\let\emptyset \undefined
\theoremstyle{plain}
\newtheorem{theorem}{Theorem}[section]
\newtheorem{corollary}[theorem]{Corollary}
\newtheorem{lemma}[theorem]{Lemma}
\newtheorem{proposition}[theorem]{Proposition}
\theoremstyle{remark}
\newtheorem{remark}[theorem]{Remark}
\numberwithin{equation}{section}
\newcommand{\C}{\mathbb{C}}
\newcommand{\E}{\mathbb{E}}
\newcommand{\N}{\mathbb{N}}
\renewcommand{\P}{{\mathbb P}}
\newcommand{\Q}{{\mathbb Q}}
\newcommand{\R}{\mathbb{R}}
\newcommand{\T}{\mathbb{T}}
\newcommand{\Z}{\mathbb{Z}}
\def\rrr{\mathbf{r}}
\def\zz{\mathbf{z}}
\def\yy{\mathbf{y}}
\newcommand{\nn}{\mathfrak{m}}
\newcommand{\rr}{\mathsf{r}}
\newcommand{\KK}{\mathfrak{K}}
\newcommand{\Aa}{\mathcal{A}}
\newcommand{\Bb}{\mathcal{B}}
\newcommand{\Cc}{\mathcal{C}}
\newcommand{\Dd}{\mathcal{D}}
\newcommand{\Mm}{\mathcal{M}}
\newcommand{\Ss}{\mathcal{S}}
\newcommand{\MM}{\mathscr{M}}
\newcommand{\GG}{\mathscr{G}}
\newcommand{\Kk}{\mathscr{K}}
\newcommand{\al}{\nu}
\newcommand{\be}{\beta}
\newcommand{\ga}{\gamma}
\newcommand{\de}{\delta	}
\newcommand{\eps}{\varepsilon}
\newcommand{\ka}{\kappa}
\newcommand{\la}{\lambda}
\newcommand{\om}{\omega}
\newcommand{\bom}{\boldsymbol{\om}}
\newcommand{\si}{\sigma}
\renewcommand{\subset}{\subseteq}
\newcommand{\ov}{\overline}
\newcommand{\td}{\tilde}
\newcommand{\msf}{\mathsf}
\newcommand{\Ll}{\left}
\newcommand{\Rr}{\right}
\newcommand{\Ca}{\Cc^{-\al}} 								% Our most commonly used Besov space 
\newcommand{\ag}{\alpha}
\newcommand{\bg}{\be}
\newcommand{\dg}{\delta}
\newcommand{\eg}{\varepsilon}
\newcommand{\hg}{h_{\ga}} 								%Rescaled field 
\newcommand{\kg}{\ka_\ga} 								% Interaction kernel
\newcommand{\Kg}{K_\ga} 								% Interaction kernel in macroscopic coordinates
\newcommand{\LN}{\Lambda_N} 							% discrete space microscopic coordinates
\newcommand{\Le}{\Lambda_\eps}		 					% discrete space macroscopic coordinates
\newcommand{\SN}{\Sigma_N} 							% Spin configurations
\newcommand{\Hg}{\mathscr{H}_{\ga}} 						% Hamiltonian
\newcommand{\lbg}{\lambda_{\ga}} 							% Gibbs measure
\newcommand{\Zbg}{\mathscr{Z}_{\ga}} 						% Partition function
\newcommand{\LgN}{\mathscr{L}_{\ga}} 						% Generator
\newcommand{\cg}{c_{\ga}}		 	 					% Jump rate microscopic coordinates
\newcommand{\Cg}{C_\ga} 								% Jump rate macroscopic coordinates
\newcommand{\mg}{m_\ga} 								% noise in microscopic coordinates 
\newcommand{\Mg}{M_\ga}  								% noise in macroscopic coordinates
\newcommand{\Dg}{\Delta_\ga}		 					% Approximation to Laplace operator
\renewcommand{\ae}{\star_\eps} 							% Convolution on macroscopic discrete torus
\newcommand{\Eg}{E_\ga} 								% Error term appearing in the equation due to expansion of tanh
\newcommand{\Xg}{X_{\ga}}		  						% The rescaled field - our main object
\newcommand{\co}{c_{\ga,2}}  								% Constant close to 1, due to inaccuracy of $\eps =\ga^2$
\newcommand{\ct}{c_{\ga,1}}  								% Constant close to 1, due to approximation of integral not exactly =1 
\newcommand{\Xng}{X_\ga^0} 							% Approximate initial condition 
\newcommand{\Xn}{X^0} 							% Initial condition 
\newcommand{\CGG}{\mathfrak{c}_{\ga}} 					% gamma - dependent renormalisation constant
\newcommand{\Xe}{X_\eps}		 						% Solution to approximated PDE
\newcommand{\Ze}{Z_\eps} 								% Approximation to SHE
\newcommand{\Ce}{\mathfrak{c}_\eps} 						% Renormalisation constant
\def\RR#1{R_{t}^{:{#1}:}} 									% continuous R
\def\ZZ#1{Z^{:{#1}:}} 										% continous Z
\newcommand{\tZ}{\widetilde{Z}} 							% processes with strange initial datum
\newcommand{\tZz}{ \widetilde{Z}^{\colon 2 \colon}} 			% and the second
\newcommand{\tZd}{ \widetilde{Z}^{\colon 3 \colon}} 			% and the third
\newcommand{\hRe}{\hat{R}_{\eps,t}}
\def\hRE#1{\hat{R}_{\eps,t}^{:{#1}:}}
\def\Pg#1{P_{#1}^{\gamma}} 								% Approximation to the heat semigroup
\def\hPg#1{\hat{P}_{#1}^{\gamma}} 							% Fourier transform of the approximated heat semigroup                
\newcommand{\Zg}{Z_{\ga}}  								% The approximate stochastic convolution
\newcommand{\Rg}{R_{\ga,t}}		 						% The martingale approximation to \Zg
\newcommand{\hRg}{\hat{R}_{\ga,t}}		 				% Its Fourier transform
\def\RG#1{R_{\ga,t}^{:{#1}:}} 								% discrete iterated integrals 
\def\hRG#1{\hat{R}_{\ga,t}^{:{#1}:}} 							% Fourier transform of the discrete iterated integral
\def\ZG#1{Z_{\ga}^{:{#1}:}} 								% discrete iterated integrals for $s=t$
\def\I#1{\mathfrak{I}_{#1}} 								% Iterated integral operator
\def\ER#1{\msf{Err}({#1})} 								% Error due to jumps in iterated integrals
\newcommand{\ZgH}{\Zg^{\mathrm{high}}} 					% Its Fourier transform
\newcommand{\Qg}{Q_{\ga,t}} 								% Difference between different quadratic variations
\def\EG#1{E_{\ga,t}^{:{#1}:}} 								% Difference between Rn and the Hermite Polynomial
\newcommand{\tZg}{\widetilde{Z}_{\ga}} 						% piecewise linearise version of \Zg - needed in tightness proof
\def\tZG#1{\widetilde{Z}_{\ga}^{:{#1}:}} 						% piecewise linearised version on \ZG{n}
\newcommand{\Len}{\Lambda_\eps^{(n)}}		 			% n fold discrete space macroscopic coordinates
\newcommand{\unR}{\underline{R}} 						% The tuple \R, [R]
\newcommand{\unZ}{\underline{Z}} 							% The tuple \Z, [R]
\newcommand{\taun}{\tau_{\ga,\nn}}							 % Stopping time
\newcommand{\Rgn}{R_{\ga,t,\nn}} 							% discrete integral changed after stopping time
\def\RGn#1{R_{\ga,t,\nn}^{:{#1}:}}		 					%  discrete iterated integrals changed after stopping time
\newcommand{\Zgn}{Z_{\ga,\nn}} 							% linear process extended beyond stopping time
\def\ZGn#1{Z_{\ga,\nn}^{:{#1}:}} 							% Zn changed after stopping time
\newcommand{\hgn}{h_{\ga,\nn}} 							% Rescaled field changed after stopping time 
\newcommand{\sgn}{\sigma_{\ga,\nn}}		 				% modified microscopic jumps
\newcommand{\ssgn}{\sigma_{\ga,\nn}^{\prime}}		 		% used to define microscopic jumps
\newcommand{\mgn}{m_{\ga,\nn}} 							% noise in microscopic coordinates changed after stopping time
\newcommand{\Mgn}{M_{\ga,\nn}} 							% noise in macroscopic coordinates changed after stopping time
\newcommand{\Lgn}{\mathscr{L}_{\ga,\nn}} 					% Generator changed after stopping time
\newcommand{\cgn}{c_{\ga,\nn}}							% Jump rate changed after stopping time in microscopic coordinates
\newcommand{\Cgn}{C_{\ga,\nn}}		 					% Jump rate changed after stopping time in macroscopic coordinates
\newcommand{\bZg}{\mathbf{Z}_\ga} 						% Vector of processes $Zg$ for convergence in law
\def\bRg#1{\mathbf{R}_{\ga,{#1} }} 							% Vector of processes $Rg$ for convergence in law
\newcommand{\bZ}{\mathbf{Z}} 							% limiting vector of processes $Z$ for  convergence in law
\newcommand{\bR}{\mathbf{R}_t}							% limiting vector of processes $R$ for convergence in law
\newcommand{\ttZ}{ \widetilde{Z}_{\ga,\nn}} 					% processes with effect of initial condition
\newcommand{\ttZz}{ \widetilde{Z}_{\ga,\nn}^{\colon 2 \colon}}		% and again for the second power
\newcommand{\ttZd}{ \widetilde{Z}_{\ga,\nn}^{\colon 3 \colon}}		% and for the third power
\newcommand{\Yg}{ Y_\ga} 								% Heat semigroup applied to initial datum
\newcommand{\oX}{\overline{X}_{\ga,\nn}} 					% the approximate triple $Z_\ga,Z^_\ga2,Z_\ga^3$ as Input for the limiting solution operator
\newcommand{\tXgn}{X_{\ga,\nn}} 							% approximation that coincides with $\Xg$ before stopping time
\newcommand{\tvg}{ v_{\gamma,\nn}} 						% the exact remainder before stopping time
\newcommand{\bvg}{ \overline{v}_{\gamma,\nn}} 				% the remainder of the continuous equation with discrete input
\newcommand{\bZZg}{ \overline{Z}_{\gamma,\nn}} 				% the linear process with discrete input                                          
\def\bZZG#1{\overline{Z}_{\ga,\nn}^{:{#1}:}} 					% Zn changed discrete Z with continuous initial condition
\newcommand{\Psg}{ \Psi_{\gamma,\nn}} 					% nonlinearity in fixed point argument
\newcommand{\bPsg}{ \overline{\Psi}_{\gamma,\nn}} 			% nonlinearity in fixed point argument with bar
\newcommand{\Ex}{\msf{Ext}} 								% Extension operator
\newcommand{\tCG}{\mathfrak{c}_{\ga}} 						% approximate value of CG(s)
\def\ERR#1{\msf{Err}^{({#1})}} 								% "Big" error terms in Sec 7
\def\err#1{\msf{err}^{({#1})}} 								% "small" error terms in Sec 7
\newcommand{\Lg}{\ga\Z^2} 								%Rescaled torus (once more) 
\newcommand{\hKK}{\widehat{\KK}} 						% Fourier transform of continuous Kac thing
\newcommand{\hKg}{\hat{K}_{\ga}} 							% Fourier transform of approximate Kac thing
\newcommand{\Delg}{\Delta_\ga} 					% yet another discrete Laplacian
\newcommand{\Dell}{\underline{\Delta}} 						% and another one
\newcommand{\Delk}{\underline{\Delta}_k} 					% yet another Laplacian in Fourier modes
\newcommand{\Del}{\underline{\Delta}_\eps} 					% Laplacian in Fourier modes
\newcommand{\dk}{\delta_k} 								% k-th Paley-Littlewood block
\newcommand{\ek}{\eta_k} 								% convolution kernel for $k$-th block
\renewcommand{\L}{\mathscr{L}}
\begin{document}
\begin{abstract}
The Ising-Kac model is a variant of the ferromagnetic Ising model in which each spin variable interacts with all spins in a neighbourhood of radius $\ga^{-1}$ for $\ga \ll1$ around its base point.  We study the Glauber dynamics for this model on a discrete two-dimensional torus $\Z^2/ (2N+1)\Z^2$, for a system size $N \gg \ga^{-1}$ and for an inverse temperature close to the critical value of the mean field model. We show that the suitably rescaled coarse-grained spin field converges in distribution to the solution of a non-linear stochastic partial differential equation.

This equation is the dynamic version of the $\Phi^4_2$ quantum field theory, which is formally given by a reaction diffusion equation driven by an additive space-time white noise. It is well-known that in two spatial dimensions, such equations are distribution valued and a \textit{Wick renormalisation} has to be performed in order to define the non-linear term. Formally, this renormalisation corresponds to adding an infinite mass term to the equation. We show that this need for renormalisation for the limiting equation is reflected in the discrete system by a shift of the critical temperature away from its mean field value.
\end{abstract}

\title[Convergence of dynamic Ising-Kac model to $\Phi_2^4$]
{Convergence of the two-dimensional dynamic Ising-Kac model to  $\Phi_2^4$}

\author{Jean-Christophe Mourrat}
\address{Jean-Christophe Mourrat, \'Ecole Normale Sup\'erieure de Lyon, CNRS
}
\email{jean-christophe.mourrat@ens-lyon.fr}

\author{Hendrik Weber}
\address{Hendrik Weber, University of Warwick
}
\email{hendrik.weber@warwick.ac.uk}

\keywords{Kac-Ising model, scaling limit, stochastic PDE, renormalisation}

\subjclass[2000]{82C22, 60K35, 60H15, 37E20}

% \begin{abstract}

% \end{abstract}

%\dedicatory{}

\date\today

\maketitle

%%%%%%%%%%%%%%%%%%%%%%%%
\section{Introduction}
%%%%%%%%%%%%%%%%%%%%%%%%
\label{s:Intro}

The aim of this article is to show the convergence of a rescaled discrete spin system to the solution of a stochastic partial differential equation formally given by 
\begin{equation}\label{e:SPDE}
\partial_t X(t,x) = \Delta X(t,x) -\frac13 X^3(t,x) +A X(t,x) + \sqrt{2} \, \xi(t,x)\;.
\end{equation}
Here $x \in \T^2$ takes values in the two-dimensional torus,  $\xi$ denotes a space-time white noise, and $A \in \R$ is a real parameter. 

The particle system that we consider is an Ising-Kac model evolving according to the Glauber dynamics. This model is similar to the usual ferromagnetic Ising model. The difference is that every spin variable interacts 
with all other spin variables in a large ball of radius $\ga^{-1}$ around its base point. We consider this model on a discrete two-dimensional torus $\Z^2/(2N+1) \Z^2$, for $N \gg \ga^{-1}$. We then study the random fluctuations of a coarse-grained and suitably rescaled magnetisation field $\Xg$ in the limit $\ga \to0$, for inverse temperature close to the critical temperature of the mean field model. Our main result, Theorem~\ref{thm:Main}, states that under suitable assumptions on the initial configuration, these fields $\Xg$ converge in law to the solution of \eqref{e:SPDE}. A similar result in one spatial dimension was shown in the nineties in  \cite{BPRS,FR}. Our two-dimensional result was conjectured in \cite{GLP}.   

The two-dimensional situation is more difficult than the one-dimensional case, because the solution theory for \eqref{e:SPDE}
is more intricate. Indeed, it is well-known that in dimension higher than one, equation~\eqref{e:SPDE} does not make sense as it stands. We will recall below in Section~\ref{sec:ContinuousAnalysis} that the solution $X$ to \eqref{e:SPDE} is a distribution-valued  process. For each fixed $t$, the regularity properties of $X$ are identical to those of the Gaussian free field. In this regularity class, it is a priori not possible to give a consistent interpretation to the nonlinear term $X^3$. In order to even give a meaning to \eqref{e:SPDE}, a renormalisation procedure has to be performed. Formally, this procedure corresponds to adding an \textit{infinite mass term} to \eqref{e:SPDE}, i.e. \eqref{e:SPDE} is formally replaced by 
\begin{equation*}
\partial_t X(t,x) = \Delta X(t,x) -\frac13 \big( X^3(t,x) - 3\infty \times X(t,x) \big) +A X(t,x) + \sqrt{2} \, \xi(t,x)\;,
\end{equation*}
where ``$\infty$'' denotes a constant that diverges to infinity in the renormalisation procedure
(see Section~\ref{sec:ContinuousAnalysis} for a more precise discussion).

A similar renormalisation was performed for the equilibrium state of \eqref{e:SPDE} in the seventies, in the context of constructive quantum field theory (see \cite{GJ81} and the references therein). This equilibrium state is given by a measure on the space of distributions over $\T^2$ (or $\R^2$) which is formally described by 
\begin{equation}\label{e:phi_four_two}
\frac{1}{\mathscr{Z}} \exp\Big(- \int \frac{1}{12}(X^4(x) - 6 \infty \times X(x)^2 + 3 \infty ) + \frac12AX(x)^2 \, dx \Big) \, \nu(dX) \,,
\end{equation}
where $\nu$ is the law of a Gaussian free field. It was shown in  \cite{GlimmJaffe} that like the two-dimensional Ising model, this model admits a phase transition, with the parameter $A$ playing the role of the inverse temperature. The measure \eqref{e:phi_four_two} is usually called the $\Phi^4_2$ model, and we will therefore refer to the solution of \eqref{e:SPDE} as the \textit{dynamic} $\Phi^4_2$ model.

The construction of the dynamic $\Phi^4_2$ model was a challenge for stochastic analysts for many years. Notable contributions include \cite{ParisiWu, JLM, AlbRock, MiRo}. 
Our analysis builds on the fundamental work of da Prato and Debussche \cite{dPD}, in which strong solutions for \eqref{e:SPDE} were constructed for the first time.

Much more recently, there was great progress in the theory of singular SPDEs, in particular with \cite{MartinKPZ,Martin1}. In these papers, Hairer developed a theory of \textit{regularity structures} which allows to perform similar renormalisation procedures for much more singular equations such as the three-dimensional version of \eqref{e:SPDE} or the Kardar-Parisi-Zhang (KPZ) equation. Parallel to \cite{MartinKPZ,Martin1}, another fruitful approach to give a meaning to such equations was developped in \cite{Gubi,MassimilianoPHI4}. One of the motivations for these works is to develop a technique to show that fluctuations of non-linear particle systems are governed by such an equation. The present article establishes such a result in this framework for the first time. One interesting feature of our result is that it gives a natural interpretation for the infinite renormalisation constant as a shift of critical temperature away from its mean field value (as was already predicted in \cite{CMPres-2,GLP}).  

The study of the KPZ equation has recently witnessed tremendous developments (see \cite{quastel,corwin,corwinICM} and references therein). In their seminal paper \cite{bergia}, Bertini and Giacomin proved that the (suitably rescaled height process associated with the) weakly asymmetric exclusion process converges to the KPZ equation. This result relies on two crucial properties: (1) that the KPZ equation can be transformed into a \emph{linear} equation (the stochastic heat equation with multiplicative noise) via a Cole-Hopf transform; and (2) that the discrete system itself allows for a microscopic analogue of the Cole-Hopf transform \cite{gartner}. The result can be extended to other particle systems as long as some form of microscopic Cole-Hopf transform is available \cite{dembo}. However, for more general models, the question is still open, although notable results in this direction were obtained in \cite{gonjar}.

For the asymmetric exclusion process to converge to the KPZ equation, it is essential to tune down the asymmetry parameter to zero while taking the scaling limit (hence the name \emph{weakly} asymmetric). This procedure enables to keep the system away from the scale-invariant \emph{KPZ fixed point}. This KPZ fixed point remains partly elusive \cite{corqua}. Proving that a discrete system converges to the KPZ fixed point has so far only been possible (in a limited sense) by relying on the special algebraic properties of integrable models, see \cite{badejo,johan,praspo} for early works, and \cite{corwinICM} and references therein for more recent developments.

We would like to underline the analogy between these observations and the situation with the two-dimensional Ising, Ising-Kac and $\Phi^4$ models. The scaling limit of the (static) two-dimensional Ising model with nearest-neighbour interactions is now well understood, see \cite{smirnov, cagane, Ising-sle,chi} and references therein. We may call this limit the (static, critical) \emph{continuous Ising model}. Our replacement of nearest-neighbour by long-range, Kac-type interactions does \emph{not} simply serve as a technical convenience. It also plays the role that the tunable asymmetry has for the convergence of the weakly asymmetric exclusion process discussed above. That is, it enables to keep the model away from the (scale-invariant) continuous Ising model. In order to best see that this limit is qualitatively different from the continuous Ising model (and its near-critical analogues), we point out that the probability for the field averaged over the torus to be above a large value $x$ decays roughly like $\exp(-x^{16})$ for the continuous Ising model \cite{cagane2}, while one can check that this probability decays roughly like $\exp(-x^4)$ for the measure in \eqref{e:phi_four_two}, as it does for the Curie-Weiss model \cite[Theorem~V.9.5]{ellis}. It is expected that the measure \eqref{e:phi_four_two} with critical $A$ converges, under a suitable scaling, to the continuous Ising model.

\subsection{Structure of the article} In Section \ref{s:Setting}, we define our model and give a precise statement of our main results. In Section \ref{sec:ContinuousAnalysis}, we describe briefly the solution theory for the  limiting dynamics, following essentially the strategy of \cite{dPD}. Sections \ref{sec:linear} to \ref{sec:conv-lin} contain the core of our article. There, we introduce a suitable linearisation of the rescaled particle model and prove the convergence of this linearisation and several non-linear functionals thereof to the continuum model. In Section \ref{sec:Nonlinear}, we analyse the nonlinear evolution and complete the proof of our main theorem.  Finally, in Section \ref{sec:APPB}, we derive some additional bounds on an approximation to the heat semigroup that are referred to freely throughout the paper.  In several appendices, we provide background material on Besov spaces, martingale inequalities and the martingale characterisation of the solution to the stochastic heat equation.

\subsection{Notation} Throughout the paper, $C$ will denote a generic constant that can change at every occurrence. We sometimes write $C(a,b,\ldots)$ if the exact value of $C$ depends on the quantities $a,b,\ldots$ .  For $x \in \R^d$, we write $|x| = \sqrt{x_1^2 + \cdots + x_d^2}$ for the Euclidean norm of $x$. For $x \in \R^d$,  and $r>0$,  $B(x,r) =\{ y \in \R^d \colon |x-y| <r \}$ is the Euclidean ball of radius $r$ around $x$.
For $a,b \in \R$, we write $a \wedge b$ and $a \vee b$ to denote the minimum and the maximum of $\{a,b\}$. We denote by $\N=\{1,2,3, \ldots\}$ the set of natural numbers and by $\N_0=\N \cup \{0\}$. We also write $\R_+ = [0, \infty)$.

\medskip

\noindent \textbf{Acknowledgements.} We would like to thank David Brydges, Christophe Garban and Krzysztof Gawedzki for discussions on the nearest-neighbour Ising model and quantum field theory. We would also like to thank Rongchan Zhu and Xiangchan Zhu for pointing out an error in an earlier version of this article.
HW was supported by an EPSRC First Grant.

%%%%%%%%%%%%%%%%%%%%%%%%
\section{Setting and Main result}
%%%%%%%%%%%%%%%%%%%%%%%%
\label{s:Setting}
%%%%%%%%%%%%%%%%%%%%%%%%

For $N \geq 1$, let $\LN =\Z^2 / (2N+1) \Z^2$ be the two-dimensional discrete torus which we identify with the set  $  \{-N, -(N-1), \ldots, N \}^2$. Denote by $\SN = \{ -1, +1 \}^{\LN}$ the set of spin configurations on $\LN$. We will always denote spin configurations by $\si = (\si(k), \, k \in \Lambda_N )$. 

Let $\KK \colon \R^2 \to [0,1]$ be a $\Cc^2$ function with compact support contained in  $B(0,3)$, the Euclidean ball of radius $3$ around $0$ in $\R^2$. We assume that $\KK$ is invariant under rotations and satisfies
\begin{equation}
\label{e:norm-kk}
\int_{\R^2} \KK(x) \, dx = 1, \qquad \int_{\R^2} \KK(x) \, |x|^2 \, dx = 4 \;. 
\end{equation}
For $0 < \gamma< \frac13$, let $\kg \colon \LN \to [0, \infty)$ be defined as $\kg(0) =0$ and
\begin{equation}\label{e:samplek}
\kg(k) = \ct \, \gamma^2 \, \KK(\ga k)  \qquad k \neq 0\;,
\end{equation}
where $\ct^{-1} = \sum_{k \in \LN^{\star} }\ga^2 \, \KK(\ga k)$ and $\LN^{\star} = \LN \setminus \{0\}$.

For any $\si \in \SN$, we introduce the locally averaged field 
\begin{equation}\label{e:hg}
\hg(\si, k) := \sum_{j \in \LN } \kg(k-j) \, \si(j)=: \kg \star \si (k)\;,
\end{equation}
and  the Hamiltonian 
\begin{equation}\label{e:Hamiltonian}
\Hg(\si) := -  \frac12 \sum_{k, j \in \LN}  \kg(k-j) \,\si(j) \,  \si(k) \,= - \frac12 \sum_{k \in \LN} \si(k) \,\hg( \si, k) \;. 
\end{equation}
In both \eqref{e:hg} and \eqref{e:Hamiltonian}, subtraction on $\LN$ is to be understood with periodic boundary conditions. Throughout this article we will always assume that $N \gg \gamma^{-1}$, so that  the assumption  $\kg(0) = 0$ implies that there is no self-interaction in \eqref{e:Hamiltonian}.

For any inverse temperature $\be>0$, we define the Gibbs measure $\lbg$ on $\SN$ as 
\begin{equation*}
\lbg (\si) := \frac{1}{\Zbg}\exp\Big( - \be\Hg(\si) \Big)\; ,
\end{equation*}
where 
\begin{equation*}
\Zbg := \sum_{\si \in \SN} \exp\Big( - \be\Hg(\si) \Big) \; 
\end{equation*}
denotes the normalisation constant that makes $\lbg$ a probability measure. 
On $\SN$, we study the Markov process given by the generator 
\begin{equation}\label{e:Generator}
\LgN f(\si) = \sum_{j \in \LN} \cg(\si,j) \big(f(\si^j) -f(\si) \big) \;,
\end{equation}
 acting on functions $f \colon\SN \to \R$. Here $\si^j \in \SN$ is the spin configuration that coincides with $\si$ except for a flipped spin at position $j$.  
As jump rates $\cg(\si, j)$  we choose those of the Glauber dynamics, 
$$
\cg(\si, j) := \frac{\lbg(\si^j)}{\lbg(\si) + \lbg(\si^j)}\;.
$$
It is clear that these jump rates are reversible with respect to the measure $\lbg$. Since $\kg(0) = 0$, the local mean field $\hg(\si,j)$ does not depend on $\sigma(j)$. Using also the fact that $\si(j) \in \{-1,1\}$, we can conveniently rewrite the jump rates as
\begin{eqnarray}
\cg(\si, j)&  = &  \frac{e^{-\si(j) \be \hg(\si,j)}}{e^{ \be \hg(\si,j)}+e^{- \be \hg(\si,j)}}\notag \\
& = & \frac{1}{2}\big(1 - \si(j) \tanh\big( \be \hg(\si,j) \big)  \big)\;.  \label{e:rate}
\end{eqnarray}
We write $(\si(t))_{t \ge 0}$ for the (pure jump) Markov process on $\SN$ thus defined, with the notation $\si(t) = (\si(t,k))_{k \in \LN}$. With a slight abuse of notation, we let
\begin{equation}
\label{e:def-hgt}
\hg(t,k) = \hg(\si(t),k) \; .
\end{equation}
The aim of this article is to describe the critical fluctuations of the  local mean field~$\hg$ as defined in \eqref{e:def-hgt}, and to derive a 
non-linear SPDE for a suitably rescaled version of it. To this end we write, for $t \ge 0$ and $k \in \LN$,
\begin{equation}\label{e:evolution1}
\hg(t,k) = \hg(0,k) + \int_0^t \LgN \, \hg(s, k) \, ds + \mg(t,k)\;,
\end{equation} 
where the process $\mg(\cdot, k)$ is a martingale. Observing that for any $\si \in \SN$ and for any $j,k \in \LN$, we have $\hg(\si^j,k) - \hg(\si,k) = -2 \kg(k-j ) \si(j)$,
we get from \eqref{e:Generator} and \eqref{e:rate}
\begin{align}
\LgN  \hg(\si, k)
 &= - \hg(\si,k) + \kg \star \tanh\big( \beta \hg(\si,k)  \big)   \notag\\
& = \Big(\kg \star \hg(\sigma, k) - \hg(\sigma,k) \Big) + (\beta-1) \,\kg \star \hg(\sigma,k)  \notag\\
& \qquad   - \frac{\beta^3}{3} \Big( \kg \star \hg^3(\si, k)\Big) + \ldots   \label{e:Lh}\;,
\end{align}
where we have used the Taylor expansion $\tanh(\beta h) = \beta h - \frac13 (\beta h)^3 + \ldots$ .

The predictable quadratic covariations (see Appendix~\ref{AppA}) of the martingales $\mg(\cdot, k)$ are given by 
\begin{align}
\langle \mg(\cdot, k ), \mg (\cdot, j) \rangle_t 
=4 \int_0^t  \!\! \sum_{\ell \in \LN} \kg(k-\ell) \, \kg(j-\ell) \,  \cg\big(\si(s),\ell \big) \, ds\;. \label{e:QuadrVar}
\end{align}
Furthermore, the jumps of $\mg(\cdot,k)$ coincide with those of $\hg(\cdot,k)$. In particular, if for some $\ell \in \LN$ the spin $\sigma(\ell)$ changes sign, then $\mg(\cdot,k)$ has a jump of $-2\si(\ell) \kg(k-\ell)$.

\subsection{Rescaled dynamics}

For any $0<\gamma <1$ let $N= N(\gamma)$ be the microscopic system size determined below (in \eqref{e:scaling1}). Then set $\eg = \frac{2}{2N+1}$. Every \emph{microscopic} point  $ k \in \LN$ can be identified with $x = \eg k \in \Le = \{x = (x_1,x_2) \in \eg \Z^2 \colon \, -1 < x_1,x_2 < 1  \}$. We view $\Le$ as a discretisation of the 
continuous two-dimensional torus $\T^2$ identified with $[-1,1]^2$. For suitable scaling factors $\ag, \dg > 0$ and inverse temperature $\bg$ to be determined below we set 
\begin{equation}
\label{e:defXg}
\Xg(t,x) = \frac{1}{\dg} \hg\bigg( \frac{t}{\ag}, \frac{x}{\eg} \bigg)  \qquad x\in \Le, \, t \geq 0\;.
\end{equation}
In these \emph{macroscopic coordinates}, the evolution equation \eqref{e:evolution1} (together with \eqref{e:Lh}) reads 
\begin{align}
\Xg(t,x) =  & \Xg(0,x) + \int_0^t \!   \bigg(  \frac{ \eg^2}{\ga^2 } \frac{1}{\ag}  \widetilde{\Delta}_\gamma \Xg(s,x) + \frac{(\be -1)}{\ag} \Kg \ae \Xg(s,x) \notag \\  
&- \frac{\be^3}{3} \frac{\dg^2}{\ag} \Kg \ae \Xg^3  (s,x)  + \Kg \ae \Eg(s,x) \bigg)\, ds + \Mg(t,x) \;  ,\label{e:evolution2}
\end{align}
for $x \in \Le$.
Here we have set  $\Kg(x) = \eg^{-2} \kg(\eg^{-1}x) =  \ct \frac{\ga^2}{\eg^2} \KK\big( \frac{\ga}{\eg} x \big)$ (the second equality being valid for $x \neq 0$). The convolution $\ae$ on $\Le$ is defined through $X \ae Y(x) = \sum_{z \in \Le}\eg^2  X(x-z)  Y(z)$ (where subtraction on $\Le$ is to be understood with periodic boundary conditions on $[-1,1]^2$) and 
$\widetilde{\Delta}_\gamma X =\frac{\gamma^2}{\eg^2} (\Kg \ae X - X)$ (so that $\widetilde{\Delta}_\gamma$ scales like the continuous Laplacian). The rescaled martingale is defined as $\Mg(t,x) := \frac{1}{\dg} \mg\Big(\frac{t}{\ag}, \frac{x}{\eg} \Big) $. Finally, the error term $\Eg(t,x)$ (implicit in \eqref{e:Lh}) is given by 
\begin{equation}\label{e:error}
\Eg(t,\cdot) =  \frac{1}{\dg \ag}    \Big( \tanh\big( \beta \dg \Xg(t,\cdot)  \big)  -  \beta \dg \Xg(t, \cdot) + \frac{(\beta \dg)^3}{3   }  \Xg(t, \cdot)^3   \Big) \;.
\end{equation}
In these coordinates, the expression \eqref{e:QuadrVar} for the quadratic variation becomes
\begin{align}
\langle \Mg(\cdot, x ),& \Mg (\cdot, y) \rangle_t  \notag\\
&=4 \frac{\eg^2}{\dg^2\ag } \int_0^t  \!\! \sum_{z \in \Le}\eg^2  \Kg( x -z) \, \Kg(y-z) \,  \Cg\big(s, z\big) \, ds \;, \label{e:QuadrVar2}
\end{align} 
where $\Cg(s,z) :=  \cg(\si(s/\ag), z/\eg )$. In these macroscopic coordinates, a spin flip at the microscopic position $k = \eg^{-1}y$ causes a jump of $-2 \sigma(\eg^{-1}y)  \delta^{-1} \eps^2 \Kg(y-x)  $ for the martingale $
\Mg(\cdot, x)$.

The scaling of the approximated Laplacian, the term $\Kg \ae \Xg^3$ and the quadratic variation in \eqref{e:QuadrVar2} suggest that in order to see  non-trivial behaviour for these terms, we need to impose $
1 \approx \frac{\eg^2}{ \ga^2}\frac{1}{\ag}  \approx \frac{\dg^2}{\ag} \approx \frac{\eg^2}{\dg^2\ag }$.
Hence, from now on we set 
\begin{equation}\label{e:scaling1}
N = \left\lfloor\gamma^{-2}\right\rfloor, \quad \eg = \frac{2}{2N+1}, \quad \ag= \ga^2, \quad \dg = \ga \;.
\end{equation}
For later reference, we note that this implies that for $0 < \ga < \frac13$ we have 
\begin{equation}\label{e:scaling2}
\eg  =  \ga^2 \, \co   \qquad \text{with} \quad  \big(1- \ga^2   \big) \leq \co \leq   \big(1+ \ga^2   \big) \;.
\end{equation}
Under these assumptions, the leading order term in the expansion of the error term \eqref{e:error} scales like $\dg^4 \ag^{-1}  = \ga^2$. Hence it seems reasonable to suspect that it will disappear in the limit.
In order to prevent the (essentially irrelevant) factor $\co$ from appearing in too  many formulas, we define 
\begin{equation}\label{e:DefDg}
\Dg := \co^{2} \widetilde{\Delta}_\gamma = \frac{ \eg^2}{\ga^2 } \frac{1}{\ag}  \widetilde{\Delta}_\gamma \;.
\end{equation}

At first sight, \eqref{e:evolution2} suggests that $\beta$ should be so close to one that $(\be -1)/ \ag = O(1)$. It was already observed in \cite{CMPres-2}  (for the equilibrium system) that this naive guess is incorrect. Instead, we will always assume that 
\begin{align}\label{e:scalingbeta}
( \bg -1) = \ag (\CGG +A)\;,
\end{align}
where $A \in \R$ is fixed. The extra term $\CGG$ reflects the fact that the limiting equation has to be renormalised (see Section~\ref{sec:ContinuousAnalysis} for a detailed explanation). Its precise value is given below in \eqref{e:valueCGG}, but we mention right away that the difference between $\CGG$ and 
\begin{align*}
\sum_{\substack{\om \in \Z^2 \\ 0<| \om | < \ga^{-1}}} \frac{1}{4 \pi^2 |\om|^2}
\end{align*}
remains bounded as $\ga$ goes to $0$. In particular, $\CGG$ diverges logarithmically as $\ga$ goes to $0$.

Before we state our main results, we briefly discuss a suitable extension of the functions $\Xg$, $\Kg$ defined on $\Le$ onto all of the torus $\T^2$ (which we identify with the interval $[-1,1]^2$). A large part of the analysis below is based on tools from Fourier analysis, and hence it is  natural to extend these functions by suitable trigonometric polynomials. For any function $Y \colon \Le \to \R$, we define the Fourier transform $\hat{Y}$ through
\begin{equation}\label{e:FT}
\hat{Y} (\om) = 
\begin{cases}
\sum_{x\in \Le} \eg^2 \, Y(x) \,e^{- i \pi \om \cdot x}     \qquad & \text{ if } \om \in \{ -N, \ldots, N \}^2\;,\\
0 \qquad & \text{ if } \om \in \Z^2 \setminus \{ -N, \ldots, N \}^2\; .
\end{cases}
\end{equation} 
In this context Fourier inversion states
\begin{equation}\label{e:FI}
Y(x) =\frac{1}{4} \sum_{\om \in \Z^2} \,   \hat{Y}(\om) \, e^{ i \pi \om \cdot x }  \qquad \text{for all } x \in \Le\;.
\end{equation}
It is thus natural to extend $Y$ to all of $\T^2 = [-1,1]^2$ by taking  \eqref{e:FI} as a definition of $Y(x)$ for $x \in \T^2 \setminus \Le$. More explicitly, for $Y \colon \Le \to \R$, we define $(\Ex \, Y) \colon \T^2 \to \R$ as 
\begin{align*}
\Ex \, Y(x)  &= \frac{1}{4} \sum_{\om \in \{-N, \ldots, N\}^2}   \sum_{y \in \Le}  \eg^2   \; e^{i \pi \om \cdot (x-y)} \; Y(y) \\
&=    \sum_{y \in \Le}  \frac{\eg^2}{4} \; Y(y) \prod_{j=1,2} \frac{\sin\Big(\frac{\pi}{2} (2N+ 1)   (x_j-y_j)  \Big)}{ \sin\big(    \frac{\pi}{2}  (x_j-y_j) \big)}  \;.
\end{align*}
The extension of $Y$ defined in this way is smooth and real valued. The function $\Ex \,Y$ is the unique trigonometric polynomial of degree $\leq N$ that coincides with $Y$ on $\Le$. This extension has the nice property that several identities that should hold approximately are in fact exactly true. 

First of all, by the continuous Fourier inversion we see that for $\om \in \Z^2$
$$
\hat{Y}(\om) = \int_{\T^2} e^{ - i \pi \om \cdot x}\, (\Ex\, Y)(x) \, dx \ ,
$$
i.e. the discrete Fourier transform of $Y$ on $\Le$ and the continuous Fourier transform of its extension coincide. Furthermore, if $X,Y \colon \Le \to \R$ and $Z := X \ae Y$, then the discrete Fourier transform \eqref{e:FT} satisfies $\hat{Z}(\om) = \hat{X}(\om) \hat{Y}(\om)$. As the Fourier transform of $\tilde{ Z}(x) = \int_{\T^2} (\Ex\, X)(x-y) \, (\Ex \, Y)(y) \, dy$ satisfies the same identity, we can conclude that $Z$ and $\tilde{Z}$ coincide on $\Le$. Finally, in the current context, Parseval's identity states 
\begin{align}
\sum_{x \in \Le} \eg^2 X(x)\, Y(x) &= \frac{1}{4} \sum_{\om \in \Z^2} \hat{X}(\om) \, \overline{\hat{Y}(\om)} \notag\\
&= \int_{\T^2} (\Ex \,X)(x) \, (\Ex \,Y)(x) \, dx\;. \label{e:Pars}
\end{align}
Below, we will use these nice identities to freely jump from discrete to continuous expressions, depending on which one is more convenient. When no confusion is possible, we  omit the operator $\Ex$ and simply use the same symbol for a function $\Le \to \R$ and its extension.

\subsection{Main result}

For any metric space $\Ss$, we denote by $\Dd(\R_+,\Ss)$ the space of $\Ss$ valued cadlag function endowed with the Skorokhod topology (see \cite{bill} for a discussion of 
this topology). For any $\al>0$ we denote by $\Ca$ the Besov space $B^{-\al}_{\infty, \infty}$ discussed in Appendix~\ref{sec:Besov}.

We denote by $X$ the solution of the renormalised limiting SPDE \eqref{e:SPDE}, discussed in Section~\ref{sec:ContinuousAnalysis} for a fixed initial datum $\Xn \in \Ca$. This process $X$ is  continuous taking values in $\Ca$.

Assume that for $\ga>0$, the spin configuration at time $0$ is given by $\sigma_\gamma(0, k), \; k \in \LN$, and define for $x \in \Le$ 
\begin{align*}
\Xng(x) = \dg^{-1}\sum_{y \in \Le} \eg^2 \Kg(x -y) \,\sigma_\gamma(0, \eg^{-1}y ) \;.
\end{align*}
We extend $\Xng$ to a function on all of $\T^2$ as a trigonometric polynomial of degree $\leq N$ still denoted by $\Xng$. Let $\Xg(t,x), \, t \geq 0, \, x \in \Le^2$ be defined by \eqref{e:defXg} and extend $\Xg(t, \cdot)$ to $x \in \T^2$ as a trigonometric polynomial of 
degree $\leq N$, still denoted by $\Xg$. 
%
%
%%%%%%%%%%%%%%%%%%%%%
%%%%%%%%%%%%%%%%%%%%%
\begin{theorem}\label{thm:Main}
%%%%%%%%%%%%%%%%%%%%%
%%%%%%%%%%%%%%%%%%%%%
Assume that the scaling assumptions \eqref{e:scaling1}, \eqref{e:scaling2} and \eqref{e:scalingbeta} hold, where the precise value of $\CGG$ is given by 
\begin{align}
\CGG =\frac{1}{4}\sum_{\substack{\om \in \{-N, \ldots, N \}^2\\ \om \neq 0 }} \frac{|\hKg(\om)|^2}{ \ga^{-2}  (1 - \hKg(\om))} \;.
\label{e:valueCGG}
\end{align}
Assume also that $\Xng$ converges to $\Xn$ in $\Ca$ for $\al>0$ small enough and that  $\Xn, \, \Xng$ are uniformly bounded in  $ \Cc^{-\al +\ka}$ for an arbitrarily small $\ka>0$. Then $\Xg$ converges in law to $X$ with respect to the topology of $\Dd(\R_+, \Ca)$. 
%%%%%%%%%%%%%%%%%%%%%
%%%%%%%%%%%%%%%%%%%%%
\end{theorem}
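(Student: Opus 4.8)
The plan is to follow the by-now standard Da Prato--Debussche strategy in its "pathwise plus stochastic input" form, adapted to the discrete setting. One first splits the rescaled field as $\Xg = \Zg + \Yg$, where $\Zg$ solves the \emph{linear} (discrete, rescaled) equation driven by the martingale $\Mg$ with the approximate Laplacian $\Dg$ and the mass shift encoded in \eqref{e:scalingbeta}, and $\Yg$ is the remainder, which should solve an equation that is amenable to a fixed-point argument in a space of continuous (in fact slightly Hölder) functions. The cubic term $\Kg \ae \Xg^3$ is expanded as $\Zg^3 + 3\Zg^2 \Yg + 3 \Zg \Yg^2 + \Yg^3$; the genuinely singular pieces are $\Zg^3$ and $\Zg^2$, which must be replaced by their Wick-renormalised analogues $\ZG{3}$ and $\ZG{2}$. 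The constant $\CGG$ in \eqref{e:valueCGG} is exactly designed so that $\Kg\ae(\Xg^3 - 3 \CGG \Xg)$, when expanded, produces $\ZG{3} + 3 \ZG{2}\Yg + \ldots$ up to errors that vanish, i.e. $\CGG$ is the discrete counterpart of the diverging Wick constant. So the structure of the proof is: (i) prove convergence of the \emph{stochastic data} $(\Zg, \ZG{2}, \ZG{3})$ — together with the relevant time-regularity and the convergence of $\Xng$ — to the continuum triple $(Z, \ZZ{2}, \ZZ{3})$ built from the stochastic heat equation in Section~\ref{sec:ContinuousAnalysis}; (ii) show that the remainder equation for $\Yg$ is, uniformly in $\ga$, well-posed with bounds depending only on the norms of the stochastic data; (iii) pass to the limit in the remainder equation by a continuity/continuous-mapping argument; and (iv) assemble (i)--(iii) into convergence in law in $\Dd(\R_+, \Ca)$, dealing with the Skorokhod topology and with possible blow-up of the limiting solution by a standard stopping-time/localisation argument.

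In more detail: for step (i), I would work on the Fourier side using the extension operator $\Ex$ so that the discrete objects live on $\T^2$. The linearised field $\Zg$ is a martingale-driven stochastic convolution; its covariance structure is governed by \eqref{e:QuadrVar2}, and the key point is that $\Cg(s,z) = \cg(\si(s/\ag),z/\eg)$ concentrates around $\tfrac12$ in the relevant averaged sense (since $\tanh(\be \hg)$ is small on the macroscopic scale), so the quadratic variation is close to $2\,\eg^2 \delta^{-2}\ag^{-1}\|\Kg\|$-type expressions matching space-time white noise after rescaling. Convergence of $\Zg$ to $Z$ and the bounds in $\Cc^{-\al}$ I would get via a Kolmogorov-type argument on Littlewood--Paley blocks together with the martingale moment inequalities of the appendix; for the Wick powers $\ZG{2}, \ZG{3}$ I would compute second moments of their Littlewood--Paley projections, check that the subtracted constant is precisely $\CGG$ and that its continuum limit is the right renormalisation, and again invoke a Kolmogorov criterion — the discrete chaos computations replace the continuum Wiener-chaos ones, with extra error terms coming from the difference between the discrete heat kernel $e^{t\Dg}$ and $e^{t\Delta}$ controlled using the bounds of Section~\ref{sec:APPB}. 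The cadlag/Skorokhod issue: $\Zg$ has jumps of size $O(\eg^2\delta^{-1})=O(\ga^3)$ (from the jump description after \eqref{e:QuadrVar2}), which vanish, so tightness in $\Dd(\R_+,\Ca)$ and identification of continuous limits is routine once the moment bounds are in place.

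For step (ii), the remainder $\Yg$ satisfies a discrete equation of schematic form $\partial_t \Yg = \Dg \Yg + A\Yg - \tfrac{\be^3}{3}\big(\ZG{3} + 3\ZG{2}\Yg + 3 \Zg \Yg^2 + \Yg^3\big) + (\text{lower-order scaling errors})$, with $\Yg(0) = \Xng - \Zg(0)$. Since $\Yg$ has strictly better regularity than $\Zg$ (it lives in a Hölder space $\Cc^{\theta}$ with $\theta>0$), all products are classically defined and the right-hand side can be estimated in a Besov space using the Schauder estimates for $e^{t\Dg}$ (the discrete analogue, again from Section~\ref{sec:APPB}) and the paraproduct/multiplication inequalities of Appendix~\ref{sec:Besov}, uniformly in $\ga$. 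The cubic nonlinearity has the right sign (dissipative), which gives global-in-time a priori bounds on $\Yg$ — exactly as in \cite{dPD}, via testing against $|\Yg|^{p-2}\Yg$ or via an interpolation argument, with the dangerous term $\ZG{2}\Yg$ absorbed using the coercivity from $-\Yg^3$; this step must be done in a way that is robust under discretisation, so I would phrase the a priori estimate in terms of quantities that converge. For step (iii), given the uniform bounds and the convergence of the data, the solution map $(\Zg,\ZG{2},\ZG{3},\Yg(0)) \mapsto \Yg$ is shown to be continuous (locally, up to blow-up time) from the relevant product space into $C(\Cc^{\theta})$, and one passes to the limit to identify $Y = X - Z$ as the solution of the continuum remainder equation from Section~\ref{sec:ContinuousAnalysis}; combining with step (i) gives $\Xg = \Zg + \Yg \to Z + Y = X$ in law in $\Dd(\R_+,\Ca)$. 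I expect the \textbf{main obstacle} to be step (i), specifically the discrete stochastic computations for the renormalised square and cube: one must show that the martingale quadratic variation \eqref{e:QuadrVar2}, which depends on the \emph{nonlinear} process $\Cg(s,z)$, can be replaced by its leading constant with controllable error (this is where the near-criticality of $\be$ and the a priori smallness of $\hg$ are essential), and simultaneously that the discrete Wick constant coincides with \eqref{e:valueCGG} up to $o(1)$ — a delicate matching of a divergent discrete sum to the continuum renormalisation, complicated by the fact that $\Dg$ is not the exact lattice Laplacian but the Kac operator $\tfrac{\ga^2}{\eg^2}(\Kg\ae\cdot - \cdot)$, so its symbol $\ga^{-2}(1-\hKg(\om))$ appears in the denominator of \eqref{e:valueCGG} and its discrepancy from $\pi^2|\om|^2$ must be tracked through all the chaos estimates.
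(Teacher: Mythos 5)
Your high-level plan is indeed the one the paper follows: the Da Prato--Debussche decomposition $\Xg = \Zg + (\text{remainder})$, convergence of the triple $(\Zg, \ZG{2}, \ZG{3})$, and a pathwise continuity argument for the remainder. You also correctly single out the two genuinely hard technical points: the absence of Nelson's hypercontractivity in the discrete chaos estimates and the identification of the divergent discrete Wick constant with \eqref{e:valueCGG}. However, there are gaps in the execution that would be fatal if one tried to carry the proposal out as written.

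First, your plan for step (ii) — obtaining global-in-time a priori bounds on the discrete remainder "via testing against $|\Yg|^{p-2}\Yg$ or via an interpolation argument," leaning on the dissipativity of $-\Yg^3$ — is not what the paper does, and it is unclear how to make it work. The dissipativity argument is used only at the level of the \emph{continuum} remainder equation (this is the content of Theorem~\ref{thm:ContSolution}, quoted from the companion paper). For the discrete remainder $\tvg$, the paper instead introduces a stopping time $\taun = \inf\{t : \|\Xg(t)\|_{\Ca} \ge \nn\}$, compares $\tvg$ to the solution $\bvg = \Ss_T(\Zgn, \ZGn{2}, \ZGn{3})$ of the \emph{continuum} remainder equation driven by the \emph{discrete} stochastic data, and closes a Gronwall estimate for $\|\tvg - \bvg\|_{\Cc^{1/2}}$. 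This sidesteps the need to repeat the global-existence proof at the discrete level. The stopping time $\taun$ is then removed at the very end using the a.s.\ global existence of $X$.

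Second, and relatedly, you treat the stopping time as a bookkeeping device for "possible blow-up of the limiting solution," but in the paper $\taun$ is load-bearing much earlier: the key estimate \eqref{e:cg-control} — which makes $\Cg(t,z)$ concentrate around $\tfrac12$ and hence lets the martingale quadratic variation be replaced by the white-noise one — holds only for $t < \taun$, because it is proportional to $\ga^{1-2\al}\|\Xg(t)\|_{\Ca}$. Without the stopping time (and the auxiliary spin process $\sgn$ running at rate $\tfrac12$ after $\taun$), even the convergence of $\Zg$ to $Z$ via the martingale problem cannot be established, because the jump rates of the original dynamics are not a priori controlled. So your step (i) implicitly needs a piece of your step (iv).

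Third, two technical devices that the paper relies on are missing from the proposal. (a) The discrete Wick powers $\ZG{n}$ are defined as iterated stochastic integrals and shown to equal $H_n(\Rg, [\Rg]_\cdot)$ up to an $O(\ga^{1-\ka})$ error via a discrete Itô formula with jumps (Proposition~\ref{p:discrete-Wick}); this requires distinguishing the bracket process $[\Rg]$ from the predictable quadratic variation $\langle\Rg\rangle$ and bounding $Q_{\ga,t} = [\Rg] - \langle\Rg\rangle$ (Lemma~\ref{le:quartic_var}), a distinction your proposal does not make. (b) Because $\Ex$ is extension by degree-$\le N$ trigonometric polynomials, $\Ex(\Xg^3) \neq (\Ex\Xg)^3$ for the top frequencies; this aliasing error is controlled by splitting $\Xg = \Xg^{\mathrm{low}} + \Xg^{\mathrm{high}}$ and showing $\Xg^{\mathrm{high}}$ is $O(\ga^{1-\ka})$ in $L^\infty$ (Lemma~\ref{le:highfreq}). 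Neither appears in the proposal. Finally, a small arithmetic point: the martingale jumps are of size $\delta^{-1}\eps^2 \|\Kg\|_\infty = O(\ga)$, not $O(\ga^3)$ — you dropped the factor $\|\Kg\|_\infty \sim \ga^{-2}$ — though the conclusion that they vanish is unchanged.
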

%%%%%%%%%%%%%%%%%%%%%
%%%%%%%%%%%%%%%%%%%%%

\begin{remark}
In principle one can perform the analysis that leads to \eqref{e:scaling1} in any spatial dimension $n$. Indeed, the only necessary change is to replace the $\eg^2$ terms appearing in \eqref{e:QuadrVar2} by $\eg^n$, so that one wishes to impose $1 \approx \frac{\eg^2}{ \ga^2}\frac{1}{\ag}  \approx \frac{\dg^2}{\ag} \approx \frac{\eg^n}{\dg^2\ag }$. In this way, 
one obtains the scaling relation
\begin{equation}\label{e:scalingn}
\eg \approx \ga^{\frac{4}{4-n}}, \quad \qquad \ag\approx \ga^{\frac{2n}{4-n}}, \quad  \qquad \dg \approx \ga^{\frac{n}{4-n}} \;.
\end{equation}
This relation was already obtained in \cite{GLP} and 
for $n=1$ it is indeed the scaling used by \cite{BPRS,FR}.  For $n=3$, we expect that it is possible to combine the techniques developed in this article with the theory developed in 
\cite{Martin1,MassimilianoPHI4} to get a convergence result to the dynamic $\Phi^4_3$ model. For $n=4$, relation \eqref{e:scalingn} cannot be satisfied. This corresponds exactly to the fact that 
the $\Phi^4_4$ model fails to satisfy the \emph{subcriticality condition} in \cite{Martin1} and indeed, a limiting theory is not expected to exist  for $n \geq 4$ (see \cite{Aizenman}).
\end{remark}

\begin{remark}
We stress that our analysis is purely based on the dynamics of the system; we do not rely on properties of the invariant measure $\lbg$.
\end{remark}

\begin{remark}\label{rem:ApproxInt}
A simple analysis  (see e.g. \cite[Chapter 2.1]{Numerical}) of the the trapezoidal approximation
 $\ct^{-1} = \sum_{k \in \LN }\ga^2 \, \KK(\ga k)$ to $\int \KK(x) \, dx =1$ shows that under our $\Cc^2$ assumption on $\KK$ we have 
 $ | \ct - 1 | \leq C \ga^2$. If we were to replace $\ct$ by $1$ in \eqref{e:samplek}, this error term of the form $C\ga^2$ would not disappear in our scaling, but would produce an $O(1)$ contribution to 
 the ``mass" $A$ in the limiting equation.  But this effect could be removed under slightly different assumptions: If we assumed that $\KK$ is $\Cc^\infty$ and in addition removed the assumption $\kg(0)=0$, 
 then by the Euler-Maclaurin formula (see \cite[Chapter 2.9]{Numerical}) we could get an arbitrary polynomial 
 rate of convergence. Under these modified assumptions, setting $\ct=1$ would not change the result.
\end{remark}

\begin{remark}
The condition $\kg(0)=0$, which  makes the analysis of the invariant measure of the Markov process very convenient, causes some minor technical problems. Much of our analysis is performed in Fourier space, and due to this condition the Fourier transform $\hKg(\om)$ of $\Kg$ decays at most like $\frac{C}{|\ga \om|^2}$ for large $\om$ (see Lemma~\ref{le:Kg} below), whereas without this condition (and with a stronger regularity assumption on $\KK$) one could obtain $\frac{C_m}{|\ga \om|^m}$ for any $m \geq 1$. Fortunately, this only produces some irrelevant logarithmic error terms. 
\end{remark}

\begin{remark}
In order to state our result, we have made two assumptions that may seem debatable. On the one hand, we have chosen to define the coarse-graining $\hg$ in terms of the same kernel $\kg$ that determines the interaction. On the other hand, we have extended the field $\Xg$ as a trigonometric polynomial. 

The reader will see below that the first choice is necessary in order to get control on Fourier modes $\om$ satisfying $\ga^{-1} \ll |\om| \ll \ga^{-2}$. The second choice is convenient but essentially irrelevant. 

 A posteriori, it is not too hard to show that even without any coarse-graining, the evolution 
\begin{equation*}
\mathbf{X}_{\ga}(t,\varphi) = \sum_{x \in \Le} \eg^2   \varphi(x) \, \dg^{-1}\si(t/\ag, x/\eg)  ,
\end{equation*}
viewed as an evolution in the space of distributions $\mathcal{S}^{\prime}(\T^2)$ converges in law to the same limit, but only with respect to the weaker topology of $\Dd(\R_+,\mathcal{S}^{\prime}(\T^2))$. 
\end{remark} 

\begin{remark}
Of course, there are many  choices other than \eqref{e:rate} for a rate $\cg$ to define a Markov process on $\SN$ that is reversible with respect to $\lbg$. The Glauber dynamics is a standard choice, but it would be possible to extend our proof to a more general jump rate. We make strong use of the fact that $\cg(\si,k)$ is a function of $\hg(\si,k)$, and of the specific form of the Taylor expansion around $\hg(\si,k) =0$ (see \eqref{e:Lh}). We furthermore use the fact that $\cg$ is bounded by $1$.
\end{remark}

%%%%%%%%%%%%%%%%%%%%%%%%
\section{Analysis of the limiting SPDE}
%%%%%%%%%%%%%%%%%%%%%%%%
\label{sec:ContinuousAnalysis}
%%%%%%%%%%%%%%%%%%%%%%%%
%
%
As stated above, a well-posedness theory for the limiting equation \eqref{e:SPDE} was provided in \cite{dPD}. More precisely, in that article
local in time existence and uniqueness for arbitrary $\Ca$ initial data and for $\al >0$ small enough was shown (recall the definition of the Besov space $\Ca$ in Appendix~\ref{sec:Besov}). Furthermore, it was shown, using an idea due to Bourgain, that global well-posedness holds for \emph{almost every} initial 
datum with respect to the invariant measure. In our companion paper  \cite{JCH}, we extend these results to show \emph{global} well-posedness for \emph{every} initial datum in $\Ca$ for $\al>0$ small enough. In \cite{JCH}, we also show how to extend these arguments from the two-dimensional torus to the full space $\R^2$, but this extension is not relevant for the present article. 
The purpose of this section is to review the relevant results from \cite{dPD, JCH}. We give some ideas of the proofs where it helps the reader's understanding of our argument in the more complicated discussion of the discrete system that follows.

As it stands, it is not clear at first sight how equation~\eqref{e:SPDE} should be interpreted. Indeed, the space-time white noise is a quite irregular distribution, and it is well-known that the regularising property of the heat semigroup is not enough to turn the solution of the linearised equation (the one obtained by removing the $X^3$ term from \eqref{e:SPDE} ) into a function. We will see below that this linearised solution takes values in all of the distributional Besov spaces of negative regularity, but not of positive regularity. We cannot expect the solution $X$ to the non-linear problem to be more regular, and hence it is not clear how to interpret the non-linear term $X^3$. 
 
 A naive approach consists in approximating solutions to \eqref{e:SPDE} by solutions to a regularised equation. More precisely, let $\Xe$ be the solution to the stochastic PDE
 \begin{equation}\label{e:SPDEeps1}
 d\Xe = \Big(  \Delta \Xe - \frac13 \Xe^3 + A \Xe \Big)\, dt  + \sqrt{2} \, dW_\eps \;.
 \end{equation}
Here $W_\eps(t,x) = \frac{1}{4}\sum_{|\om| < \eps^{-1}} e^{i \pi \om \cdot x} \,\hat{W}(\om,t) $  is a spatially regularised cylindrical Wiener process. For every $\om \in \Z^2$ the process $\hat{W}(\om,t)$ is a complex valued Brownian motion with $\E|\hat{W}(\om, t)|^2  = 4 t$. These Brownian motions  are independent except for the constraint $\hat{W}(\om,t) = \overline{\hat{W} (-\om, t)}$ for all $\om \in \Z^2$ and $t \geq 0$. We choose to regularise the noise by considering a cut-off in Fourier space, but this choice of regularisation is inessential. A solution to equation~\eqref{e:SPDEeps1} for fixed value of $\eps>0$ can be constructed with standard methods,  see e.g.  \cite{dPZ, HairerIntro, prevot}. It seems natural to study the behaviour of these regularised solutions as $\eps$ goes to~$0$. Unfortunately, letting $\eps$ go to zero yields a trivial result. Indeed, it is shown in \cite{HRW} that $\Xe$ converges to zero in probability (in a space of distributions). 

In order to obtain a non-trivial result, the approximations \eqref{e:SPDEeps1} have to be modified. Indeed, it is shown in \cite{dPD} that if instead of \eqref{e:SPDEeps1}, we consider
 \begin{equation}\label{e:SPDEeps}
 d\Xe = \Big(  \Delta \Xe - \Big(\frac13 \Xe^3  -  \Ce \Xe \Big) + A \Xe \Big)\, dt  + \sqrt{2} \, dW_\eps,
 \end{equation}
 for a particular choice of constant $\Ce$, then a non-trivial limit can be obtained as $\eps$ goes to zero. Similar to  
 \eqref{e:valueCGG}, the precise value of $\Ce$ is given by
 \begin{equation}\label{e:norm-constant}
 \Ce = \sum_{ 0 <|\om| < \eps^{-1} } \frac{1}{ 4 \pi^2 |\om|^2} \;.
 \end{equation}
In particular,  the constants $\Ce$ diverge logarithmically as $\eps \to 0$.

As a first step to show this convergence, equation~\eqref{e:SPDEeps} is linearised. Let $\Ze$ be the unique mild solution to 
 \begin{align}
 d\Ze(t,x) &= \Delta \Ze(t,x) \, dt + \sqrt{2} \,dW_\eps(t,x) \notag\\
 \Ze(0,x)&=0\;, \label{e:linear}
 \end{align}
i.e. $\Ze(t,\cdot) = \sqrt{2} \int_0^t P_{t-s} \, dW_\eps(s, \cdot) $, where $P_t= e^{\Delta t}$ is the solution operator of the heat equation on the torus $\T^2$. Then the renormalisation is performed on the level of this linearised equation. We explain this procedure in some detail, because a similar study of a linearised version of \eqref{e:evolution2} constitutes the core of our argument.

We start by recalling that  the Hermite polynomials $H_n = H_n(X,T)$ are defined recursively by setting 
\begin{equation}
\label{e:def-Hermite1}
\left\{
\begin{array}{l}
H_0 = 1, \\
H_{n} = X H_{n-1} - T \, \partial_X H_{n-1} \qquad ( n \in \N),
\end{array}
\right.
\end{equation}
so that $H_1 = X$, $H_2 = X^2-T$, $H_3 = X^3 - 3 XT$, etc. 
One can check by induction that
\begin{equation}
\label{e:partialX}
\partial_X H_{n} = n H_{n-1}
\end{equation}
and
\begin{equation}
\label{e:partialT}
\partial_T H_{n} = - \frac{n(n-1)}{2} H_{n-2}
\end{equation}
(the identities are even valid for $n \in \{0,1\}$, for an arbitrary interpretation of $H_{-1}$ and $H_{-2}$).

Now,  for any fixed $\eps>0$,  $\Ze$ is a random continuous function, and there is no ambiguity in the definition of $\Ze^n$, but for $n \geq 2$ these random functions $\Ze^n$ fail to converge to random distributions as $\eps$ goes to zero.  If, however, $\Ze^n$ are  replaced by the Hermite polynomials  
\begin{equation*}
Z_{\eps}^{\colon n \colon}(t,x)  := H_n(\Ze(t,x), \Ce(t))
\end{equation*}
for 
\begin{align}
\Ce(t)& =  \E[Z_\eps(t,0)^2]  = \frac{1}{2}\sum_{ |\om| < \eps^{-1}} \int_0^t \exp\Ll(-2r\pi^2  |\om|^2\Rr) \, dr \;\notag\\
&= \frac{t}{2} + \sum_{0 < |\om| < \eps^{-1}} \frac{1}{4 \pi^2 |\om|^2 } \Big(1 - \exp\big( -2t \pi^2 \, |\om|^2  \big) \Big)   \;,\label{e:coft}
\end{align}
then a non-trivial limit is obtained. Note that $\Ce = \lim_{t \to \infty} ( \Ce(t) - \frac{t}{2}) $, where the term $\frac{t}{2}$ comes from the summand for $\om =0$ in \eqref{e:coft} which does not converge as $t \to \infty$. Furthermore, for every fixed $t>0$ the difference $|\Ce - \Ce(t)|$ is uniformly bounded in $\eps$.

The following result, essentially \cite[Lemma 3.2]{dPD}, summarises this convergence. 
\begin{proposition}\label{prop:daPratoDebussche}
For every $T>0$ and every  $\al>0$, the stochastic processes $\Ze$ and  $Z_{\eps}^{\colon n \colon }$ for $\, n \geq2$ converge almost surely  and in every stochastic $L^p$ space with respect to the metric of $\Cc([0, T],\Ca)$. We denote the limiting processes by $Z$ and $Z^{\colon n \colon}$. 
\end{proposition}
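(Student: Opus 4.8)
The plan is to run the standard second-moment argument. For fixed $\eps$, $t$, $x$ the variable $Z_{\eps}^{\colon n \colon}(t,x) = H_n(\Ze(t,x),\Ce(t))$ lies in the $n$-th homogeneous Wiener chaos of the Gaussian field driving \eqref{e:linear}, and the same holds after applying a Littlewood--Paley block $\delta_q$ and after taking an increment in $t$ or in $\eps$ (note $Z_{\eps}^{\colon n \colon}(0,\cdot)=0$ for $n\geq1$, since $\Ze(0,\cdot)=0$ and $\Ce(0)=0$). By Gaussian hypercontractivity all $L^p(\Omega)$ norms of such chaos elements are comparable, with a constant depending only on $p$ and $n$, to their $L^2(\Omega)$ norms; so it suffices to bound second moments. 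These in turn are governed by the orthogonality identity for Wick powers of jointly Gaussian variables, $\E[H_n(G,\E G^2)\,H_n(G',\E(G')^2)] = n!\,(\E[GG'])^n$, which reduces everything to estimates on powers of the covariance kernel of $\Ze$.

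\textbf{The key estimate.} Let $q_{\eps,\eps'}(t,s,\om)=\E[\hat Z_\eps(t,\om)\overline{\hat Z_{\eps'}(s,\om)}]$. From the mild formula $\Ze(t,\cdot)=\sqrt2\int_0^tP_{t-r}\,dW_\eps(r)$ one computes, for $0<|\om|<\eps^{-1}\wedge(\eps')^{-1}$, that $q_{\eps,\eps'}(t,s,\om)\es|\om|^{-2}$, with the refinements (i) $q_{\eps,\eps'}(t,t,\om)-q_{\eps,\eps'}(t,s,\om)\ls|\om|^{-2+2\si}|t-s|^\si$ and (ii) $q_{\eps,\eps}-q_{\eps,\eps'}$ is supported on $|\om|\geq(\eps\vee\eps')^{-1}$, hence $\ls|\om|^{-2+2\theta}(\eps\vee\eps')^{2\theta}$ there, for arbitrarily small $\si,\theta>0$; the $\om=0$ mode (which produces the $\tfrac{t}{2}$ in \eqref{e:coft}) is bounded and harmless. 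Plugging this into the Wick identity, $\E|\delta_q Z_{\eps}^{\colon n \colon}(t,x)|^2$ is controlled, up to the factor $n!$, by a sum localised to $|\om|\es2^q$ of the $n$-fold discrete convolution $\sum_{\om_1+\cdots+\om_n=\om}\prod_i(1\wedge|\om_i|^{-2})$. In dimension two this convolution is $\ls(1\wedge|\om|^{-2})\,(\log(2+|\om|))^{n-1}$, the logarithm mirroring the logarithmic divergence of $\Ce$; summing over $|\om|\es2^q$ gives $\E|\delta_q Z_{\eps}^{\colon n \colon}(t,x)|^2\ls q^{n-1}\ls2^{q\de}$ for every $\de>0$, uniformly in $\eps>0$, $x\in\T^2$, $t\leq T$. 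The same computation with (i) and (ii) yields $\E|\delta_q(Z_{\eps}^{\colon n \colon}(t,x)-Z_{\eps}^{\colon n \colon}(s,x))|^2\ls2^{q(\de+\si)}|t-s|^\si$ and $\E|\delta_q(Z_{\eps}^{\colon n \colon}(t,x)-Z_{\eps'}^{\colon n \colon}(t,x))|^2\ls2^{q\de}(\eps\vee\eps')^{2\theta}$.

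\textbf{Passing to $\Cc([0,T],\Ca)$.} I would then transfer these pointwise bounds to the Besov norm $\Ca=B^{-\al}_{\infty,\infty}$, using the characterisation $\|\cdot\|_{\Ca}\es\sup_q2^{-q\al}\|\delta_q\,\cdot\,\|_{L^\infty(\T^2)}$. Since $\delta_q f$ is a trigonometric polynomial of frequency $\ls2^q$, Bernstein's inequality gives $\|\delta_q f\|_{L^\infty}\ls2^{2q/p}\|\delta_q f\|_{L^p(\T^2)}$; combining this with Fubini and the hypercontractive second moment bounds, and choosing $\de$ small and $p$ large, yields $\E\|Z_{\eps}^{\colon n \colon}(t)\|_{\Ca}^p\ls_p1$ for every $\al>0$, uniformly in $\eps$ and $t\leq T$, together with $\E\|Z_{\eps}^{\colon n \colon}(t)-Z_{\eps}^{\colon n \colon}(s)\|_{\Ca}^p\ls_p|t-s|^{\si p/2}$ and $\E\|Z_{\eps}^{\colon n \colon}(t)-Z_{\eps'}^{\colon n \colon}(t)\|_{\Ca}^p\ls_p(\eps\vee\eps')^{\theta p}$. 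Because $Z_{\eps}^{\colon n \colon}(0)=0$, Kolmogorov's continuity theorem (applied to the $\Ca$-valued process in time) furnishes versions in $\Cc([0,T],\Ca)$ and upgrades the last two displays to bounds on $\E\|\cdot\|_{\Cc([0,T],\Ca)}^p$. The $\eps$-increment bound shows $(Z_{\eps}^{\colon n \colon})_\eps$ is Cauchy in $L^p(\Omega;\Cc([0,T],\Ca))$, hence converges to some $Z^{\colon n \colon}$; evaluating along $\eps=2^{-k}$, the summable rate and Borel--Cantelli give almost sure convergence along this sequence, and since the $\eps$-increment estimate holds for \emph{all} pairs $\eps,\eps'$, it promotes to a.s. convergence of the whole family. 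The case $n=1$ is the process $\Ze$ itself and is strictly easier (first chaos, no renormalisation).

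\textbf{Main obstacle.} The crux is the two-dimensional convolution estimate $\sum_{\om_1+\cdots+\om_n=\om}\prod_i(1\wedge|\om_i|^{-2})\ls(1\wedge|\om|^{-2})(\log(2+|\om|))^{n-1}$ and the uniform tracking of its dependence on $t,s,\eps,\eps'$: this is precisely the borderline computation marking dimension two as ``critical'' --- one extra factor $|\om|^{-2}$ (as in $d\leq1$) would remove the logarithm, while for $d\geq4$ the sum diverges, consistent with the non-existence of a limit. Everything else --- hypercontractivity, Bernstein, Kolmogorov, Borel--Cantelli --- is routine.
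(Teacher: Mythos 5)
Your proof is correct and takes essentially the same route as the paper's sketch: reduce to second moments via Gaussian hypercontractivity (Nelson's estimate), bound those by the two-dimensional $n$-fold convolution estimate $\sum_{\om_1+\cdots+\om_n=\om}\prod_i(1\wedge|\om_i|^{-2})\lesssim(1\wedge|\om|^{-2})\log(2+|\om|)^{n-1}$ (which is exactly the content of the paper's Lemma~\ref{le:FourCalc}), and transfer the pointwise moment bounds to $\Cc([0,T],\Cc^{-\al})$ via a Kolmogorov-type criterion (the paper's Proposition~\ref{prop:Kolmogorov}, which packages the Bernstein step you carry out by hand). The only stylistic difference is how the second moment is computed: the paper represents $Z_\eps^{\colon n\colon}(t,\cdot)$ as the terminal value of the iterated stochastic integral $R_{\eps,t}^{\colon n\colon}(s,\cdot)$ and applies It\^o's isometry, whereas you invoke the Hermite orthogonality identity $\E[H_n(G,\E G^2)\,H_n(G',\E(G')^2)] = n!\,(\E[GG'])^n$ directly. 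Both routes yield the same kernel estimate; the paper's choice is deliberate, because the iterated-integral representation is the one that survives in the non-Gaussian discrete setting of Sections~\ref{sec:linear}--\ref{sec:Tightness}, where hypercontractivity and Hermite orthogonality are unavailable and are replaced by Lemma~\ref{le:ItInt} and Burkholder--Davis--Gundy. Your argument is thus the cleaner one for the Gaussian case alone, but it does not foreshadow the discrete analysis.

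One minor point worth tightening: the step from $L^p$ convergence to almost sure convergence over the whole family $\eps\in(0,1]$ (not just a dyadic sequence) requires either a Kolmogorov continuity argument in the parameter $\eps$ built from your increment estimate $\E\|Z_\eps^{\colon n\colon}-Z_{\eps'}^{\colon n\colon}\|^p_{\Cc([0,T],\Ca)}\lesssim(\eps\vee\eps')^{\theta p}$, or a comparison of a general $\eps$ with the two neighbouring dyadic scales using the same estimate; your sketch gestures at this but does not spell it out. This is routine and does not affect the correctness of the approach.
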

We outline an argument for Proposition~\ref{prop:daPratoDebussche} which is inspired by the treatment of a (more complicated) renormalisation procedure in \cite[Section 10]{Martin1}. We start with an alternative representation of the $Z_{\eps}^{\colon n \colon }$. As explained above we have $\Ze(t,\cdot) = \sqrt{2}\int_0^t P_{t-r} \; \, dW_\eps(r)$. It will be useful to introduce the processes 
\begin{align*}
R_{\eps,t}(s,x) =R_{\eps,t}^{\colon 1\colon}(s,x) = \sqrt{2} \int_0^s P_{t-r} \; dW_\eps(r,x) \;,
\end{align*} 
defined for $s\leq t$, and to define recursively 
\begin{equation*}
R_{\eps,t}^{\colon n\colon}(s,x) = n \int_{r=0}^s R_{\eps,t}^{\colon n-1\colon} (r,x) \, dR_{\eps,t}(r,x) \;.
\end{equation*}
It can be checked easily using It\^o's formula and the relations \eqref{e:partialX} and \eqref{e:partialT} that for any $\eps>0$, we have
\begin{equation*}
R_{\eps,t}^{\colon n\colon}(t,x) = \Ze^{\colon n\colon }(t,x)  \;.
\end{equation*}
For any smooth function $f \colon \T^2 \to \R$, the expectation
\begin{equation*}
\E \Ze^{\colon n\colon }(t,f)^2  := \E \Big(  \int_{\T^2} \Ze^{\colon n\colon }(t,x) \, f(x) \, dx  \Big)^2
\end{equation*}
can now be calculated explicitly via It\^o's isometry, and we obtain\footnote{To derive this formula we introduce the Fourier coefficients $\hRe(s,\om)$, $\hRE{n}(s,\om)$ ($\om \in \Z^2$) of $R_{\eps,t}(s,\cdot)$ and $R_{\eps,t}^{\colon n\colon}(s,\cdot)$ respectively, and observe that
$$
\hRe(s,\om) = \sqrt{2} \int_{r=0}^t \hat{P}_{t-r}(\om) \, d\hat{W}(\om,r) \qquad (|\om|\le \eps^{-1}),
$$
$$
\hRE{n}(s,\om) = n \sqrt{2} \int_{r=0}^s \frac{1}{4} \sum_{\om_1 + \om_2 = \om} \hRE{n-1}(r,\om_1) \hat{P}_{t-r}(\om_2) \, d\hat{W}(\om_2,r).
$$
For instance,
$$
\int_{\T^2} R_{\eps,t}(s,x) \, f(x) \, dx = \frac{\sqrt{2}}{4} \sum_{|\om|\le \eps^{-1}} \hat{f}(\om)\int_{r=0}^s \hat{P}_{t-r}(\om) \, d\hat{W}(\om,r).
$$
}

\begin{align*}
\E \Ze^{\colon n\colon }(t,f)^2  = n! \frac{2^n}{4^n} \int_{[0,t]^n}   \sum_{|\om_i| \leq \eps^{-1}}\big|  \hat{f}(\om_1 + \ldots + \om_n) \big|^2\; \prod_{j=1}^n \big| \hat{P}_{t-r_j}(\om_j) \big|^2 \; d\rrr \;,
\end{align*}
where $d\rrr= dr_1 \ldots dr_n$ and where $\hat{P}_{t}(\om) = \exp\Ll(- t\pi^2  |\om|^2\Rr)$ denotes the Fourier transform of the heat kernel on the torus.  This quantity converges as $\eps$ goes to zero, and the limit can be expressed as 
\begin{align*}
n! 2^n  \int_{[0,t]^n} \int_{(\T^2)^n}  \Big( \int_{\T^2}  f(x) \; \prod_{j=1}^n  P_{t-r_j}(x-z_j) \;dx \Big)^2  \; d\zz \, d\rrr \;,
\end{align*}
where $d\zz = dz_1 \ldots dz_n$. A crucial observation now uses the Gaussian structure of the noise and the fact that $ \Ze^{\colon n\colon }(t,f)$ is a random variable in the $n$-th homogenous Wiener chaos over this Gaussian noise (see \cite[Chapter 1]{Nualart} for a definition and properties of Gaussian Wiener chaos). According to Nelson's estimate (see \cite{Nelson} or \cite[Chapter 1.5]{Nualart}) the estimate on 
$\E \Ze^{\colon n\colon }(t,f)^2 $ can be turned into an equivalent estimate on $\E \Ze^{\colon n\colon }(t,f)^p$. Then one can specialise this bound to $f = \ek(u - \cdot)$ (defined in  \eqref{e:DEFek}), and apply Proposition~\ref{prop:Kolmogorov} to obtain bounds on 
$\E \| Z^{\colon n \colon}_{\eps}(t, \cdot) \|_{\Ca}^p$ that are uniform in $\eps$. 

For continuity in time, one can modify this argument to get bounds on $\E \| Z^{\colon n \colon}_{\eps}(t, \cdot) - Z^{\colon n \colon}_{\eps}(s, \cdot)  \|_{\Ca}^p$, and then apply the Kolmogorov criterion. 

In Sections~\ref{sec:linear} and \ref{sec:Tightness}, we will perform a similar argument for a linearised version of the evolution equation \eqref{e:evolution2}. One obstacle we need to overcome, is that without the Gaussian structure of the noise, Nelson's estimate is not available. In Lemma~\ref{le:ItInt}, we replace it by a suitable version of the Burkholder-Davis-Gundy inequality. The price we have to pay is that various error terms caused by the jumps need to be controlled. 

It is useful to note that for fixed values of $s$ and $t$, $s < t$, the processes $R_{\eps,t}^{\colon n\colon}(s,\cdot) $ actually converge in nicer spaces than the $\Ca$. Indeed $R_{\eps,t}(s, \cdot) = P_{t-s} Z_{\eps}(s,\cdot)$, and by the convergence of $Z_\eps(s)$ in $\Ca$ and standard regularising properties of the heat semigroup (see e.g. \eqref{e:heat-semigroup-reg} and the discussion following it) $R_{\eps,t}(s,x)$ converges to 
$$
R_t(s,x) := \sqrt{2} \int_{r=0}^s  P_{t-r} \,  dW (r,x) \;,
$$
in $\Cc^k$ for every $k \in \N$. In the same way, $R^{\colon n \colon}_{t,\eps}$ converges to 
\begin{equation}\label{e:Wick0}
\RR{n}(s,x) =  n \int_{r=0}^s \RR{n-1}(r,x) \; d R_t(r,x) =  H_n\Ll(R_t(s,x), \langle R_t(\cdot,x) \rangle_s\Rr)  \;.
\end{equation}
Here the quadratic variation of the continuous martingale $s \mapsto R_t(s,x)$ for $s < t$ is given by  
\begin{eqnarray}
\langle R_t(\cdot,x) \rangle_s & = & 2 \int_0^s P_{2(t-r)}(0) \, dr \notag \\
& = & \frac{1}{2} \sum_{\om \in \Z^2} \int_0^s \exp\Ll(-2(t-r) \pi^2 |\om|^2\Rr) \, dr \;, \notag
\end{eqnarray}
where $P_t(x)$ is the heat kernel associated with the semigroup $P_t$.

Finally, note that the convergence of $\RR{n}(t,\cdot)$ to  $\ZZ{n}(t,\cdot) $ in $\Ca$ can be quantified. Using an argument in the same spirit as the one sketched above, one can see that  for all $\al>0, \; 0 \leq  \la \leq 1, \; p \geq2$ and $T >0$,  there exists $C= C(\al,\la, p,T)$ such that 
\begin{equation}
\label{e:regR0n}
\E\| \ZZ{n}(t, \cdot) - \RR{n}(s,\cdot) \|^p_{\Cc^{-\al-\la}} \le C |t-s|^{\frac{\lambda p}{2}}
\end{equation}
for all $0 \leq s \leq t \leq T$. A similar bound in the more complicated discrete situation is derived below in  \eqref{e:RgRegularity3}.

In order to study the convergence of the  the non-linear equations \eqref{e:SPDEeps}, we study the remainder $v_\eps := X_\eps - Z_\eps$. For $\eps>0$, we observe that $v_\eps$ is a solution to the random partial differential equation 
\begin{align}
\partial_t v_\eps(t,x) &= \Delta v_\eps - \Big( \frac13 (v_\eps + \Ze)^3 -  \Ce (v_\eps + \Ze)  \Big) + A (v_\eps + \Ze) \notag \\
&= \Delta v_\eps - \big(v_\eps^3  + 3 v_\eps^2 \Ze + 3v_\eps \Ze^{\colon 2 \colon} + \Ze^{\colon 3\colon }  \big) + A_\eps(t) (v_\eps + \Ze)\;, \label{e:remainder}
\end{align}
where we have set $A_\eps(t) := A + \Ce - \Ce(t)$. 
Note that the noise term $dW_\eps$ has disappeared from equation \eqref{e:remainder}. Note furthermore that in the second line, we have rewritten the right-hand side in terms of the processes $\Ze,\Ze^{\colon 2 \colon},$ and $\Ze^{\colon 3 \colon}$, which converge to a non-trivial limit as $\eps$ goes to zero. This is possible due to the relation 
\begin{equation*}
H_n(z+v,c) = \sum_{k=0}^n {n \choose k}    H_k(z,c) \, v^{n-k},
\end{equation*}
which holds for arbitrary $z,v \in \R$ and $c>0$.

Equation \eqref{e:remainder} can be treated as a normal PDE, without taking into account stochastic cancellations or stochastic integrals. The argument in \cite{dPD} is  concluded by observing that (at least for small times) the solutions of \eqref{e:remainder} are stable under approximation of the functions $\Ze,\Ze^{\colon 2 \colon},$ and $\Ze^{\colon 3 \colon}$ in  $\Cc([0, \infty),\Ca)$ as well as the limit of $A_\eps$ as $\eps \to 0$. In this way, local in time solutions are obtained in \cite{dPD}. 

The authors then show that these solutions do not blow up in finite time for almost every initial datum with respect to the invariant measure. For our purposes, it is slightly  more convenient to have global existence for \emph{every} initial datum in $\Ca$ for $\al>0$ small enough, and we show this in the forthcoming article \cite{JCH}. In order to state the main result, it is necessary to briefly discuss the role of the initial datum $\Xn \in \Ca$. 

There are essentially two possibilities --  either equation \eqref{e:linear} is started with $\Xn$, in which case the initial datum for \eqref{e:remainder} is zero; or the linear equation is started with zero, and \eqref{e:remainder} is started with $\Xn$. The first option turns out to be slightly more convenient. Hence, we define $Y(t):=P_t \Xn$  and set
\begin{align}
 \widetilde{Z}(t, \cdot) &=Y(t, \cdot) + Z(t, \cdot)\;, \notag\\
\widetilde{Z}^{\colon 2 \colon} (t, \cdot)  &=  Z^{\colon 2 \colon} (t, \cdot) \colon + 2Y(t, \cdot) Z(t, \cdot)  +  Y(t,\cdot)^2\;,\notag \\
 \widetilde{Z}^{\colon 3 \colon} (t, \cdot) &=  Z^{\colon 3 \colon} (t, \cdot) \colon + 3Y(t, \cdot)  Z(t, \cdot)^{\colon 2 \colon} +  3Y(t, \cdot)^2 Z(t, \cdot)  +  Y(t,\cdot)^3\;.\label{e:Z_with_ic}
\end{align}
Notice that by the regularisation property of the heat semigroup (see \eqref{e:heat-semigroup-reg}), for any $t >0$, $Y(t,\cdot)$ is actually a smooth function and, in particular, the products appearing in these expressions are well-defined. More precisely, for every $\beta>\al$, there exists a constant $C=C(\al,\beta)$ such that for every $t >0$, we get $\| Y(t, \cdot) \|_{\Cc^\beta}  \leq C t^{-\frac{\al+\beta}{2}}  \| \Xn \|_{\Cc^{-\al}}$, and hence $\| Y(t, \cdot)^2 \|_{\Cc^{-\al}}  \leq C t^{-\frac{\al+\beta}{2}} \| \Xn \|_{\Cc^{-\al}}^2$, $\| Y(t, \cdot)^2 \|_{\Cc^\beta}  \leq C t^{-(\al+\beta)} \| \Xn \|_{\Cc^{-\al}}^2$ and $  \| Y^3(t, \cdot) \|_{\Cc^{-\al}} \leq C t^{-(\al+\beta)} \| \Xn \|_{\Cc^{-\al}}^3$, where we use the fact that $\Cc^{\beta}$ is an algebra for $\beta >0$ as well as the multiplicative inequality, Lemma~\ref{le:Besov-multiplicative}. Using Lemma~\ref{le:Besov-multiplicative} once more, we can conclude that for every $T>0$, there exists a \emph{random} constant $C_0$ (depending on $T,\al, \beta, \| \Xn\|_{\Ca}$ and on the particular realisation of $Z$, $ Z^{\colon 2\colon}$, and $\colon Z^{\colon 3 \colon}$)  such that
\begin{align}
\sup_{0 \leq t \leq T} \|  \widetilde{Z}(t, \cdot) \|_{\Cc^{-\al}}& \leq C_0\;, \qquad
\sup_{0 \leq t \leq T} t^{\frac{\beta+\al}{2}} \|   \widetilde{Z}^{\colon 2\colon}(t, \cdot)  \|_{\Cc^{-\al}} \leq  C_0\;,\notag\\
\sup_{0 \leq t \leq T}t^{\beta+\al}  \|  \widetilde{Z}^{\colon 3\colon}(t, \cdot)   \|_{\Cc^{-\al}} &\leq  C_0\; \label{e:tildeZRegularity}.
\end{align}
After this preliminary discussion, we are now ready to state the main existence result. For $\tZ, \tZz$,  and $\tZd$ satisfying \eqref{e:tildeZRegularity}, consider the problem. 
\begin{align}
\partial_t v &= \Delta v - \frac13 \big(v^3 + 3 \tZ v^2 + 3  \tZz v +  \tZd  \big)  + A(t) (\tZ + v)\;,\notag\\
v(0,\cdot) &= 0 \;, \label{e:vequation}
\end{align}
where
\begin{align}
A(t) &:= A + \lim_{\eps \to 0} (\Ce -\Ce(t)) 
= A- \frac{t}{2}  + \sum_{\om \in \Z^2 \setminus\{ 0\}}  \frac{e^{- 2t\pi^2 |\om|^2}}{ 4\pi^2 |\om|^2} \;.\label{e:A_value}
\end{align}
Note that $A(t)$ only diverges logarithmically  in $t$ as $t$ goes to $0$, and in particular any power of $A(t)$ is integrable at $0$.
The following theorem is essentially \cite[Theorem~6.1]{JCH}. (The continuity of the solution map is not stated explicitly there, but it is contained in the method of proof). 
\begin{theorem}\label{thm:ContSolution}
For $\al>0$ small enough, fix an initial datum $\Xn \in \Ca$. For $(Z, Z^{\colon 2\colon }, Z^{\colon 3 \colon })  \in\big(  L^{\infty}\big([0,T], \Ca \big)\big)^3$, let $(\tilde{Z},\widetilde{Z}^{\colon 2 \colon}, \widetilde{Z}^{\colon 3 \colon}) $ be defined as in \eqref{e:Z_with_ic}.  Let $\Ss_T(Z, Z^{\colon 2\colon }, Z^{\colon 3 \colon })$ denote the solution $v$ on $[0,T]$ of the PDE \eqref{e:vequation}. Then for any $\kappa >0$, the mapping $\Ss_T$ is Lipschitz continuous on bounded sets from $\big(  L^{\infty}\big([0,T], \Ca \big)\big)^3$ to $\Cc([0,T], \Cc^{2 - \al - \kappa}(\T^2) )$.
\end{theorem}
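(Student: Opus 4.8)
The proof proceeds along the lines of da Prato and Debussche \cite{dPD}: having isolated the irregular part of the dynamics into $\tZ,\tZz,\tZd$, equation \eqref{e:vequation} is an ordinary (random) parabolic PDE for the remainder $v$, which we solve by a fixed-point argument in a space of genuine functions and then continue globally by means of an a priori estimate. The three steps are: (i) local well-posedness together with Lipschitz dependence on the data, by a contraction mapping; (ii) globalisation on $[0,T]$ via the a priori bound of \cite{JCH}; (iii) patching the local Lipschitz estimates over $[0,T]$, using the global bound, to obtain continuity of $\Ss_T$ on bounded sets.

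For step (i) I would fix an auxiliary exponent $\beta$ with $\al<\beta$ and with $\al,\beta$ small relative to $\ka$, and read \eqref{e:vequation} in its mild form
\[
v(t) = -\frac13\int_0^t P_{t-s}\big(v^3 + 3\tZ v^2 + 3\tZz v + \tZd\big)(s)\,ds + \int_0^t P_{t-s}\big(A(s)(\tZ(s)+v(s))\big)\,ds ,
\]
with $v(0)=0$. The two analytic inputs are: the smoothing property \eqref{e:heat-semigroup-reg} of the heat semigroup, giving $\|P_\tau w\|_{\Cc^{2-\al-\ka}}\le C\tau^{-1+\ka/2}\|w\|_{\Cc^{-\al}}$; and the multiplicative inequality of Lemma~\ref{le:Besov-multiplicative} together with the algebra property of $\Cc^{2-\al-\ka}$ (valid since $2-\al-\ka>0$), which show that the map $v\mapsto v^3 + 3\tZ v^2 + 3\tZz v + \tZd$ sends $\Cc^{2-\al-\ka}$ into $\Cc^{-\al}$ with norm bounded by $C(\|v\|_{\Cc^{2-\al-\ka}}^3 + \|\tZ\|_{\Cc^{-\al}}\|v\|_{\Cc^{2-\al-\ka}}^2 + \|\tZz\|_{\Cc^{-\al}}\|v\|_{\Cc^{2-\al-\ka}} + \|\tZd\|_{\Cc^{-\al}})$. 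Inserting \eqref{e:tildeZRegularity} and using that $A(t)$ diverges only logarithmically as $t\to0$ (see \eqref{e:A_value}), the resulting time integrals $\int_0^t(t-s)^{-1+\ka/2}\big(1+s^{-(\beta+\al)}+|A(s)|\big)\,ds$ are finite and tend to $0$ as $t\to0$; hence, for $T$ small depending only on the constant $C_0$ of \eqref{e:tildeZRegularity}, the right-hand side is a contraction on a ball of $\Cc([0,T],\Cc^{2-\al-\ka})$. This yields a unique local solution, and the same estimate applied to the difference of the solutions attached to two data triples shows that $v$ depends Lipschitz-continuously, on bounded sets, on $(\tZ,\tZz,\tZd)$ measured in the weighted norms of \eqref{e:tildeZRegularity}. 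Since $\Xn$ is fixed, the map \eqref{e:Z_with_ic} from $\big(L^\infty([0,T],\Ca)\big)^3$ to $(\tZ,\tZz,\tZd)$ in those norms is affine and bounded (by the estimates for $Y$ recorded before \eqref{e:tildeZRegularity}), so Lipschitz dependence on $(Z,Z^{\colon 2\colon},Z^{\colon 3\colon})$ follows. A standard Schauder bootstrap then promotes a solution found in $\Cc([0,T],\Cc^{\delta})$ for some small $\delta>\al$ to $\Cc([0,T],\Cc^{2-\al-\ka})$ for any $\ka>0$.

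For step (ii), the key observation is that the local existence time above depends only on $C_0$, so global existence on $[0,T]$ follows once one controls $\sup_{t_1\le t\le T}\|v(t)\|_{\Cc^{2-\al-\ka}}$ for any fixed $t_1>0$. This is precisely the a priori estimate of \cite{JCH}: it exploits the dissipative structure of \eqref{e:vequation} — testing against powers of $v$ and using the sign of the cubic term to produce a damping $\|\nabla v\|_{L^2}^2 + \|v\|_{L^{4}}^{4}$ (and its higher analogues) into which the rough contributions $\langle\tZ,v^{3}\rangle$, $\langle\tZz,v^{2}\rangle$, $\langle\tZd,v\rangle$ are absorbed, via Lemma~\ref{le:Besov-multiplicative} and interpolation, at the price of a data-dependent constant — together with the integrability at $t=0$ of the singularities of $\tZz,\tZd$ and of $A(t)$ after multiplication by the regularising weights. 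One first uses the local theory to reach a small $t_1>0$ with $\|v(t_1)\|_{\Cc^{2-\al-\ka}}$ under control, then iterates with the uniform bound on $[t_1,T]$.

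For step (iii), the global bound of step (ii) confines the solutions associated with any bounded set of data to a fixed bounded subset of $\Cc([0,T],\Cc^{2-\al-\ka})$; chaining the local Lipschitz estimate of step (i) across a partition of $[0,T]$ into $\lceil T/T_0\rceil$ intervals of length $T_0(C_0)$ then gives the asserted Lipschitz continuity of $\Ss_T$ on bounded sets. I expect the main obstacle to be step (ii): in contrast with the purely perturbative local theory, the global a priori estimate genuinely relies on the sign of $-v^3$ and on a delicate accounting of the $t\to0$ singularities, which is why it is carried out separately in \cite{JCH}; in the present paper it can simply be quoted, the continuity of $\Ss_T$ then coming essentially for free from the contraction argument.
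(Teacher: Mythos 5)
The paper does not prove Theorem~\ref{thm:ContSolution} itself: the authors explicitly state that it is ``essentially \cite[Theorem~6.1]{JCH}'' and defer the argument to the companion paper, remarking only that the Lipschitz continuity of $\Ss_T$ is implicit in the method of proof there. Your three-step sketch --- local well-posedness by a mild-formulation contraction using \eqref{e:heat-semigroup-reg}, the algebra property of $\Cc^{2-\al-\ka}$ and Lemma~\ref{le:Besov-multiplicative}; globalisation on $[0,T]$ via the coercive $L^p$-type a priori estimate that exploits the sign of $-v^3$ together with the integrability at $t=0$ of the singularities in \eqref{e:tildeZRegularity} and \eqref{e:A_value}; and Lipschitz dependence obtained by chaining the local contraction estimates once the global bound makes the local existence time uniform over a bounded data set --- is a faithful outline of how \cite{JCH} proceeds, and you have correctly identified step (ii) as the genuinely hard, non-perturbative part. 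One small internal inconsistency in step (i): you first claim to run the contraction directly in $\Cc([0,T],\Cc^{2-\al-\ka})$, but then appeal to a Schauder bootstrap from $\Cc([0,T],\Cc^\delta)$; only one of the two is required (the bootstrap is the cleaner route to get the full range of $\ka>0$ since the direct contraction time degenerates as $\ka\to 0$, but for fixed $\ka>0$ either works). Apart from that stylistic point, the proposal is consistent with what the paper quotes.
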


\begin{remark}
The choice of two different time dependent normalisation constants $\Ce$ and $\Ce(t)$ may not seem particularly elegant.  Indeed, in \cite{dPD, Martin1}, all processes are renormalised with time independent constants. This is possible because in those papers, the processes  $\Ze$ and  $ Z_{\eps}^{\colon n \colon}$ are replaced by similar processes that are \emph{stationary} in $t$. This can be done by adding a linear damping term and moving the initial condition in \eqref{e:linear}  to $t= -\infty$ (as in \cite{dPD}), or by cutting off the heat kernel $P_t$ outside of a ball  (as in \cite{Martin1}). In the discrete setting below, however, the choices for the linearised process presented here seem most convenient.

This discussion shows as well that there is no canonical choice of the renormalisation constant $\Ce$ and hence no canonical value of the constant $A$. The different procedures in \cite{dPD} and \cite{Martin1} yield different choices of $\Ce$. The difference between these two constants remains bounded as $\eps$ goes to zero, but it does not disappear in the limit.
\end{remark}

%%%%%%%%%%%%%%%%%%%%%%%%
\section{Bounds for the linearised system}
%%%%%%%%%%%%%%%%%%%%%%%% 
\label{sec:linear}
%%%%%%%%%%%%%%%%%%%%%%%%
We now come back to the study of the discrete system. By Duhamel's principle and using the scaling relations \eqref{e:scaling1} and \eqref{e:scalingbeta}, the evolution equation \eqref{e:evolution2} can be rewritten as 
\begin{align}
\Xg(t,\cdot) =&\Pg{t} \Xng + \int_0^t \Pg{t-r} \, \Kg \ae   \bigg( - \frac{\beta^3}{3}   \Xg^3  (r,\cdot) +  (\CGG +A)    \Xg(r,\cdot)
\notag \\
& + \Eg(r,\cdot) \bigg) \, dr  + \int_{r=0}^t  \,\Pg{t-r} d  \,\Mg(r, \cdot) \qquad\text{on }  \Le \;,    \label{e:evolution3} 
\end{align}
where we use the convention $\int_0^t = \int_{(0,t]}$, and we denote by $\Pg{ t} = e^{ \Dg  t}$ the semigroup generated by $ \Dg$. For every $t \geq 0$, the operator $\Pg{t}$ acts on a function $Y \colon \Le \to \R$ by convolution with a kernel, also denoted by $\Pg{t}$. 
This kernel is characterised by its Fourier transform (defined as in \eqref{e:FT})
\begin{equation}
\label{e:Fourier-semi}
\hPg{t}(\om) = \exp\Big(t \gamma^{-2} (\hKg(\om) - 1) \Big) \qquad \text{if } \om \in \{ -N, \ldots, N \}^2  \; .  
\end{equation} 
Viewing $\Pg{t}$ as a Fourier multiplication operator (with $\hPg{t}(\om) = 0$ if $\om  \notin \{ -N, \ldots, N\}^2$) enables to make sense of $\Pg{t} f$ for every $f : \T^2 \to \R$.
Further properties of the operator $\Pg{t}$ are summarised in Lemmas~\ref{le:Pgt} and~\ref{le:semi-group-regularity} as well as Corollary~\ref{cor:regPg}.

As explained above for the continuous equation, a crucial step in studying the limiting behaviour of $\Xg$ consists of the analysis of a linearised evolution. For $x \in \Le$, we denote by 
\begin{equation*}
\Zg(t,x) = \int_{r=0}^t  \Pg{t-r} \, d\Mg (r,x) 
\end{equation*}
the stochastic convolution appearing on the right-hand side of \eqref{e:evolution3}. The process $\Zg$ is the solution to the linear stochastic equation
\begin{align}
d\Zg(t,x) &= \Dg \Zg(t,x) dt + d\Mg(t,x)\notag\\
 \Zg(0,x)& = 0 \, , \label{e:DefZg}
\end{align}
for $x\in \Le,\: t \geq 0$. 
It will be convenient to work with the following family of approximations to $\Zg(t,x)$. For $s \leq t$, we introduce
\begin{equation*}
\Rg(s,x) := \int_{r=0}^s  \Pg{t-r} \, d\Mg (r,x) \;.
\end{equation*} 
As explained above (see the discussion following \eqref{e:FI}), we extend $\Rg(s, \cdot) \colon \Le \to \R$ and $\Zg( t , \cdot) \colon \Le \to \R$ to  functions on all of $\T^2$ by trigonometric polynomials of degree $\leq N$. Note that for any $t$ and any $x \in \T^2$, the process $\Rg(\cdot,x)$ is a martingale and $\Rg(t,\cdot) = \Zg(t,\cdot)$.

As in the case of the continuous process, it is not enough to control $\Zg$, the solution of the linearised evolution: we also need to control additional non-linear functions thereof. We introduce recursively the following quantities:
for a fixed $t\geq 0$ and $x \in \T^2$, we set $\RG{1}(s,x) = \Rg(s,x)$. For $n \geq 2, \; t \geq 0$  and $x \in \Le$, we set
\begin{equation}\label{e:ZnA}
\RG{n}(s,x) =  n \int_{r=0}^s \RG{n-1}(r^-,x) \; d \Rg(r,x)\;.
\end{equation}
We use the notation $\RG{n-1}(r^-,x)$ to denote the left limit of $\RG{n-1}(\cdot,x)$ at $r$. This definition ensures that $(\RG{n}(s,x))_{0 \le s \le t}$ is a martingale. To define an extension of $\RG{n}(s, \cdot)$ to arguments $x \in \T^2 \setminus \Le$ for $n \geq 2$, it is advisable \emph{not} to extend it by a trigonometric polynomial of degree $\leq N$. Indeed, products are not well captured by this extension. It is more natural to define the extension recursively through its Fourier series
\begin{equation}\label{e:ZnB}
\hRG{n}(s, \om) := n\int_{r=0}^s \frac{1}{4}\sum_{\tilde{\om} \in \Z^2} \hRG{n-1}(r^-, \om- \tilde{\om}) \; d\hRg(r, \tilde{\om})\;,
\end{equation} 
and set $\RG{n}(s,x) := \frac14 \sum_{\om \in \Z^2}\hRG{n}(s,\om)e^{i \pi \om \cdot x}$. This definition coincides with \eqref{e:ZnA} on $\Le$, and for every $n \geq 2$ the function $\RG{n}(s, \cdot) \colon \T^2 \to \R$ is a trigonometric polynomial of degree $\leq nN$. For any $n \geq 2$ and for $t \geq 0$, $x \in \T^2$ we define
\begin{equation}\label{e:DefZn}
\ZG{n}(t,x) := \RG{n}(t,x) \;.
\end{equation}

The main objective of this section is to prove uniform bounds on the Besov norms of the processes $\ZG{n}$ and $\RG{n}$. These bounds are stated in Proposition \ref{prop:RGBoundOne}. 

As a first step, we derive a general bound on $p$-th moments of iterated stochastic integrals. We start by introducing some more notation: Let $F \colon [0,\infty)^n \times \Le^n \times \Omega \to \R$ be adapted and left continuous in each of the $n$ ``time" variables. By adapted, we mean that if $s_1, \ldots, s_n \leq t$, then for all $y_1, \ldots,y_n$, the random variable $F(s_1, \ldots, s_n,  y_1, \ldots, y_n)$ is measurable with respect to the sigma algebra generated by $\Xg$ up to time $t$.  We recursively define iterated integrals $\I{n} F(t)$ as follows. For $n=1$, we set $\I{1}F(t) = \int_{r=0}^t \sum_{y \in \Le} \eps^2 F(r,y) \, d M(r,y) $. For $n \geq 2$, we set
\begin{align*}
\I{n} F(t) := \int_{r_1 =0}^t  \sum_{y_1 \in \Le} \eps^2 \, \I{n-1} F^{(r_1,y_1)} (r_1^-) \, dM(r_1,y_1) \;,
\end{align*}
where $F^{(r_1,y_1)} \colon [0,\infty)^{n-1} \times \Le^{n-1}$ is defined as 
\begin{equation*}
F^{(r_1,y_1)}(r_2, \ldots ,r_n, y_2, \ldots , y_n) = F(r_1, \ldots ,r_n , y_1 , \ldots, y_n)\;.
\end{equation*}
As above (but somewhat abusively here), we denote by $\I{n-1} F^{(r_1,y_1)} (r_1^-)$ the left limit of $r_1 \mapsto \I{n-1} F^{(r_1,y_1)} (r_1)$. With this definition, for every $n$ and $F$, the process $t \mapsto \I{n} F(t)$ is a martingale. 
Finally, given any $F$ as above and $1 \leq \ell \leq n$, we define
\begin{align}
\label{e:def:Fl}
F_\ell(r_1&, \ldots,  r_{n},  z_1 \ldots, z_\ell , y_{\ell+1}, \ldots y_n) \\
& :=\sum_{y_1, \ldots, y_\ell  \in \Le^\ell} \eps^{2\ell}  F(r_1, \ldots , r_n, y_1 \ldots , y_n) \prod_{i=1}^\ell \Kg(y_i - z_i) \;,
\end{align} 
i.e. $F$ is convolved with the kernel $\Kg$ in the first $\ell$ spatial arguments.  In the sequel, when we write $\I{n-\ell} F_\ell(r_1, \ldots, r_\ell; z_1, \ldots z_\ell)$ this means that the iterated stochastic integral is taken with respect to the variables $r_{\ell+1}, \ldots ,r_{n}$ and $y_{\ell+1}, \ldots, y_n $ and evaluated at time $t= r_\ell$ (the variables $r_1,\ldots, r_{\ell}, z_{1},\ldots,z_{\ell}$ are just treated as fixed parameters). 
Using the definition \eqref{e:ZnA}, it is easy to see that for $x \in \Le$ and $0 \leq s \leq t$, we can write $\RG{n}(s,x) = \I{n}F(s)$  for 
\begin{align}
 F(r_1, \ldots, r_n, y_1, \ldots, y_n) = n! \prod_{i=1}^n \Pg{t-r_i}(x - y_i)\;.  \label{e:itconv}
\end{align}
Furthermore, note that for every $n$, the mapping $F \mapsto \I{n}F$ is linear in $F$.
\begin{lemma}\label{le:ItInt}
Let $n \geq 1$ and let $F \colon [0,\infty)^n \times \Le^n \to \R$ and $F_\ell$ for $1 \leq \ell \leq n$ be deterministic and left-continuous in each time variable. Then for any $p \geq 2$, there exists a constant $C = C(n,p)$ such that 
\begin{align}\label{e:itintBound}
\bigg(\E \sup_{0 \leq r \leq t} \big| \I{n}F(r) \big|^p \bigg)^{\frac{2}{p}}  \leq C \int_{r_1=0}^t \!\ldots \! \int_{r_n=0}^{r_{n-1}}   \sum_{\substack{\zz \in \Le^n}} \eps^{2n}  \; F_n(\rrr,\zz)^2 \;  d\rrr + \ER{n} \;,
\end{align}
where we use the short-hand $ \rrr = (r_1, \ldots r_n)$,   and $ d\rrr = dr_{n}\cdots  d{r_2}\, d{r_1}$. 
The error term $\ER{n}$ is given by 
\begin{align}
\ER{n} =& C  \;\eps^4 \delta^{-2}
  \sum_{\ell=1}^{n}     \int_{r_1=0}^t  \!\ldots\! \int_{r_{\ell-1}=0}^{r_{\ell-2}} \; \sum_{z_1, \ldots , z_{\ell-1} \in \Le } \!\!\eps^{2(\ell-1)} \notag \\
& \qquad \qquad \qquad   \bigg( \E \sup_{\substack{0 \leq r_\ell \leq r_{\ell-1}\\z_\ell \in \Le  }}  \big| \I{n-\ell} F_\ell(\rrr_\ell, \zz_\ell) \big|^p   \bigg)^{\frac{2}{p}} \; d\rrr_{\ell-1}  \;,\label{e:itintError}
\end{align}
where $ \rrr_\ell = (r_1, \ldots r_\ell)$, $\zz_\ell  =( z_1, \ldots , z_\ell)$,  $ d\rrr_{\ell-1} = dr_{\ell-1}\cdots  d{r_2}\, d{r_1}$ (there is no variable to integrate for $\ell = 1$), and $r_0 =t$. 
\end{lemma}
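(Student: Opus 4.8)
The plan is to prove the bound \eqref{e:itintBound} by induction on $n$, using at each stage the Burkholder-Davis-Gundy (BDG) inequality for the outermost stochastic integral together with a careful accounting of the jump contributions. For $n=1$, the process $r \mapsto \I{1}F(r)$ is a martingale whose jumps come from spin flips; when the spin $\si(\ell)$ flips, it jumps by roughly $-2\si(\ell)\eps^2\delta^{-1}\Kg(\cdot)$ (up to the semigroup smoothing already incorporated in $F$). Applying BDG, $\E\sup_{r\le t}|\I{1}F(r)|^p \ls \E\big(\langle \I{1}F\rangle_t\big)^{p/2} + \E\big(\sum_{\text{jumps}}|\Delta\I{1}F|^p\big)$; the predictable quadratic variation, computed from \eqref{e:QuadrVar2}, contributes the main term $\int_0^t\sum_{z}\eps^2 F_1(r,z)^2\,dr$ after bounding the jump rate $\Cg$ by $1$, while the pure-jump contribution is of size $\eps^2\delta^{-2}$ per jump and hence gives the $\ell=1$ term of $\ER{1}$. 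Here I will use H\"older in the form $\big(\E Y^{p/2}\big)^{2/p}$ to linearise the estimate in the way the statement is phrased.

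For the inductive step, I would freeze the outermost variable and write $\I{n}F(t) = \int_0^t \sum_{y_1}\eps^2\, G(r_1,y_1)\,dM(r_1,y_1)$ where $G(r_1,y_1) = \I{n-1}F^{(r_1,y_1)}(r_1^-)$ is itself an iterated integral of order $n-1$. Applying BDG to this outer martingale and then bounding its predictable bracket using \eqref{e:QuadrVar2}, I get a main term of the shape $\int_0^t\sum_{z_1}\eps^2\,\E\big[\big(\I{n-1}(F_1)^{(r_1,z_1)}(r_1^-)\big)^2\big]\cdots$ — the single convolution with $\Kg$ in the first spatial slot is exactly what turns $F^{(r_1,y_1)}$ into $(F_1)^{(r_1,z_1)}$, matching the subscript bookkeeping in \eqref{e:def:Fl}. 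Iterating the inductive hypothesis on this order-$(n-1)$ piece (which after one convolution is $F_1$, so its own ``$F_{n-1}$'' is $(F_1)_{n-1} = F_n$) produces the clean main term on the right of \eqref{e:itintBound}. The jump term from BDG at this stage contributes, after the same manipulations, the $\ell$-th summand of $\ER{n}$: a factor $\eps^4\delta^{-2}$, an $(\ell-1)$-fold time/space integral of the convolved kernel, and the supremum of the order-$(n-\ell)$ iterated integral $\I{n-\ell}F_\ell$; one has to be slightly careful that the jump of a product of martingales involves not only the individual jumps but also cross terms, and that these are absorbed by pushing one spin-flip jump through while leaving the remaining factors intact.

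The main obstacle I anticipate is controlling the jump terms cleanly and uniformly in $\eps$. Unlike the Gaussian setting of \cite{dPD}, where It\^o's isometry gives an exact identity and Nelson's hypercontractivity upgrades $L^2$ to $L^p$, here the martingales are pure-jump and the BDG inequality leaves a genuine residual $\sum|\Delta|^p$ term. Each elementary jump has size of order $\eps^2\delta^{-1}$ (before semigroup smoothing), so a single jump in the outermost integral combined with the $L^p$-norm of the remaining $(n-\ell)$-fold integral produces the $\eps^4\delta^{-2}$ prefactor; the bookkeeping challenge is to organise the recursion so that exactly the convolved kernels $F_\ell$ appear and the residual error is expressed in terms of \emph{suprema} of lower-order iterated integrals, which is what makes \eqref{e:itintError} amenable to a later induction. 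A secondary technical point is the need to interchange suprema and expectations and to handle the left-limits $r^-$ correctly; these are routine given the c\`adl\`ag structure but must be tracked to keep the statement's form. I also expect to invoke a discrete Gr\"onwall-type or summation argument, or simply a term-by-term comparison, to see that the finitely many error contributions $\ell = 1,\dots,n$ are all of the stated type and do not compound across the induction.
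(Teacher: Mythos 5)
Your proposal follows the same route as the paper: induction on $n$, Burkholder--Davis--Gundy applied to the outermost stochastic integral, bounding the predictable bracket using \eqref{e:QuadrVar2} with $\Cg \le 1$, and absorbing the jump contribution into the error term $\ER{n}$; your bookkeeping of the convolved kernels (noting that $(F_1)_{n-1} = F_n$) is also exactly right. Three small points are worth flagging. First, you invoke BDG in the Rosenthal form $\E\sup|M|^p \ls \E\langle M\rangle^{p/2} + \E\sum|\Delta M|^p$, whereas the paper's Lemma~\ref{le:BDG} uses $\E\sup|M|^p \ls \E\langle M\rangle^{p/2} + \E\sup|\Delta M|^p$. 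The $\sup$ form is what lets the error \eqref{e:itintError} be written directly as a supremum of lower-order iterated integrals; the $\sum$ form leaves you with an expected number of jumps (of order $\alpha_\gamma^{-1}|\Lambda_\eps| t$, enormous) that would have to be pulled out as $(\cdots)^{2/p}$, which is harmless only after taking $p$ large — doable but not the form stated. Second, you write the bracket bound as $\int\sum \eps^2 \E[(\I{n-1}F_1)^2]$; to close the induction you actually need the Minkowski step $\big(\E\langle \I{n}F\rangle_t^{p/2}\big)^{2/p} \le 4\co^2\int_0^t\sum_{z_1}\eps^2\big(\E|\I{n-1}F_1(r_1,z_1)|^p\big)^{2/p}\,dr_1$, i.e. the $L^p$ (not $L^2$) norm must appear under the integral so the induction hypothesis applies. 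You do signal awareness of this with your remark about ``H\"older in the form $(\E Y^{p/2})^{2/p}$'', but it should be made the explicit pivot of the recursion. Third, there are no ``cross terms'' to worry about: $\I{n}F$ is a single iterated stochastic integral, not a product of martingales, so its jumps come only from the outermost integral; the integrand $\I{n-1}F^{(r_1,y_1)}(r_1^-)$ evaluated at the left limit is what makes the jump of $\I{n}F$ at a flip at $z_1$ exactly $2\eps^2\delta^{-1}\I{n-1}F_1(r_1^-,z_1)$, with nothing extra.
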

\begin{proof}
We proceed by induction. Let us consider the case $n=1$ first. In order to apply the Burkholder-Davis-Gundy inequality (Lemma~\ref{le:BDG}), we need to bound the quadratic variation as well as the size of the jumps of the martingale $\I{1}F(t)$. The quadratic variation is given by 
\begin{align}
 \big\langle &\I{1} F \big\rangle_t  \notag\\
&=  \int_{r=0}^t  \sum_{\substack{y \in \Le\\ \bar{y} \in \Le}} \eg^4 \, F(r,y) \, F(r,\bar{y}) \;  d \langle  \Mg(\cdot, y) , \Mg( \cdot, \bar{y}) \rangle_r \, \notag\\
&= 4 \co^2 \int_{0}^t  \sum_{\substack{y \in \Le\\ \bar{y} \in \Le}} \eg^4 \, F(r,y) \, F(r,\bar{y})  \sum_{z \in \Le} \eg^2 \Kg(y-z) \,\Kg(\bar{y}-z)  \, \Cg(r,z) \,dr \notag\\
&=  4 \co^2 \int_0^t    \sum_{z \in \Le} \eg^2 \,  \Big( \sum_{y \in \Le}\eps^2 F(r,y) \, \Kg(y-z)   \Big)^2 \, \Cg(r,z) \, dr \notag\\
& \leq 4 \co^2 \int_0^t    \sum_{z \in \Le} \eg^2 \,  F_1(r,z)^2  \, dr \label{e:ItInt1}\;.
\end{align}
Here we have used \eqref{e:QuadrVar2} for the second equality and the deterministic estimate $0 \leq \Cg(r,z) \leq 1$ in the last inequality.
Let us now turn to the jumps. We had seen above that a jump of the spin $\sigma(k)$ at microscopic position $k = \eps^{-1} z$ causes a jump of size $2 \delta^{-1} \eps^2 \Kg(y-z)$ for $\Mg$. With probability one, two such jumps never occur at the same time, so that we only have to estimate the impact of such an event on $\I{1}F $. If such an event takes place at (macroscopic) time $r$, then the martingale $\I{1}F $ has a jump of absolute value 
 \begin{align}\label{e:ItInt2}
 2\eps^{2}\delta^{-1} &\Big| \sum_{y \in \Le} \eps^2 F(r,y)  \Kg(y-z) \Big| = 2 \eps^2 \delta^{-1} \big| F_1(r,z)  \big| \;. 
 \end{align} 
Hence, the Burkholder-Davis-Gundy inequality implies that for every $p >0$, we have 
\begin{align*}
\Big(\E  \sup_{0 \leq r \leq t} \big| \I{1}  F (r) \big|^p \Big)^{\frac{2}{p}} \leq C(p) \Big(  \int_0^t    \sum_{z \in \Le} \eg^2 \,  F_1(r,z)^2  \, dr +  \eps^4 \delta^{-2}\sup_{\substack{0 \leq r \leq t\\ z \in \Le}} F_1(r,z)^2 \Big) \;,
\end{align*}
which is what we wanted.

Let us now assume that \eqref{e:itintBound} is established for $n-1$. In order to bound moments of the martingale $\I{n} F(t)$, we bound again the quadratic variation and the size of the jumps. For the jumps, we can see as in \eqref{e:ItInt2} that a jump of $\sigma(k)$ at location $k = \eps^{-1}z_1$ at  time $r_1$ causes a jump of $\I{n} F$ of absolute value 
\begin{align*}
 2\eps^{2}\delta^{-1} \Big| \sum_{y_1 \in \Le} \eps^2   \I{n-1} F^{(r_1,y_1)}(r_1^{-}) \,\Kg(y_1 - z_1)  \Big| \;. 
 \end{align*} 
Recalling that
$$
\sum_{y_1 \in \Le} \eps^2 F^{(r_1,y_1)}(r_2,\ldots, r_n,y_2,\ldots, y_n) \Kg(y_1-z_1) = F_1(r_1,\ldots,r_n,z_1,y_2,\ldots, y_n)
$$
and that $F \mapsto \I{n-1} F$ is linear, we can rewrite the quantity above as
$$
 2\eps^{2}\delta^{-1} \Big| \sum_{y_1 \in \Le} \eps^2   \I{n-1} F_1(r_1^{-},z_1) \Big| \;. 
$$

Hence, the corresponding error term in the bound for $\Big(\E  \sup_{0 \leq r \leq t}\big| \I{n} F (r) \big|^p \Big)^{\frac{2}{p}} $ takes the form 
\begin{equation}
	\label{e:ItInt3}
C(p)\eps^{4}\delta^{-2} \,   \bigg( \E  \sup_{\substack{0 \leq r_1 \leq t \\z_1 \in \Le  }}  \big| \I{n-1}  F_1(r_1, z_1 ) \big|^p   \bigg)^{\frac{2}{p}} \;,	
\end{equation}
which is precisely the term corresponding to $\ell=1$ in \eqref{e:itintError}. 

For the quadratic variation of $\I{n}F(t)$, we get as above in \eqref{e:ItInt1} that
\begin{align}
 \big\langle &\I{n} F \big\rangle_t  \notag\\
&=  \int_{r_1=0}^t  \sum_{\substack{y_1 \in \Le\\ \bar{y}_1 \in \Le}} \eg^4 \,  \I{n-1} F^{(r_1,y_1)} (r_1) \,   \I{n-1} F^{(r_1,\bar{y}_1)} (r_1) \; d \langle \Mg(\cdot, y_1) , \Mg( \cdot, \bar{y}_1) \rangle_{r_1} \, \notag\\
& \leq 4 \co^2 \int_{0}^t    \sum_{z_1 \in \Le} \eg^2 \,  \Big( \sum_{y_1 \in \Le} \eps^2  \I{n-1} F^{(r_1,y_1)} (r_1) \, \Kg(y_1-z_1)   \Big)^2  \; dr_1 \notag \\
& = 4 \co^2 \int_{0}^t    \sum_{z_1 \in \Le} \eg^2 \,  \big( \I{n-1}F_1(r_1,z_1)\big)^2  \, dr_1 \notag\;.
\end{align}
In the first equality above, we have used the fact that $F^{(r_1,y_1)} (r_1^-) = F^{(r_1,y_1)} (r_1)$ for Lebesgue almost every $r_1$. Then Minkowski's inequality (for the exponent $\frac{p}{2} \geq 1$) implies that 
\begin{align}\label{e:ItInt4}
\Big( \E  \big\langle &\I{n} F \big\rangle_t^{\frac{p}{2}}  \Big)^{\frac{2}{p}} \leq  4 \co^2 \int_{0}^t    \sum_{z_1 \in \Le} \eg^2 \, \Big( \E   \big| \I{n-1}F_1(r_1,z_1)\big|^p \Big)^{\frac{2}{p}}  \, dr_1 \;. 
\end{align}
 But  the induction hypothesis implies that for some $C = C(n,p)$, for every $r_1 \geq 0$ and $ z_1 \in \Le$, we have 
 \begin{align}
  \Big( \E   \big| \I{n-1}F_1(r_1,z_1)\big|^p \Big)^{\frac{2}{p}} \leq & \,C  \int_{r_2=0}^{r_{1}} \ldots \int_{r_n=0}^{r_{n-1}}   \sum_{\substack{z_2, \ldots,  z_n \in \Le}} \eps^{2(n-1)}  \; F_n(\rrr,\zz)^2 \;  d\rrr' \notag \\
 & + \msf{Err}(r_1,z_1) \;, \label{e:ItInt5}
 \end{align}
 where $d \rrr' = dr_n \ldots dr_2$, and  where the error term is given by
 \begin{align}
  \msf{Err}(r_1,z_1) =  C \eps^4 \delta^{-2}    \; \sum_{\ell=2}^{n} &     \int_{r_2=0}^{r_1} \ldots \int_{r_{\ell-1}=0}^{r_{\ell-2}} \sum_{z_2, \ldots , z_{\ell-1} \in \Le } \eps^{2(\ell-2)}\notag\\
&  \quad \quad  \bigg( \E \sup_{\substack{0 \leq r_\ell \leq r_{\ell-1}\\z_\ell \in \Le  }} \big|  \I{n-\ell} F_{\ell}(\rrr_\ell, \zz_\ell) \big|^p   \bigg)^{\frac{2}{p}} \; d\rrr_\ell' \;, \label{e:ItInt6}
 \end{align}
 with $d \rrr_\ell' = dr_\ell \ldots dr_2$. We then obtain the desired estimate by plugging \eqref{e:ItInt5} and \eqref{e:ItInt6}  into \eqref{e:ItInt4}.
\end{proof}
%%%%%%%%%%%%%%%%%%%
%
%
%
%

With Lemma~\ref{le:ItInt} in hand, we now proceed to derive bounds on $\ZG{n}$. 
%
%%%%%%%%%%%%%%%%%
\begin{proposition}\label{prop:RGBoundOne}
%%%%%%%%%%%%%%%%%
There exists a constant $ \ga_0>0$ such that the following holds. For every $n \in \N$,  $p\geq 1$,  $\al>0$,  $T>0$,  $0 \leq \la \leq \frac12$ and $0 < \ka \leq 1$,  there exists a constant $C= C(n,p,\al,T,  \la,\ka)$ such that for every $0 \leq s \leq t \leq T$ and $0< \ga < \ga_0$,
\begin{align}
 \E \sup_{0 \leq r \leq t}\| \RG{n}(r, \cdot) \|^p_{\Cc^{-\al- 2\la}} &\leq C \,t^{\la \, p}   \qquad  +C \ga^{p(1 - \ka)}   \;, \label{e:RgRegularity1}\\
\E \sup_{0 \leq r \leq t} \| \RG{n}(r, \cdot) -R_{\ga,s}^{\colon n \colon}(r \wedge s, \cdot) \|^p_{\Cc^{-\al-  2\la}} &\leq C \, |t-s|^{\la \, p}  +C \ga^{p(1 - \ka)} \; ,  \label{e:RgRegularity2} \\
\E \sup_{0 \leq r \leq t} \| \RG{n}(r, \cdot) - \RG{n}(r \wedge s,\cdot) \|^p_{\Cc^{-\al-  2\la}} &\leq C  \; |t-s|^{\la \, p}   +C \ga^{p(1 - \ka)} \; .  \label{e:RgRegularity3} 
 \end{align}
\end{proposition}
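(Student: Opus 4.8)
The plan is to prove \eqref{e:RgRegularity1}--\eqref{e:RgRegularity3} together by induction on $n$, transcribing the continuum argument sketched after Proposition~\ref{prop:daPratoDebussche} with Lemma~\ref{le:ItInt} playing the role of Nelson's hypercontractive bound. By the Littlewood--Paley description of $\Cc^{-\al-2\la}=B^{-\al-2\la}_{\infty,\infty}$ in Appendix~\ref{sec:Besov} and the Kolmogorov-type criterion of Proposition~\ref{prop:Kolmogorov}, applied with a large exponent $p$ (the case of general $p\ge 1$ then following by Jensen's inequality), it is enough to bound, uniformly in $u\in\T^2$ and in the dyadic scale $k\ge0$,
\[
\E\sup_{0\le r\le t}\big|\langle \RG{n}(r,\cdot),\ek(u-\cdot)\rangle\big|^p\;\ls\;t^{\la p}\,2^{2k\la p}\;+\;\ga^{p(1-\ka)}\,2^{2k\la p}\,,
\]
(the bracket being the $L^2(\T^2)$ pairing) together with its two increment analogues; summing against the Besov weight $2^{-k(\al+2\la)p}$ then yields the claims since $\al>0$ absorbs the remaining dyadic sum. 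By \eqref{e:itconv} and linearity of $F\mapsto\I{n}F$, the tested quantity equals $\I{n}G$, where $G$ is obtained from the product of heat kernels in \eqref{e:itconv} by a further spatial convolution with $\ek(u-\cdot)$, and I would feed this $G$ into Lemma~\ref{le:ItInt}.

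For the leading term of \eqref{e:itintBound}, which is deterministic, I would pass to Fourier variables via Parseval's identity \eqref{e:Pars}, use $\widehat{\Pg{u}\ae\Kg}(\om)=\hPg{u}(\om)\hKg(\om)$ and the explicit formula \eqref{e:Fourier-semi}, so that it equals a constant times
\[
\sum_{\om_1,\dots,\om_n}\big|\widehat{\ek}(\om_1+\dots+\om_n)\big|^2\int_{0\le r_n\le\dots\le r_1\le t}\prod_{i=1}^n\big|\hPg{t-r_i}(\om_i)\big|^2\,\big|\hKg(\om_i)\big|^2\,d\rrr\,.
\]
Carrying out the time integrals produces, for each $i$, a factor $\es\big(1-e^{-2(t-r_{i-1})\ga^{-2}(1-\hKg(\om_i))}\big)\big/\big(\ga^{-2}(1-\hKg(\om_i))\big)$ (with $r_0=t$). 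By Lemma~\ref{le:Kg} this behaves like $(1\wedge t|\om_i|^2)(1+|\om_i|^2)^{-1}$ for $|\om_i|\ls\ga^{-1}$ — exactly as in the continuum estimate behind \eqref{e:regR0n} — while for $\ga^{-1}\ls|\om_i|\ls\ga^{-2}$ the decay $|\hKg(\om_i)|^2\ls(\ga|\om_i|)^{-4}$ makes up for the loss of the heat factor. Summing over the shell $|\om_1+\dots+\om_n|\es 2^k$ and using the elementary inequality $1\wedge t2^{2k}\le(t2^{2k})^{2\la}$, valid precisely for $0\le\la\le\tfrac12$ (this is where the restriction on $\la$ and the factor $t^{\la}$ enter), one gets $\ls t^{2\la}2^{4k\la}$ up to a factor bounded by a power of $\log(1+2^k)$; raising to the power $p/2$ gives the term $t^{\la p}2^{2k\la p}$ above.

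For the error term $\ER{n}$ in \eqref{e:itintError}, the point is that its prefactor $\eps^4\delta^{-2}=\co^4\ga^6$ is a large positive power of $\ga$, while everything else is under control: each $\Kg$-convolution appearing in the $F_\ell$ contracts $L^\infty$ (since $\sum_x\eps^2\Kg(x)=\ct\es1$), the supremum over a spatial argument of a remaining frequency-localised expression is $\ls\sum_{|\om|\es2^k}|\widehat{\ek}(\om)|\,|\hKg(\om)|\ls\ga^{-2}$ by Lemma~\ref{le:Kg}, and the lower-order iterated integrals $\I{n-\ell}F_\ell$ are bounded by the induction hypothesis. Hence $\ER{n}\ls\ga^{6}\cdot\ga^{-4}\cdot(\log\ga^{-1})^{C}=\ga^{2}(\log\ga^{-1})^{C}$, and the polylogarithmic factor picked up on raising to the power $p/2$ is absorbed into the $\ka$-loss, giving the term $\ga^{p(1-\ka)}$. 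Estimates \eqref{e:RgRegularity2} and \eqref{e:RgRegularity3} would be handled by the same machinery: for \eqref{e:RgRegularity3} one runs the $\I{n}$-construction with the outermost integral over $(s,r]$ only, which in the Fourier computation confines $r_1$ to an interval of length $\le t-s$ and, via $\int_0^{t-s}|\hPg{u}(\om_1)|^2\,du\ls\big((t-s)|\om_1|^2\big)^{2\la}(1+|\om_1|^2)^{-1}$, produces the factor $|t-s|^{\la}$, the error term being unchanged; for \eqref{e:RgRegularity2} (where $\Pg{t-r_i}$ is replaced by $\Pg{s-r_i}$, and $r\wedge s$ by $r$ when $r\le s$) one expands $\prod_i\Pg{t-r_i}-\prod_i\Pg{s-r_i}$ telescopically and uses $\|(\Pg{t-s}-\mathrm{Id})f\|_{\Cc^{a-2\la}}\ls|t-s|^{\la}\|f\|_{\Cc^a}$ from Lemma~\ref{le:semi-group-regularity} and Corollary~\ref{cor:regPg}, the case $r>s$ following by splitting off $\RG{n}(r,\cdot)-\RG{n}(s,\cdot)$ and invoking \eqref{e:RgRegularity3}.

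I expect the main obstacle to be the uniform-in-$\ga$ Fourier estimate of the leading term in the intermediate regime $\ga^{-1}\ll|\om|\ll\ga^{-2}$: there the symbol $\ga^{-2}(1-\hKg(\om))$ saturates, so $\hPg{t}(\om)$ no longer decays like the continuum heat kernel, and one has to genuinely exploit the decay of $\hKg$ coming from the coarse-graining (this is precisely why the coarse-graining kernel is chosen equal to the interaction kernel). The secondary difficulty, absent from the Gaussian continuum argument, is keeping the entire cascade of jump-error terms generated by Lemma~\ref{le:ItInt} under control with a genuine positive power of $\ga$; the weak decay $|\hKg(\om)|\ls(\ga|\om|)^{-2}$ (a consequence of the condition $\kg(0)=0$) only costs logarithmic factors, and this is what forces the arbitrarily small loss $\ka$ in the statement.
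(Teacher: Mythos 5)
Your proposal reproduces the paper's proof in all essential respects: the induction on $n$, Lemma~\ref{le:ItInt} as the substitute for Nelson's estimate, the specialisation to $f=\ek(u-\cdot)$ followed by Proposition~\ref{prop:Kolmogorov}, the Fourier computation of the leading term (Lemma~\ref{le:FourCalc}) with the interpolation $1\wedge t2^{2k}\le(t2^{2k})^{2\la}$ fixing the range $0\le\la\le\tfrac12$, and the prefactor $\eps^4\delta^{-2}\es\ga^6$ suppressing the jump error (Lemma~\ref{le:ErrIntBound}). The one place you gloss over is the feedback of the induction hypothesis into the error term: it supplies $\Cc^{-\ka'}$ control on $\RG{n-\ell}$, not the $L^\infty$ control your spatial supremum requires, and the passage between the two (Lemma~\ref{l:unif-Fourier-cut}, exploiting that $\RG{n-\ell}$ is a trigonometric polynomial of degree $\le nN$) costs an extra $\ga^{-4\ka'}$ -- it is this factor, rather than the logarithms you point to, that is the dominant source of the $\ka$-loss in \eqref{e:RgRegularity1}--\eqref{e:RgRegularity3}.
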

%%%%%%%%%%%%%%%%%%%%
\begin{remark}
In particular, the bounds \eqref{e:RgRegularity1} -- \eqref{e:RgRegularity3} imply that (under the same conditions on $p,\al,\la,\ka$) we have
\begin{align*}
 \E \| \ZG{n}(t, \cdot) \|^p_{\Cc^{-\al- 2\la}} &\leq C \,t^{\la \, p}   \qquad  +C \ga^{p(1 - \ka)}   \;, \\
\E \| \ZG{n}(t, \cdot) -\ZG{n}(s, \cdot) \|^p_{\Cc^{-\al-  2\la}} &\leq C \, |t-s|^{\la \, p}  +C \ga^{p(1 - \ka)} \; , \\
\E \| \ZG{n}(t, \cdot) - \RG{n}(s,\cdot) \|^p_{\Cc^{-\al-  2\la}} &\leq C  \; |t-s|^{\la \, p}   +C \ga^{p(1 - \ka)} \; .  
 \end{align*}
 These weaker bounds are the key ingredient for the proof of tightness  in Proposition~\ref{prop:tight} and for the proof of convergence in law in Theorem~\ref{t:converg-lin-Wickbis} below.
\end{remark}
%
%%%%%%%%%%%%%%%%%%%%
\begin{proof}
Recalling that $R_{\ga,0}^{\colon n \colon}(0,\cdot) =0$, we see that the bound \eqref{e:RgRegularity1} is contained in \eqref{e:RgRegularity2}, so that it suffices to show \eqref{e:RgRegularity2} and \eqref{e:RgRegularity3}. Furthermore, note that by the monotonicity in $p$ of stochastic $L^p$ norms, it is sufficient to prove these bounds for $p$ large enough. 

For any smooth function $f \colon \T^2 \to \R$ and any $n$, we write 
\begin{equation*}
\RG{n}(s,f) := \int_{\T^2} \RG{n}(s,x) \, f(x) \, dx \, , 
\end{equation*}
and similarly, $\ZG{n}(t,f) := \RG{n}(t,f)$. Note that in general, neither $f$ nor $\RG{n}$ (defined for all $x \in \T^2$ as in \eqref{e:ZnB}) are trigonometric polynomials of degree $\leq N$, so that this integral does not coincide exactly with its Riemann sum approximation on~$\Le$. 

 In order to obtain  \eqref{e:RgRegularity2} and \eqref{e:RgRegularity3},  we derive bounds on 
\begin{align}
\E \sup_{0 \leq r \leq t} | \RG{n}(r, f) - R_{\ga,s}^{\colon n \colon}(r \wedge s,f) |^p    \quad \text{and} \quad
\E \sup_{0 \leq r \leq t}| \RG{n}(r,f) - \RG{n}(r \wedge s,f) |^p  \notag
 \end{align}
for an arbitrary smooth test function $f \colon \T^2 \to \R$ and for an arbitrary $p \geq 2$. Later, we will specialise to $f(x) = \eta_k(u-x)$ (defined in \eqref{e:DEFek})  for some $u \in \T^2$ and $k \geq -1$ to apply Proposition~\ref{prop:Kolmogorov}. 

A simple recursion based on \eqref{e:ZnB} shows that for $0 \leq s \leq t$, we have  $\RG{n}(s,f) = \I{n} F^t(s)$ where for $\yy= (y_1, \ldots, y_n)$, $\rrr=(r_1, \ldots, r_n)$, and $\bom= (\om_1, \ldots, \om_n)$,
\begin{align}
F^t(\yy,\rrr) &= \;\frac{n!}{4^n}   \sum_{\bom \in (\Z^2)^n} \overline{ \hat{f} (\om_1 + \ldots + \om_n)} \prod_{j=1}^n \hPg{t-r_j} (\om_j) e^{- i \pi \om_j  \cdot y_j}  \notag \\
&= n! \int_{\T^2} \, f(x)  \,  \prod_{j=1}^n  \Pg{t-r_j}(x- y_j)  \,   dx \;. \notag
\end{align}
Here, each $\Pg{t-r_j}$ is viewed as a function on all of $\T^2$. Note that the kernel $\prod_{j=1}^n  \Pg{t-r_j}(x- y_j) $ is a trigonometric polynomial of degree $\leq nN$ in the $x$ variable and a trigonometric polynomial of degree $\leq N$ in each $y_j$ coordinate.

By linearity of the operator $\I{n}$, we get for any $0 \leq s \leq t$ and $0 \leq r \leq t$ that
\begin{align*}
\RG{n}(r, f) - R_{\ga,s}^{\colon n \colon}(r \wedge s,f) &= \I{n}(F^t - F^s)(r) \;, \\
\RG{n}(r, f) - \RG{n}(r \wedge s,f) &= \I{n}(F^t  \mathbf{1}_{r_1 \in [s,t]} )(r) \;.
\end{align*}
Here we use the convention to set $ \Pg{s-r_j}(x- y_j)    = 0$ for $r_j > s$. 

  Hence, by  Lemma~\ref{le:ItInt} there exists $C=C(n,p)$ such that
\begin{align}
\big( \E & \sup_{0 \leq r \leq t}| \RG{n}(r, f) - R_{\ga,s}^{\colon n \colon}(r,f) |^p \big)^{\frac{2}{p}} \notag \\ 
& \leq  \frac{C}{n!} \int_{r_1, \ldots, r_n =0}^t   \sum_{\substack{\zz \in \Le^n}} \eps^{2n}  \; 
 \big( F^t_n(\rrr,\zz)  - F^s_n(\rrr,\zz)   \big)^2 \;  d\rrr + \msf{Err} \;, \label{e:RN1}
\end{align}
and
\begin{align}
\big( \E& \sup_{0 \leq r \leq t} | \RG{n}(r,f)  - \RG{n}(r \wedge s,f) |^p \big)^{\frac{2}{p}}  \notag\\
 &\leq\frac{C}{(n-1)!}\int_{r_1=s}^t \int_{r_2, \ldots ,r_n =0}^{r_1}    \sum_{\substack{\zz \in \Le^n}} \eps^{2n}  \; 
 F^t_n(\rrr,\zz)^2 \;  d\rrr + \msf{Err}^{\prime} \; \label{e:RN2}.
\end{align}
Here $F_n^t$ denotes the convolution of $F^t$ with the kernel $\Kg$ in all $n$ spatial arguments as defined in \eqref{e:def:Fl}. In \eqref{e:RN1} and \eqref{e:RN2}, we have used the symmetry of the kernels $F_n^t$ and $F_n^s$ in their time arguments to replace the integrals over the simplices $0 \leq r_n \leq r_{n-1}  \leq  \ldots \leq r_1\leq t$ and $0 \leq r_n \leq  \ldots \leq r_1$ by integrals over $[0,t]^n$ and $[0,r_1]^{n-1}$. The precise form of the error terms $\msf{Err}$ and $ \msf{Err}^{\prime}$ is discussed below.

We start by bounding the first term on the right-hand side of \eqref{e:RN1}. Using Parseval's identity \eqref{e:Pars} in each of the $z_j$ summations, we get  for any fixed $\rrr$
\begin{align}
 \sum_{\substack{\zz \in \Le^n}} \eps^{2n} & \; 
 \big( F^t_n(\rrr,\zz)  - F^s_n(\rrr,\zz)   \big)^2 \notag \\
= \ \frac{(n!)^2}{4^n} & \sum_{\substack{\bom \in (\Z^2)^n }}  \big| \hat{f}(\om_1 + \ldots + \om_n) \big|^2 
&\Big(  \prod_{j=1}^n  \hPg{t-r_j} \hKg(\om_j)     -  \prod_{j=1}^n  \hPg{s-r_j} \hKg(\om_j)    \Big)^2   \;, \notag
\end{align}
where as above we write  $\bom = (\om_1, \ldots, \om_n)$. For fixed $\bom$ and $\rrr$ we bound
\begin{align}
\Big( & \prod_{j=1}^n  \hPg{t-r_j} \hKg(\om_j)     -  \prod_{j=1}^n  \hPg{s-r_j} \hKg(\om_j)    \Big)^2 \notag \\
& \quad \leq n \sum_{k=1}^n \Big(   \prod_{j=1}^{k-1}  \big(  \hPg{t-r_j} \hKg(\om_j) \big)^2   \; \big(  \hPg{t-r_k} \hKg(\om_k) -  \hPg{s-r_k} \hKg(\om_k)    \big)^2 \notag\\
& \qquad \quad \times  \, \prod_{j=k+1}^{n}  \big( \hPg{s-r_j} \hKg(\om_j) \big)^2    \Big)\;.    \label{e:RN3}
\end{align} 
We performs the $\rrr$-integrations for fixed value of $k$ and $\bom$ for each term on the right-hand side of \eqref{e:RN3} separately, recalling that for $\om \in \{-N, \ldots, N\}^2$ and for any $t \geq 0$ we have $
\hPg{t}(\om) = \exp\big(- t \gamma^{-2} (1- \hKg(\om)) \big)$
according to \eqref{e:Fourier-semi}. For the integrals over $r_j$ for $j \neq k$ we use the elementary estimate
\begin{equation}
\label{e:elementary}
\int_0^t   e^{-(t-r) 2\ell} \, dr  \leq e \int_0^{\infty} e^{-r(2 \ell+\frac{1}{t})} \, dr = \frac{e}{\tfrac{1}{t}+ 2\ell}\;,
\end{equation}
for $\ell =\gamma^{-2} \big( 1 - \hKg(\om_j )\big) \geq 0$. We split the integral over $r_k$ into an integral over $[0,s]$ and an integral over $[s,t]$. Then for the same choice of $\ell$ we use the bounds 
\begin{align*}
\int_0^s  \big(   e^{ - (s-r) \ell } - e^{ -(t-r) \ell } \big)^2  \, dr &=  \Big( 1 -  e^{ -(t-s) \ell} \Big)^2 \int_0^s   e^{ - (s-r) 2\ell }    \, dr \\
&  \leq  \ell \int_0^{t-s} e^{-r \ell} dr \; \frac{1}{2\ell} \;\\
& \leq \frac{e}{\tfrac{1}{t-s}+ 2\ell}\; ,
\end{align*}
as well as
\begin{align}
\int_s^t   e^{ - 2(t-r) \ell }   \, dr &\leq \frac{e}{\tfrac{1}{t-s}+ 2\ell}\;. \label{e:elementary2} 
\end{align}
In this way we obtain for every $k \in \{1, \ldots, n\}$
\begin{align*}
\int_{[0,t]^n} &  \prod_{j=1}^{k-1} \big(   \hPg{t-r_j} \hKg(\om_j) \big)^2   \; \big(  \hPg{t-r_k} \hKg(\om_k) -  \hPg{s-r_k} \hKg(\om_k)    \big)^2  \\
&\qquad \qquad \qquad \qquad\qquad \quad \quad \times \prod_{j=k+1}^{n} \big(  \hPg{s-r_j} \hKg(\om_j)\big)^2 \, d\rrr  \\
&\leq e^n \prod_{j=1}^{k-1}   \,\frac{ \big|\hKg(\om_j)\big|^2}{\frac{1}{t} + 2 \gamma^{-2} \big( 1 -\hKg(\om_j)\big)} \;  \frac{ \big|\hKg(\om_k)\big|^2}{ \frac{1}{t-s} +2 \gamma^{-2} \big( 1 -\hKg(\om_k)\big)}\\
&  \qquad \qquad \qquad \qquad \qquad \quad \times \prod_{j=k+1}^{n}   \,\frac{ \big|\hKg(\om_j)\big|^2}{\frac{1}{s} + 2 \gamma^{-2} \big( 1 - \hKg(\om_j )\big) } \,.
\end{align*}
Here we have again made use of the convention $\hPg{s-r_j}(\om_j) = 0$ for $r_j >s$. We only make this bound worse, if we replace the $\frac{1}{s}$ appearing in last line of this expression by $\frac{1}{t}$.  Plugging this back into \eqref{e:RN1}, summing over $\bom$ and using the invariance of this expression under changing the value of $k \in \{1, \ldots, n \}$, we obtain
\begin{align}
\frac{1}{n!}\int_{r_1, \ldots, r_n =0}^t&   \sum_{\substack{\zz \in \Le^n}} \eps^{2n}  \; 
 \big( F^t_n(\rrr,\zz)  - F^s_n(\rrr,\zz)   \big)^2 \;  d\rrr \notag \\
\leq e^n \,n!  \,n^2 & \frac{1}{4^n}  \sum_{\substack{\bom \in (\Z^2)^n }}  \big| \hat{f}(\om_1 + \ldots + \om_n) \big|^2 \frac{ \big|\hKg(\om_1)\big|^2}{\frac{1}{t-s} + 2 \gamma^{-2} \big( 1 -\hKg(\om_1)\big)}   \notag \\
&\quad  \times \prod_{j=2}^{n}   \,\frac{ \big|\hKg(\om_j)\big|^2}{\frac{1}{t} + 2 \gamma^{-2} \big( 1 - \hKg(\om_j )\big) }   \;.\label{e:RN3A}
\end{align}
The corresponding calculation for the integral in \eqref{e:RN2} is very similar (but slightly simpler). After passing to spatial Fourier variables  and performing the  integrations over $r_2, \ldots, r_n$ using \eqref{e:elementary}, we get
\begin{align*}
\frac{1}{(n-1)!}\int_{r_1=s}^t  &\int_{r_2, \ldots ,r_n =0}^{r_1}    \sum_{\substack{\zz \in \Le^n}} \eps^{2n}  \; 
 F^t_n(\rrr,\zz)^2 \;  d\rrr \\
 \leq &\,e^{n-1} n! \, n  \frac{1}{4^n} \sum_{\substack{\bom \in (\Z^2)^n }}  \big| \hat{f}(\om_1 + \ldots + \om_n) \big|^2 \int_{r_1 =s}^t \big( \hPg{t-r_1}  \hKg(\om_1)\big)^2 \\
& \qquad \times \prod_{j=2}^{n}   \,\frac{ \big|\hKg(\om_j)\big|^2}{\frac{1}{r_1} + 2 \gamma^{-2} \big( 1 - \hKg(\om_j )\big) } \;  dr_1 \,.
\end{align*}
As above, we only make this bound worse, if we replace the expression $\frac{1}{r_1}$ appearing in the last line by $\frac{1}{t}$. Then we can perform the $dr_1$ integral using  \eqref{e:elementary2}. In this way we get  the same upper bound (up to an inessential factor $n$) \eqref{e:RN3A} for  the integrals appearing on the right-hand side of \eqref{e:RN1} and \eqref{e:RN2}.

Now we specialise to $f(x) = \eta_k(u -x)$ for some $u \in \T^2$ and $k \geq -1$. Note that according to \eqref{e:dkConvProp}, for this choice of $f$, we have $\RG{n}(s,f) = \dk \RG{n}(s,u)$. For this $f$, we recall from \eqref{e:def:chik} that
 $\hat{f}(\om) = \chi_k(\om) \, e^{-i \pi u \cdot \om}$. In particular,  
$ \big| \hat{f}(\om)\big| \leq 1$ for all $\om \in \Z^2$ and $\hat{f}(\om) = 0$ for $|\om|  \notin I_k$, where $I_k = 2^k\big[3/4, 8/3 \big] $
for $k \geq 0$ and $I_{-1} = [0, 4/3]$. Summarising, we can conclude that for every $n \geq 1$ and $p \geq 2$, there exists $C=C(n,p)$ such that for all $0 \leq s \leq t$, $k \geq -1$ and $u \in \T^2$ ,
\begin{align} 
\Big( &\E   \sup_{ 0 \leq r \leq t}\big| \dk  \RG{n}(r,u) -\dk R_{\ga,s}^{\colon n \colon}(r \wedge s, u) \big| ^p \Big)^{\frac{2}{p}} \notag \\
&\leq C \sum_{\sum \bom  \in I_k} \,   \frac{ \big|\hKg(\om_1)\big|^2}{\frac{1}{t-s} + 2 \gamma^{-2} \big( 1 -\hKg(\om_1)\big)}  \prod_{j=2}^n \,\frac{ \big|\hKg(\om_j)\big|^2}{\frac{1}{t} + 2 \gamma^{-2} \big( 1 -\hKg(\om_j)\big)}  + \msf{Err}\;, \label{e:RN4}
\end{align}
where, for $\bom = (\om_1, \ldots, \om_n) \in (\Z^2)^n$, we write $\sum \bom = \sum_{j=1}^n \om_j$. In the same way, we have
\begin{align} 
&\Big( \E \sup_{0 \leq r \leq t} \big| \dk  \RG{n}(r,u) -\dk \RG{n}(r \wedge s, u) \big| ^p \Big)^{\frac{2}{p}}  \notag\\
&\leq C\! \sum_{\sum \bom  \in I_k} \,   \frac{ \big|\hKg(\om_1)\big|^2}{\frac{1}{t-s} + 2 \gamma^{-2} \big( 1 -\hKg(\om_1)\big)}  \prod_{j=2}^n \,\frac{ \big|\hKg(\om_j)\big|^2}{\frac{1}{t} + 2 \gamma^{-2} \big( 1 -\hKg(\om_j)\big)}  + \msf{Err}^{\prime}\;. \label{e:RN5}
\end{align}
We defer the analysis of the sum appearing on the right-hand side of both \eqref{e:RN4} and \eqref{e:RN5} to Lemma~\ref{le:FourCalc} below, and proceed to analyse the error terms $\msf{Err}$ and $\msf{Err}^{\prime}$, going back to the setting of an arbitrary smooth $f : \T^2 \to \R$ for the moment. The term $\msf{Err}$ on the right-hand side of \eqref{e:RN1} is given by
\begin{align}
\msf{Err}  =
&C \eps^4 \delta^{-2}\sum_{\ell=1}^{n}   \int_{r_1=0}^t \ldots \int_{r_{\ell-1}=0}^{r_{\ell-2}} \sum_{z_1, \ldots , z_{\ell-1} \in \Le } \eps^{2(\ell-1)}\notag\\
& \qquad \qquad \qquad   \bigg( \E \sup_{\substack{0 \leq r_\ell \leq r_{\ell-1}\\z_\ell \in \Le  }}  \big|  \I{n-\ell} (F_{\ell}^t - F_{\ell}^s) (\rrr_\ell, \zz_\ell)  \big|^p   \bigg)^{\frac{2}{p}} \; d\rrr_{\ell-1}    \;, \label{e:RN6}
\end{align}
where as above $ \rrr_\ell = (r_1, \ldots r_\ell)$, $\zz_\ell  =( z_1, \ldots , z_\ell)$, and $ d\rrr_{\ell-1} = dr_{\ell-1}\cdots  d{r_2}\, d{r_1}$.
Here, for any $1 \leq \ell \leq n$ and for fixed $\rrr_\ell, \zz_\ell$, we have
\begin{align}
 \big| \I{n-\ell}& (F_{\ell}^t - F_{\ell}^s) (\rrr_\ell, \zz_\ell) \big|^p \notag\\
  =  & \Big| \int_{\T^2} f(x) \,\Big(  \RG{n-\ell}(r_{\ell }, x)    \prod_{j=1}^{\ell} \Pg{t-r_j} \star \Kg(x - z_j)\notag \\
&\qquad  \qquad\qquad\qquad  \qquad-  R_{\ga,s}^{\colon n-\ell \colon}(r_\ell,x)  \;   \prod_{j=1}^{\ell} \Pg{s-r_j} \star \Kg(x - z_j)   \Big)\, dx \; \Big|^p \; \notag \\
& \leq p \, \big\|    \RG{n-\ell}(r_{\ell }, \cdot) \big\|_{L^\infty}^p \Big(  \int_{\T^2} \big| f(x)   \big| \;      \prod_{j=1}^{\ell} \big| \Pg{t-r_j} \star \Kg(x - z_j)  \big| \, dx \Big)^p \notag\\
&\quad +    p  \big\|    R_{\gamma,s}^{\colon n- \ell \colon}(r_{\ell }, \cdot) \big\|_{L^\infty}^p \Big(  \int_{\T^2} \big| f(x) \big|  \;    \prod_{j=1}^{\ell}  \big| \Pg{s-r_j} \star \Kg(x - z_j)  \big| \, dx \Big)^p \label{e:RN7}\;,
\end{align}
where we use the convention $\RG{0} =1$ and $R_{\ga,s}^{\colon n-\ell \colon}(r_\ell,x)  = 0$ for $r_\ell >s$. By Lemma~\ref{l:unif-Fourier-cut} and the definition of $\RG{n-\ell}$ as a trigonometric polynomial of degree $\leq (n-\ell) N \leq nN$, for every $\ka^{\prime} >0$ there exists a constant $C = C(n,\ka^{\prime})$ such that
\begin{align*}
\big\|    \RG{n-\ell}(r_{\ell }, \cdot) \big\|_{L^\infty}  \leq & C \gamma^{- 2 \ka^{\prime}  } \;   \big\|    \RG{n-\ell}(r_{\ell }, \cdot) \big\|_{\Cc^{-\ka^{\prime}}} \;. 
\end{align*}
Hence, plugging \eqref{e:RN7} back into \eqref{e:RN6} and applying Minkowski's inequality, we get
\begin{align}
\msf{Err}  \leq&  \;C(n,p, \kappa^{\prime})  \, \eps^4  \,\delta^{-2} \, \gamma^{- 4 \kappa^{\prime}  } \notag\\
& \sum_{\ell =1}^{n} \bigg[   \Big( \E \sup_{0 \leq r \leq t} \big\|   \RG{n-\ell}(r,\cdot) \big\|_{\Cc^{-\kappa^{\prime}}}^p    + \E \sup_{0 \leq r \leq s}  \big\|   R_{\ga,s}^{\colon n-\ell \colon}(r,\cdot) \big\|_{\Cc^{-\kappa^{\prime}}}^p \Big)^{\frac{2}{p}} \notag
 \\
& \qquad \times \int_{r_1=0}^t \ldots \int_{r_{\ell-1}=0}^{r_{\ell-2}} \sum_{z_1, \ldots , z_{\ell-1} \in \Le } \eps^{2(\ell-1)} \notag \\
& \qquad \quad \sup_{\substack{0 \leq r_\ell \leq r_{\ell-1}\\z_\ell \in \Le  }}\Big(  \int_{\T^2} \big| f(x)  \big| \;      \prod_{j=1}^{\ell}  \big| \Pg{t-r_j} \star \Kg(x - z_j)  \big| \, dx \Big)^2   \; d\rrr_{\ell-1}   \bigg] \;. \label{e:RN8}
\end{align}
We turn to bound $\msf{Err}^{\prime}$. In the same way,
\begin{align}
\msf{Err}^{\prime}  &=  C \eps^4 \delta^{-2} \bigg( \E  \sup_{\substack{s \leq r_n \leq t \\z_1 \in \Le  }}  \big|  \I{n-1} F_{\ell}^t  (r_1, z_1)  \big|^p   \bigg)^{\frac{2}{p}} \notag\\
&\quad +C \eps^4 \delta^{-2}\sum_{\ell=2}^{n}   \int_{r_1=s}^t \ldots \int_{r_{\ell-1}=0}^{r_{\ell-2}} \sum_{z_1, \ldots , z_{\ell-1} \in \Le } \eps^{2(\ell-1)}\notag\\
& \qquad \qquad \qquad   \qquad  \bigg( \E \sup_{\substack{0 \leq r_\ell \leq r_{\ell-1}\\z_\ell \in \Le  }}  \big|  \I{n-\ell} F_{\ell}^t  (\rrr_\ell, \zz_\ell)  \big|^p   \bigg)^{\frac{2}{p}} \; d\rrr_{\ell-1}    \; \notag \\
 & \leq \;C(n,p, \kappa^{\prime})  \, \eps^4  \,\delta^{-2} \, \gamma^{- 4 \kappa^{\prime}  } 
 \sum_{\ell =1}^{n} \bigg[   \Big( \E \sup_{0 \leq r \leq t} \big\|   \RG{n-\ell}(r,\cdot) \big\|_{\Cc^{-\kappa^{\prime}}}^p     \Big)^{\frac{2}{p}} \notag
 \\
& \quad \times \int_{r_1=0}^t \ldots \int_{r_{\ell-1}=0}^{r_{\ell-2}} \sum_{z_1, \ldots , z_{\ell-1} \in \Le } \eps^{2(\ell-1)}  \notag  \\
& \qquad \quad \sup_{\substack{0 \leq r_\ell \leq r_{\ell-1}\\z_\ell \in \Le  }}\Big(  \int_{\T^2} \big| f(x)  \big| \;      \prod_{j=1}^{\ell} \big| \Pg{t-r_j} \star \Kg(x - z_j)  \big| \, dx \Big)^2   \; d\rrr_{\ell-1}   \bigg]  \label{e:RN9}\;.
\end{align}
The integral appearing both on the right-hand side of \eqref{e:RN8} and \eqref{e:RN9} specialised to the case $f(x) = \eta_k(u-x)$ for some $k \geq -1$ and $u \in \T^2$ is bounded in Lemma~\ref{le:ErrIntBound}.

Actually, in the case $n=1$, we obtain a slightly better bound, because the only stochastic process $\RG{n-\ell}$ appearing on the right-hand side of \eqref{e:RN8} and \eqref{e:RN9} is $\RG{0}=1$. Hence, the embedding from $\Cc^{-\ka^{\prime}} \to L^\infty$ is unnecessary, and we do not need to introduce the factor $\gamma^{-4 \ka^{\prime}}$.

Finally, summarising our calculations \eqref{e:RN4} and \eqref{e:RN8} as well as the bounds derived in Lemmas~\ref{le:FourCalc} and~\ref{le:ErrIntBound}, we can conclude that for every $n \geq 1$,  $p \geq 2$,  $T \geq 0$,  $\lambda \in [0,1]$, and  $\al > 0$, there exists a constant $C=C(n,p,T,\lambda,\al)$ such that for all $0 < \ga < \ga_0$,   $0 \leq s \leq t \leq T$,  $u \in \T^2$, and every $k \geq -1$ ,
\begin{align}
\Big( \E & \sup_{0 \leq r \leq t}\big| \dk  \RG{n}(r,u) -\dk R_{\ga,s}^{\colon n \colon}(r \wedge s, u) \big| ^p \Big)^{\frac{2}{p}}  
 \leq  C |t-s|^{\lambda}  \, 2^{2k \lambda} (k+2)^n \notag\\
& + C \ga^{2 -4 \al}  \big( \log(\ga^{-1})\big)^{2+6(n-1)}  \max_{\substack{\ell=0, \ldots, n-1\\\tau = s,t}} \bigg(\E \sup_{0 \leq r \leq \tau} \big\| R_{\ga,\tau}^{\colon \ell \colon}(r,\cdot)  \|_{\Ca}^p \bigg)^{\frac{2}{p}} \;, \label{e:RN10}
\end{align}
where as above we use the convention $\RG{0} =1$. We can bound  
\begin{equation*}
\Big( \E  \sup_{0 \leq r \leq t} \big| \dk  \RG{n}(r,u) -\dk \RG{n}(r \wedge s, u) \big| ^p \Big)^{\frac{2}{p}}  
\end{equation*}
by exactly the same quantity, with the only difference that the $\max$ in the second line only needs to be taken with respect to $\tau =t$. The desired bounds \eqref{e:RgRegularity2} and \eqref{e:RgRegularity3} now follow easily by induction, as we now explain.  The arguments are identical for both bounds, so we restrict ourselves to \eqref{e:RgRegularity2}.

For $n=1$, the bound \eqref{e:RN10} reduces to 
\begin{align}
\Big( \E  \sup_{0 \leq r \leq t} &\big| \dk  \Rg(r,u) -\dk r_{\ga,s}(r \wedge s, u) \big| ^p \Big)^{\frac{2}{p}} \notag  \\
& \leq  C |t-s|^{\lambda}  \, 2^{2k \lambda} (k+2)    + C \ga^{2 }  \big( \log(\ga^{-1})\big)^{2} \;, \notag
\end{align}
so that \eqref{e:RgRegularity2} for $n=1$ and $p$ large enough (choosing $p > \frac{2}{\al}$ suffices) follows from Proposition~\ref{prop:Kolmogorov}. To pass from $n-1$ to $n$, we 
observe that for every fixed $\ka^{\prime}>0$, the inductive hypothesis implies that 
\begin{align}
\label{e:supR}
\max_{\substack{\ell=0, \ldots, n-1\\\tau = s,t}} \bigg(\E \sup_{0 \leq r \leq \tau} \big\| R_{\ga,\tau}^{\colon \ell \colon}(r,\cdot)  \|_{\Cc^{-\ka^{\prime}}}^p \bigg)^{\frac{2}{p}} 
\end{align}
is uniformly bounded for $t \leq T$. Hence \eqref{e:RN10} turns into 
\begin{align}
\Big( \E \sup_{0 \leq r \leq t} & \big| \dk  \RG{n}(r,u) -\dk R_{\ga,s}^{\colon n \colon}(r \wedge s, u) \big| ^p \Big)^{\frac{2}{p}}  \notag\\
 &\leq  C |t-s|^{\lambda}  \, 2^{2k \lambda} (k+2)^{n} 
 + C \ga^{2 -4 \ka^{\prime}}  \big( \log(\ga^{-1})\big)^{2+6(n-1)}   \;, \notag
\end{align}
for $C=C(n,p,T,\lambda,\ka^{\prime})$. By choosing $\ka^{\prime} = \frac{\ka}{4}$ and applying Proposition~\ref{prop:Kolmogorov} for $p$ large enough, \eqref{e:RgRegularity2} follows for arbitrary $n \in \N$.
%
%%%%%%%%%%%%%%%%%
\end{proof}
%%%%%%%%%%%%%%%%%

%
%
%%%%%%%%%%%%%%%%%
\begin{lemma}\label{le:FourCalc}
%%%%%%%%%%%%%%%%%
Let $n \in \N$ and $\underline{c} < \bar{c}$ be fixed. Let $0 < \ga < \ga_0$, where $\ga_0$ is the constant appearing in Lemma \ref{le:Kg}. For any $k \geq 0$ let $I_k = 2^k \big[\underline{c}, \overline{c} \big] $,  and let $I_{-1} = [0, 2^{-1}\overline{c}]$. Then for every $T>0$ and  $\la \in [0,1]$, there exists a constant $C= C(n,\underline{c}, \overline{c},\la,T)$ such that for all $k \geq -1$, and $0 \leq s < t\leq T$, we have
\begin{align}
\sum_{\sum \bom  \in I_k} &\,\;   \frac{ \big|\hKg(\om_1)\big|^2}{\frac{1}{t-s} + 2 \gamma^{-2} \big( 1 -\hKg(\om_1)\big)} \:\; \prod_{j=2}^n \,\frac{ \big|\hKg(\om_j)\big|^2}{\frac{1}{t} + 2 \gamma^{-2} \big( 1 -\hKg(\om_j)\big)} \notag\\
&\leq C |t-s|^\la   \;2^{2k\la} (k+2)^{n-1}   \;,   \label{e:FourierCalculation}
\end{align}
where for any $\bom = (\om_1, \ldots, \om_n) \in (\{-N, \ldots, N\}^2)^n$, we use the short-hand notation  $\sum \bom =  \sum_{j=1}^n \om_j$.
\end{lemma}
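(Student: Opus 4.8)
The plan is to reduce \eqref{e:FourierCalculation} to two elementary one-variable summation bounds together with a dyadic case analysis on the sizes of the frequencies. Write $g_\tau(\om):=\frac{|\hKg(\om)|^2}{\tau^{-1}+2\ga^{-2}(1-\hKg(\om))}$ for $\om\in\{-N,\dots,N\}^2$ and $\tau>0$, so that the left-hand side of \eqref{e:FourierCalculation} equals $\sum_{\sum\bom\in I_k}g_{t-s}(\om_1)\prod_{j=2}^ng_t(\om_j)$. By Lemma~\ref{le:Kg} (after possibly shrinking $\ga_0$) there are constants $0<c\le1\le C$ with $0\le 1-\hKg(\om)\le 2$, $\ga^{-2}(1-\hKg(\om))\ge c\,(|\om|\wedge\ga^{-1})^2$ and $|\hKg(\om)|^2\le C\,(1\vee\ga|\om|)^{-4}$; writing $w(\om):=|\om|\wedge\ga^{-1}$ and $\rho(\om):=(1\vee\ga|\om|)^{-4}$, this gives the pointwise bound $g_\tau(\om)\le C\,\rho(\om)\,\min\!\big(\tau,\tfrac1c w(\om)^{-2}\big)$.

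From this I would first record, with constants depending only on $T,c,C$: a \emph{ball bound} $\sum_{0<|\om|\le M}g_\tau(\om)\lesssim 1+\log M$ for $M\ge1$, $\tau\le T$ (the logarithm is the two-dimensional divergence of $\sum_{1\le|\om|\le M}|\om|^{-2}$, the part $|\om|>\ga^{-1}$ contributing only $\lesssim\ga^{-2}\sum_{|\om|>\ga^{-1}}|\om|^{-4}\lesssim1$; note the bound in fact saturates at $\lesssim 1+\log\ga^{-1}$); and an \emph{annulus bound} $\sum_{M\le|\om|\le 2M}g_\tau(\om)\lesssim\min(\tau M^2,1)\le(\tau M^2)^\la$ for $M\ge1$, $\tau\le T$, $\la\in[0,1]$ (the annulus holds $\lesssim M^2$ lattice points, on each of which $g_\tau(\om)\lesssim\rho(M)\min(\tau,w(M)^{-2})$; one checks the bound separately for $M\le\ga^{-1}$ and $M>\ga^{-1}$, the decay $\rho$ handling the latter). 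Applying the annulus bound with $M\sim2^k$ and $\tau=t-s$ is what produces the factor $|t-s|^\la2^{2k\la}$.

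I would then split the sum according to the largest dyadic scale $2^L:=\max_j(|\om_j|\vee1)$. Since $\sum\bom\in I_k$ forces $|\om_1+\dots+\om_n|\asymp2^k$ (or $\le\tfrac12\bar c$ when $k=-1$, a case handled the same but more easily), one always has $2^L\gtrsim2^k$. In the \emph{resonant regime} $2^L\le A2^k$ (for a constant $A=A(n,\bar c)$) all $|\om_j|\lesssim2^k$, and the index $j_0$ of the largest frequency has $|\om_{j_0}|\asymp2^k$; summing over the $n$ possible values of $j_0$ and discarding the coupling $\sum\bom\in I_k$, one bounds the $\om_{j_0}$-sum by the annulus bound, each remaining $t$-variable by the ball bound ($\lesssim k+2$ each), and — when $j_0\ne1$ — the $\om_1$-sum by $\min(k+2,(t-s)2^{2k})\le((t-s)2^{2k})^\la(k+2)^{1-\la}$ (a ball of $\lesssim2^{2k}$ points on each of which $g_{t-s}\le t-s$, together with the ball bound). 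Both sub-cases give $((t-s)2^{2k})^\la(k+2)^{n-1}$ or less. In the \emph{non-resonant regime} $2^L>A2^k$, the near-cancellation $2^k\ll2^L$ forces at least two frequencies to sit at scale $\sim2^L$; once the remaining variables are fixed, $\om_1$ is confined by $\sum\bom\in I_k$ to $\lesssim2^{2k}$ values and its sum contributes $\lesssim2^{2k}\rho(2^L)\min(t-s,w(2^L)^{-2})$ (or, if $\om_1$ is one of the two large ones, one of them is pinned in this way), the companion large frequency contributes $\lesssim\min(t2^{2L},1)\lesssim1$, and the at most $n-2$ remaining variables cost $\lesssim\big((L\wedge\log\ga^{-1})+2\big)$ each. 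Inserting $\min(t-s,w(2^L)^{-2})\le(t-s)^\la w(2^L)^{-2(1-\la)}$ and comparing $2^{2k}w(2^L)^{-2(1-\la)}\rho(2^L)$ with $2^{2k\la}$ leaves a geometric factor $2^{-2(L-k)(1-\la)}$ for $2^L\le\ga^{-1}$ and the rapid decay $\rho(2^L)$ for $2^L>\ga^{-1}$; either dominates the polynomial cost of the remaining variables, so the sum over $L\ge k$ converges to $\lesssim(t-s)^\la2^{2k\la}(k+2)^{n-1}$. Assembling the resonant and non-resonant contributions gives \eqref{e:FourierCalculation}.

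The main difficulty is the non-resonant regime, precisely because the individual $|\om_j|$ are a priori unconstrained: configurations in which several large frequencies nearly cancel must be shown negligible, which is where the decay $|\hKg(\om)|\lesssim|\ga\om|^{-2}$ for $|\om|>\ga^{-1}$ (and the weaker $g_t(\om)\lesssim\min(t,|\om|^{-2})$ for $2^k<|\om|\le\ga^{-1}$) is indispensable. Keeping the exponent of the logarithm at exactly $n-1$ — rather than accumulating extra powers of $\log\ga^{-1}$ from the unconstrained intermediate scales — is what forces one to identify the single variable furnishing the annulus gain and to use $\sum\bom\in I_k$ to confine the others to balls of radius $\lesssim2^k$; balancing the competing powers of $\ga$, $t-s$ and $2^k$ in the non-resonant sum is the technical heart of the argument.
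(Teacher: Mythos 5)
Your approach is genuinely different from the paper's: you perform a direct dyadic decomposition in the largest frequency scale $2^L$, whereas the paper proves an inductive convolution estimate
$G^{(n)}(\om) \lesssim r^\la (1+|\om|)^{-2(1-\la)}\log(1+|\om|)^{n-1}$
(claim~\eqref{e:FourierClaim}, proven by splitting the one-step convolution at each stage of the induction into a resonant, an off-diagonal and a tail piece) and then sums over $|\om|\in I_k$. Your resonant regime ($2^L\lesssim 2^k$) is clean and correct. But the non-resonant regime as written has a gap. You assert that the pinned $\om_1$-sum contributes $\lesssim 2^{2k}\rho(2^L)\min\big(t-s,w(2^L)^{-2}\big)$; this requires $|\om_1|\asymp 2^L$. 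When $\om_1$ is \emph{not} one of the (at least) two large frequencies — a sub-case your parenthetical acknowledges but describes in a garbled way — pinning $\om_1$ via the constraint $\sum\bom\in I_k$ only yields $\lesssim 2^{2k}(t-s)$, with no decay in $L$ to offset the sum over $L$ up to $\log_2 N$. The correct fix is to pin one of the large frequencies $\om_a$ (with $a\neq 1$), obtaining $\lesssim 2^{2k}\rho(2^L)w(2^L)^{-2}$; take the companion large frequency as an annulus ($\lesssim 1$); and extract $(t-s)^\la$ from $\om_1$'s own ball sum via $\min\big(L+2,(t-s)2^{2L}\big)\le(t-s)^\la 2^{2L\la}(L+2)^{1-\la}$, so that $2^{2L\la}$ is compensated by $\rho(2^L)w(2^L)^{-2}\lesssim 2^{-2L}$, leaving the geometric factor $2^{-2(L-k)(1-\la)}$ to beat the polynomial $(L+2)^{n-2}$. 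This works, but must be spelled out; as written, the $(t-s)^\la$ factor simply does not appear in this sub-case.

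Note also that this fix (and hence your whole non-resonant argument) only closes for $\la<1$, with a constant blowing up as $\la\to1$, since the geometric decay $2^{-2(L-k)(1-\la)}$ degenerates. The paper's own argument has the same defect: its $\Aa_3$-sum in the proof of \eqref{e:FourierClaim} diverges when $\la=1$, and the stated bound fails at $\om=0$ for $n\ge 2$ (where $G^{(n)}(0)\gtrsim r\log r^{-1}$ while the right-hand side vanishes). This endpoint is harmless in both cases since Proposition~\ref{prop:RGBoundOne} only invokes the Lemma with $\la\le\frac12$, but it means your claim that the bound holds uniformly on $[0,1]$ (and indeed the Lemma's statement as written) is not quite delivered by either proof.
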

%%%%%%%%%%%%%%%%%%%
%
%
%
%%%%%%%%%%%%%%%%%%%%%%
\begin{proof}
We assume that $0 < \ga < \ga_0$ where $\ga_0$ is the constant appearing in Lemma~\ref{le:Kg}. We only need to consider those  $\bom$ with $ \om_j \in \{-N, \ldots, N\}^2$ for all $j$ because all other summands vanish.  

For $|\om_j| \le \ga^{-1}$, we can use \eqref{e:K1} and \eqref{e:K2} to bound 
\begin{align*}
 \frac{ \big|\hKg(\om_j)\big|^2}{\frac1t + 2 \gamma^{-2} \big( 1 -\hKg(\om_j)\big)}  \leq \,\frac{1}{\frac1t + \frac{2}{C_1} |\om_j|^2}\;  \leq C(T)  \Big( \frac{1}{1+ |\om_j|^2}  \wedge t \Big)\;.
\end{align*}
For $|\om_j| > \ga^{-1}$, we get essentially the same bound using \eqref{e:K1},\eqref{e:K3} and \eqref{e:K2}:
\begin{align}\label{e:Four_Calc1}
 \frac{ \big|\hKg(\om_j)\big|^2}{\frac1t + 2 \gamma^{-2} \big( 1 -\hKg(\om_j)\big)}  \leq \,\frac{C}{|\ga \om_j|^2} \,\frac{1}{\frac1t + \frac{2}{C_1}\ga^{-2}} \;  \leq C  \Big( \frac{1}{1+ |\om_j|^2}  \wedge t \Big)\;.
\end{align}

To bound the sum over these terms we claim that for any $\om\in \Z^2$, $0 \leq \la \leq 1$ and $r>0$, we have the following bound
\begin{align}\label{e:FourierClaim}
G^{(n)}(\om)& := \sum_{ \substack{ \bom \in (\Z^2)^n\\ \sum \bom = \om}} \, \bigg( \frac{1}{1 +  |\om_1|^2} \wedge r\bigg)      \prod_{j=2}^n \,\frac{1}{1 +  |\om_j|^2}\notag\\
& \leq C(n,\la) \;   r^\la  \, \frac{1}{1 + |\om|^{2(1-\la)}}  \log(1 +|\om|)^{n-1}.
\end{align}
We show \eqref{e:FourierClaim} by induction. For  $n=1$, it follows from an easy interpolation. For $n \geq 2$, we observe that for any $\om \in \Z^2$,
\begin{equation*}
G^{(n)}(\om) = \sum_{\om_1 + \om_2 = \om} G^{(n-1)}(\om_1) \, \frac{1}{1+ |\om_2|^2} \;. 
\end{equation*}
To bound this sum, we split its index set $\Aa(\om) :=\{ (\om_1, \, \om_2 ) \in (\Z^2)^2 \colon\, \om_1 + \om_2 = \om \}$  into the following three sets
\begin{align*}
\Aa_{1}(\om)&:= \big\{ (\om_1, \, \om_2 ) \in \Aa\colon  \quad   |\om_2| \leq \tfrac{1}{2}|\om| \big\}\;,\\
\Aa_{2}(\om)&:= \big\{ (\om_1, \, \om_2 ) \in \Aa\colon  \quad  |\om_2| > \tfrac{1}{2}|\om| \,,\quad \text{and} \quad  |\om_1| \leq 3|\om| \big\}\;,\\
\Aa_{3}(\om)&:= \big\{ (\om_1, \, \om_2 ) \in \Aa \colon  \quad     |\om_1| > 3|\om| \big\}\;.
\end{align*}
On  $\Aa_1(\om)$, we have by the triangle inequality that  $|\om_1| \geq \tfrac12|\om|$, so that we can bound
\begin{align*}
\sum_{ \Aa_1(\om)} &G^{(n-1)}(\om_1) \, \frac{1}{1+ |\om_2|^2} \\
&\leq  C\frac{r^\la}{1+ \big| \tfrac12 \om  \big|^{2(1-\la)}} \log\big(1 + \tfrac12 |\om| \big)^{n-2} \sum_{|\om_2| \leq \tfrac12 |\om|} \frac{1}{1 + |\om_2|^2}\\
&\leq  C \,  \frac{r^\la}{1+ |\om|^{2(1-\la)}} \log\big( 1+ |\om| \big)^{n-1}.
\end{align*}
For the sum over $\Aa_2(\om)$ we write
\begin{align*}
\sum_{ \Aa_2(\om)} G^{(n-1)}(\om_1) \, \frac{1}{1+ |\om_2|^2} &\leq C \frac{1}{1+ \tfrac14|\om|^2}  \sum_{|\om_1| \leq 3|\om|} \frac{r^{\la}}{1+ |\om_1|^{2(1-\la)}} \log\big(1+ |\om_1| \big)^{n-2}\\
&\leq  C \,  \frac{r^\la}{1+ |\om|^{2(1-\la)}} \log\big(1+  |\om| \big)^{n-1}.
\end{align*}
Actually, the extra logarithm on the right-hand side of this bound is only necessary in the case $\la =0$, but we do not optimise this. 
For the sum over $\Aa_3(\om)$, we observe that on $\Aa_3$ we have $|\om_2| \geq \tfrac12 |\om_1| $ to write
\begin{align*}
\sum_{\Aa_3(\om)} G^{(n-1)}(\om_1) \, \frac{1}{1+ |\om_2|^2} &\leq C\sum_{|\om_1| \geq 3|\om|} \frac{r^\la}{1+ |\om_1|^{2(1-\la)}} \log\big( 1+|\om_1| \big)^{n-2} \frac{1}{1 + \tfrac14 |\om_1|^2}\\
&\leq  C \,  \frac{r^{\la}}{1+ |\om|^{2(1-\la)}} \log\big(1+  |\om| \big)^{n-1}\;,
\end{align*}
which finishes the proof of \eqref{e:FourierClaim}. Summing $G^{(n)}(\om)$ over all $\om$ with $|\om| \in I_k$ establishes \eqref{e:FourierCalculation}.
\end{proof}
%
%
%%%%%%%%%%%%%%%%%%%%
\begin{lemma}\label{le:ErrIntBound}
%%%%%%%%%%%%%%%%%%%%
There exists $\ga_0>0$ such for any $T >0$,  there exists a constant $C= C(T)$ such that for any $\ell \in \N$, $0 \leq t \leq T$,  $k \geq -1$, $0 < \ga < \ga_0$ and $u \in \T^2$, we have 
 \begin{align}
&\int_{r_1=0}^t \ldots \int_{r_{\ell-1}=0}^{r_{\ell-2}} \sum_{z_1, \ldots , z_{\ell-1} \in \Le } \eps^{2(\ell-1)}  \notag\\
& \qquad \qquad    \sup_{\substack{0 \leq r_\ell \leq r_{\ell-1}\\z_\ell \in \Le  }}    \bigg(   \int_{\T^2} \big| \ek(u-x) \big|      \prod_{j=1}^{\ell}  \big| \Pg{t-r_j} \star \Kg(x - z_j)  \big| \, dx \bigg)^2 \; d\rrr_{\ell-1} \notag\\
& \qquad  \leq C \gamma^{-4 }  \big(\log(\ga^{-1})\big)^{2+6 (\ell-1)} \;.  \label{e:ErrorIntegral}
\end{align}
\end{lemma}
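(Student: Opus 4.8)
The plan is to estimate the inner $x$-integral in \eqref{e:ErrorIntegral} by a quantity that no longer depends on $x$ or $u$, paying for this with an extra square and the summations over the $z_j$. Write $h_j:=\Pg{t-r_j}\star\Kg$; by \eqref{e:Fourier-semi} this is a trigonometric polynomial of degree $\le N$ with Fourier coefficients $\hPg{t-r_j}(\om)\hKg(\om)$. Set
\[
g(s):=\tfrac14\sum_{|\om|_\infty\le N} e^{-s\gamma^{-2}(1-\hKg(\om))}|\hKg(\om)|,\qquad
\psi(s):=\tfrac14\sum_{|\om|_\infty\le N} e^{-2s\gamma^{-2}(1-\hKg(\om))}|\hKg(\om)|^2 .
\]
Reading off these Fourier series gives $\|\Pg{s}\star\Kg\|_{L^\infty(\T^2)}\le g(s)$; moreover, since $\Pg{s}\star\Kg$ has degree $\le N$, the discrete Parseval identity on $\Le$ is exact and yields $\sum_{z\in\Le}\eps^2|\Pg{s}\star\Kg(w-z)|^2=\|\Pg{s}\star\Kg\|_{L^2(\T^2)}^2=\psi(s)$ for every $w\in\T^2$. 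Both $g$ and $\psi$ are nonincreasing in $s$, and $\psi\le g$ since $|\hKg|\le1$ and $1-\hKg\ge0$.

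The key reduction goes as follows. I would expand the square in \eqref{e:ErrorIntegral} into a double integral over $x,x'\in\T^2$, and use that the supremum of an integral is at most the integral of the supremum to pull out the factor $j=\ell$ as $\sup_{0\le r_\ell\le r_{\ell-1}}\|h_\ell\|_{L^\infty}^2\le g(t-r_{\ell-1})^2$, by monotonicity of $g$. Bringing $\sum_{z_1,\dots,z_{\ell-1}}\eps^{2(\ell-1)}$ inside and applying the exact Parseval identity together with Cauchy--Schwarz in each $z_j$ turns the $j$-th remaining factor into $\sum_{z_j}\eps^2|h_j(x-z_j)||h_j(x'-z_j)|\le\psi(t-r_j)$, which is independent of $x,x'$. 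Since $\|\ek\|_{L^1(\T^2)}\le C$ uniformly in $k\ge-1$ (Appendix~\ref{sec:Besov}), the remaining $x,x'$ integration contributes only $\|\ek\|_{L^1}^2\le C$. Integrating the resulting bound $C\,g(t-r_{\ell-1})^2\prod_{j=1}^{\ell-1}\psi(t-r_j)$ over the time simplex, enlarging it to $[0,T]^{\ell-1}$ (nonnegative integrand), using $\psi\le g$ and Fubini, the left side of \eqref{e:ErrorIntegral} is at most $C\big(\int_0^T g(s)\,ds\big)^{\ell-2}\int_0^T g(s)^3\,ds$ for $\ell\ge2$, and simply at most $C\,g(0)^2$ when $\ell=1$.

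It then remains to estimate $g$, which is where Lemma~\ref{le:Kg} enters. Splitting the defining sum at $|\om|=\gamma^{-1}$: for $|\om|\le\gamma^{-1}$, \eqref{e:K2} gives $\gamma^{-2}(1-\hKg(\om))\ge c|\om|^2$, so that part of $g(s)$ is $\ls\min(\gamma^{-2},\,1+s^{-1})\ls 1+(s+\gamma^2)^{-1}$; for $\gamma^{-1}<|\om|\le N$, \eqref{e:K2} and \eqref{e:K3} give $1-\hKg(\om)\ge c$ and $|\hKg(\om)|\le C|\gamma\om|^{-2}$, and since $N=\lfloor\gamma^{-2}\rfloor$ that part is $\ls e^{-cs\gamma^{-2}}\gamma^{-2}\log\gamma^{-1}$. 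Hence $g(s)\ls 1+(s+\gamma^2)^{-1}+e^{-cs\gamma^{-2}}\gamma^{-2}\log\gamma^{-1}$, whence $\int_0^T g\ls_T \log\gamma^{-1}$ (the $\om\ne0$ part being $\ls\gamma^2\sum_{\om\ne0}\frac{|\hKg(\om)|}{1-\hKg(\om)}\ls\log\gamma^{-1}$), $\int_0^T g^3\ls_T \gamma^{-4}(\log\gamma^{-1})^3$, and $g(0)\ls\gamma^{-2}\log\gamma^{-1}$. Plugging back yields a bound of the form $C(T)^{\ell-1}\gamma^{-4}(\log\gamma^{-1})^{\ell+1}$; after possibly shrinking $\gamma_0$ depending on $T$ so that $(\log\gamma_0^{-1})^5\ge C(T)$, the factor $C(T)^{\ell-1}$ is absorbed into $(\log\gamma^{-1})^{5(\ell-1)}$, and $\ell+1+5(\ell-1)=2+6(\ell-1)$, which is exactly \eqref{e:ErrorIntegral}.

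The main obstacle is the reduction in the second paragraph: one must \emph{not} bound the $\ell$ factors $|\Pg{t-r_j}\star\Kg|$ separately in $L^\infty$, because $\|\Pg{s}\star\Kg\|_{L^\infty}\sim\gamma^{-2}$ for $s\ls\gamma^2$ and the square of this is time-integrable only at the cost of a factor $\gamma^{-2}$, which would produce $\gamma^{-2\ell}$ overall instead of $\gamma^{-4}$. Squaring the $x$-integral and summing out the $z_j$ is what converts $\ell-1$ of the factors into the $L^2$-type quantity $\psi$, whose time integral costs merely a logarithm; only the single surviving $L^\infty$ factor carries the $\gamma^{-4}$. A secondary point is that the discrete Parseval step is exact precisely because $\Pg{s}\star\Kg$ --- unlike the higher iterated integrals $\RG{n}$ --- is a trigonometric polynomial of degree $\le N$, so no aliasing occurs in the sums over $\Le$.
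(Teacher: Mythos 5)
Your proof is essentially correct and takes a genuinely different route from the paper's, which is worth noting. The paper's proof writes the $(\ell-1)$-fold time integral (using symmetry, and hence keeping a factor $1/(\ell-1)!$) as a power of $\int_0^t\sum_{z\in\Le}\eps^2|\Pg{r}\star\Kg(x_1-z)||\Pg{r}\star\Kg(x_2-z)|\,dr$ and then bounds the $z$-sum by a Young-type $L^\infty(\Le)\times L^1(\Le)$ estimate built from the pointwise bounds of Lemma~\ref{le:Pgt}, paying $(\log\gamma^{-1})^6$ per factor. Your version instead applies Cauchy--Schwarz in each $z_j$ and uses the \emph{exact} discrete Parseval identity (valid because $h_j(x-\cdot)^2$ is a trigonometric polynomial of degree $\le 2N$ and the quadrature on $\Le$ is exact up to degree $2N$), which turns each factor into $\psi(t-r_j)=\|\Pg{t-r_j}\star\Kg\|_{L^2}^2$. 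Since $\int_0^T\psi\lesssim\int_0^T g\lesssim_T\log\gamma^{-1}$, this pays only one logarithm per factor, yielding the sharper exponent $\ell+1$ in place of the paper's $2+6(\ell-1)=6\ell-4$; both are of course below the stated bound. Your use of monotonicity of $g$ to get $\|h_\ell\|_{L^\infty}\le g(t-r_{\ell-1})$ is fine since $1-\hKg\ge 0$ by \eqref{e:K2} and the normalization $\hKg(0)=1$, so $g$ is indeed nonincreasing.

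There is, however, a small but genuine quantifier problem in the final step. When you enlarge the time simplex to the box $[0,T]^{\ell-1}$, you discard the combinatorial factor and end up needing to absorb $C(T)^{\ell-1}$ into $(\log\gamma^{-1})^{5(\ell-1)}$ by shrinking $\gamma_0$ \emph{depending on $T$}. The lemma asserts the existence of $\gamma_0$ \emph{before} $T$ is quantified, so as written your argument proves a slightly weaker statement. The fix is to keep the factorial: integrate out $r_1,\ldots,r_{\ell-2}$ first, which by the symmetry of $\prod_{j<\ell-1}\psi(t-r_j)$ gives $\frac{1}{(\ell-2)!}\bigl(\int_{r_{\ell-1}}^{t}\psi\bigr)^{\ell-2}$, and only then integrate $r_{\ell-1}$ against $g^2\psi\le g^3$. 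This yields $\frac{C(T)^{\ell-2}}{(\ell-2)!}\cdot C(T)\gamma^{-4}(\log\gamma^{-1})^{\ell+1}$, and $\sup_\ell C(T)^{\ell-2}/(\ell-2)!\le e^{C(T)}$ is absorbed into $C(T)$ with $\gamma_0$ independent of $T$, matching the lemma's quantifier structure. This is in fact what the paper's $\frac{1}{(\ell-1)!}$ accomplishes there.
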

%%%%%%%%%%%%%%%%%%%%%
%
%
\begin{proof}
Throughout the calculation we make heavy use of the  pointwise bounds on $ \Pg{t} \star  \Kg $ derived in Lemma~\ref{le:Pgt}. From now on we choose $\ga_0$ as the constant appearing in this lemma, and we assume that $0 < \ga < \ga_0$.
Furthermore, for notational simplicity we assume that $u=0$, but the argument and the choice of constants are independent of this.

We start by fixing $r_1 > r_2> \ldots r_{\ell-1}>0$ and $z_1, \ldots, z_{\ell -1} \in \Le$ and bounding 
\begin{align}
&\sup_{\substack{0 \leq r_\ell \leq r_{\ell-1}\\z_\ell \in \Le  }}    \bigg(   \int_{\T^2} \big| \eta_k(x) \big|      \prod_{j=1}^{\ell}  \big|\Pg{t-r_j} \star \Kg(x - z_j) \big| \, dx \bigg)^2 \; \notag\\
&\leq \sup_{0 \leq r \leq T }  \big\| \Pg{r} \star \Kg \big\|_{L^\infty(\T^2)}^2  \bigg(   \int_{\T^2} \big| \eta_k(x) \big|      \prod_{j=1}^{\ell-1} \big| \Pg{t-r_j} \star \Kg(x - z_j) \big| \, dx \bigg)^2  \;.\notag
\end{align}
Lemma~\ref{le:Pgt} implies that  for $0 \leq r_{\ell} \leq T$ we have
\begin{equation}\notag
\big\| \Pg{r} \star \Kg \big\|_{L^\infty(\T^2)}^2  \leq  C(T)  \, \gamma^{-4}   \; \big(\log(\ga^{-1})\big)^2 \;.
\end{equation}
Hence we can write
\begin{align}
&\int_{r_1=0}^t \ldots \int_{r_{\ell-1}=0}^{r_{\ell-2}} \sum_{z_1, \ldots , z_{\ell-1} \in \Le } \eps^{2(\ell-1)} \notag \\
& \qquad   \sup_{\substack{0 \leq r_\ell \leq r_{\ell-1}\\z_\ell \in \Le  }}    \bigg(   \int_{\T^2} \big| \ek(x) \big|      \prod_{j=1}^{\ell}  \big| \Pg{t-r_j} \star \Kg(x - z_j)  \big| \, dx \bigg)^2 \; d\rrr_{\ell-1} \notag \\
&\leq C \gamma^{-4}   \; \big(\log(\ga^{-1})\big)^2   \int_{\T^2}  \int_{\T^2} \big| \ek(x_1) \big|  \big| \ek(x_2) \big| \, \Kk(x_1, x_2)  \;  dx_1 \, dx_2 \;,  \label{e:ErrInt1}
\end{align}
where
\begin{align}
&\Kk(x_1,x_2) \notag\\
&=\int_{r_1=0}^t \ldots \int_{r_{\ell-1}=0}^{r_{\ell-2}} \sum_{z_1, \ldots , z_{\ell-1} \in \Le } \eps^{2(\ell-1)}\notag \\
& \qquad \qquad  \prod_{j=1}^{\ell-1}  \big| \Pg{t-r_j} \star \Kg(x_1 - z_j)  \big| \big| \Pg{t-r_j} \star \Kg( x_2 -z_j)  \big| \,d\rrr_{\ell-1} \notag  \\
&=  \frac{1}{(\ell -1)!}  \bigg(   \int_{0}^t  \sum_{z \in \Le } \eps^{2}  \,  \big| \Pg{r} \star \Kg(x_1 - z)  \big| \,\big| \Pg{r} \star \Kg(x_2-z)  \big| \, dr \bigg)^{\ell-1}\;.\notag
\end{align}
Here, in the second line we have used the symmetry of the integrand in the time variables to replace the integral over the simplex $r_{\ell-1} \leq r_{\ell -1} \leq \ldots r_1 \leq t$ by an integral over $[0,t]^{\ell-1}$. We claim that (up to a power of $\log(\ga^{-1})$) the convolution $ \sum_{z \in \Le } \eps^{2}  \,  \big| \Pg{r} \star \Kg(x_1 - z)  \big|\, \big| \Pg{r} \star \Kg(x_2- z)  \big| $ satisfies the same bounds \eqref{e:P0} and \eqref{e:P1} as $\Pg{r} \star \Kg$. Indeed, we get for all $x_1,x_2 \in \T$ that
\begin{align*}
 \sum_{z \in \Le } \eps^{2} & \,  \big| \Pg{r} \star \Kg(x_1 - z)  \big|\, \big| \Pg{r} \star \Kg(x_2- z)  \big| \\
 & \leq  \big\| \Pg{r} \star \Kg (x_1 - \cdot)   \big\|_{L^\infty(\Le)} \,  \big\| \Pg{r} \star \Kg(x_2 - \cdot)  \big\|_{L^1(\Le)} 
 \end{align*}
with the obvious conventions $ \big\| f \big\|_{L^1(\Le)} := \sum_{z \in\Le} \eps^2 \, |f(z)|$ and $ \big\| f \big\|_{L^\infty(\Le)} := \sup_{z \in \Le} |f(z)|$ for any function $f \colon \Le \to \R$. According to \eqref{e:P0}, we have uniformly over $x \in \T^2$ and $0 \leq r \leq T $ that $\big\| \Pg{r} \star \Kg (x - \cdot)   \big\|_{L^\infty(\Le)} \leq  C\big(\log(\ga^{-1})\big)^2   \big( t^{-1}    \wedge \gamma^{-2} \big) \;$. On the other hand, \eqref{e:P0} and  \eqref{e:P1} imply that
\begin{multline}
\label{e:P-L1}
 \big\| \Pg{r} \star \Kg (x - \cdot)  \big\|_{L^1(\Le)}  \\ \leq C \log(\ga^{-1})^2 \sum_{z \in \Le} \eps^2 \bigg( \frac{1}{|x-z|^2} \wedge \ga^{-1}\bigg) \leq C \log(\ga^{-1})^3\;.
\end{multline}
Combining these bounds, we get
\begin{align*}
 \sum_{z \in \Le } \eps^{2} & \,  \big| \Pg{r} \star \Kg(x_1 - z)  \big|\, \big| \Pg{r} \star \Kg(x_2- z)  \big| \leq C\big(\log(\ga^{-1})\big)^5   \big( t^{-1}    \wedge \gamma^{-2} \big)\;,
\end{align*}
for a constant $C$ that is uniform in $0 \leq r \leq T$ and $0 < \ga < \ga_0$.

Integrating this bound over $r$ for any fixed $x_1,x_2 \in \T^2$ brings
\begin{align*}
 \int_{0}^t  &\sum_{z \in \Le } \eps^{2}  \,  \big| \Pg{r} \star \Kg(x_1 - z)  \big| \,\big| \Pg{r} \star \Kg(x_2- z)  \big| \, dr \\
  &\leq C\big( \log(\ga^{-1})\big)^{5} \Big(\int_{0}^{\gamma^2}  \ga^{-2} \, dr +  \int_{\gamma^2 }^t  r^{-1}  \, dr   \Big)\\
  & \leq C\big( \log(\ga^{-1})\big)^{6}\;.
\end{align*}
Plugging this back into \eqref{e:ErrInt1} leads to
\begin{align*}
&\int_{r_1=0}^t \ldots \int_{r_{\ell-1}=0}^{r_{\ell-2}} \sum_{z_1, \ldots , z_{\ell-1} \in \Le } \eps^{2(\ell-1)}  \\
& \qquad   \sup_{\substack{0 \leq r_\ell \leq r_{\ell-1}\\z_\ell \in \Le  }}    \bigg(   \int_{\T^2} \big| \ek(x) \big|      \prod_{j=1}^{\ell}  \big| \Pg{t-r_j} \star \Kg(x - z_j)  \big| \, dx \bigg)^2 \; d\rrr_{\ell-1}\\
& \leq  C \gamma^{-4}   \; \big(\log(\ga^{-1})\big)^{2 +6 (\ell-1)} \,  \big\| \ek  \big\|_{L^1(\T^2)}^2 \;. 
\end{align*}
Recalling that according to Lemma~\ref{l:Bernstein}, $\big\| \ek(x_1) \big\|_{L^1(\T^2)}^2  \leq C$ uniformly in $k \geq -1$, we get the desired conclusion \eqref{e:ErrorIntegral}.
\end{proof}
%%%%%%%%%%%%%%%%%%%%%%%
For technical reasons, below in Lemma~\ref{le:Collection_of_Errors} we will need an additional bound on $\Zg$ that states that the very high frequencies of $\Zg$ are actually much smaller than predicted by Proposition~\ref{prop:RGBoundOne}. We define
\begin{align*}
 \ZgH(t,x)  = \sum_{2^k \geq \frac{\ga^{-2}}{10}} \delta_k \Zg(t,x)\;.
\end{align*}

\begin{lemma}\label{le:highfreq}
There exists a constant $ \ga_0>0$ such that  any $p\geq 1$,  $T>0$, and $\ka>0$, there exists a constant $C= C(p,T,\ka)$ such that for all $0 \leq s \leq t \leq T$ and $0< \ga < \ga_0$,
\begin{align}
 \E \| \ZgH(t, \cdot) \|^p_{L^\infty} \leq C \ga^{p(1 - \ka)}   \;. \label{e:RgRegularity_High}
\end{align}
\end{lemma}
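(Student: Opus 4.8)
The plan is to combine two observations. First, $\Zg(t,\cdot)$ is a trigonometric polynomial of degree $\leq N=\lfloor\ga^{-2}\rfloor$, so $\delta_k\Zg(t,\cdot)$ vanishes as soon as $\chi_k$ is supported away from $\{|\om|\leq N\}$; hence the sum defining $\ZgH$ involves only indices $k$ in a set $\mathscr{K}_\ga$ of the form $\ga^{-2}/10\leq 2^k\leq\tfrac43 N$, whose cardinality is bounded by a universal constant. By the triangle inequality and the power–mean inequality it then suffices to prove, for each $k\in\mathscr{K}_\ga$, that $\E\|\delta_k\Zg(t,\cdot)\|_{L^\infty}^p\leq C(p,T,\ka)\,\ga^{p(1-\ka)}$. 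Second — and this is the point that is \emph{not} exploited in the proof of Proposition~\ref{prop:RGBoundOne} — on these very high frequencies $\hKg$ decays like $|\ga\om|^{-2}$, and this extra decay is precisely what makes the high modes of $\Zg$ of order $\ga$ rather than of order $1$.

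Fix $k\in\mathscr{K}_\ga$ and $u\in\T^2$. I would repeat the argument of the proof of Proposition~\ref{prop:RGBoundOne} in the simplest case $n=1$, $s=0$ — i.e.\ apply Lemma~\ref{le:ItInt} to $f=\eta_k(u-\cdot)$, which is exactly \eqref{e:RN4} with $n=1$, $s=0$ (recall $\Zg(t,\cdot)=\Rg(t,\cdot)$) — to get
\begin{equation*}
\Big(\E\sup_{0\leq r\leq t}|\delta_k\Zg(r,u)|^p\Big)^{2/p}\;\leq\;C\sum_{|\om|\in I_k}\frac{|\hKg(\om)|^2}{\frac1t+2\ga^{-2}\big(1-\hKg(\om)\big)}\;+\;\msf{Err}\;,
\end{equation*}
with $I_k\subseteq[\tfrac34 2^k,\tfrac83 2^k]$. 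Since $2^k\geq\ga^{-2}/10$, for $\ga$ small every $\om$ here satisfies $|\om|>\ga^{-1}$, so Lemma~\ref{le:Kg} gives $|\hKg(\om)|\leq C|\ga\om|^{-2}$ and $1-\hKg(\om)\geq c>0$; each summand is then $\leq C\ga^{2}|\ga\om|^{-4}=C\ga^{-2}|\om|^{-4}$, and summing the $O(2^{2k})$ of them gives $\leq C\ga^{-2}2^{-2k}\leq C\ga^{2}$. The error term is controlled by Lemma~\ref{le:ErrIntBound} with $\ell=1$ together with the scaling \eqref{e:scaling1} (which gives $\eps^{4}\delta^{-2}\leq C\ga^{6}$), yielding $\msf{Err}\leq C\ga^{6}\cdot\ga^{-4}\log(\ga^{-1})^{2}=C\ga^{2}\log(\ga^{-1})^{2}$. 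Thus $\E\sup_{0\leq r\leq t}|\delta_k\Zg(r,u)|^p\leq C\ga^{p}\log(\ga^{-1})^{p}$ uniformly in $u\in\T^2$ and $t\in[0,T]$, and after absorbing the logarithm, $\E|\delta_k\Zg(t,u)|^p\leq C(p,T,\ka)\,\ga^{p(1-\ka)}$ for every $\ka>0$.

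It remains to promote this pointwise bound to a bound on $\|\delta_k\Zg(t,\cdot)\|_{L^\infty}$. Since $\delta_k\Zg(t,\cdot)$ is a trigonometric polynomial of degree $\leq\tfrac83 N\leq C\ga^{-2}$, Bernstein's inequality (Lemma~\ref{l:Bernstein}) bounds its gradient in $L^\infty$ by $C\ga^{-2}\|\delta_k\Zg(t,\cdot)\|_{L^\infty}$; evaluating on a grid of mesh $\sim\ga^{2}$ (hence of cardinality $\leq C\ga^{-4}$) and taking a union bound gives $\E\|\delta_k\Zg(t,\cdot)\|_{L^\infty}^p\leq C\ga^{-4}\sup_{u\in\T^2}\E|\delta_k\Zg(t,u)|^p$. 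Running the previous step with $\ka/4$ in place of $\ka$ and choosing $p$ large enough that $4/p\leq\ka/4$ absorbs the $\ga^{-4}$ loss; the bound for the remaining small values of $p$ then follows from monotonicity of the $L^p(\Omega)$-norms. Summing over the $O(1)$ indices $k\in\mathscr{K}_\ga$ yields \eqref{e:RgRegularity_High}.

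The only genuinely new ingredient, and the step I would pay most attention to, is the Fourier estimate in the second paragraph: here one must use the sharp decay $|\hKg(\om)|\leq C|\ga\om|^{-2}$ on $\{|\om|>\ga^{-1}\}$ from Lemma~\ref{le:Kg} — rather than only the weaker consequence $|\hKg(\om)|^2\leq C|\ga\om|^{-2}$, which is all that is needed in Lemma~\ref{le:FourCalc} — since it is precisely this that turns, for the top modes $|\om|\sim\ga^{-2}$, a Fourier sum of size $O(1)$ into one of size $O(\ga^{2})$. Everything else is routine bookkeeping; the only nuisance is the loss of $\ga^{-4}$ in passing from pointwise to uniform control, which is harmless because it is absorbed by taking $p$ large.
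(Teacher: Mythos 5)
Your proof is correct and follows essentially the same route as the paper's. Both proofs observe that (i) the Littlewood--Paley sum defining $\ZgH$ involves only $O(1)$ blocks $k$ with $2^k\sim\ga^{-2}$, (ii) the $n=1$ case of \eqref{e:RN4} reduces the estimate to a Fourier sum plus an error controlled by \eqref{e:RN10}, and (iii) on frequencies $|\om|\gtrsim\ga^{-2}$ one gains a factor $\ga^2$ by using the \emph{full} quadratic decay $|\hKg(\om)|^2\leq C|\ga\om|^{-4}$ from \eqref{e:K3}, rather than the single factor $|\hKg(\om)|^2 \leq C|\ga\om|^{-2}$ that sufficed in Lemma~\ref{le:FourCalc}. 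You have correctly identified this as the crux, and your Fourier computation ($\ga^{-2}|\om|^{-4}$ summed over $O(2^{2k})$ frequencies with $2^k\geq\ga^{-2}/10$, giving $C\ga^2$) matches \eqref{e:high_freq2}.

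The only divergence is the last step, promoting the pointwise moment bound to an $L^\infty$ one. The paper goes through $\E\|\ZgH\|_{\Ca}^p$ (via the Kolmogorov-type Proposition~\ref{prop:Kolmogorov}) and then applies Lemma~\ref{l:unif-Fourier-cut}; you instead use Bernstein's gradient inequality for trigonometric polynomials together with a union bound over a grid of mesh $\sim\ga^2$. Both are routine and lose only an arbitrarily small power of $\ga$. Two small notational points: (1) the lemma you cite as ``Bernstein's inequality,'' Lemma~\ref{l:Bernstein}, is actually an $L^p$ bound on the dyadic kernels $\eta_\eps$, not the gradient estimate you want — what you are using is the standard Bernstein derivative inequality for band-limited functions, which the paper does not state explicitly; the paper's Lemma~\ref{le:LinftyDiscreteCont} does exactly the job you need and would be the cleaner reference. (2) To make the grid argument run with cardinality $\leq C\ga^{-4}$, the mesh must be $c\ga^2$ for a sufficiently small absolute constant $c$ (so that the Bernstein-controlled oscillation between grid points is at most, say, $\tfrac12\|\cdot\|_{L^\infty}$); this is harmless but worth stating.
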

\begin{proof}
In the proof of Proposition~\ref{prop:RGBoundOne}, we had already seen in \eqref{e:RN4} that for all $k \geq 0$, we have
\begin{align} 
\Big( &\E  \big| \dk  \Zg(t,u) \big| ^p \Big)^{\frac{2}{p}} \leq C \sum_{ |\om| \in 2^k [\frac{3}{4}, \frac{8}{3}] } \,   \frac{ \big|\hKg(\om)\big|^2}{\frac{1}{t} + 2 \gamma^{-2} \big( 1 -\hKg(\om)\big)}   + \msf{Err}\;.\label{e:high_freq1}
\end{align}
According to \eqref{e:RN10}, we have
\begin{align*}
\msf{Err} \leq  C \ga^{2 }  \big( \log(\ga^{-1})\big)^{2} . 
\end{align*}
In order to bound the first expression in \eqref{e:high_freq1}, recall that we are only interested in $k$ with $2^k \geq \frac{\ga^{-2}}{10}$. In particular, for $\ga$ small enough all frequencies appearing in this sum are much larger  than $\ga^{-1}$. For such $\om$, the bounds derived in Lemma~\ref{le:FourCalc} are suboptimal. Indeed, the bound \eqref{e:Four_Calc1} can be improved to
\begin{align}\label{e:high_freq2}
 \frac{ \big|\hKg(\om_j)\big|^2}{\frac1t + 2 \gamma^{-2} \big( 1 -\hKg(\om_j)\big)}  \leq \,\frac{C}{|\ga \om_j|^4} \,\frac{1}{\frac1t + \frac{2}{C_1}\ga^{-2}} \;  \leq C \ga^2  \Big( \frac{1}{1+ |\om_j|^2}  \wedge t \Big)\;,
\end{align}
where we have used the fact that  $|\om| \geq \frac{\ga^{-2}}{10} \frac{3}{4} $. Following the rest of the argument as before, we see that 
\begin{align*}
 \E \| \ZgH(t, \cdot) \|^p_{\Ca} \leq C \ga^p \,   +C \ga^{p} \log(\ga^{-1})^p \;,
\end{align*} 
and the desired bound follows from Lemma~\ref{l:unif-Fourier-cut}. 
\end{proof}

%%%%%%%%%%%%%%%%%%%%%%%
\section{Tightness for the linearised system}
%%%%%%%%%%%%%%%%%%%%%%%%
\label{sec:Tightness}
%%%%%%%%%%%%%%%%%%%%%%%%
In this section, we continue the discussion of the processes $\ZG{n}$ and $\RG{n}$ defined in \eqref{e:ZnB} and \eqref{e:DefZn}. The first main result is Proposition~\ref{p:discrete-Wick} which states that 
$\RG{n}$ can be approximately written as a Hermite polynomial applied to $\Rg$. In Proposition~\ref{prop:tight}, we combine this result with the bounds obtained in Proposition~\ref{prop:RGBoundOne} to show tightness for the family $\ZG{n}$ in an appropriate space. 

We start by comparing the quadratic variation $\langle \Rg(\cdot, x) \rangle_t$ to the bracket process $[\Rg(\cdot,x)]_t$ (see Appendix~\ref{AppA} for the different notions of quadratic variation for a martingale with jumps). Implicitly in the proof of Lemma~\ref{le:ItInt}, we have already seen that for $x\in \Le$ and $0 \leq s \leq t$, we have 
\begin{align}
\langle \Rg(\cdot, x) \rangle_s = 4 \co^2 \int_0^s \sum_{z \in \Le} \eps^2  \big(\Pg{t-r} \ae \Kg \big)^2 (x-z) \; \Cg(r,z) \, dr \;.\label{e:QuadrVarR}
\end{align}
%

%%%%%%%%%%%%%%%%%%
\begin{lemma}
%%%%%%%%%%%%%%%%%%
\label{le:quartic_var}
%%%%%%%%%%%%%%%%%%
For $x \in \Le$, let
\begin{equation}
\label{e:def:Qg}
\Qg(s,x) = [\Rg(\cdot,x)]_s - \langle \Rg(\cdot,x) \rangle_s\;,
\end{equation}
where $s \mapsto [\Rg(\cdot,x)]_s$ is the bracket process of the martingale $\Rg(\cdot,x)$. Let $\ga_0>0$ be the constant appearing in Lemma~\ref{le:Kg}. For any $t \ge 0$, $\ka > 0$ and $1 \le p < +\infty$, there exists $C = C(t,\ka,p)$ such that for $0 < \ga < \ga_0$,
$$
 \E\sup_{x \in \Le} \sup_{0 \le s \le t} |\Qg(s,x)|^p  \le C \ga^{p(1-\ka)}.
$$
\end{lemma}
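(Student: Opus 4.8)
The plan is to exploit that, for each fixed $t\ge 0$ and $x\in\Le$, the process $s\mapsto\Qg(s,x)$ is a martingale null at $0$: this is exactly the statement that $\langle\Rg(\cdot,x)\rangle$ is the predictable compensator of the bracket process $[\Rg(\cdot,x)]$. Since $\langle\Rg(\cdot,x)\rangle$ is absolutely continuous, $\Qg(\cdot,x)$ is purely discontinuous, with jumps $\Delta\Qg(s,x)=(\Delta\Rg(s,x))^{2}$, and it can be estimated with the Burkholder--Davis--Gundy inequality (Lemma~\ref{le:BDG}), used exactly as in the proof of Lemma~\ref{le:ItInt}. To set this up, recall from the discussion after \eqref{e:QuadrVar2} that a flip of the spin at microscopic position $\eps^{-1}z$ at macroscopic time $r$ makes $\Rg(\cdot,x)$ jump by $-2\,\sigma(\eps^{-1}z)\,\dg^{-1}\eps^{2}\,(\Pg{t-r}\ae\Kg)(x-z)$, so that $\Delta\Qg(r,x)=4\,\dg^{-2}\eps^{4}\,(\Pg{t-r}\ae\Kg)^{2}(x-z)$ does \emph{not} depend on the spin configuration. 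As these flips occur at rate $\ag^{-1}\Cg(r,z)$, one finds
\[
\langle\Qg(\cdot,x)\rangle_{s}=16\,\dg^{-4}\eps^{6}\ag^{-1}\int_{0}^{s}\sum_{z\in\Le}\eps^{2}\,(\Pg{t-r}\ae\Kg)^{4}(x-z)\;\Cg(r,z)\,dr\,,
\]
together with the deterministic bound $\sup_{0\le s\le t}|\Delta\Qg(s,x)|\le 4\,\dg^{-2}\eps^{4}\,\sup_{0\le r\le t}\|\Pg{r}\ae\Kg\|_{L^{\infty}}^{2}$. (For fixed $\ga$ the state space is finite and the rates are bounded, so there is no integrability issue and $\Qg(\cdot,x)$ is a genuine martingale.)

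The second step is to estimate these two quantities by means of the pointwise bounds on $\Pg{r}\ae\Kg$ from Lemma~\ref{le:Pgt}. Bounding $\Cg\le 1$, using the interpolation $\|\Pg{r}\ae\Kg\|_{L^{4}(\Le)}^{4}\le\|\Pg{r}\ae\Kg\|_{L^{\infty}(\Le)}^{3}\,\|\Pg{r}\ae\Kg\|_{L^{1}(\Le)}$, and the facts that $\|\Pg{r}\ae\Kg\|_{L^{\infty}}$ is of order $r^{-1}\wedge\ga^{-2}$ while $\|\Pg{r}\ae\Kg\|_{L^{1}(\Le)}$ is of order one, both up to powers of $\log(\ga^{-1})$ (by \eqref{e:P0} and \eqref{e:P-L1}), the time integral above is, up to a power of $\log(\ga^{-1})$, at most
\[
\int_{0}^{t}\big(r^{-1}\wedge\ga^{-2}\big)^{3}\,dr\;\le\;\int_{0}^{\ga^{2}}\ga^{-6}\,dr+\int_{\ga^{2}}^{t}r^{-3}\,dr\;\le\;C\,\ga^{-4}\,.
\]
Inserting the scaling relations \eqref{e:scaling1}--\eqref{e:scaling2} ($\ag=\ga^{2}$, $\dg=\ga$, $\eps=\co\ga^{2}$ with $1-\ga^{2}\le\co\le 1+\ga^{2}$), so that $\dg^{-4}\eps^{6}\ag^{-1}=\co^{6}\ga^{6}$ and $\dg^{-2}\eps^{4}=\co^{4}\ga^{6}$, one concludes that, deterministically and uniformly in $x\in\Le$ and $0\le t\le T$, both $\langle\Qg(\cdot,x)\rangle_{t}$ and $\sup_{0\le s\le t}|\Delta\Qg(s,x)|$ are bounded by $\ga^{2}$ up to a power of $\log(\ga^{-1})$. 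Feeding these into Lemma~\ref{le:BDG} exactly as in the proof of Lemma~\ref{le:ItInt} gives, for every $\ka'>0$, every $p\ge 2$ and $0\le t\le T$, a constant $C=C(p,t,\ka')$ such that $\E\sup_{0\le s\le t}|\Qg(s,x)|^{p}\le C\,\ga^{p(1-\ka')}$ for every $x\in\Le$ (the power of $\log(\ga^{-1})$ being absorbed into $\ga^{-p\ka'}$ for $\ga$ small).

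The last step is to pass to the supremum over $x$ by a crude union bound: since $\#\Le=(2N+1)^{2}\le C\ga^{-4}$ (recall $N=\lfloor\ga^{-2}\rfloor$),
\[
\E\sup_{x\in\Le}\sup_{0\le s\le t}|\Qg(s,x)|^{p}\;\le\;\sum_{x\in\Le}\E\sup_{0\le s\le t}|\Qg(s,x)|^{p}\;\le\;C\,\ga^{p(1-\ka')-4}\,.
\]
Given $\ka>0$ and $1\le p<\infty$, one first establishes this for $p$ replaced by some $p'\ge\max(p,2)$ with $p'\ka\ge 8$ and with $\ka'=\ka/2$; then $p'(1-\ka')-4-p'(1-\ka)=p'\ka/2-4\ge 0$, so the right-hand side is $\le C\ga^{p'(1-\ka)}$, which is the desired bound for the exponent $p'$. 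The case of the original $1\le p<p'$ then follows from the monotonicity of $L^{p}$-norms. I expect this final step to be the only genuinely delicate point: the volume of $\Le$ is of order $\ga^{-4}$, so no fixed moment is sufficient and the whole estimate has to be carried out for arbitrarily large $p$; the rest is routine bookkeeping with the kernel bounds of Lemma~\ref{le:Pgt} and the martingale structure of $\Qg$.
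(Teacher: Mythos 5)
Your proof is correct and follows essentially the same strategy as the paper: identify $\Qg(\cdot,x)$ as a purely discontinuous martingale with deterministically bounded jumps and predictable quadratic variation of order $\ga^{2}$ up to logarithms, apply the Burkholder--Davis--Gundy inequality, and finally pass to the supremum over $x$ by a crude union bound over the $O(\ga^{-4})$ sites of $\Le$. The one place where your computation diverges is the estimate of $\int_0^t\sum_{z}\eps^{2}(\Pg{t-s}\ae\Kg)^{4}(z)\,ds$: you interpolate $\|\cdot\|_{L^{4}}^{4}\le\|\cdot\|_{L^{\infty}}^{3}\|\cdot\|_{L^{1}}$ and integrate $(r^{-1}\wedge\ga^{-2})^{3}$ in time, whereas the paper pulls out a single deterministic factor $\|\Pg{t-s}\ae\Kg\|_{L^{\infty}(\Le)}^{2}\le(2\ga^{2}/\eps^{2})^{2}$ to reduce to an $L^{2}$ integral, passes to Fourier via Parseval, and bounds the resulting sum over $\om$ with \eqref{e:elementary} and the bounds of Lemma~\ref{le:Kg}; both yield the required $\ga^{2}$ up to powers of $\log(\ga^{-1})$ that are absorbed by $\ga^{-p\ka}$. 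Likewise, for the jump size you use the $L^{\infty}$ bound of Lemma~\ref{le:Pgt} (giving $\ga^{2}(\log\ga^{-1})^{2}$), while the paper invokes the cleaner $\|\Pg{t-s}\|_{L^{1}(\Le)}=1$ together with $\|\Kg\|_{L^{\infty}(\Le)}$ to obtain $(3\ga)^{2}$ directly; again, the extra logarithm is harmless.
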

%%%%%%%%%%%%%%%%%%

%%%%%%%%%%%%%%%%%%
\begin{proof}
%%%%%%%%%%%%%%%%%%
By monotonicity of stochastic $L^p$ norms, it suffices to show the statement for $p$ large enough. We wish to apply the Burkholder-Davis-Gundy inequality to the martingale $\Qg(\cdot,x)$ for a fixed $x \in \Le$ and in order to do so, we need to estimate the quadratic variation and the jumps of this martingale. 

Since the martingale $\Rg(\cdot,x)$ is of finite total variation, its bracket process is simply
$$
[\Rg(\cdot,x)]_s = \sum_{0 < r \le s} \Ll(\Delta_r \Rg(\cdot,x)\Rr)^2.
$$
Moreover, as the process $s \mapsto \langle \Rg(\cdot,x) \rangle_s$ is continuous, the jumps of $\Qg(\cdot,x)$ are identical to those of $s \mapsto[\Rg(\cdot,x)]_s$.

Recall that an update of the spin $\si(k)$ at microscopic position $k = \eps^{-1} z$ causes a jump of size $2\de^{-1}\eps^2 \Kg(y-z)$ for $\Mg(y)$. If such an event takes place at the macroscopic time $s$, then the martingale $\Rg(\cdot,x)$ has a jump at time $s$ of absolute value
\begin{equation}\label{e:JumpBoundDet}
2\eps^2\de^{-1} (\Pg{t-s} \ae  \Kg)(x-z) \le 2\eps^2\de^{-1} \|\Kg\|_{L^\infty(\Lambda_\eps)} \le  3 \ga \;,
\end{equation}
where we have used the fact that $\| \Pg{t}\|_{L^1(\Le)} =1$, cf.\ \eqref{e:Prepresentation}. 
Under the same circumstance, the jump of the martingale $\Qg(\cdot,x)$ at time $s$ is
$$
\Ll( 2\eps^2\de^{-1} (\Pg{t-s} \ae  \Kg)(x-z) \Rr)^2 
\le  (3 \ga)^2\;.
$$
This gives us the required estimate on the jumps of $\Qg(\cdot,x)$. We now turn to its quadratic variation. It is the same as the quadratic variation of the process $s \mapsto [\Rg(\cdot,x)]_s$, hence, 
\begin{eqnarray*}
\langle \Qg(\cdot,x) \rangle_t
&  = & \int_0^t \sum_{z \in \Le} \frac{\Cg(s,z)}{\ag} \Ll( 2\eps^2\de^{-1} (\Pg{t-s} \ae  \Kg)(x-z)\Rr)^4 \, ds \\
& \le & \frac{16\eps^6}{\ag \dg^4 } \int_0^t \sum_{z \in \Le} \eps^2 (\Pg{t-s} \ae  \Kg)^4(z) \, ds\;,
\end{eqnarray*}
where we used the fact that $\Cg(s,z) \leq 1$. Using again \eqref{e:JumpBoundDet} in the form
$$
\|\Pg{t-s} \ae  \Kg\|_{L^\infty(\Le)} \le 2\frac{\ga^2}{\eps^2}
$$
yields
\begin{equation}
\label{e:Qg}
\langle \Qg(\cdot,x) \rangle_t \le 2^6  \frac{ \eps^2 \ga^4}{\ag \dg^4  } \int_0^t \sum_{z \in \Le} \eps^2(\Pg{t-s} \ae  \Kg)^2(z) \, ds \;.
\end{equation}
We now use $(\eps^2 \ga^2 / \ag \dg^{4}) \le 2$ and \eqref{e:elementary} to obtain
\begin{equation}
\label{fromthis}
\langle \Qg(\cdot,x) \rangle_t \le 2^7\;e \, \ga^2 \sum_{\om \in \{ -N, \ldots, N \}^2} \frac{|\hKg(\om)|^2}{t^{-1} + 2 \ga^{-2} (1-\hKg(\om))} \;.
\end{equation}
For $0 < \ga < \ga_0$, the bounds  \eqref{e:K3} and \eqref{e:K2} imply that the sum over $|\om| \ge  \ga^{-1}$ is bounded uniformly.  By \eqref{e:K1} and \eqref{e:K2}, the sum over $|\om| \le C \ga^{-1}$ is smaller than $C(t) \log(\ga^{-1})$. We have shown the deterministic bound
\begin{equation}
\label{tothat}
\langle \Qg(\cdot,x) \rangle_t \le C \ga^2  \log(\ga^{-1})\;.
\end{equation}
By the Burkholder-Davis-Gundy inequality (Lemma~\ref{le:BDG}), it follows that for all $p >0$,
$$
\E \sup_{0 \le s \le t} |\Qg(s,x)|^p  \le C \ga^{p} \log^{p/2}(\ga^{-1})\;,
$$
where $C=C(p,t)$, and in particular the constant does not depend on $x \in \Le$. The conclusion is then obtained using the observation that
\begin{equation}
\label{e:easyobs}
\E \sup_{x \in \Le} \sup_{0 \le s \le t} |\Qg(s,x)|^p  \le \sum_{x \in \Le} \E\sup_{0 \le s \le t} |\Qg(s,x)|^p \;,
\end{equation}
and choosing $p$ sufficiently large.
\end{proof}
%%%%%%%%%%%%%%%%
%
%

%%%%%%%%%%%%%%%%
\begin{lemma}
%%%%%%%%%%%%%%%%
\label{le:jumps}
%%%%%%%%%%%%%%%%%
Let $\ga_0>0$ be the constant appearing in Lemma~\ref{le:Kg}. For any $t \ge 0$ and $1 \le p < +\infty$, there exists $C= C(t,p)>0$ such that for every $0 < \ga< \ga_0$,
$$
\Ll( \E\sup_{x \in \Le} \Big|\sum_{0 \le s \le t} \Ll(\Delta_s \Rg(\cdot,x) \Rr)^2 \Big|^p \Rr)^{1/p} \le C \log(\ga^{-1})\;.
$$
\end{lemma}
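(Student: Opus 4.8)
By monotonicity of the stochastic $L^p$ norms it suffices to prove the bound for $p$ large. The plan is to recognise the sum $\sum_{0 \le s \le t}(\Delta_s \Rg(\cdot,x))^2$ as the bracket process $[\Rg(\cdot,x)]_t$: since $\Rg(\cdot,x)$ has finite total variation (and $\Rg(0,x)=0$), we have $[\Rg(\cdot,x)]_t = \sum_{0 < s \le t}(\Delta_s \Rg(\cdot,x))^2 = \sum_{0 \le s \le t}(\Delta_s \Rg(\cdot,x))^2$, and then to invoke the decomposition from \eqref{e:def:Qg},
\[
\sum_{0 \le s \le t}\Ll(\Delta_s \Rg(\cdot,x)\Rr)^2 = [\Rg(\cdot,x)]_t = \Qg(t,x) + \langle \Rg(\cdot,x) \rangle_t \;.
\]
The fluctuation term $\Qg$ is handled directly by Lemma~\ref{le:quartic_var}, which gives, for any $\ka > 0$ and any finite $p$, $\big(\E \sup_{x \in \Le}\sup_{0\le s\le t}|\Qg(s,x)|^p\big)^{1/p} \le C \ga^{1-\ka} \le C$, far better than needed.

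The remaining work is a purely deterministic estimate on the predictable quadratic variation $\langle \Rg(\cdot,x) \rangle_t$. Starting from \eqref{e:QuadrVarR}, using the pointwise bound $0 \le \Cg(r,z) \le 1$, then Parseval's identity \eqref{e:Pars} in the variable $z$ together with the shift-covariance of the Fourier transform and the identity $|\hPg{t-r}(\om)|^2 = \exp\!\big(-2(t-r)\ga^{-2}(1-\hKg(\om))\big)$ from \eqref{e:Fourier-semi}, I would bound
\[
\langle \Rg(\cdot,x) \rangle_t \le 4 \co^2 \int_0^t \sum_{z \in \Le} \eps^2 \big(\Pg{t-r} \ae \Kg\big)^2(x-z)\, dr = \co^2 \int_0^t \sum_{\om \in \{-N,\dots,N\}^2} \big|\hPg{t-r}(\om)\,\hKg(\om)\big|^2\, dr \;,
\]
and then carry out the $r$-integration with the elementary inequality \eqref{e:elementary} (applied with $\ell = \ga^{-2}(1-\hKg(\om)) \ge 0$) to obtain
\[
\langle \Rg(\cdot,x) \rangle_t \le e\,\co^2 \sum_{\om \in \{-N,\dots,N\}^2} \frac{|\hKg(\om)|^2}{t^{-1} + 2\ga^{-2}(1-\hKg(\om))} \;.
\]
This is exactly the Fourier sum that already appears in the proof of Lemma~\ref{le:quartic_var} (there carrying an extra factor $\ga^2$), and the same analysis applies: by \eqref{e:K1} and \eqref{e:K2} the contribution of $|\om| \le C\ga^{-1}$ is at most $C(t)\log(\ga^{-1})$, while by \eqref{e:K3} and \eqref{e:K2} the contribution of $|\om| > \ga^{-1}$ is $O(1)$, uniformly in $x \in \Le$ and $0 < \ga < \ga_0$. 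Hence $\langle \Rg(\cdot,x) \rangle_t \le C(t)\log(\ga^{-1})$, and combining the two terms via Minkowski's inequality yields
\[
\Ll( \E \sup_{x \in \Le}\Big|\sum_{0 \le s \le t}(\Delta_s \Rg(\cdot,x))^2\Big|^p \Rr)^{1/p} \le C\ga^{1-\ka} + C(t)\log(\ga^{-1}) \le C(t,p)\log(\ga^{-1}) \;,
\]
which is the assertion.

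I do not anticipate a serious obstacle: the argument is essentially bookkeeping built on Lemma~\ref{le:quartic_var}. The only genuine input is the logarithmic size of the Fourier sum $\sum_\om |\hKg(\om)|^2 \big(t^{-1}+2\ga^{-2}(1-\hKg(\om))\big)^{-1}$, and this is precisely where the slow $|\ga\om|^{-2}$ decay of $\hKg$ forced by the convention $\kg(0)=0$ enters; it is what prevents a bound of order $1$ and produces the $\log(\ga^{-1})$ on the right-hand side.
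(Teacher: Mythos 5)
Your proposal is correct and follows essentially the same route as the paper: decompose $\sum_{0 \le s \le t}(\Delta_s \Rg(\cdot,x))^2 = \Qg(t,x) + \langle \Rg(\cdot,x)\rangle_t$, dispose of $\Qg$ via Lemma~\ref{le:quartic_var}, and bound $\langle \Rg(\cdot,x)\rangle_t$ deterministically by passing to Fourier via \eqref{e:Pars}, integrating in $r$ via \eqref{e:elementary}, and splitting the resulting sum at $|\om| \approx \ga^{-1}$ using \eqref{e:K1}--\eqref{e:K3}. The only (immaterial) difference is in the numerical prefactor of the Fourier sum.
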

%%%%%%%%%%%%%%%%

%
%%%%%%%%%%%%%%%%%
\begin{proof}
%%%%%%%%%%%%%%%%%
We observe that
$$
\sum_{0 \le s \le t} \Ll(\Delta_s \Rg(\cdot,x) \Rr)^2 = \Qg(t,x) + \langle \Rg(\cdot,x) \rangle_t\;.
$$
By Lemma~\ref{le:quartic_var}, it thus suffices to show that
\begin{equation}
\label{leadshere}
\sup_{x \in \Le} \langle \Rg(\cdot,x) \rangle_t \le C \log(\ga^{-1})\;.
\end{equation}
We learn from \eqref{e:QuadrVarR} (as when passing from \eqref{e:Qg} to \eqref{fromthis}) that
$$
\langle \Rg(\cdot, x) \rangle_t  \le 
4 \co^2 \sum_{\om \in \{-N, \ldots,  N\}^2} \frac{ \big|  \hKg (\om)\big|^2}{ t^{-1} + 2\gamma^{-2}( 1 -\hKg(\om)\big)} \;. 
$$
We obtain \eqref{leadshere} arguing in the same way as from \eqref{fromthis} to \eqref{tothat}.
\end{proof}

We are now ready to prove that the $\RG{n}$ can approximately be represented as a Hermite polynomials applied to 
$\Rg$. Recall the recursive definition \eqref{e:def-Hermite1} of the Hermite polynomials $H_n = H_n(X,T)$ as well as 
the identities \eqref{e:partialX} and \eqref{e:partialT} for their derivatives.

We aim to bound the quantity  
\begin{equation}
\label{e:def:EGn}
\EG{n}(s,x) := H_n(\Rg(s,x), [\Rg(\cdot,x)]_s) - \RG{n}(s,x)\;,
\end{equation}
for any $x \in \T^2$. Here, we view $[\Rg(\cdot,x)]_s$ as defined on all of $\T^2$, by extending it as a trigonometric polynomial of degree $\leq N$.   Recall that according to \eqref{e:DefZn}, $\RG{n}(t,x) = \ZG{n}(t,x)$. 

\begin{proposition}[$\ZG{n}$ as a Hermite polynomial]
\label{p:discrete-Wick}
Let $\ga_0$ be the constant appearing in Lemma~\ref{le:Kg}. Then  for any $n \in \N$, $\ka > 0$, $t > 0$ and $1 \le p < \infty$, there exists $C =C(n,p,t,\ka)> 0$ such that for every $0 < \ga < \ga_0$,
\begin{equation}
\label{e:discrete-Wick}
\Ll( \E \sup_{x \in \T^2} \sup_{0 \le s \le t} | \EG{n}(s,x) |^p \Rr)^{1/p} \le C \ga^{1-\ka} \notag.
\end{equation}
\end{proposition}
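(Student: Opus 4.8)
The plan is, for each fixed point $x$, to expand the process $s \mapsto H_n\big(\Rg(s,x),[\Rg(\cdot,x)]_s\big)$ by It\^o's formula and to compare the result with the recursive definition \eqref{e:ZnA} of $\RG{n}$, thereby reducing the estimate on $\EG{n}$ to one on $\EG{n-1}$ plus a jump error; the proposition then follows by an induction on $n$ whose base cases $n=1,2$ are trivial. For fixed $x$ the martingale $s\mapsto\Rg(s,x)$ has finite variation and is pure jump, and so is $s\mapsto[\Rg(\cdot,x)]_s=\sum_{0<r\le s}(\Delta_r\Rg(\cdot,x))^2$, so $H_n(\Rg(s,x),[\Rg(\cdot,x)]_s)$ is the sum of its jumps. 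The recursion \eqref{e:def-Hermite1} yields the generating identity $\sum_{m\ge0}H_m(X,T)\lambda^m/m!=\exp(\lambda X-\lambda^2T/2)$, hence the addition formula
\begin{equation*}
H_n(X+j,\,T+j^2)=\sum_{k=0}^n\binom{n}{k}H_k(X,T)\,H_{n-k}(j,j^2),
\end{equation*}
in which $H_m(j,j^2)$ is homogeneous of degree $m$ in $j$ with $H_0(j,j^2)=1$, $H_1(j,j^2)=j$, and, crucially, $H_2(j,j^2)=0$. Applying this at a spin flip at time $r$ (at which $\Rg(\cdot,x)$ jumps by $\Delta_r\Rg(\cdot,x)$ and $[\Rg(\cdot,x)]$ by its square), summing over $r$, recognising the leading term as an It\^o integral, and subtracting \eqref{e:ZnA}, one obtains
\begin{equation*}
\EG{n}(s,x)=n\int_0^s\EG{n-1}(r^-,x)\,d\Rg(r,x)+J^{(n)}(s,x),\qquad J^{(n)}(s,x)=\sum_{0<r\le s}\ \sum_{k=0}^{n-3}\binom{n}{k}c_{n-k}\,H_k\big(\Rg(r^-,x),[\Rg(\cdot,x)]_{r^-}\big)\,\big(\Delta_r\Rg(\cdot,x)\big)^{n-k},
\end{equation*}
with bounded coefficients $c_m$ given by $H_m(j,j^2)=c_mj^m$. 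Since $H_1(X,T)=X$ and $H_2(X,T)=X^2-T$, It\^o's formula for $\Rg(\cdot,x)^2$ gives $\EG{1}=\EG{2}=0$, and for $n=3$ one reads off $\EG{3}(s,x)=-2\sum_{0<r\le s}(\Delta_r\Rg(\cdot,x))^3$.

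Next one bounds the jump error $J^{(n)}$. Every summand carries $(\Delta_r\Rg(\cdot,x))^{n-k}$ with $n-k\ge3$; the deterministic jump bound behind \eqref{e:JumpBoundDet} together with the decay estimates \eqref{e:K1}--\eqref{e:K3} for $\hKg$ gives $|\Delta_r\Rg(\cdot,x)|\le C\ga\log(\ga^{-1})$ uniformly in $x\in\T^2$, so $n-k-2\ge1$ powers of $\ga$ can be extracted and one is left with $\sum_{0<r\le s}(\Delta_r\Rg(\cdot,x))^2=[\Rg(\cdot,x)]_s$, which by Lemma~\ref{le:jumps} is bounded in $L^p$, uniformly in $x$ and $s\le t$, by $C\log(\ga^{-1})$. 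The Hermite prefactors $H_k(\Rg(r^-,x),[\Rg(\cdot,x)]_{r^-})$ are polynomials in $\Rg$ and $[\Rg(\cdot,x)]$; estimating $\Rg$ in $L^\infty$ via Proposition~\ref{prop:RGBoundOne} and Lemma~\ref{l:unif-Fourier-cut} (which costs a factor $\ga^{-\ka'}$ on trigonometric polynomials of degree $\le N$) and $[\Rg(\cdot,x)]$ via Lemma~\ref{le:jumps}, H\"older's inequality yields, for $p$ large and $\ka'>0$ arbitrarily small,
\begin{equation*}
\Big(\E\sup_{x\in\T^2}\sup_{0\le s\le t}|J^{(n)}(s,x)|^p\Big)^{1/p}\le C\,\ga^{1-2\ka'}.
\end{equation*}

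To close the induction, assume $(\E\sup_x\sup_{s\le t}|\EG{n-1}(s,x)|^p)^{1/p}\le C\ga^{1-\ka'}$ for all $p$ (vacuous for $n\le2$). For fixed $x$ the Burkholder--Davis--Gundy inequality (Lemma~\ref{le:BDG}) applies to $s\mapsto n\int_0^s\EG{n-1}(r^-,x)\,d\Rg(r,x)$, whose predictable quadratic variation is at most $n^2\sup_{r\le t}|\EG{n-1}(r,x)|^2\,\langle\Rg(\cdot,x)\rangle_t$ with $\langle\Rg(\cdot,x)\rangle_t\le C\log(\ga^{-1})$ deterministically (as in the proof of Lemma~\ref{le:jumps}) and whose jumps are at most $C\ga\log(\ga^{-1})\sup_{r\le t}|\EG{n-1}(r,x)|$, giving
\begin{equation*}
\Big(\E\sup_{0\le s\le t}\Big|n\int_0^s\EG{n-1}(r^-,x)\,d\Rg(r,x)\Big|^p\Big)^{1/p}\le C\log(\ga^{-1})^{1/2}\Big(\E\sup_{0\le r\le t}|\EG{n-1}(r,x)|^p\Big)^{1/p}.
\end{equation*}
Together with the bound on $J^{(n)}$, iterating over the finitely many indices $\le n$ and using monotonicity of $L^p$ norms, this gives $(\E\sup_{s\le t}|\EG{n}(s,x)|^p)^{1/p}\le C\ga^{1-\ka'}$ uniformly in $x$; since $\EG{n}(s,\cdot)$ is a trigonometric polynomial of degree $\le nN$, $\sup_{x\in\T^2}\sup_{s\le t}|\EG{n}(s,x)|$ is comparable to its maximum over a lattice $\Lambda$ with $|\Lambda|\ls\ga^{-4}$, so a union bound over $\Lambda$ and a choice of $\ka'$ small and then $p$ large absorb the extra factor $\ga^{-4/p}$ and yield the claim. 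The conceptual core is the cancellation $H_2(j,j^2)=0$, which is exactly what makes $H_n(\Rg,[\Rg])$ agree with $\RG{n}$ up to an error cubic in the jumps; the main technical obstacle is the recursive accounting above, ensuring for every $n$ and $\ka>0$ that each power of the random jump $\Delta_r\Rg$ is paid for by a power of $\ga$ while the $\ga^{-\ka'}$ losses from the $L^\infty$-embeddings and from the union bound over $\ga^{-4}$ lattice sites stay subordinate. A secondary point requiring care is that the lattice bracket $[\Rg(\cdot,x)]_s$, $x\in\Le$, is extended to $\T^2$ as a trigonometric polynomial of degree $\le N$: one must verify that the identity of the first step and the estimates of Lemmas~\ref{le:quartic_var} and~\ref{le:jumps} persist, up to logarithmic factors, for the extended process $s\mapsto\Rg(s,x)$ at every fixed $x\in\T^2$.
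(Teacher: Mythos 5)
Your proof is correct and follows the same global strategy as the paper's: reduce the $\T^2$-supremum to a finite grid via Lemma~\ref{le:LinftyDiscreteCont} and a union bound, establish the recursion
$\EG{n}(s,x)=n\int_0^s\EG{n-1}(r^-,x)\,d\Rg(r,x)+\mbox{(jump error)}$, control the jump error via the deterministic jump-size bound and Lemma~\ref{le:jumps}, and close the induction with Burkholder--Davis--Gundy using $\langle \Rg(\cdot,x)\rangle_t \le C\log(\ga^{-1})$. Where you deviate is in how the recursion and the error term are derived. The paper applies the It\^{o} formula with jumps (Lemma~\ref{l:ito-with-jumps}, Remark~\ref{r:ito-with-jumps}) to $H_n(\unR(s))$, uses the Hermite ``heat equation'' $\partial_T H_n + \tfrac12\partial_X^2 H_n=0$ to cancel two of the resulting integrals, and then bounds the jump remainder via the Taylor-type estimate \eqref{e:controlH}, which yields $|\mathsf{Err}|\lesssim (|\Rg|^{n-2}+[\Rg]^{(n-2)/2}+1)\sum\{(\Delta\Rg)^3+(\Delta\Rg)^4\}$. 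You instead use the Hermite addition formula
$H_n(X+j,T+j^2)=\sum_{k=0}^n\binom{n}{k}H_k(X,T)\,H_{n-k}(j,j^2)$
together with $H_m(j,j^2)=H_m(1,1)\,j^m$ and the key observation $H_2(1,1)=0$, which gives the \emph{exact} identity
$\EG{n}=n\int\EG{n-1}\,d\Rg + J^{(n)}$
with $J^{(n)}$ a finite sum whose every summand carries at least the cube of the jump. The cancellation $H_2(j,j^2)=0$ is precisely the content of $\partial_T H_n + \tfrac12\partial_X^2 H_n=0$ at the level of a single increment, so the two derivations encode the same algebra, but your route makes the cubic suppression manifest without invoking a Taylor bound; it also recovers, e.g., the clean identity $\EG{3}(s,x)=-2\sum_{r\le s}(\Delta_r\Rg(\cdot,x))^3$, and the observation $\EG{2}=0$ follows from the definition of the bracket process. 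Two small remarks: your uniform-in-$\T^2$ jump bound $|\Delta_r\Rg(\cdot,x)|\le C\ga\log(\ga^{-1})$ (via Lemma~\ref{le:Pgt}) is a legitimate alternative to the paper's route of bounding on $\Le$ and then invoking Lemma~\ref{le:LinftyDiscreteCont}; either loss (logarithmic or $\ga^{-\ka'}$) is subordinate to the final exponent. And the subtlety you flag at the end --- that the trigonometric extension of $[\Rg(\cdot,x)]_s$ to $\T^2$ is not the genuine bracket of the extended martingale for $x\notin\Le$ --- is indeed the one delicate point which the paper also treats lightly; it is handled by deriving the recursion pointwise on a grid where the genuine bracket is used, and absorbing the interpolation discrepancy into the $\ga^{-\ka}$ losses.
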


\begin{proof}
We start by reducing the bound on the spatial supremum over $x \in \T^2$ to the pointwise bounds for $x$ in a grid. According to the definition, for every $n$ the function $\EG{n}$ is 
a trigonometric polynomial of degree $\leq nN$. For $n = 1$, Lemma~\ref{le:LinftyDiscreteCont} implies that we can control the supremum over $x \in \T^2$ by the supremum over $x \in \Le$ at the 
price of loosing an arbitrarily small power of $\eps$. For $n \geq 2$, this lemma does not apply directly, but we can circumvent this problem by refining the grid. Indeed, set $\Len = \{ x \in \frac{\eps}{n} \Z^2 \colon -1 < x_1, x_2 \leq 1\}  $. Then Lemma~\ref{le:LinftyDiscreteCont} applies (the fact that the number of grid points in this lemma is an odd multiple of the dimension is just for convenience of notation) and we can conclude that 
$\sup_{x \in \T^2}  \big| \EG{n}(s,x) \big| \leq C(\ka) \eps^{-\ka}\sup_{x \in \Len}  \big| \EG{n}(s,x) \big| $. Finally, as in \eqref{e:easyobs} we can reduce the bound on the supremum over $x \in \Len$ to bounds on a single point $x \in \Len$. We now proceed to derive such a bound.

The proof proceeds by induction on $n$. For $n = 1$, it is obviously true since $\EG{1} = 0$. We now assume $n \ge 2$.

To begin with, we observe that for any $n$, there exists $C$ such that for every $x,t \in \R$ and every $|h| \le 1, |s|\le 1$,
\begin{multline}
\label{e:controlH}
\!\!\!\!\!\! \Ll|H_n(x+h,t+s) - H_n(x,t) - \partial_X H_n(x,t) \, h - \frac{1}{2}\partial_X^2 H_n(x,t) \, h^2 - \partial_T H_n(x,t) \, s    \Rr| \\
\le C\Ll( |x|^{n-2} + |t|^{(n-2)/2} + 1 \Rr)\Ll( |h|^3 + |s|^2 \Rr).
\end{multline}
We fix $x \in \Len$ and use the shorthand notation 
$$
\unR(s) = (\Rg(s,x),[\Rg(\cdot,x)]_s)\;.
$$ 
By It\^{o}'s formula (see Lemma~\ref{e:ito-with-jumps} and Remark~\ref{r:ito-with-jumps}),
\begin{align}
\label{e:itoerr}
H_n(\unR(s))  = & \int_{r=0}^s \partial_T H_n\Ll(\unR(r^-)\Rr) \, d[\Rg(\cdot,x)]_r \notag\\
& +  \int_{r=0}^s \partial_X H_n\Ll(\unR(r^-)\Rr) \, d\Rg(r,x) \notag\\
& + \frac{1}{2} \int_{r=0}^s \partial^2_X H_n\Ll(\unR(r^-)\Rr) \, d[\Rg(\cdot,x)]_r \notag \\
& + \msf{Err}(s,x)\;,
\end{align}
where $\msf{Err}(s,x)$ is the error term caused by the jumps,
\begin{multline*}
\msf{Err}(s,x) = \sum_{0 < r \le s} \Big\{ \Delta_r H_n(\unR) - \partial_T H_n(\unR(r^-)) \Delta_r [\Rg(\cdot,x)]_\cdot \\
- \partial_X H_n(\unR(r^-)) \Delta_r \Rg(\cdot,x) - \frac{1}{2} \partial_X^2 H_n(\unR(r^-)) \Ll( \Delta_r \Rg(\cdot,x) \Rr)^2  \Big\}\;.
\end{multline*}
We have seen in \eqref{e:JumpBoundDet}  that uniformly over $r$ and $x \in \Le$, we have $\Ll|\Delta_r \Rg(\cdot,x)\Rr| \le 3 \ga$. Together with Lemma~\ref{le:LinftyDiscreteCont}, this implies that
\begin{equation}
\label{e:jumpss}
\sup_{0 \leq r \leq t} \sup_{x\in \Len}  \Ll|\Delta_r \Rg(\cdot,x)\Rr| \le C \ga^{1-\ka}\;
\end{equation}
for any given $\ka > 0$, where $C=C(\ka)$. Therefore,
$$
\Delta_r [\Rg(\cdot,x)]_\cdot = \Ll( \Delta_r \Rg(\cdot,x) \Rr)^2 \le \Ll(C \ga^{1-\ka}\Rr)^2.
$$
As a consequence, we can apply the estimate in \eqref{e:controlH} on the error term, to get
\begin{multline*}
\Ll|\msf{Err}(s,x)\Rr| \le C \Ll(\sup_{0 \le r \le s} |\Rg(r,x)|^{n-2} +\sup_{0 \le r \le s} [\Rg(\cdot,x)]_r^{(n-2)/2} + 1\Rr) \\
\times \sum_{0 < r \le s} \Ll\{ \Ll( \Delta_r \Rg(\cdot,x) \Rr)^3 + \Ll(\Delta_r \Rg(\cdot,x) \Rr)^4  \Rr\}.
\end{multline*}
In view of \eqref{e:jumpss}, it is clear that the summand $(\Delta_r \Rg(\cdot,x))^4$ can be neglected. From \eqref{e:jumpss} and Lemma~\ref{le:jumps}, we obtain that for every $1 \le p < +\infty$, there exists $C$ such that for every $s \le t$,
$$
\Ll(  \E \sup_{x\in \Le} \Big|\sum_{0 \le r \le s} \Ll(\Delta_r \Rg(\cdot,x) \Rr)^3 \Big|^p \Rr)^{1/p} \le C \ga^{1-\ka} \log(\ga^{-1})\;,
$$
and using Lemma \ref{le:LinftyDiscreteCont}, we can replace the supremum over $x \in \Le$ by a supremum over $x \in \Len$ in this bound (at the price of changing the exact value of the arbitrarily small $\ka$).

By the bounds derived in Lemma~\ref{le:ItInt}, Proposition~\ref{prop:RGBoundOne} and Lemma~\ref{le:jumps}, we also get that for every $\ka > 0$, there exists $C > 0$ such that for every $s \le t$ and $x \in \Len$, 
$$
\Ll(  \E\Big| \sup_{0 \le r \le s} |\Rg(r,x)|^{n-2} +\sup_{0 \le r \le s} [\Rg(\cdot,x)]_r^{(n-2)/2} + 1 \Big|^p\Rr)^{1/p} \le C\ga^{-\ka}.
$$
It follows from these observations that for every $\ka > 0$ and $1 \le p < +\infty$, there exists $C$ such that uniformly over $x \in \Le$,
\begin{equation}
\label{e:error-controlled}
\Big(\E \sup_{s \le t}\Ll|\msf{Err}(s,x)  \Rr|^p\Big)^{1/p} \le C \ga^{1-\ka}.
\end{equation}
Going back to the relation in \eqref{e:itoerr}, we can use \eqref{e:partialX} and \eqref{e:partialT} to see that 
$\partial_T H_n + \partial^2_X H_n/2 = 0$,
so the first and third integrals in \eqref{e:itoerr} cancel out. Using \eqref{e:partialX} again, we arrive at
$$
H_n(\unR(s)) 
= n \int_{r=0}^s H_{n-1}\Ll(\unR(r^-)\Rr) \, d\Rg(r,x) + \msf{Err}(s,x).
$$
In view of the definition \eqref{e:ZnA} of $\RG{n}(s,x)$ (which remains valid for $x \in \Len$), we can rewrite this as
\begin{equation}
\label{e:rec}
\EG{n}(s,x) = n \int_{r = 0}^s \EG{n-1}(r^-,x) \, d\Rg(r,x) + \msf{Err}(s,x) \;.
\end{equation}
Assuming that the Proposition is true for the index $n-1$, we want to prove that it holds for the index $n$. In fact, it suffices to prove that for every $\ka > 0$, every $t > 0$ and every $p$ sufficiently large, there exists $C > 0$ such that uniformly over $x \in \Le$,
\begin{equation}
\label{e:target}
\Big( \E \sup_{0 \le s \le t} | \EG{n}(s,x) |^p \Big)^{1/p} \le C \ga^{1-\ka},
\end{equation}
since we can later on argue as in \eqref{e:easyobs} to conclude.
The error term in \eqref{e:rec} will not cause any trouble by \eqref{e:error-controlled}. There remains to consider the integral in the right-hand side of \eqref{e:rec}. Since this integral is a martingale as $s$ varies, we can use the Burkholder-Davis-Gundy inequality, provided that we can estimate its quadratic variation and its maximal jump size. The quadratic variation at time $t$ is bounded by
$$
n^2 \sup_{s \le t} \Ll|\EG{n-1}(s,x)\Rr|^2 \langle \Rg(\cdot,x)\rangle_t,
$$
with $\langle \Rg(\cdot,x)\rangle_t \le C \log(\ga^{-1})$ by \eqref{leadshere}. The maximal jump size is bounded by
$$
n \, \sup_{s \le t} \Ll|\EG{n-1}(s,x)\Rr| \, \sup_{s \le t} \Ll|\Delta_s \Rg(\cdot,x)\Rr|,
$$
and we already saw that $\sup_{s \le t} \Ll|\Delta_s \Rg(\cdot,x)\Rr| \le C \ga^{1-\ka}$. The induction hypothesis and the Burkholder-Davis-Gundy inequality thus lead to \eqref{e:target}, and the proof is complete.
\end{proof}

Finally, we are ready to conclude and to prove the tightness of the processes $\ZG{n}$. Before we state the result, recall that for any separable metric space $\Ss$, we denote by $\Dd(\R_+, \Ss)$ the space of cadlag functions on $\R_+$ taking values in $\Ss$, endowed with the Skorokhod topology (see \cite[Chapters 16 and 18]{bill}). Recall in particular that according to \cite[Theorem~16.4]{bill}, a family of processes is tight on $\Dd(\R_+, \Ss)$ as soon as their restrictions to all compact time intervals are tight. 
%
%
%%%%%%%%%%%%%%%%%%
\begin{proposition}\label{prop:tight}
%%%%%%%%%%%%%%%%%%%
For any fixed $n\in \N$ and any $\al >0$, the family $\{ \ZG{n}, \ga \in (0,\frac13) \}$ is tight on $\Dd(\R_+,\Ca) $. Any weak limit is supported on $\Cc(\R_+,\Ca) $. Furthermore, for any $p \geq 1$ and $T>0$, we have%there exists $C = C(n,p,\al,T)$ such that 
\begin{equation}\label{e:FinalAPriori}
\sup_{\gamma \in (0,\frac13)} \E  \sup_{0 \leq t \leq T}  \big\| \ZG{n}(t, \cdot ) \big\|_{\Ca}^p   < \infty\;.
\end{equation}
\end{proposition}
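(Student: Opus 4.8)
The plan is to deduce both the tightness statement and the a priori bound~\eqref{e:FinalAPriori} from the uniform moment estimates of Proposition~\ref{prop:RGBoundOne}. The regime $\ga$ bounded away from $0$ is harmless and I would dispose of it first: for $\ga\in[\ga_0,1/3)$ the index $N=\lfloor\ga^{-2}\rfloor$ takes only finitely many values, the state space $\SN$ is finite, and every quantity entering the construction depends continuously on $\be$, which ranges over a compact set for each fixed $N$; hence $\{\ZG{n}:\ga\in[\ga_0,1/3)\}$ is tight and obeys a uniform moment bound by classical arguments. From now on I restrict to $0<\ga<\ga_0$, and, since tightness on $\Dd(\R_+,\Ca)$ follows from tightness on every $\Dd([0,T],\Ca)$ by \cite[Theorem~16.4]{bill}, to a fixed interval $[0,T]$.

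The core step is to upgrade the fixed-time bounds of Proposition~\ref{prop:RGBoundOne} to an estimate with the time supremum inside the expectation, in a space slightly smaller than $\Ca$: for every $\al_0\in(0,\al)$, $p\ge1$ and $T>0$,
\begin{equation*}
\sup_{0<\ga<\ga_0}\,\E\sup_{0\le t\le T}\big\|\ZG{n}(t,\cdot)\big\|_{\Cc^{-\al_0}}^p<\infty .
\end{equation*}
Since $\ZG{n}(t,\cdot)=\RG{n}(t,\cdot)$, evaluating~\eqref{e:RgRegularity2} at the terminal time $r=t$ gives, for all fixed $0\le s\le t\le T$ and for suitably small exponents (so that $\la p>1$ and the Besov space appearing there embeds into $\Cc^{-\al_0}$), the increment bound $\E\|\ZG{n}(t,\cdot)-\ZG{n}(s,\cdot)\|_{\Cc^{-\al_0}}^p\le C|t-s|^{\la p}+C\ga^{p(1-\ka)}$. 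A chaining argument of Kolmogorov type in time on $[0,T]$, based on this bound and on $\ZG{n}(0,\cdot)=0$, then yields the displayed estimate, and, run on subintervals, also controls the Skorokhod oscillation modulus $w'_\delta(\ZG{n})$; the one modification of the usual argument is that, because the error $C\ga^{p(1-\ka)}$ does not decay with $|t-s|$, the chaining must be stopped at a time scale of the form $\ga^C$ with $C$ large. Below that scale the oscillation of $\ZG{n}$ is controlled pathwise, using that $\RG{n}(\cdot,x)$ is a pure-jump process of finite variation with $\|\Delta_r\ZG{n}(\cdot,\cdot)\|_{L^\infty}\le C\ga^{1-\ka}$ in every $L^p$ --- a consequence of the jump bound~\eqref{e:JumpBoundDet}, the recursion~\eqref{e:ZnA}, Lemma~\ref{l:unif-Fourier-cut} and~\eqref{e:RgRegularity1} --- together with the fact that the number of spin flips up to macroscopic time $T$ is stochastically dominated by a Poisson random variable of mean $O(\ga^{-6})$. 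Summing the error $C\ga^{p(1-\ka)}$ over the $O(\ga^{-C})$ intervals of the chaining contributes $O(\ga^{p(1-\ka)-C})$, which vanishes once $p$ is large.

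Granting these two estimates, the rest is routine. Compact containment in $\Dd([0,T],\Ca)$ follows from the displayed a priori bound and Markov's inequality, since balls of $\Cc^{-\al_0}$ are compact in $\Ca=\Cc^{-\al}$ for $\al_0<\al$ (compact Besov embedding on $\T^2$, Appendix~\ref{sec:Besov}); together with the modulus estimate, Billingsley's tightness criterion in the Skorokhod space (\cite[Chapter~16]{bill}) gives tightness on $\Dd([0,T],\Ca)$, hence, with the $\ga\ge\ga_0$ case, tightness of $\{\ZG{n}:\ga\in(0,1/3)\}$ on $\Dd(\R_+,\Ca)$. Any weak limit is supported on $\Cc(\R_+,\Ca)$ because the maximal jump $\sup_{0\le t\le T}\|\ZG{n}(t,\cdot)-\ZG{n}(t^-,\cdot)\|_{\Ca}\le C\ga^{1-\ka}$ tends to $0$ in probability, so by a standard property of the Skorokhod topology the limiting paths are continuous. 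Finally, since $\Cc^{-\al_0}\hookrightarrow\Ca$ for any $\al_0\le\al$, the displayed bound immediately implies~\eqref{e:FinalAPriori}.

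The hard part is the bookkeeping of the two features of the discrete model that are absent in the continuum: the errors $C\ga^{p(1-\ka)}$ in Proposition~\ref{prop:RGBoundOne} and the genuine, if small, jumps of $\ZG{n}$. They prevent a direct appeal to a Kolmogorov-type criterion, since one cannot chain all the way down to time scale $0$ without the accumulated $\ga$-errors diverging; the chaining must be cut off at a polynomial-in-$\ga$ time scale and the finer behaviour absorbed into the pure-jump structure and the pathwise jump bound. This is precisely why the estimates are only usable for $p$ large, which is harmless by monotonicity of the stochastic $L^p$ norms. A secondary, essentially notational point is the presence of two time parameters in $\RG{n}$, which is the reason both~\eqref{e:RgRegularity2} and~\eqref{e:RgRegularity3} --- and, inside their proofs, the splitting of an increment into a martingale increment at fixed heat-kernel parameter and a change-of-parameter term --- are needed.
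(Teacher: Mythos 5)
Your proposal follows the same overall strategy as the paper's proof: moment estimates from Proposition~\ref{prop:RGBoundOne} supply a Kolmogorov-type increment bound that is usable only down to a time mesh of order $\ga^m$ (the paper interpolates linearly on the grid $\ga^m\N_0$; your ``cut off the chaining at $\ga^C$'' is the same device phrased differently), and the residual sub-mesh oscillation is then controlled pathwise, with Poisson domination of the spin-flip count and the jump bound \eqref{e:JumpBoundDet} doing the work, and the continuity of any weak limit following from the vanishing of the maximal jump size.

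The one place where your sketch misdescribes the mechanism is the sub-mesh control. It is not accurate to say the oscillation of $\ZG{n}$ is controlled by treating $\RG{n}(\cdot,x)$ as a pure-jump process: the map $t\mapsto\ZG{n}(t,x)=\RG{n}(t,x)$ changes the heat-kernel parameter $t$ as well as the integration endpoint, so it is a genuine semimartingale in $t$, not a pure-jump one. For $n=1$ the paper splits $\Zg(t,\cdot)-\Zg(k\ga^m,\cdot)$ into three pieces in \eqref{e:tight2}: the drift $\int\Dg\Zg\,ds$, the generator term $\delta^{-1}\int\LgN\hg\,ds$, and the pure-jump increment of $\hg$. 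The first is bounded by pushing into $L^\infty$ via Lemma~\ref{l:unif-Fourier-cut} at the cost of $\ga^{-2-2\ka'}$ and beating that with $\ga^m$, and the second uses the deterministic bound $|\LgN\hg|\le2$; only the third is the jump/Poisson contribution you describe. For $n\ge2$ one additionally must control the increment of the bracket process $[R_{\ga,t}(\cdot,x)]_t$, which is where Proposition~\ref{p:discrete-Wick} enters — your passing reference to ``the recursion~\eqref{e:ZnA}'' gestures at this but does not capture the need to go through the Hermite-polynomial representation. None of this changes the architecture of the argument, but it means the phrase ``absorbed into the pure-jump structure and the pathwise jump bound'' would not survive being turned into a proof as stated: the two drift terms must be estimated separately, and they are exactly why the mesh exponent $m$ has to be taken large.

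Two smaller remarks. The compact embedding $\Cc^{-\al_0}\hookrightarrow\Cc^{-\al}$ for $\al_0<\al$ that you invoke is correct but is not actually stated in Appendix~\ref{sec:Besov}, so it should not be cited as if it were. And your preliminary disposal of $\ga\in[\ga_0,1/3)$ is a reasonable (and missing in the paper) observation, though the continuity-in-$\be$ claim needs slight care since the law of the jump process for fixed $N$ still depends on $\ga$ through $\CGG$ and the initial condition; a uniform bound on jump rates over the compact parameter range is what one actually uses.
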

%%%%%%%%%%%%%%%%%%

%
%
%%%%%%%%%%%%%%%
\begin{proof}
%%%%%%%%%%%%%%%
We can restrict ourselves to consider $0 < \ga< \ga_0$, where $\ga_0$ is the constant appearing in Lemma~\ref{le:Kg} and Proposition~\ref{prop:RGBoundOne}. We fix a $T>0$ and show tightness in $\Dd([0,T],\Ca)$.

Our strategy is similar to that of \cite{MuellerTribe}. Let $ m \in \N$ be fixed below. The estimate \eqref{e:RgRegularity2}
implies that for all $s \neq t \in \ga^m \N_0$, all $p \geq 1$, $\al^{\prime}>0$ ,$\la \leq \frac{1}{ 2 m}$, and $n \in \N$, we have
 \begin{equation}\label{e:tight0}
\E \| \ZG{n}(t, \cdot) - \ZG{n}(s,\cdot) \|^p_{\Cc^{-\al^{\prime}- 2 \la}} \leq C \Big( |t-s|^{\la}  + \ga^{\frac12} \Big)^p \leq  C |t-s|^{\la p}   \;,
 \end{equation}
where $C= C(n,p, \al^{\prime},  T, \lambda)$.
We now define the following continuous interpolation for $\ZG{n}$: set $\tZG{n}(t, \cdot) = \ZG{n}(t, \cdot)$ for $t \in \gamma^m \N_0$, and interpolate linearly between these points. It is easy to check that $\tZG{n}$ satisfies
 \begin{equation*}
\E \| \tZG{n}(t, \cdot) - \tZG{n}(s,\cdot) \|^p_{\Cc^{-\al^{\prime}- 2 \la}} \leq  C |t-s|^{\la p}   \;,
 \end{equation*}
for all values of $s,t \in [0,T]$ and hence, the Kolomogorov criterion implies the desired properties when $\ZG{n}$ is replaced by $\tZG{n}$.
 
 We claim that for any $\ka >0$ and $p \geq 1$, we have
 \begin{equation}\label{e:tight1}
 \E  \sup_{0 \leq t \leq T} \sup_{x \in \T^2} \big| \tZG{n}(t, x) - \ZG{n}(t, x) \big|^p  \leq C(n,p,T,\ka) \, \gamma^{(1 - \ka)p}\;.
 \end{equation}
 Once we have established this bound, the proof is complete.
 
By monotonicity of $L^p$-norms, it is sufficient to establish \eqref{e:tight1} for large $p$.  We treat the case $n=1$ first. In view of Lemma~\ref{le:LinftyDiscreteCont}, it suffices to establish \eqref{e:tight1} with the supremum over $x \in \T^2$ replaced by the supremum over $x \in \Le$.  We fix an interval $I_k = [k \ga^m, (k+1) \ga^m]$ for some $k \in \N_0$ and an $x \in \Le$, and we start with the estimate
 \begin{equation*}
\sup_{t \in I_k} |\tZg(t,x) - \Zg(t,x) | \leq 2 \sup_{t \in I_k} |\Zg(t,x) - \Zg(k \ga^m,x) |\;.
 \end{equation*}
 Using the definition \eqref{e:DefZg} of $\Zg$, the definition of $\Mg$ just above \eqref{e:error}, as well as \eqref{e:evolution1}, we get for any $t \in I_k$
 \begin{align}
& \Zg(t,x) - \Zg(k \ga^m,x) \notag\\
 &= \int_{k \ga^m}^t \Dg \Zg(s,x) \, ds + \big( \Mg(t,x) - \Mg(k \ga^m,x) \big) \notag\\
 &= \int_{k \ga^m}^t  \Dg \Zg(s,x) \, ds  - \frac{1}{\delta} \bigg(\int_{\tfrac{\ga^m k}{  \ag}}^{\tfrac{t}{\ag}} \LgN \hg\big(s,\tfrac{x}{\eps}\big)  \, ds \bigg) \notag\\
 & \qquad \qquad \qquad \qquad \qquad \qquad + \frac{1}{\delta} \Big(\hg\big(\tfrac{t}{\ag} , \tfrac{x}{\eps} \big) - \hg\big(\tfrac{k \ga^r}{\ag} , \tfrac{x}{\eps} \big)   \Big)\;. \label{e:tight2}
 \end{align}
We bound the terms on the right-hand side of \eqref{e:tight2} one by one:  using the definition of $\Dg$, we get for  the first term that
 \begin{align*}
\sup_{x \in \Le}\sup_{ t \in I_k} \bigg| \int_{k \ga^m}^t \Dg \Zg(s,x) \, ds  \bigg|& \leq \frac{C}{\ga^2} \int_{I_k}\big\|  \Zg(s,\cdot) \big\|_{L^\infty(\T^2)}  \, ds \\
&\leq  \frac{C(\ka^{\prime})}{\ga^{2+2 \ka^{\prime}}} \int_{I_k}\big\|  \Zg(s,\cdot) \big\|_{\Cc^{-\ka^{\prime}}} \, ds  \;.
 \end{align*}
In the second inequality we have used Lemma~\ref{l:unif-Fourier-cut} for an arbitrary $\ka^{\prime} >0$. Hence we get for any $p \geq 1$
\begin{align*}
\E &\sup_{k \leq T \ga^{-m}} \sup_{x \in \Le} \sup_{t \in I_k}  \bigg| \int_{k \ga^m}^t  \Dg \Zg(s,x) \, ds \bigg|^p   \notag\\
&\leq \sum_{k \leq T \ga^{-m}}    \E \sup_{x \in \Le}  \sup_{t \in I_k}  \bigg| \int_{k \ga^m}^t  \Dg \Zg(s,x) \, ds \bigg|^p   \\
&\leq C(\ka^{\prime},p)  \sum_{k \leq T \ga^{-m}}  \ga^{- (2+2\ka^{\prime})p }  \; \E \Big( \int_{I_k}\big\|  \Zg(s,\cdot) \big\|_{\Cc^{-\ka^{\prime}}} \, ds   \Big)^p  \\
&\leq C(\ka^{\prime}, p,T) \, \ga^{-m}  \ga^{- (2+ 2 \ka^{\prime}) p }  \ga^{m p} \; \sup_{0 \leq t \leq T+ \ga^m}\E \big\|  \Zg(s,\cdot) \big\|_{\Cc^{-\ka^{\prime}}}^p   \;.
\end{align*}
By \eqref{e:RgRegularity1}, the supremum on the right-hand side of this expression is bounded by a constant depending on $T, \ka^{\prime}$ and $p$, so that the whole expression can be bounded by 
 \begin{equation*}
C(\ka^{\prime},p,T) \, \ga^{p \Ll(m\Ll(1 - \frac{1}{p}\Rr) - (2 + 2 \ka^{\prime})   \Rr)}\;.
 \end{equation*}
 Choosing $m \geq 3$, $\ka^{\prime}$ small enough and $p$ large enough  we can obtain any exponent of the form $p (1-\ka)$ for $\ga$ ($\ka^{\prime} < \frac{\ka}{4}$ and $p > \frac{3}{2 \ka}$ suffice).

For the second term on the right-hand side of \eqref{e:tight2}, we use the deterministic estimate $| \LgN \hg(s,k)| \leq 2$ which holds for any $k \in \LN$ and any time $s$ to get for any $x \in \Le$ and $k \leq T \ga^{-m}$ and any $t \in I_k$
 \begin{equation*}
 \frac{1}{\delta} \bigg(\int_{\tfrac{\ga^m k}{  \ag}}^{\tfrac{t}{\ag}} \LgN \hg\big(s,\tfrac{x}{\eps}\big)  \, ds \bigg) \leq 2 \frac{\ga^m}{\ag \delta} \leq 2 \ga^{m-3}\;.
 \end{equation*} 
 Hence this term satisfies the estimate \eqref{e:tight1} as soon as $m \geq 4$.
 
 Let us turn to the third term on the right-hand side of \eqref{e:tight2}. The process $\hg\big(\cdot,\tfrac{x}{\eps}\big)$ only evolves by jumps. Let us recall that a jump-event at position $j \in \LN$ at time $s \in \big[ \tfrac{k \ga^m}{\ag} , \tfrac{t}{\ag} \big]$ causes a jump of magnitude $2 \kg\big(\tfrac{x}{\eps}, j\big) \leq 3 \ga^2$ for $\hg\big(\cdot, \tfrac{x}{\eps}\big)$. Hence, we have 
 \begin{equation*}
 \sup_{x \in \Le} \sup_{t \in I_k} \frac{1}{\de} \Big(\hg\big(\tfrac{t}{\ag} , \tfrac{x}{\eps} \big) - \hg\big(\tfrac{k \ga^m}{\ag} , \tfrac{x}{\eps} \big) \Big) \leq 3 \ga J_k \;,
 \end{equation*}
 where $J_k$ is the total number of jumps at all locations $j \in \LN$ during the time interval $\big[ \tfrac{k \ga^m}{\ag} , \tfrac{(k+1) \ga^m}{\ag} \big]$. According to \eqref{e:rate}, the jump rate at any given location is always bounded by $1$, so the total jump rate is bounded by $|\Le|$. This implies that for every~$k$, the random variable $J_k$ is stochastically dominated by $\msf{Poi}(\la)$, a Poisson random variable with mean $\la = \ga^{m} \ag^{-1} |\Le| \le C \ga^{m-6}$. We impose $m >6$, so that this rate goes to zero. We note that
\begin{equation*}
\E \sup_{k \le T \ga^{-m}} J_k^p   \le \sum_{k \le T\ga^{-m}} \E J_k^p  \le T\ga^{-m} \, \E \, \msf{Poi}(\la)^p \;.
\end{equation*}
Since $\E\Ll[ \msf{Poi}(\la)^p \Rr] \le C(p) \ga^{m-6}$, we arrive at
\begin{equation}\label{e:tight1B}
\ga^p  \E \Big[\sup_{k \leq T \ga^{-m}}  J_k^p \Big] \leq    C(p,T) \ga^{p-6}\;,
\end{equation}
and as above we can obtain the exponent $p(1-\ka)$ by choosing $p > \frac{6}{\ka}$. Summarising these calculations and invoking Lemma~\ref{le:LinftyDiscreteCont}, we see that for any $\ka >0$, $m>6$ and $p>\frac{6}{\ka}$ there exists a constant $C=C(p,T,\ka)$ such that 
\begin{equation}\label{e:tight3}
\E \sup_{k \leq T \ga^{-m}}  \sup_{x \in \T^2} \sup_{t \in I_k} |\Zg(t,x) - \Zg(k \ga^m,x) |^p \leq C \ga^{p(1-\ka)} \;.
\end{equation}
which establishes \eqref{e:tight1} in the case $n=1$.

We now proceed to prove \eqref{e:tight1} in the case $n \geq 2$. According to Proposition~\ref{p:discrete-Wick}, it suffices to show
 \begin{align*}
 \E & \sup_{0 \leq t \leq T} \sup_{x \in \T^2} \big| H_n(\unZ(t,x)) - \widetilde{H_n(\unZ) }(t,x)\big|^p \leq C \, \gamma^{(1 - \ka)p}\;.
 \end{align*}
where $\unZ(t,x):= (\Zg(t, x), [\Rg(\cdot,x)]_t)$, and $ \widetilde{H_n(\unZ)} $ denotes the process obtained by evaluating $H_n(\unZ(t,x))$ at points $t \in \ga^m \N$ exactly and taking the linear interpolation in between. It is easy to see that for fixed $x \in \T^2$ and $t \in I_k$ we have
\begin{align}
\big| & H_n(\unZ(t,x)) - \widetilde{H_n(\unZ) }(t,x)\big| \notag \\
&\leq C(n)  \big(1 +  \sup_{t\in I_k} |\Zg(t,x)|^{n} + \sup_{t \in I_k} [ R_{\ga,t}(\cdot,x)]_t^{(n/2)}  \big)  \notag\;\\
&  \times \Big( \sup_{t \in I_k}|\Zg(t,x) - \Zg(k \ga^m,x) | + \sup_{t \in I_k}|[ R_{\ga,t}(\cdot,x)]_t - [ R_{\ga,k\ga^m}(\cdot,x)]_{k \ga^m} |   \Big) \;.\label{e:tight3A}
\end{align}
We have already established \eqref{e:FinalAPriori}  for $n=1$, which yields a bound on all moments of $\sup_{t\in I_k} |\Zg(t,x)|^{n}$ as well as  \eqref{e:tight3}, so that it remains to bound the terms involving the bracket process. We start with the last term on the right-hand side of \eqref{e:tight3A}. For a fixed $k$, we write $s= k \ga^m$ and get for any $t \in I_k$
\begin{align}
|[ &R_{\ga,t}(\cdot,x)]_t - [ R_{\ga,s}(\cdot,x)]_{ s} | \notag\\
&  \leq \sum_{0 \leq r \leq s} \big| (\Delta_r  \Rg(\cdot,x) )^2 - (\Delta_r R_{\ga,s}(\cdot,x) )^2 \big| + \sum_{s \leq r \leq t}  \Delta_r  (\Rg(\cdot,x) )^2\;.\label{e:tight4}
\end{align}
Recall, that a jump of spin $\sigma(k)$ for $k = \eps^{-1}z$ at time $r$ causes a jump of absolute value $  2 \eps^2 \delta^{-1}\Pg{t-r} \ae \Kg (x-z)$ for $\Rg( \cdot,x)$, so that the first term on the right-hand side of \eqref{e:tight4} can be bounded by 
\begin{align*}
\sum_{0 \leq r \leq s} & \big| \Delta_r  (\Rg(\cdot,x) )^2 - (\Delta_r R_{\ga,s}(\cdot,x) )^2 \big|  \\
&\leq 4 \frac{\eps^4}{\delta^2} J \sup_{z \in \Le} \sup_{0 \leq r \leq s} \big| (\Pg{t-r} \ae \Kg(z))^2 -  (\Pg{s-r} \ae \Kg(z))^2 \big|\;,
\end{align*}
where $J$ is the total number of jumps at any point in $\LN$ and any point in time before $T$ in macroscopic time, which corresponds to $T/\ag$ in microscopic units. As the jump rate at any given point is always bounded by $1$, $J$ is stochastically dominated by a Poisson variable $\msf{Poi}(\la)$ with mean $\la \leq C T \eps^{-2} \ag^{-1} \leq CT \ga^{-6}$, which implies in particular that for every $p \geq1$,
\begin{align*}
\E J^p \; \leq C(p,T) \ga^{-6 p} \;.  
\end{align*}
On the other hand, for any $0 \leq r \leq s$ and $z \in \Le$, we can write
\begin{align*}
\big|& (\Pg{t-r} \ae \Kg(z))^2 -  (\Pg{s-r} \ae \Kg(z))^2 \big| \\
&\leq \big| \Pg{t-r} \ae \Kg(z)+  \Pg{s-r} \ae \Kg(z) \big| \, \big| \Pg{t-r} \ae \Kg(z) -  \Pg{s-r} \ae \Kg(z) \big|  \\
&\leq  C \ga^{-2}  \log(\ga^{-1}) \, \big| \Pg{t-r} \ae \Kg(z) -  \Pg{s-r} \ae \Kg(z) \big|\;, 
\end{align*}
where we have made use of \eqref{e:P0} in the last line. We continue by bounding brutally
\begin{align*}
\big| &\Pg{t-r} \ae \Kg(z) -  \Pg{s-r} \ae \Kg(z) \big|  \\
 &= \Big|  \frac{1}{4} \sum_{\om \in \{ -N,\ldots,N\}^2  } e^{i \pi \om \cdot z}  \hKg(\om)  \; \big( e^{\frac{t-r}{\ga^2} ( \hKg(\om) -1)   } -  e^{\frac{s-r}{\ga^2} ( \hKg(\om)-1)   } \big)  \Big| \\
&\leq C \ga^{-4} \frac{t-s}{\ga^2} \; ,
\end{align*}
where we have used the bound $|\hKg(\om)| \leq 1$ twice. Summarising these bounds, we get for any $p$ 
\begin{align*}
\E& \sup_{k \leq T\ga^{-m}}\sup_{x \in \Le}\sup_{t \in I_k} \Big(\sum_{0 \leq r \leq s}  \big| \Delta_r  (\Rg(\cdot,x) )^2 - (\Delta_r R_{\ga,k \ga^m}(\cdot,x) )^2 \big|  \Big)^p\\
& \leq C(p,T)\ga^{mp} \, \ga^{-8p }\log(\ga^{-1})^p\;,
\end{align*}
which is bounded by $C(p,T) \ga^{p(2-\ka)}$ as soon as $m \geq 10$.

For the second term on the right-hand side of \eqref{e:tight4}, we write using \eqref{e:JumpBoundDet}
\begin{align*}
\sum_{k \ga^m \leq r \leq (k+1)\ga^m}  \Delta_r  (\Rg(\cdot,x) )^2 \leq C \ga^2 J_k \;,
\end{align*}
where as above $J_k$ is the total number of jumps at all locations $j \in \LN$ during the time interval $\big[ \tfrac{k \ga^m}{\ag} , \tfrac{(k+1) \ga^m}{\ag} \big]$. Repeating the argument that leads to \eqref{e:tight1B}, we get that for $m >6$
\begin{align*}
\ga^{2p}  \E \sup_{k \leq T \ga^{-m}}  J_k^p  \leq    C(p,T) \, \ga^{2p-6}\;.
\end{align*}
Summarising these calculations, we get that for $m \geq 10$ and $p > \frac{6}{\ka}$, there exists $C=C(p,T,\ka)$ such that 
\begin{align}
\E \sup_{k \leq T\ga^{-m}}\sup_{x \in \Le}\sup_{t \in I_k} \big|[ R_{\ga,t}(\cdot,x)]_t - [ R_{\ga,k \ga^m}(\cdot,x)]_{ k \ga^m} \big|^p  \leq C \ga^{p (2 -\ka)}\,. \label{e:tight10}
\end{align}
Finally, going back to \eqref{e:tight3A}, it remains to bound 
\begin{align*}
\E &\Big( \sup_{k \leq T \ga^{-m}}\sup_{x \in \Le }\sup_{t \in I_k} [ R_{\ga,t}(\cdot,x)]_t^{(n/2)}  \Big) ^p \\
& \leq C  \sum_{k \leq T \ga^{-m} }  \E\big\| \;  [ R_{\ga,k \ga^m}(\cdot,\cdot)]_{k \ga^m}^{(n/2)} \; \big\|_{L^\infty(\Le)}^p  \\
&\qquad + C \; \E \sup_{k \leq T\ga^{-m}}\sup_{x \in \Le}\sup_{t \in I_k} \big|[ R_{\ga,t}(\cdot,x)]_t - [ R_{\ga,k \ga^m}(\cdot,x)]_{ k \ga^m} \big|^p\;.
\end{align*}
In view of \eqref{e:tight10}, the second term can be neglected. Using Lemma~\ref{le:jumps}, we see that the first term is bounded by $C(p,T) \ga^{-m} \log(\ga^{-1})^{mp}$. Plugging all of this back into \eqref{e:tight3A}, invoking Lemma~\ref{le:LinftyDiscreteCont} once more, and using H\"older's inequality (with a large exponent on the terms in the last line of \eqref{e:tight3A})  we get for  any $\ka>0$ and $p$ large enough
\begin{align*}
\E \sup_{x  \in \Le} \sup_{0 \leq t \leq T} \big| & H_n(\unZ(t,x)) - \widetilde{H_n(\unZ) }(t,x)\big|^p  \leq C(n,p,T,\ka) \ga^{p(1-\ka)} \;,
\end{align*}
and the proof is complete. 
%
%
%%%%%%%%%%%%%%
\end{proof}
%%%%%%%%%%%%%

%
%
%%%%%%%%%%%%%%%%%%%%%%%%
\section{Convergence in law of the linearised system}
%%%%%%%%%%%%%%%%%%%%%%%%
\label{sec:conv-lin}
%%%%%%%%%%%%%%%%%%%%%%%%
The aim of this section is to prove the convergence in law of $\Zg$  and the approximate Wick powers $\ZG{n}$ to the solution of the stochastic heat equation and its Wick powers. 
We will  only be able to show the convergence in law of $\Zg$ and $\ZG{n}$ up to  a stopping time that depends on $\Xg$ (the ``non-linear'' dynamics), which we do not control for now. For a fixed  $\al \in (0,\frac12)$, any $\nn>1$ and $0 <\ga <1$, we set 
\begin{equation}
\label{e:deftaug}
\taun := \inf \big\{ t \ge 0 : \|\Xg(t,\cdot)\|_{\Ca} \ge \nn \big\} \;.
\end{equation}
The following Theorem~\ref{t:converg-lin} states, roughly speaking, that $\Zg$ converges to $Z$ (the solution of the stochastic heat equation introduced in Section~\ref{sec:ContinuousAnalysis}) ``until $\taun$". In order to state this properly, we introduce a different extension of $\Zg$ beyond $\taun$. We start by modifying the microscopic jump process $\si$ for times $t \geq \taun$. Indeed, for $k \in \LN$ and for $t \geq 0$, define
\begin{equation*}
\sgn(t,k) := 
\left\{
\begin{array}{ll}
\sigma(t,k)  & \text{if } t <   \frac{\taun}{\ag}, \\
\ssgn(t,k) & \text{otherwise}\;.
\end{array}
\right.
\end{equation*}
Here $\ssgn$ is a spin system with $\ssgn(\taun/\ag,k ) =\si(\taun/\ag,k )$ and with jumps occurring for every $t > \taun/\ag$ and every $k \in \LN$ at rate $\frac12$, independently from $\si$. We now construct processes $\Mgn$ and $\Zgn$ following exactly the construction of $\Mg$ and $\Zg$ with $\si$ replaced by $\sgn$. For $t \geq 0$ and $k \in \LN$ we set $\hgn(t,k) = (\sgn(t, \cdot) \star \kg)(k)$ and as in \eqref{e:evolution1} we set 
\begin{equation}\label{e:evolution1n}
\mgn(t,k) := \hgn(t,k) - \hgn(0,k) - \int_0^t \Lgn^s \, \hgn(s, k) \, ds \;,
\end{equation} 
where $\Lgn^s$ is defined as in \eqref{e:Generator} with $\cg$ replaced by
\begin{equation}\label{e:raten}
\cgn^s =   
\left\{
\begin{array}{ll}
 \cg & \text{if } s <   \frac{\taun}{\ag} \;, \\
 \frac12 & \text{otherwise}\;
\end{array}
\right.
\end{equation}
(in other words, $\Lgn^s$ is the infinitesimal generator of $\ssgn$).
Finally, let  $\Mgn(t,x) := \frac{1}{\dg} \mgn\Big(\frac{t}{\ag}, \frac{x}{\eg} \Big) $. The processes $\Mgn(\cdot, x)$ are martingales with quadratic variations given by \eqref{e:QuadrVar2} with $\Cg$ replaced by
\begin{equation}
\label{e:def:Cgn}
\Cgn(s,z):= \cgn^s(\sgn(s/\ag), z/\eps) \; .
\end{equation}
Let $\RGn{n}$ and $\ZGn{n}$ be defined as iterated stochastic integrals just as $\RG{n}$ and $\ZG{n}$ in \eqref{e:ZnA}, \eqref{e:ZnB}, and \eqref{e:DefZn}, but with $\Mg$ replaced by $\Mgn$. It is clear that for $s \leq \taun$, we have 
$\RGn{n}(s, \cdot) = \RG{n}(s, \cdot)$. Furthermore, the main results of the previous two sections, i.e. Proposition~\ref{prop:RGBoundOne}, Lemma~\ref{le:quartic_var}, Proposition~\ref{p:discrete-Wick}, as well as Proposition~\ref{prop:tight} hold true unchanged if $\RG{n}$ and $\ZGn{n}$ are replaced by $\RGn{n}$ and $\ZGn{n}$. Indeed, the only property used in these proofs concerning the jump rate $\Cg$ is the fact that it is bounded by $1$, and this remains true for $\Cgn$.

%%%%%%%%%%
\begin{theorem}[Convergence of $\Zg$]
%%%%%%%%%%
\label{t:converg-lin}
%%%%%%%%%%
Let $\al \in (0,1/2) $ and $\nn > 1$.  As $\gamma$ tends to $0$, the processes $\Zgn$ converge in law to $Z$ with respect to the Skorokhod topology on $\Dd(\R_+,\Ca)$, where $Z$ is defined in Proposition~\ref{prop:daPratoDebussche}. 
%%%%%%%%%%
\end{theorem}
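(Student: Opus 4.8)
The plan is to combine the tightness already established in Proposition~\ref{prop:tight} (which, as remarked, applies verbatim to $\Zgn$) with an identification of all subsequential limits. Tightness on $\Dd(\R_+,\Ca)$ gives relative compactness; it remains to show that any weak limit point equals the law of the stochastic heat equation solution $Z$ from Proposition~\ref{prop:daPratoDebussche}. Since, by Proposition~\ref{prop:tight}, every limit is supported on $\Cc(\R_+,\Ca)$ and satisfies the uniform moment bound \eqref{e:FinalAPriori}, it suffices to characterise the limit as the unique process solving the martingale problem associated with \eqref{e:linear}. Concretely, I would pass to the limit in the semimartingale decomposition of $\langle \Zgn(t,\cdot),\varphi\rangle$ for a fixed smooth test function $\varphi$ on $\T^2$: by \eqref{e:DefZg} (with $\si$ replaced by $\sgn$), $\langle\Zgn(t,\cdot),\varphi\rangle = \int_0^t \langle\Zgn(s,\cdot),\Dg^\ast\varphi\rangle\,ds + \langle\Mgn(t,\cdot),\varphi\rangle$, and $\langle\Mgn(t,\cdot),\varphi\rangle$ is a martingale whose predictable quadratic variation is given by \eqref{e:QuadrVar2} with $\Cg$ replaced by $\Cgn$.

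The first key step is to show that $\Dg^\ast$ (equivalently $\Dg$, which is self-adjoint) converges to the continuum Laplacian $\Delta$ when tested against smooth functions: this follows from \eqref{e:DefDg}, \eqref{e:Fourier-semi} and the expansion $\hKg(\om) = 1 - \ga^2|\om|^2 + o(\ga^2)$ contained in Lemma~\ref{le:Kg}, so that $\gamma^{-2}(\hKg(\om)-1) \to -\pi^2|\om|^2$ (up to the $\co^2$ factor, which tends to $1$ by \eqref{e:scaling2}) for each fixed Fourier mode, with uniform control on the relevant modes. Combined with the tightness and the uniform bounds, this lets one identify the drift term of any limit point as $\int_0^t \langle Z(s,\cdot),\Delta\varphi\rangle\,ds$. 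The second, and more delicate, step is to identify the limiting quadratic variation. From \eqref{e:QuadrVar2} one has
\begin{equation*}
\langle \langle\Mgn(\cdot,\cdot),\varphi\rangle \rangle_t = 4\,\frac{\eg^2}{\dg^2\ag}\int_0^t \sum_{z\in\Le}\eg^2\,\big(\Kg\ae\varphi\big)(z)^2\,\Cgn(s,z)\,ds,
\end{equation*}
and by the scaling relations \eqref{e:scaling1} the prefactor $4\eg^2/(\dg^2\ag) = 4\co^2 \to 4$. One then needs the convergence of the law of large numbers type object $\sum_z \eg^2 (\Kg\ae\varphi)(z)^2\,\Cgn(s,z)$ to $2\int_{\T^2}\varphi(x)^2\,dx$, i.e.\ that $\Kg\ae\varphi \to \varphi$ in $L^2$ (immediate since $\Kg$ is an approximate identity by \eqref{e:norm-kk}) together with the fact that the jump rate $\Cgn$ averages to $1/2$. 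This last point is the main obstacle: we have no control on the invariant measure, so the averaging of $\Cgn(s,z) = \tfrac12(1-\sgn(s/\ag,z/\eg)\tanh(\cdots))$ towards $1/2$ must be extracted purely dynamically. The strategy is to show that the martingale increment $\langle\Mgn(t,\cdot),\varphi^2\rangle$, whose bracket is itself $O(\ga^2\log\ga^{-1})$-small by the computation in Lemma~\ref{le:quartic_var}, forces the time-integrated fluctuations of $\Cgn$ around $1/2$ to vanish; more precisely, the term $\sum_z \eg^2(\Kg\ae\varphi)(z)^2\,\sgn(s/\ag,z/\eg)\tanh(\beta\hgn(s,z/\eg))$ is a spatial average against a rapidly oscillating, bounded field, and one bounds its time integral using the martingale decomposition of $\sum_z \eg^2(\Kg\ae\varphi)(z)^2\,\hgn(s,z/\eg)$ (as in \eqref{e:evolution1}), whose drift contains $-\hgn + \kg\star\tanh(\cdots)$ and whose martingale part has small bracket — an Itô-trick / energy estimate showing the offending term is $O(\ga^c)$ in an appropriate sense.

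With the drift and quadratic variation of the limiting martingale problem identified, and the jumps of $\Mgn$ vanishing uniformly (a jump has size $\le C\ga$ by \eqref{e:JumpBoundDet}, hence the limit martingale is continuous), one applies the martingale characterisation of the stochastic heat equation (recalled in the appendix referenced in the introduction) to conclude that any limit point has the law of $Z$. Finally I would invoke Proposition~\ref{prop:daPratoDebussche} for uniqueness of this law and conclude, by the usual Prokhorov argument, that the full family $\Zgn$ converges in law to $Z$ on $\Dd(\R_+,\Ca)$. The extension to all of $\R_+$ follows from \cite[Theorem~16.4]{bill} as in Proposition~\ref{prop:tight}, since it suffices to establish convergence on each compact time interval $[0,T]$.
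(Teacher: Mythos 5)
Your overall structure — tightness from Proposition~\ref{prop:tight} plus identification of limit points via the martingale problem of Theorem~\ref{t:martchar}, with the drift handled by $\Dg\phi\to\Delta\phi$ and the jumps killed by \eqref{e:JumpBoundDet} — matches the paper. But your treatment of the quadratic variation contains a genuine misconception, and this is where the proposal breaks down.

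You write that the convergence of $\Cgn$ to $1/2$ ``must be extracted purely dynamically'' via an It\^o-trick/energy estimate exhibiting cancellation in the rapidly oscillating field $\sgn\tanh(\beta\hgn)$, and you suggest exploiting some smallness from Lemma~\ref{le:quartic_var}. This is not needed, and it misses the entire point of introducing $\taun$ and the modified dynamics $\sgn$. The control is \emph{deterministic and pointwise in $(s,z)$}, not an average: from \eqref{e:rate}, the jump rate deviates from $1/2$ by $\tfrac12|\tanh(\beta\hgn)|\le \beta|\hgn| = \beta\dg|\Xg| = \beta\ga|\Xg|$, and $\|\Xg(s)\|_{L^\infty}\le C\ga^{-2\al}\|\Xg(s)\|_{\Ca}$ by Lemma~\ref{l:unif-Fourier-cut} (as $\hat\Xg$ is supported on frequencies $\le N\le\ga^{-2}$). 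Since $\|\Xg(s)\|_{\Ca}\le\nn$ for $s<\taun$ by the very definition \eqref{e:deftaug}, and since $\Cgn\equiv\tfrac12$ for $s>\taun$ by construction \eqref{e:raten}, one has the uniform estimate $|\Cgn(s,z)-\tfrac12|\le C\ga^{1-2\al}\nn$ for all $s,z$ — this is exactly \eqref{e:cg-control} in the paper. With this, the quadratic variation converges to $2t\|\phi\|_{L^2}^2$ directly, with no cancellation argument. Note also that the bracket of the test-function martingale is $O(\log\ga^{-1})$, not small, by \eqref{leadshere}; Lemma~\ref{le:quartic_var} concerns only the \emph{difference} between the bracket and the predictable quadratic variation, so it cannot play the role you assign it. The apparent ``main obstacle'' you identify is precisely what the $\nn$-localisation dissolves — without it (i.e., for the un-stopped $\Zg$) the problem would indeed be hard, which is why the theorem is stated for $\Zgn$ and the stopping time is removed only at the very end of the proof of Theorem~\ref{thm:Main}.

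A secondary point: you gloss over passing from the convergence of the pre-limit martingales to the local-martingale property of the limit. Since the functionals involved are unbounded on $\Dd(\R_+,\Ca)$, the paper introduces a cutoff via a stopping time $T_L(z)$ and restricts to levels $L$ outside a countable exceptional set to make the relevant maps continuous; you should at least acknowledge that this localisation is needed to justify exchanging limits and expectations.
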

%%%%%%%%%%
%
%%%%%%%%%%
\begin{proof}
%%%%%%%%%%
%
As in equation \eqref{e:DefZg} on $\T^2$, we have that
\begin{equation}
\label{e:evolution-lin}
\Zgn(t,x) =     \int_0^t  \Dg \Zgn(s,x)\, ds  + \Mgn(t,x)\;.
\end{equation}
As explained above, Proposition~\ref{prop:tight} also applies to the family of processes $\Zgn$. Therefore, for any fixed $\nn > 1$ the family $( \Zgn)_{ \ga  \in (0,\frac13) }$ is  tight for the Skorokhod topology on $\Dd(\R_+,\Ca)$. Let $\ov{Z}$ be a sub-sequential limit of $\Zgn$ as $\ga$ tends to $0$. It suffices to show that the law of $\ov{Z}$ is that of $Z$. In order to see this, we appeal to the martingale characterization of this law, as recalled in Theorem~\ref{t:martchar} below. By Proposition~\ref{prop:tight}, $\ov{Z}$ must take values in $\Cc(\R_+,\Ca)$.

Let $\phi \in \Cc^\infty(\T^2)$. We define 
$$
\Mm_{\ga,\phi}(t) = (\Zgn(t) , \phi)  -  \int_0^t  (\Zgn(s), \Dg \phi)  \, ds\;,
$$
where 
$(f,g) = \int f(x) g(x) \, dx$. More generally, when $f$ is a distribution and $g$ a smooth test function, we write $(f,g)$ to denote the evaluation of $f$ on $g$. As $( f, \Dg g) = ( \Dg f, g)$, the process $\Mm_{\ga,\phi}$ is a martingale. 

Let us first see that for any $s \ge 0$,
\begin{equation}
\label{e:Dg-Delta}
\Ll| (\Zgn(s), \Dg \phi) - (\Zgn(s), \Delta \phi) \Rr| \le C \gamma^{2-2\al} \|\Zgn(s)\|_{\Ca}\;,
\end{equation}
for a constant $C = C(\phi)$. Indeed, according to the definitions of $\Dg$ and $\Kg$ (see the discussion below \eqref{e:evolution2} as well as \eqref{e:DefDg}) 
\begin{align*}
\Dg \phi(x) &= \ct \frac{1}{\ga^2}\sum_{z \in \Lg } \ga^2 \KK(z) \Big( \phi \big(x+\frac{\eg}{\ga}z\big) - \phi(x) \Big)\\
&= \ct \frac{1}{\ga^2}\sum_{z \in \Lg } \ga^2 \KK(z) \Big(  \sum_{j=1,2}\partial_j \phi(x) \frac{\eps}{\ga}z_j  + \frac{1}{2} \sum_{j_1,j_2=1,2} \partial_{j_1} \partial_{j_2} \phi(x) \frac{\eps^2}{\ga^2}z_{j_1} z_{j_2} \notag\\
&\qquad \qquad  + \frac{1}{6} \sum_{j_1,j_2,j_3=1,2} \partial_{j_1} \partial_{j_2} \partial_{j_3} \phi(x)\frac{\eps^3}{\ga^3} z_{j_1} z_{j_2} z_{j_3}    \Big)    + \msf{Err} \;.
\end{align*}
The error term $\msf{Err}$ is readily seen to be bounded by $C\ga^2$ uniformly in $x$. Furthermore, by the symmetry of $\KK$, all of the sums involving odd powers of $z_j$ vanish. The only remaining contribution is 
\begin{equation*}
 \Delta \phi  \; \frac{1}{2}  \ct\sum_{z \in \Lg } \ga^2 \KK(z) z_1^2\;,
\end{equation*}
where we used symmetry of the kernel again. The Riemann sum converges to the integral $\int \KK(z) z_1^2 \, dz = 2$ and the error is bounded by $C\ga^2$ (see Remark~\ref{rem:ApproxInt}). Therefore, we get
$$
\|\ \Dg \phi - \Delta \phi\|_{L^\infty} \le C \gamma^2.
$$
The estimate \eqref{e:Dg-Delta} thus follows from Lemma~\ref{l:unif-Fourier-cut} and the fact that the Fourier coefficients of $\Zg$ with frequency larger than $N \leq \ga^{-2}$ vanish.

For any $L > 0$ and $z \in \Dd(\R_+,\Ca)$, we define
$$
T_L(z) = \inf\{t \ge 0 : \|z\|_{\Ca} > L\}
$$
and for any $t \ge 0$,
$$
\MM_{z,\phi}(t) = 
(z(t),\phi) - \int_0^t (z(s),\Delta \phi) \, ds\;.
$$
The first condition from Theorem~\ref{t:martchar} that needs to be checked is that $\MM_{\ov{Z},\phi}$ is a local martingale.
For some $s \ge 0$, we give ourselves a bounded continuous function $F : \Dd(\R_+,\Ca) \to \R$ that is measurable with respect to the $\sigma$-algebra over $\Dd([0,s],\Cc^{-\al})$, and we consider, for $t \ge s$,
$$
\GG_{L,t}(z) = \Ll(\MM_{z,\phi}(t \wedge T_L(z)) - \MM_{z,\phi}({s \wedge T_L(z)})\Rr) F(z)\;,
$$
where we slightly abuse notation by writing $\MM_{z,\phi}({t \wedge T_L(z)})$ to denote the process that is equal to $\MM_{z,\phi}(t)$ if $t < T_L(z)$, and is equal to the left limit of $\MM_{z,\phi}$ at $T_L(z)$ otherwise.
Let us define
$$
\msf{Loc} = \Ll\{ L > 0 : \P\Ll[\|\ov{Z}\|_{\Cc^{-\al}} \text{ has a local maximum at height } L  \Rr]  > 0 \Rr\}\;.
$$
Noting that
$$
\msf{Loc} \subset \bigcup_{\substack{ n,m \in \N \\ s \in \Q_+ }} \Ll\{ L > 0 : \P\Ll[ \sup_{t:|t-s| \le 1/m} \|\ov{Z}(t)\|_{\Cc^{-\al}} = L \Rr] \ge \frac{1}{n} \Rr\}\;,
$$
we see that this set is countable. For $L \notin \msf{Loc}$, the process $\ov{Z}$ belongs a.s.\ to the set of continuity points of the mapping
$$
\Ll\{\begin{array}{lll}
\Dd(\R_+,\Ca) & \to & \R \\
z & \mapsto & T_L(z)\;.
\end{array}
\Rr.
$$
Similarly, $\ov{Z}$ belongs a.s.\ to the set of continuity points of the mapping
$$
\Ll\{\begin{array}{lll}
\Dd(\R_+,\Ca) & \to & \R \\
z & \mapsto & \GG_{L,t}(z) \;.
\end{array}
\Rr.
$$
By the continuous mapping theorem, $\GG_{L,t}(\Zgn)$ thus converges in law to $\GG_{L,t}(\ov{Z})$ along the sub-sequence $\gamma \to 0$. Since $z \mapsto \GG_{L,t}(z)$ is uniformly bounded, the expectations converge as well, i.e.\ $\E[\GG_{L,t}(\Zgn)]$ converges to $\E[\GG_{L,t}(\ov{Z})]$ along the sub-sequence $\gamma \to 0$. Moreover, $\MM_{\Zgn,\phi}({t \wedge T_L(\Zgn)})$ is very close to $\Mm_{\ga,\phi}({t \wedge T_L(\Zgn)})$, as shown by \eqref{e:Dg-Delta} (and where we use the same abuse of notation on $t \wedge T_L(\Zgn)$). More precisely,
\begin{multline*}
\E\Ll[ \Ll| \GG_{L,t}(\Zgn) - \Ll(\Mm_{\ga,\phi}({t \wedge T_L(\Zgn)}) - \Mm_{\ga,\phi}({s \wedge T_L(\Zgn)})\Rr) F(\Zgn) \Rr| \Rr] \\
\le C(\phi) \, t \gamma^{2-2\al} L \|F\|_{L^\infty}\;.
\end{multline*}
By the martingale property of $\Mm_{\ga,\phi}(t)$, we have
$$
\E\Ll[ \Ll(\Mm_{\ga,\phi}({t \wedge T_L(\Zgn)}) - \Mm_{\ga,\phi}({s \wedge T_L(\Zgn)})\Rr) F(\Zgn) \Rr] = 0\;,
$$
so $\E[\GG_{L,t}(\Zgn)]$ tends to $0$ as $\gamma$ tends to $0$. This implies that $\E[\GG_{L,t}(\ov{Z})] = 0$, and thus that $(\MM_{\ov{Z},\phi}({t \wedge T_L(\ov{Z})}))_{t \ge 0}$ is a martingale. Since the set $\msf{Loc}$ is countable, we can choose a sequence $L_n \notin \msf{Loc}$ that goes to infinity with $n$. For such a sequence, $T_{L_n}(\ov{Z})$ tends to infinity a.s.\ as $n$ tends to infinity. We have thus proved that $\MM_{\ov{Z},\phi}$ is a local martingale, which is what we wanted.

For $z \in \Dd(\R_+,\Cc^{-\al})$, let
$$
\Gamma_z(t) = \Ll(\MM_{z,\phi}(t)\Rr)^2 - 2 t \|\phi\|_{L^2}^2\;.
$$
In order to verify the assumptions of Theorem~\ref{t:martchar}, there remains to see that $\Gamma_{\ov{Z}}$ is a local martingale.
The reasoning is similar, except that unlike the first part, the argument relies on the presence of the stopping time $\taun$. Let us see how. Recalling $\Mgn$ from \eqref{e:evolution-lin}, we have
$$
\Mm_{\ga,\phi}(t) = (\Mgn(t), \phi)\;,
$$
which coincides with $ (\Mg(t), \phi)$ if $t \leq \taun$. If we assume further that $\phi$ is a trigonometric polynomial of degree $K < \infty$ (i.e.\ $\phi(x) = \frac14\sum_{|\om_j| \leq K} \hat{\phi}(\om) e^{i \pi \om \cdot x}$), then by \eqref{e:Pars}, we have the identity
$$
(\Mgn(t), \phi) =  \sum_{x \in \Lambda_\eg} \eg^2 \Mgn(t,x) \, \phi(x)
$$
for $\ga^{-2} \geq K$.  By \eqref{e:QuadrVar2}, the predictable quadratic variation of $\Mm_{\ga,\phi}$ is given by
$$
\langle \Mm_{\ga,\phi} \rangle_t = 4 \co^2  \int_0^t \sum_{x,y,z \in \Lambda_\eps} \eg^6 \, \phi(x) \phi(y) \, \Kg(x-z) \Kg(y-z) \, \Cgn(s,z)   \, ds
$$
with $\Cgn$ as in \eqref{e:def:Cgn}. The central point is the observation that for $t < \taun$, 
\begin{equation}
\label{e:cg-control}
\Ll|\Cgn(t,z) - \frac12 \Rr| = \Ll|\Cg(t,z) - \frac12 \Rr| \le C(\al) \gamma^{1-2\al} \|\Xg(t)\|_{\Ca}\;,
\end{equation}
whereas for $t > \taun$, we have $\Cgn = \frac12$ by definition.

Indeed, to see \eqref{e:cg-control}  we note that by \eqref{e:rate} and \eqref{e:defXg},
$$
\Ll| \Cg(t,z) - \frac12 \Rr| \le \bg \big| \hg(\si(t/\ag),z/\eg)  \big|= \bg \dg \big| \Xg(t,z) \big| \;.
$$
Moreover, since the Fourier coefficients of $\Xg$ with frequency larger than $\gamma^{-2}$ vanish, we obtain from Lemma~\ref{l:unif-Fourier-cut} that
$$
\|\Xg(t)\|_{L^\infty} \le C \gamma^{-2\al} \|\Xg(t)\|_{\Ca}\;.
$$
Recalling that $\dg = \gamma$ and $\bg \le 2$ for $\gamma$ sufficiently small, we obtain \eqref{e:cg-control}. From this, we deduce that the quadratic variation of $\Mm_{\ga,\phi}$ at time $t$ is close to
$$
2 \eps^6 \int_0^t \sum_{x,y,z \in \Lambda_\eps} \phi(x) \phi(y)  \,  \Kg(x-z) \Kg(y-z)   \, ds
$$
up to an error controlled by $\gamma^{1-2\al}$. It is then clear that this tends to $2 t \|\phi\|_{L^2}^2$ as $\gamma$ tends to $0$. We obtain that the martingale
$$
\Mm_{\ga,\phi}^2(t) - \langle \Mm_{\ga,\phi}\rangle_t
$$
is close to $\Gamma_{\Zgn}(t)$ up to an error that vanishes as $\gamma$ tends to $0$. We can now proceed as in the first part to obtain that ${\Gamma}_{\ov{Z}}$ is a local martingale. Recall that we have assumed that $\phi$ is a trigonometric polynomial. But by Remark~\ref{r:density}, this is sufficient to characterize the law of $\ov{Z}$, and the proof is complete.
%
%%%%%%%%%%
\end{proof}
%%%%%%%%%%
%
%
We can now prove the convergence in law of the iterated integrals as well. As above, for the moment we can only prove this ``before the stopping time $\taun$".  

Recall the definitions of the processes $\RR{n}$ and $\ZZ{n}$ in Section~\ref{sec:ContinuousAnalysis}. Furthermore, recall that for $x \in \T^2$, the process  $ s \mapsto R_t(s, x) = \RR{1}(s,x)$, defined for $s <t$, is a continuous martingale with quadratic variation given by 
\begin{align}
\langle R_t(\cdot,x) \rangle_s 
& =  \frac{1}{2} \sum_{\om \in \Z^2} \int_0^s \exp\Ll(2(t-r)\pi^2 |\om|^2\Rr) \, dr\;, \label{e:quadvar0}
\end{align}
and that for $s<t$ and $x \in \T^2$, we have the exact identity
\begin{equation}
\label{e:Wick}
\RR{n}(s,x) = H_n\Ll(R_t(s,x), \langle R_t(\cdot,x) \rangle_s\Rr).
\end{equation}
%

%%%%%%
\begin{theorem}[Convergence of $\ZG{n}$] 
%%%%%
\label{t:converg-lin-Wickbis}
%%%%%%
For every $\nn \in \N$ and $n \in \N$, the processes $(\ZGn{1},$ $\ldots,  \ZGn{n})$  (defined  as in the beginning of this section) converge (jointly) in law to $(\ZZ{1}, \ldots, \ZZ{n})$ with respect to the topology of  $\Dd(\R_+, \Ca)^n$.
\end{theorem}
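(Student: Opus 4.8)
The plan is to bootstrap from the case $n=1$, which is precisely Theorem~\ref{t:converg-lin} (recall that $\ZGn{1}=\Zgn$), to the higher Wick powers. Since there is no convenient martingale characterisation of the continuum objects $\ZZ{k}$, I would use a double approximation: both the discrete powers $\ZGn{k}$ and their candidate limits $\ZZ{k}$ are compared to the mollified processes obtained by inserting a Fourier cut-off at level $\eps^{-1}$ (the parameter $\eps$ of Section~\ref{sec:ContinuousAnalysis}; it should not be confused with the mesh), and one lets first $\ga\to0$ and then $\eps\to0$. The preliminary step is tightness: by Theorem~\ref{t:converg-lin}, Proposition~\ref{prop:tight} (which, as observed at the end of Section~\ref{sec:conv-lin}, holds unchanged for the $\nn$-stopped processes) and \eqref{e:FinalAPriori}, the family $\{(\ZGn{1},\dots,\ZGn{n}):0<\ga<\tfrac13\}$ is tight on $\Dd(\R_+,\Ca)^n$, every subsequential limit is supported on $\Cc(\R_+,\Ca)^n$, and $\sup_\ga\E\sup_{t\le T}\|\ZGn{k}(t,\cdot)\|_{\Ca}^p<\infty$ for all $k\le n$, $p\ge1$, $T>0$. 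It is therefore enough to show that $\E\big[F(\ZGn{1},\dots,\ZGn{n})\big]\to\E\big[F(\ZZ{1},\dots,\ZZ{n})\big]$ as $\ga\to0$, for every bounded continuous $F$ on $\Dd(\R_+,\Ca)^n$.

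Let $\Pi_\eps$ be the projection onto Fourier frequencies $|\om|\le\eps^{-1}$ (the cut-off used to define $W_\eps$ and $Z_\eps$), and define $Z_{\ga,\nn,\eps}^{:k:}$ and $R_{\ga,t,\nn,\eps}^{:k:}$ by the recursions \eqref{e:ZnA}--\eqref{e:DefZn}, but driven by the martingale $\Pi_\eps\Rgn$ in place of $\Rgn$. For fixed $\eps$ the jumps of $\Pi_\eps\Rgn$ are bounded by $\ga$ up to an $\eps$-dependent logarithmic factor, and the jump rates are still bounded by $1$; hence the proofs of Proposition~\ref{prop:RGBoundOne}, Lemma~\ref{le:quartic_var} and Proposition~\ref{p:discrete-Wick} carry over to these mollified processes. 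Combining the last two with \eqref{e:cg-control} would give, for each fixed $\eps$, the Hermite representation $Z_{\ga,\nn,\eps}^{:k:}(t,\cdot)=H_k\big(\Pi_\eps\Zgn(t,\cdot),\sigma_{\ga,\eps}(t)\big)+o(1)$ in $L^p$, uniformly on $[0,T]$ and in $\|\cdot\|_{\Ca}$, where $\sigma_{\ga,\eps}(t)$ is the deterministic, $x$-independent quantity obtained from the predictable quadratic variation $\langle\Pi_\eps\Rgn(\cdot,x)\rangle_t$ by replacing the jump rate $\Cgn$ with its leading value $\tfrac12$ (cf.\ \eqref{e:cg-control}), and $\sigma_{\ga,\eps}(t)\to\Ce(t)$ as $\ga\to0$, $\Ce(t)$ being the constant in \eqref{e:coft} with cut-off at $\eps^{-1}$. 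Since $\Pi_\eps$ commutes with the stochastic convolution, $\Pi_\eps Z=Z_\eps$; and since $\Pi_\eps$ maps $\Ca$ continuously onto a fixed finite-dimensional space of trigonometric polynomials on which all norms are equivalent, Theorem~\ref{t:converg-lin}, a (perturbed) continuous mapping theorem, and the Hermite representation above yield, jointly for $k=1,\dots,n$,
\begin{equation*}
\big(Z_{\ga,\nn,\eps}^{:1:},\dots,Z_{\ga,\nn,\eps}^{:n:}\big)\ \longrightarrow\ \big(H_1(Z_\eps,\Ce(\cdot)),\dots,H_n(Z_\eps,\Ce(\cdot))\big)\qquad\text{in law, as }\ga\to0,
\end{equation*}
and $H_k(Z_\eps,\Ce(t))$ is exactly the mollified Wick power $Z_\eps^{\colon k\colon}$ of Section~\ref{sec:ContinuousAnalysis}.

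To remove the mollification, note first that Proposition~\ref{prop:daPratoDebussche} gives $Z_\eps^{\colon k\colon}\to\ZZ{k}$ in $\Cc([0,T],\Ca)$ as $\eps\to0$, jointly in $k$. On the discrete side, the key point is the uniform tail bound
\begin{equation*}
\limsup_{\ga\to0}\ \E\,\sup_{t\le T}\big\|\ZGn{k}(t,\cdot)-Z_{\ga,\nn,\eps}^{:k:}(t,\cdot)\big\|_{\Ca}^p\ \le\ \omega_k(\eps),\qquad \omega_k(\eps)\longrightarrow0\ \ (\eps\to0),
\end{equation*}
which I would prove by rerunning the Fourier-space estimates behind Proposition~\ref{prop:RGBoundOne}. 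Indeed, $\ZGn{k}-Z_{\ga,\nn,\eps}^{:k:}$ is a telescoping sum of iterated stochastic integrals in which at least one of the heat kernels $\Pg{t-r_j}$ is replaced by $(1-\Pi_\eps)\Pg{t-r_j}$; after applying Lemma~\ref{le:ItInt} and the bound $|\hKg(\om)|^2/(t^{-1}+2\ga^{-2}(1-\hKg(\om)))\ls(1+|\om|^2)^{-1}\wedge t$ (as in Lemmas~\ref{le:FourCalc} and~\ref{le:highfreq}) one is left with a sum over frequency configurations having at least one frequency above $\eps^{-1}$ --- and for the low Littlewood--Paley blocks the convolution constraint $\om_1+\dots+\om_k\in I_j$ forces a \emph{second} frequency above $\eps^{-1}$ as well --- so that, together with the negative Besov weight, one gains a positive power of $\eps$, uniformly in $\ga$; the error terms from Lemma~\ref{le:ItInt} are $O(\ga^{2-\ka}\,\mathrm{polylog}(\ga^{-1}))$ and disappear as $\ga\to0$. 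The three ingredients then combine through the usual $\eta/3$ argument --- using boundedness of $F$, tightness, and uniform continuity of $F$ on a compact set carrying most of the mass --- by first choosing $\eps$ so that the contributions of the last two displays are each $\le\eta/3$ and then letting $\ga\to0$ at this fixed $\eps$.

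I expect the hard part to be this last step. Wick powers are \emph{not} continuous functions of the field, so one cannot pass to the limit directly in $H_k(\Zgn,\sigma_{\ga,\eps})$; it is the commuting-limits structure --- and, concretely, the $\ga$-uniform smallness in $\Ca$ of the high-frequency remainder $\ZGn{k}-Z_{\ga,\nn,\eps}^{:k:}$ --- that makes the scheme work. This smallness is delicate because the discrete model carries a built-in cut-off at $|\om|\sim\ga^{-2}$ and the relevant two-dimensional frequency sums are only logarithmically convergent, so one must genuinely exploit that the telescoping generically forces \emph{two} frequencies above $\eps^{-1}$. A secondary, more routine, matter is verifying that Proposition~\ref{p:discrete-Wick} and the estimates of Sections~\ref{sec:linear}--\ref{sec:Tightness} are stable under the mollification $\Rgn\mapsto\Pi_\eps\Rgn$.
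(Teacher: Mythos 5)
Your proposal is correct in outline but takes a genuinely different route from the paper. Both arguments start from tightness via Proposition~\ref{prop:tight} and reduce to convergence of finite-dimensional distributions through a Hermite representation (Proposition~\ref{p:discrete-Wick}) and control of the bracket process (Lemma~\ref{le:quartic_var}, together with \eqref{e:cg-control}). But the \emph{approximation} step is different: you mollify in \emph{frequency}, replacing $\Rgn$ by $\Pi_\eps\Rgn$ and comparing to the mollified continuum objects $Z_\eps^{\colon k\colon}$; the paper instead mollifies in \emph{time}, approximating $\ZGn{k}(t,\cdot)=\RGn{k}(t,\cdot)$ by $\RGn{k}(s,\cdot)$ for $s<t$ and $\ZZ{k}(t,\cdot)$ by $\RR{k}(s,\cdot)$. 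The paper's choice is economical because the $\ga$-uniform approximation estimate it needs is \emph{already} available in Proposition~\ref{prop:RGBoundOne} (namely \eqref{e:RgRegularity3}, and its continuum twin \eqref{e:regR0n}), and the approximants $\RGn{k}(s,\cdot)=\Pg{t-s}^{\otimes}\,(\cdots)$ and $\RR{k}(s,\cdot)$ already live in $L^\infty$ thanks to the heat-semigroup smoothing, so one can apply the Hermite reduction directly there and only needs the pointwise convergences \eqref{conv1}--\eqref{conv2}, i.e.\ of $\Pg{t-s}\Zgn(s)$ and of the deterministic quadratic variations. Your approach is perhaps more transparent conceptually (the discrete and continuum sides are mollified in exactly parallel ways, and the final $\eps\to0$ limit is Proposition~\ref{prop:daPratoDebussche} verbatim), but it pays for this by requiring a \emph{new} $\ga$-uniform Fourier estimate --- the bound on $\ZGn{k}-Z_{\ga,\nn,\eps}^{\colon k\colon}$ in $\Ca$ --- which is not a direct corollary of Lemma~\ref{le:FourCalc}.

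You correctly flag this estimate as the crux, and your diagnosis of why it is delicate is right: the telescoping inserts a factor $(1-\Pi_\eps)$ on a single kernel, and for Littlewood--Paley blocks $I_j$ with $2^j\lesssim\eps^{-1}$ the convolution constraint $\sum\bom\in I_j$ does force a second frequency above (roughly) $\eps^{-1}/(2n)$, whereas for $k=1$ the gain comes instead from the slack between $\Ca$ and the slightly better $\Cc^{-\al+\ka}$ regularity already established. Two points of care that your sketch should not gloss over. First, the ``two high frequencies'' observation must be combined with the fact that in two dimensions $\sum_{|\om|>\eps^{-1}}|\om|^{-4}\sim\eps^2$ gives the quantitative gain, but one then has to re-run the whole induction of \eqref{e:FourierClaim} with the constraint carried along --- in particular to check the high-LP blocks $2^j\gtrsim\eps^{-1}$, where the two-high-frequencies constraint is vacuous and the gain must come entirely from the Besov weight $2^{-2\al j}\lesssim\eps^{2\al}$; this needs the unconstrained sum to be controlled at exponent strictly below $2\al$, which is a small but nontrivial margin. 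Second, your argument for the mollified convergence at fixed $\eps$ invokes both Lemma~\ref{le:quartic_var} (to pass from bracket to predictable quadratic variation) \emph{and} \eqref{e:cg-control} (to pass from predictable quadratic variation to the deterministic $\sigma_{\ga,\eps}(t)$); the paper does this too, but it is worth making explicit that the second step is exactly where the stopping time $\taun$ (equivalently, the replacement $\Cg\mapsto\Cgn$) is used, and without it \eqref{e:cg-control} would not be available. Neither of these is a gap in the idea, but they are the places where the write-up would need to be fully carried out, and where the paper's time-smoothing route avoids extra work.
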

%%%%%%
%
%%%%%%%%%
\begin{proof}
%%%%%%%%
As explained at the beginning of this section, the results from Sections~\ref{sec:linear} and~\ref{sec:Tightness} remain true if $\RG{n}$ and $\ZG{n}$
are replaced by $\RGn{n}$ and $\ZGn{n}$. In particular, by Proposition~\ref{prop:tight}, the family of processes $\Ll(\ZGn{n}\Rr)_{\ga \in (0,\frac13)}$ is tight with respect to the topology of $\Dd(\R_+,\Ca)$, for every given $n$. 
This implies immediately that for every $n$, the family of vectors $(\ZGn{1}, \ldots, \ZGn{n}), \, \ga \in (0,\frac13)$ is tight with respect to the topology of $\Dd(\R_+,\Ca)^n$. It remains to check the 
convergence of the finite dimensional distributions. 

From now on, we use the shorthand notations $\bZg = (\ZGn{1}, \ldots, \ZGn{n})$ and $\bZ = (\ZZ{1}, \ldots, \ZZ{n})$ to denote the random vectors of interest. It will be useful to also use $\bRg{t} = (\RGn{1}, \ldots, \RGn{n})$ and $\bR = (\RR{1}, \ldots, \RR{n})$. We fix a $K \in \N$ and times $t_1 < t_2 < \ldots < t_K$. Furthermore, let $F\colon (\Ca)^{n \times K} \to \R$ be bounded and uniformly continuous (with respect to the product metric on $(\Ca)^{n \times K}$). We need to show that 
\begin{equation*}
\lim_{\ga \to 0 } \big| \E\, F\big(\bZg(t_1), \ldots, \bZg(t_K)\big)  - \E \,F\big(\bZ(t_1), \ldots, \bZ(t_K)\big) \big| =0\;.  
\end{equation*}
To this end, fix $s_1 < t_1, \,  \ldots, s_K < t_K$ and write
\begin{align}
\big| &\E\, F\big(\bZg(t_1), \ldots, \bZg(t_K)\big)  - \E \,F\big(\bZ(t_1), \ldots, \bZ(t_K)\big) \big| \notag \\
&\leq  \E\,\big|  F\big(\bZg(t_1), \ldots, \bZg(t_K)\big)  -\,F\big(\bRg{t_1}(s_1), \ldots, \bRg{t_K}(s_K)\big) \big| \notag \\
& \qquad + \big| \E\, F\big(\bRg{t_1}(s_1), \ldots, \bRg{t_K}(s_K)\big)  - \E \,F\big(\bR(s_1), \ldots, \bR(s_K)\big) \big|\notag \\
& \qquad +  \E\,\big|  F\big(\bR(s_1), \ldots, \bR(s_K)\big)  -\,F\big(\bZ(t_1), \ldots, \bZ(t_K)\big) \big| \;. \label{e:CiL1}
\end{align}
Recall the estimates \eqref{e:regR0n} and \eqref{e:RgRegularity3} that allow us to control all moments of $\| \bZg(t_i) - \bRg{t_i}(s_i) \big\|_{(\Ca)^n}$ uniformly in~$\ga$. Since $F$ is uniformly continuous, we can thus make the first and last terms on the right-hand side of \eqref{e:CiL1} small uniformly in $\ga$ by choosing $|t_i - s_i|$ small enough. 
Therefore, in order to conclude, it suffices to show the convergence in law of the vector $(\bRg{t_1}(s_1), \ldots, \bRg{t_K}(s_K))$ to $(\bR(s_1), \ldots, \bR(s_K))$ for fixed values of $s_i < t_i$.

We will show the stronger statement that this convergence in law holds with respect to the topology of $(L^\infty)^{n \times K}$. By Proposition~\ref{p:discrete-Wick}, it suffices to show that 
$$
H_\ell(R_{\ga,t_i,\nn}(s_i,x), [R_{\ga,t_i,\nn}(\cdot,x)]_{s_i}) \qquad \ell = 1, \ldots, n, \quad i = 1, \ldots, K 
$$
converges in law to $(\bR(s_1), \ldots, \bR(s_K))$ in $L^\infty$. By \eqref{e:Wick} and Lemma~\ref{le:quartic_var}, it suffices to show the two convergences
\begin{equation}
\label{conv1}
\big( R_{\ga,t_1,\nn}(s_1) , \ldots, R_{\ga,t_K,\nn}(s_K) \big) \xrightarrow[\ga \to 0]{\text{(law)}} \big(R_{t_1}(s_1), \ldots, R_{t_K}(s_K) \big)\; ,
\end{equation}
and 
\begin{equation}
\label{conv2}
\big( \langle R_{\ga,t_1,\nn}(\cdot,\cdot)\rangle_{s_1}, \ldots, \langle R_{\ga,t_K,\nn}(\cdot,\cdot)\rangle_{s_K} \big)  \xrightarrow[\ga \to 0]{\text{(law)}} \big( \langle R_{t_1}(\cdot,\cdot)\rangle_{s_1}, \ldots, \langle R_{t_K}(\cdot,\cdot)\rangle_{s_K} \big) \;, 
\end{equation}
both being understood for the $(L^\infty)^K$ topology. (The joint convergence in law of \eqref{conv1} and \eqref{conv2} would follow immediately since the right-hand side of \eqref{conv2} is deterministic.) As for \eqref{conv1}, note that $\Rgn(s) = \Pg{t-s} \Zgn(s)$. We learn from Corollary~\ref{cor:regPg} and Proposition~\ref{prop:RGBoundOne} that for $i = 1, \ldots, K$, 
$$
\Ll\|\Ll(\Pg{t_i-s_i}- P_{t_i-s_i}\Rr) \Zgn(s_i)\Rr\|_{L^\infty} \xrightarrow[\ga \to 0]{} 0 \;,
$$
almost surely. It thus suffices to check the convergence of $P_{t_i-s_i} \Zg(s_i)$ to $R_{t_i}(s_i) = P_{t_i-s_i} Z(s_i)$. The estimate \eqref{e:heat-semigroup-reg}  ensures that the mapping
$$
\left\{
\begin{array}{lll}
\Cc^{-\al} & \to & L^\infty \\
\msf{Z} & \mapsto & P_{t-s} \msf{Z}
\end{array}
\right.
$$
is continuous, so the convergence in \eqref{conv1} follows from Theorem~\ref{t:converg-lin} and the continuous mapping theorem.

Turning now to \eqref{conv2}, we learn from \eqref{e:QuadrVarR} that for $x \in \Le$,
$$
\langle R_{\ga,t_i,\nn}(\cdot,x)\rangle_{s_i} = 4 \co^2 \int_0^{s_i}    \sum_{z \in \Le} \eg^2 \,  \big( \Pg{t_i-r} \ae \Kg \big)^2(z-x) \, \Cgn(r,z) \, dr,
$$
where $\Cgn(s,z)$ satisfies \eqref{e:cg-control}. It is thus clear that up to an additive error that is bounded by $C \ga^{1 - 2 \al}$ uniformly over $x \in \Le$, the quadratic variation $\langle R_{\ga,t_i,\nn}(\cdot,x)\rangle_{s_i}$ is given by
$$
2 \int_0^{s_i}    \sum_{z \in \Le} \eg^2 \,  \big( \Pg{t_i-r} \ae \Kg \big)^2(z-x) \, dr
$$
(we also used the bound \eqref{e:scaling2} which implies   that $|\co^2 - 1| \leq C\ga^2$ and the calculation \eqref{fromthis} -- \eqref{tothat} which yields a logarithmic bound on the sum).  The latter quantity is equal to
\begin{equation}
\label{e:quadvarlisse}
\frac{1}{2} \int_0^{s_i}   \sum_{\om \in \{-N, \ldots, N\}^2}\exp\left(-2(t_i-r) \ga^{-2}\Ll(1 -\hKg(\om)\Rr) \right)  \,  \big|  \hKg (\om)\big|^2\, dr. 
\end{equation}
By \eqref{e:K2.4} and \eqref{e:K2}, for any fixed $|\om| \leq \ga^{-1}$, we get
\begin{align}
\Big|& \exp\left(-2(t_i-r) \ga^{-2}\Ll(1 -\hKg(\om)\Rr) \right)  \,  \big|  \hKg (\om)\big|^2 - \exp\Ll(-2(t_i-r) \pi^2 |\om|^2\Rr) \Big| \notag\\
&\leq C \exp\Big(- 2(t_i - r) \frac{|\om|^2}{C_1} \Big)\big((t_i - r) \ga|\om|^3  + \ga^2 |\om|^2\big) \label{e:brutal-new1}
\end{align}
From the elementary bound
$$
\int_0^s e^{-(t-r)\ell} \, dr \le s \, {e^{-(t-s)\ell}},
$$
we can conclude that after integrating the bound \eqref{e:brutal-new1} over $[0,s_i]$ and summing over $|\om| \leq \ga^{-1}$, we obtain a quantity that is bounded by $C(s_i,t_i) \ga$. In the same way (using \eqref{e:K2} once more),
we obtain that the sum arising in \eqref{e:quadvarlisse} restricted to indices $|\om| > \ga^{-1}$ is smaller than
$$
\le s_i\sum_{\substack{\om \in \{-N, \ldots, N \}^2\\ |\om|>\ga^{-1}} } \exp\Ll( -2(t_i-s_i)\frac{1}{C_1 \ga^2}  \Rr) \leq C  s_i \ga^{-2} \exp\Ll( -2(t_i-s_i)\frac{1}{C_1 \ga^2}  \Rr) ,
$$
and a similar bound holds true for the limiting quantity $ \exp\Ll(-2(t_i-r) \pi^2 |\om|^2\Rr)$.
Hence we can conclude that uniformly over $x \in \Le$ we get the deterministic bound
\begin{align*}
\big| \langle R_{\ga,t_i,\nn}(\cdot,x)\rangle_{s_i}  - \langle R_{t_i}(\cdot,x)\rangle_{s_i} \big| \leq C(s_i,t_i) \ga^{1 - 2 \al}.
\end{align*}
It only remains to refer to Lemma~\ref{le:LinftyDiscreteCont} to conclude that the convergence \eqref{conv2} also holds for the extensions to arbitrary $x \in \T^2$.

\end{proof}

%%%%%%%%%%%%%%%%%%%%%%%%
\section{Analysis of the non-linear equation}
%%%%%%%%%%%%%%%%%%%%%%%%
\label{sec:Nonlinear}
%%%%%%%%%%%%%%%%%%%%%%%%

In this section, we summarise the calculations of the previous sections and prove our main result, Theorem~\ref{thm:Main}. Throughout this section, $\al>0$ will be assumed to be small enough and fixed. We furthermore fix $\Xn \in \Ca$. We denote by $X \in C(\R_+, \Ca)$ the solution of the renormalised limiting evolution equation with initial datum $\Xn$ as constructed in Section~\ref{sec:ContinuousAnalysis}. Throughout the section, we make use of the bounds on $\Kg$ and $\Pg{t}$ collected in Section~\ref{sec:APPB}.

Recall that the rescaled field  $\Xg(t,x)$ was defined in \eqref{e:defXg}. Recall furthermore that it satisfies the evolution equation \eqref{e:evolution2}, or equivalently, its mild form \eqref{e:evolution3}. For the reader's convenience, we repeat here that  \eqref{e:evolution3} states 
\begin{align}
\Xg(t,\cdot) =&\Pg{t} \Xng + \int_0^t \Pg{t-s}  \Kg \ae   \Big(  - \frac{\beta^3}{3}   \Xg^3  (s,\cdot) +  (\CGG +A) \Xg(s,\cdot)
 \notag \\
& + \Eg(s,\cdot) \Big) \, ds  + \Zg(t, \cdot) \qquad\text{on }  \Le\;.  \label{e:evolution3AA} 
\end{align}
This equation is only valid on the grid points $x \in \Le$ because 
the extension by trigonometric polynomials does not commute with cubing $\Xg$. Therefore, our first step 
consists of deriving an equation that holds for all $x \in \T^2$.

The problem is only caused by large $\om$ in Fourier space. Indeed, as long as $|\om_i| < \frac{N}{3}$, we have
\begin{equation}
\Ex \big[\big( e^{i \pi \om \cdot x}\big)^3 \big]= \big[ \Ex \; e^{i \pi \om \cdot x} \big]^3\;, \label{e:low_freq_nice}
\end{equation}
where as before, we have used $\Ex$ to denote  the extension operator for a function $\Le \to \R$ to a function $\T^2 \to \R$ by trigonometric 
polynomials of degree $\leq N$. In order to control the error caused by the extension, we will use the notation introduced above Lemma~\ref{le:highfreq} and write 
\begin{equation}\label{e:XhigH}
 \Xg^{\mathrm{high}} = \sum_{2^k \geq \frac{\ga^{-2}}{10}} \delta_k \Xg \; \qquad \text{and} \qquad  \Xg^{\mathrm{low}} = \sum_{2^k < \frac{\ga^{-2}}{10}} \delta_k \Xg \;,
\end{equation}
so that $\Xg =  \Xg^{\mathrm{high}} +  \Xg^{\mathrm{low}}$. It is convenient to already collect some error terms in the first lemma, and to this end we discuss	 some notation. For $x \in \Le$, we set $\tCG(s,x) = [ R_{\ga,s}(\cdot,x)]_{ s}$, and we extend this to all $x \in \T^2$ as a trigonometric polynomial. Recall that in \eqref{e:A_value} we had defined 
\begin{align*}
A(s) := A- \frac{s}{2} + \sum_{\om \in \Z^2 \setminus\{ 0\}}  \frac{e^{- 2s\pi^2 |\om|^2}}{ 4\pi^2 |\om|^2}  \;.
\end{align*}
Finally, recall that $Q_{\ga,s}$ was defined in Lemma~\ref{le:quartic_var}.

With this notation at hand, we are ready to state the following result.
%
%%%%%%%%%%%%%%
\begin{lemma}
%%%%%%%%%%%%%%
For every $t \geq 0$, we have on $\T^2$ 
\begin{align}
\Xg(t,\cdot) =&\Pg{t} \Xng + \int_0^t \Pg{t-s}  \Kg \star   \Big(  - \frac{1}{3}   \Xg^3  (s,\cdot) +  (\tCG(s, \cdot) +A(s)) \Xg(s,\cdot)
 \notag \\
& + \ERR{1}(s, \cdot) \Big) \, ds  + \Zg(t, \cdot) \;.  \label{e:evolution3CC} 
\end{align}
For every $ T>0$ and $\ka>0$, there exists $C = C(T,\ka,\al)$ such that the error term satisfies for all $0 \leq s \leq T$ and $0 < \ga < \ga_0$
\begin{align}
 \| \ERR{1}(s, \cdot) \|_{L^\infty(\T^2)} & \leq \; C \; \ga^{-10 \al - \ka}\big(  \| \Xg(s, \cdot) \|_{\Ca}^5 +1)\notag \\
\times & \Big(  \ga^{\frac23} s^{-\frac13}   + \| \Xg^{\mathrm{high}}(s, \cdot)  \|_{L^{\infty}(\T^2)} +\|  Q_{\ga,s}(s,\cdot) \|_{L^\infty(\Le)}    \Big) \;. \label{e:Final_Error1}
\end{align}
%
%%%%%%%%%%%%%%
\end{lemma}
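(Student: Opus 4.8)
The plan is to start from \eqref{e:evolution3AA}, which holds on $\Le$, and to massage the right-hand side term by term so that every identity becomes exact on all of $\T^2$ after extending by trigonometric polynomials. The heart of the matter is the cubic term: on $\Le$ we have $\Kg \ae \Xg^3(s,\cdot)$, but the extension operator $\Ex$ does not commute with cubing. I would split $\Xg = \Xg^{\mathrm{low}} + \Xg^{\mathrm{high}}$ as in \eqref{e:XhigH} and use \eqref{e:low_freq_nice}: for Fourier modes below $N/3$ the extension of a product is the product of extensions, so $\Ex[(\Xg^{\mathrm{low}})^3]$ agrees with $(\Ex \Xg^{\mathrm{low}})^3$ up to dropping modes above $N$, which are killed anyway by the convolution with $\Kg$ once we note that $\Pg{t-s}\Kg\star$ acts as a Fourier multiplier supported on $\{-N,\dots,N\}^2$. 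The difference $\Xg^3 - (\Xg^{\mathrm{low}})^3$ is a sum of terms each containing at least one factor $\Xg^{\mathrm{high}}$; estimating these in $L^\infty$ via the multiplicative Besov inequality (Lemma~\ref{le:Besov-multiplicative}), Lemma~\ref{l:unif-Fourier-cut} to pass from $\Ca$ to $L^\infty$ at the price of $\ga^{-2\al}$, and the bound on $\|\Xg^{\mathrm{high}}\|_{L^\infty}$ produces one of the three terms displayed in \eqref{e:Final_Error1}.

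Next I would treat the renormalisation constants. The term $(\CGG + A)\Xg$ in \eqref{e:evolution3AA} must be rewritten as $(\tCG(s,\cdot) + A(s))\Xg$ plus an error. Here $\tCG(s,x) = [R_{\ga,s}(\cdot,x)]_s$ is the bracket process, and the point is that $\CGG + A$ differs from $\tCG(s,\cdot) + A(s)$ only by quantities that are either small or absorbed: by Lemma~\ref{le:quartic_var} the bracket $[R_{\ga,s}(\cdot,x)]_s$ differs from its predictable version $\langle R_{\ga,s}(\cdot,x)\rangle_s$ by $Q_{\ga,s}(s,x)$, which is $O(\ga^{1-\ka})$ in $L^\infty(\Le)$ and contributes the third term in \eqref{e:Final_Error1}; and the predictable bracket $\langle R_{\ga,s}(\cdot,x)\rangle_s$, computed from \eqref{e:QuadrVarR}, is (up to $\Cg$ replaced by $1/2$, which costs $O(\ga^{1-2\al}\|\Xg\|_{\Ca})$ via \eqref{e:cg-control}) equal to $\frac12\int_0^s \sum_\om \hPg{s-r}(\om)^2|\hKg(\om)|^2\,dr$; carrying out the $r$-integral and comparing with the definitions \eqref{e:valueCGG} of $\CGG$ and \eqref{e:A_value} of $A(s)$ shows the remaining discrepancy is bounded by $C\ga^{2/3}s^{-1/3}$ (the $s^{-1/3}$ coming from the short-time singularity, integrable against any power, as remarked after \eqref{e:A_value}). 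This is where I expect the bulk of the technical bookkeeping to live, and it is the main obstacle: one must track the $t$-dependent vs $t$-independent pieces of the renormalisation carefully and show the difference is exactly of the advertised order, using the Fourier estimates \eqref{e:K1}--\eqref{e:K3} for $\hKg$.

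The remaining terms are easier. The factor $\beta^3/3$ multiplying the cubic term equals $\frac13$ plus $O(\ag)=O(\ga^2)$ by \eqref{e:scalingbeta}, and $\beta^3 - 1$ times $\Xg^3$ gives an error controlled by $\ga^2\|\Xg\|_{L^\infty}^3 \le C\ga^{2-6\al}\|\Xg\|_{\Ca}^3$. The genuine error term $\Eg(s,\cdot)$ from \eqref{e:error}, the Taylor remainder of $\tanh$, scales like $\dg^4\ag^{-1} = \ga^2$ times a fifth power of $\Xg$ (hence the $\|\Xg\|_{\Ca}^5$ in \eqref{e:Final_Error1}), again converted to $L^\infty$ at the cost of a power $\ga^{-10\al}$; the convolution $\Kg\star$ in front only helps. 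Finally I would collect all these contributions into a single $\ERR{1}(s,\cdot)$, check that each is dominated by the right-hand side of \eqref{e:Final_Error1} after pulling out the common prefactor $\ga^{-10\al-\ka}(\|\Xg(s,\cdot)\|_{\Ca}^5 + 1)$, and verify that the surviving identity \eqref{e:evolution3CC} now holds for every $x \in \T^2$ since every term on the right is a legitimate Fourier multiplier applied to an extended function, plus $\Zg(t,\cdot)$ which is by construction defined on $\T^2$.
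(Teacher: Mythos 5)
Your proposal is correct and follows essentially the same route as the paper: you decompose $\ERR{1}$ into the same three pieces the paper calls $\err{1}$ (Taylor remainder of $\tanh$ together with the $\be^3-1$ discrepancy), $\err{2}$ (the extension-versus-cubing error, handled by the low/high frequency split and \eqref{e:low_freq_nice}, producing the $\|\Xg^{\mathrm{high}}\|_{L^\infty}$ term), and $\err{3}$ (the comparison of $\CGG+A$ with $\tCG(s,\cdot)+A(s)$, further broken into the $Q_{\ga,s}$ bracket correction, the $\Cg-\tfrac12$ error via \eqref{e:cg-control}, and the Fourier-side comparison giving $\ga^{2/3}s^{-1/3}$ via Lemma~\ref{le:Kg0}). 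The tools you invoke (Lemmas~\ref{le:Besov-multiplicative}, \ref{l:unif-Fourier-cut}, \ref{le:quartic_var}, and the $\hKg$ estimates \eqref{e:K1}--\eqref{e:K3}) are the same ones the paper uses, and the accounting of the $\ga^{-10\al}$ prefactor from the fifth power is correct.
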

%%%%%%%%%%%%%%
%
%%%%%%%%%%%%%%
\begin{proof}
%%%%%%%%%%%%%%
We get from \eqref{e:evolution3AA} that 
\begin{equation*}
\ERR{1} = \err{1} + \err{2} + \err{3} \;,
\end{equation*}
where 
\begin{align*}
\err{1}(s,\cdot) &= \Eg(s,\cdot) -  \frac{1}{3} (\be^3 -1)    \Ex (\Xg^3(s,\cdot)) \;, \\
\err{2}(s,\cdot)& = - \frac13\Big(  \Ex \; \big( \Xg^3(s,\cdot) \big) - (\Ex \; \Xg(s, \cdot))^3 \Big)\;,\\
\err{3}(s, \cdot)  &=   (\CGG +A -  \tCG(s, \cdot) - A(s)) \;  \Xg(s,\cdot) \;.
\end{align*}
The first term is easily bounded using the definition of $\Eg$ in \eqref{e:error} and the assumption \eqref{e:scalingbeta} and \eqref{e:valueCGG}  on $\be$. On $\Le$ we get
\begin{align}
\big\| \err{1} (s,\cdot) \big\|_{L^\infty(\Le)} &\leq C \ga^2 \|\Xg(s,\cdot) \|_{L^\infty(\Le)}^5 + C \ga^2 \log(\ga^{-1}) \|\Xg(s,\cdot)\|_{L^\infty(\Le)}^3 \;.  \notag
\end{align}
Using Lemma~\ref{le:LinftyDiscreteCont} this bound can easily be extended to $x \in \T^2$ at the expense of loosing an arbitrarily small power of $\ga$. Furthermore, as we have already seen at several places, we can 
use Lemma~\ref{l:unif-Fourier-cut} and the fact that $\hat{X}_{\gamma}(\om) = 0$ for $\om \notin \{ -N, \ldots,N\}^2$ to bound the $L^\infty$ norm by the $\Ca$ norm. In this way we get
for any arbitrary small $\ka >0$
\begin{align}
\big\| \err{1} (s,\cdot) \big\|_{L^\infty(\T^2)} &\leq  C(\ka,\al) \ga^{2-\ka - 10 \al}  \big( \|\Xg(s,\cdot) \|_{\Ca}^5  + 1 \big)\;.  \notag
\end{align}
For the second term, we get on $\T^2$
\begin{align*}
 \Ex& \; \big( \Xg^3\big) - (\Ex \; \Xg)^3  \\
& = \Big[ \Ex \; \big( (\Xg^{\mathrm{low}}   )^3\big) - (\Ex \; \Xg^{\mathrm{low}} )^3 \Big] \\
& \qquad +  \Ex \Big[ \Xg^{\mathrm{high}}\Big( 3 \big( \Xg^{\mathrm{low}}\big)^2 + 3\Xg^{\mathrm{low}}  \Xg^{\mathrm{high}}  + \big( \Xg^{\mathrm{high}} \big)^2   \Big)  \Big] \\
& \qquad- \Big[   \Ex\;  \Xg^{\mathrm{high}} \Big( 3 \big( \Ex \,\Xg^{\mathrm{low}}\big)^2 + 3 \Ex \,\Xg^{\mathrm{low}} \;\; \Ex \, \Xg^{\mathrm{high}}  + \big( \Ex\, \Xg^{\mathrm{high}} \big)^2   \Big)  \Big] \;.
\end{align*}
According to \eqref{e:low_freq_nice}, the term $ \Big[ \Ex \; \big( (\Xg^{\mathrm{low}}   )^3\big) - (\Ex \; \Xg^{\mathrm{low}} )^3 \Big]$ vanishes. 
Using Lemma~\ref{le:LinftyDiscreteCont}, the remaining terms can easily be bounded, so that we obtain  
\begin{align}
\| \err{2}(s, \cdot) \|_{L^\infty(\T^2)} \leq& C(\ka) \ga^{-\ka} \| \Xg^{\mathrm{high}}(s, \cdot)  \|_{L^{\infty}(\T^2)} \notag \\
& \quad \times \Big(  \| \Xg^{\mathrm{low}}(s, \cdot)  \|_{L^{\infty}(\T^2)}^2  +  \| \Xg^{\mathrm{high}}(s, \cdot)  \|_{L^{\infty}(\T^2)}^2 \Big) \;. \label{e:ugly-1}
\end{align}
Finally, we bound the terms appearing in the bracket on the right-hand side of \eqref{e:ugly-1} by
\begin{align*}
\| \Xg^{\mathrm{low}}(s, \cdot)  \|_{L^{\infty}(\T^2)}&  \leq  \sum_{2^k <\frac{\ga^{-2}}{10}  }  \| \delta_k \Xg(s, \cdot)  \|_{L^\infty(\T^2)} \leq\sum_{2^k <\frac{\ga^{-2}}{10}  }  2^{-k \nu}  \| \Xg(s, \cdot) \|_{\Ca}\\
&\leq C(\nu) \ga^{- 2\al} \| \Xg(s, \cdot) \|_{\Ca} \;,\\
\end{align*}
and in the same way
\begin{align*}
\| \Xg^{\mathrm{high}}(s, \cdot)  \|_{L^{\infty}(\T^2)} & \leq  \sum_{2^k \geq \frac{\ga^{-2}}{10}}  \| \delta_k \Xg(s, \cdot)  \|_{L^\infty(\T^2)} \leq C\ga^{- 2\al} \| \Xg(s, \cdot) \|_{\Ca} \;.
\end{align*}
Note that the number of terms appearing in the last sum is uniformly bounded in $\ga$. We will see below that this last bound does not capture the true behaviour of $\Xg^{\mathrm{high}}$, but it is sufficient for this particular part of the estimate \eqref{e:ugly-1}.

In order to bound $\err{3}$, we recall that the precise value of $\CGG$ was defined in \eqref{e:valueCGG}. Then we get for $x \in \T^2$
\begin{align*}
 \CGG&  +A - \tCG(s, x) - A(s) \\
 &= \sum_{\substack{\om \in \{-N, \ldots, N \}^2\\ \om \neq 0 }} \frac{|\hKg(\om)|^2}{4 \ga^{-2}  (1 - \hKg(\om))} - [ R_{\ga,s}(\cdot,x)]_{ s}+ \frac{s}{2} -  \sum_{\substack{\om \in \Z^2  \\ \om \neq 0} }  \frac{\exp(-2s \pi^2|\om|^2 )}{4 \pi^2 |\om|^2} \;.
\end{align*}
Recall from \eqref{e:def:Qg} that for $x\in \Le$,
$[ R_{\ga,r}(\cdot,x)]_{ r}  = \langle R_{\ga,r}(\cdot,x) \rangle_r\ + Q_{\ga,r}(s,x)$.
Furthermore, according to \eqref{e:QuadrVarR} we get for $x\in \Le$
\begin{align}
\langle& R_{\ga,s}(\cdot, x) \rangle_s\notag\\
 &= 4 \co^2 \int_0^s \sum_{z \in \Le} \eps^2  \big(\Pg{s-r} \ae \Kg \big)^2 (x-z) \; \Cg(r,z) \, dr  \notag \\
&=  2  \int_0^s \sum_{z \in \Le} \eps^2  \big(\Pg{s-r} \ae \Kg \big)^2 (x-z) \;  \, dr  +\err{4}(s,x) \; \notag\\
&= \frac{1}{2} \int_0^s  \!\!\! \sum_{\om \in \{-N, \ldots, N\}^2}\exp\left(-\frac{2 r}{\ga^2}\Ll(1 -\hKg(\om)\Rr) \right)  \,  \big|  \hKg (\om)\big|^2\, dr +\err{4}(s,x) \; \notag\\
&=   \frac{s}{2}+  \sum_{\substack{\om \in \{-N, \ldots, N\}^2\\\om \neq 0}}  \frac{\big|  \hKg (\om)\big|^2 }{4 \ga^{-2}\Ll(1 -\hKg(\om)\Rr)} \notag\\
& \qquad \qquad\qquad \times \bigg[1- \exp\left(-\frac{2 s}{\ga^2}\Ll(1 -\hKg(\om)\Rr) \right) \bigg]  +\err{4}(s,x) \; , \notag
\end{align}
where for any $\ka>0$
\begin{align*}
\err{4}(s,x)  = & 4 (\co^2 -1)  \int_0^s \sum_{z \in \Le} \eps^2  \big(\Pg{s-r} \ae \Kg \big)^2 (x-z) \; \Cg(r,z) \, dr \\
&+ 4  \int_0^s \sum_{z \in \Le} \eps^2  \big(\Pg{s-r} \ae \Kg \big)^2 (x-z) \; \Big( \Cg(r,z)- \frac{1}{2} \Big) \, dr \\
\leq & \;C(T,\ka) \; \ga^{2-\ka} + C(T,\ka,\al) \gamma^{1-2\al - \ka} \|\Xg(t)\|_{\Ca} \;,
\end{align*}
using  \eqref{e:scaling2} and \eqref{e:cg-control} as well as the fact that  $ \int_0^s \sum_{z \in \Le} \eps^2  \big(\Pg{s-r} \ae \Kg \big)^2 (x-z) dr$ is bounded by $C(T) \log(\ga^{-1} )\leq C(T,\ka) \ga^{-\ka}$.

Summarising these calculations, we get
\begin{align}
 \CGG&  +A - \tCG(s, \cdot) - A(s)+ Q_{\ga,s}(s,x) + \err{4}(s,x) \notag\\
& =  \sum_{\substack{\om \in \{-N, \ldots, N\}^2 \\ \om \neq 0}}   \frac{\big|  \hKg (\om)\big|^2 }{4 \ga^{-2}\Ll(1 -\hKg(\om)\Rr)}   \exp\left(- \frac{2 s }{\ga^2}\Ll(1 -\hKg(\om)\Rr)\right)  \,\notag\\
&\qquad \qquad  -  \sum_{\substack{\om \in \Z^2  \\ \om \neq 0} }  \frac{\exp(-2s \pi^2|\om|^2 )}{4 \pi^2 |\om|^2} \;.\label{e:ugly0}
\end{align}
We bound the difference between the sums over $|\om| < \ga^{-1}$ term by term. We write
\begin{align}
 \bigg|& \frac{\big|  \hKg (\om)\big|^2 }{4 \ga^{-2}\Ll(1 -\hKg(\om)\Rr)}   \exp\left(- \frac{2 s }{\ga^2}\Ll(1 -\hKg(\om)\Rr)\right) - \frac{\exp(-2s \pi^2|\om|^2 )}{4 \pi^2 |\om|^2}\bigg| 
 \notag\\
 \leq & \exp(-2s \pi^2|\om|^2 )\bigg|  \frac{\big|  \hKg (\om)\big|^2 }{4 \ga^{-2}\Ll(1 -\hKg(\om)\Rr)} - \frac{1}{4 \pi^2 |\om|^2}  \bigg| \notag\\
 & \qquad + \frac{\big| \hKg (\om)\big|^2 }{4 \ga^{-2}\Ll(1 -\hKg(\om)\Rr)} \bigg|  \exp\left(- \frac{2 s }{\ga^2}\Ll(1 -\hKg(\om)\Rr)\right) -\exp(-2s \pi^2|\om|^2 )  \bigg| \;.\label{e:ugly1}
\end{align}
 Using Lemma~\ref{le:Kg0}, we can bound the expression on the right-hand side of \eqref{e:ugly1}  for $|\om| < \ga^{-1}$ by 
\begin{align*}
Ce^{- 2 s \pi^2 |\om|^2}\frac{\ga}{|\om|} + Ce^{-\frac{s|\om|^2}{C_1}}\frac{1}{|\om|^2} s \ga |\om|^3 \leq C \, \ga \; \frac{1}{|\om|^{1+\frac23}}  \; s^{-\frac13} \;.
\end{align*}
So, summing over $0 < |\om| < \ga^{-1}$ gives a contribution which is bounded by $C\ga^{\frac23} s^{-\frac{1}{3}}$. Of course, the choice of the exponents here is essentially arbitrary -- we could have obtained any bound of the type $\ga^{\la} s^{-\frac{\la}{2}}$ for $0 \leq \la < 1$ -- but this choice is convenient.

For $|\om| \geq \ga^{-1}$, the terms in the sum involving $\hKg$ are not a good approximation to the corresponding limiting terms, so we simply bound each sum separately. For the first one, using \eqref{e:K3} and \eqref{e:K2},
\begin{align*}
&\sum_{\substack{\om \in \{-N, \ldots, N\}^2 \\ |\om| \geq \ga^{-1}}}   \frac{\big|  \hKg (\om)\big|^2 }{4 \ga^{-2}\Ll(1 -\hKg(\om)\Rr)}   \exp\left(- \frac{2 s }{\ga^2}\Ll(1 -\hKg(\om)\Rr)\right)  \\
&\qquad \leq C \sum_{\substack{\om \in \{-N, \ldots, N\}^2 \\ |\om| \geq \ga^{-1}}}   \frac{\ga^2 }{|\ga \om|^4}    \exp\left(- \frac{2 s }{C_1 \ga^2} \right)\leq C \ga^{\frac23} s^{-\frac13}\!\!\! \sum_{\substack{\om \in \{-N, \ldots, N\}^2 \\ |\om| \geq \ga^{-1}}}  \frac{1}{|\om|^2}  \leq  C\ga^{\frac23} s^{-\frac13}\;,
\end{align*}
while for the second one,
\begin{align*}
 \sum_{\substack{\om \in \Z^2  \\ \om \geq \ga^{-1}} }  \frac{\exp(-2s \pi^2|\om|^2 )}{4 \pi^2 |\om|^2}\leq C(\ka)  s^{-\frac13} \ga^{\frac23-\ka} \;.
\end{align*}
Summarising our calculations, we get the desired estimate on $\ERR{1}$.
%%%%%%%%%%%%%
\end{proof}
%%%%%%%%%%%%%

For $\nn \geq1$, recall the definition of the stopping time $\taun$ in \eqref{e:deftaug} and the definition of the processes $\Zgn$ (see the discussion following  \eqref{e:raten}). It will be useful to introduce an auxiliary process $\tXgn(t,x)$, $t \geq 0$, $x \in \T^2$ as the solution of  \eqref{e:evolution3CC} with $\Zg$ replaced by $\Zgn$ and $\ERR{1}$ replaced by 
\begin{equation}\label{e:err11}
\ERR{1}_{\nn}(s) :=
\begin{cases}
 \ERR{1}(s ) \qquad &\text{if } s \leq \taun \;,\\
 0 \qquad &\text{else}.
\end{cases}
\end{equation}
 The existence and uniqueness of this process for fixed $\gamma>0$ follows by an elementary ODE argument which we omit.  Of course, $\tXgn(t, \cdot)$ and $\Xg(t, \cdot)$ coincide on the event $\{ t < \taun\}$. 
Note that according to \eqref{e:Final_Error1}, by choosing $\al>0$ and $\ka>0$ small enough, for $s \leq \taun$ we have the deterministic bound 
\begin{align}
\|& \ERR{1}_{\nn}(s,\cdot) \|_{L^\infty(\T^2)}\notag \\
& \leq C(T,\nn)  \Big(  \ga^{\frac12} s^{-\frac13}   + \ga^{-\frac16} \| \Xg^{\mathrm{high}}(s, \cdot)  \|_{L^{\infty}(\T^2)} + \ga^{-\frac16}\|  Q_{\ga,s}(s,\cdot) \|_{L^\infty(\Le)}    \Big) \;. \label{e:error0control}
\end{align}

We will require yet another process that approximates $\Xg$. For any $0 \leq t \leq T$, we define
\begin{equation*}
\oX(t, \cdot) = P_{t}\Xn +  \Zgn(t, \cdot)   + \Ss_T(\Zgn, \ZGn{2},\ZGn{3})(t, \cdot)\;,
\end{equation*}
where $\Ss_T$ is the solution operator to the continuous problem, defined in Theorem~\ref{thm:ContSolution}. Recall that the definition of the operator $\Ss_T$ involves the choice of initial datum $\Xn$, which we keep fixed. The following lemma is an immediate consequence of the main results of Sections~\ref{sec:ContinuousAnalysis} and~\ref{sec:conv-lin}. (The initial datum of the limiting evolution $X$ is $\Xn \in \Ca$.)
%
%
%
%%%%%%%%%%%%%%
\begin{lemma}
%%%%%%%%%%%%%%
\label{le:FirstPartOfMainThm}
%%%%%%%%%%%%%%
For any $T >0$, let $F \colon \Dd([0,T], \Ca) \to \R$ be uniformly continuous and bounded. We have
\begin{equation*}
\lim_{\gamma \to 0 }\Big| \E \big[ F\big(   \oX \big) \big] - \E \big[ F(X ) \big]  \Big| =0\;.
\end{equation*}
\end{lemma}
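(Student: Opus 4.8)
The statement to prove is Lemma~\ref{le:FirstPartOfMainThm}: for any uniformly continuous bounded $F \colon \Dd([0,T],\Ca) \to \R$,
\[
\lim_{\ga \to 0} \bigl| \E[F(\oX)] - \E[F(X)] \bigr| = 0,
\]
where $\oX(t,\cdot) = P_t\Xn + \Zgn(t,\cdot) + \Ss_T(\Zgn,\ZGn{2},\ZGn{3})(t,\cdot)$ and $X = P_\cdot\Xn + Z + \Ss_T(Z,\ZZ{2},\ZZ{3})$ (recall from Section~\ref{sec:ContinuousAnalysis} that $X = \tZ + v$ with $v = \Ss_T(Z,\ZZ{2},\ZZ{3})$ and $\tZ = Y + Z$).

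\textbf{Plan of proof.} The idea is to write $\oX = \Phi(\Zgn,\ZGn{2},\ZGn{3})$ and $X = \Phi(Z,\ZZ{2},\ZZ{3})$ for one and the same map $\Phi$, and then combine convergence in law of the inputs (Theorem~\ref{t:converg-lin-Wickbis}) with continuity of $\Phi$ (Theorem~\ref{thm:ContSolution}) via the continuous mapping theorem. Concretely, define
\[
\Phi \colon \bigl(\Dd([0,T],\Ca)\bigr)^3 \to \Dd([0,T],\Ca), \qquad
\Phi(Z_1,Z_2,Z_3)(t,\cdot) = P_t\Xn + Z_1(t,\cdot) + \Ss_T(Z_1,Z_2,Z_3)(t,\cdot).
\]
By Theorem~\ref{thm:ContSolution}, $\Ss_T$ is Lipschitz continuous on bounded sets from $\bigl(L^\infty([0,T],\Ca)\bigr)^3$ into $\Cc([0,T],\Cc^{2-\al-\ka})$, hence in particular continuous into $\Cc([0,T],\Ca)$; the map $t \mapsto P_t\Xn$ is a fixed continuous $\Ca$-valued path; and addition is continuous on $\Dd([0,T],\Ca)$ when one of the two summands is continuous (this is a standard fact about the Skorokhod topology, and here both $P_\cdot\Xn$ and $\Ss_T(\cdots)$ are continuous, so only the first argument $Z_1$ may have jumps). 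Therefore $\Phi$ is continuous at every point of $\bigl(\Cc([0,T],\Ca)\bigr)^3$. Since by Proposition~\ref{prop:daPratoDebussche} the limiting processes $(Z,\ZZ{2},\ZZ{3})$ a.s.\ take values in $\bigl(\Cc([0,T],\Ca)\bigr)^3$, the triple $(Z,\ZZ{2},\ZZ{3})$ a.s.\ belongs to the set of continuity points of $\Phi$.

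\textbf{Execution of the steps.} First I would record that $\oX = \Phi(\Zgn,\ZGn{2},\ZGn{3})$ restricted to $[0,T]$: this is literally the definition of $\oX$, noting that the restriction map $\Dd(\R_+,\Ca) \to \Dd([0,T],\Ca)$ is continuous, so it suffices to argue on $[0,T]$. Second, I would note that $X = \Phi(Z,\ZZ{2},\ZZ{3})$: indeed $X = \tZ + v$ with $\tZ = Y + Z$, $Y(t) = P_t\Xn$, and $v = \Ss_T(Z,\ZZ{2},\ZZ{3})$, which is exactly $\Phi(Z,\ZZ{2},\ZZ{3})$. Third, by Theorem~\ref{t:converg-lin-Wickbis}, $(\ZGn{1},\ZGn{2},\ZGn{3}) \to (\ZZ{1},\ZZ{2},\ZZ{3})$ in law in $\Dd(\R_+,\Ca)^3$ as $\ga \to 0$; restricting to $[0,T]$ preserves this. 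Fourth, since $F \circ \Phi$ is bounded and is continuous on a set of full measure under the law of $(Z,\ZZ{2},\ZZ{3})$, the continuous mapping theorem and the portmanteau lemma give
\[
\E\bigl[F(\oX)\bigr] = \E\bigl[(F\circ\Phi)(\Zgn,\ZGn{2},\ZGn{3})\bigr] \xrightarrow[\ga\to0]{} \E\bigl[(F\circ\Phi)(Z,\ZZ{2},\ZZ{3})\bigr] = \E\bigl[F(X)\bigr],
\]
which is the claim.

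\textbf{The main obstacle.} The delicate point is the continuity of $\Phi$ at continuous inputs with respect to the Skorokhod topology, and in particular the handling of the first (possibly discontinuous) argument. Theorem~\ref{thm:ContSolution} gives Lipschitz continuity of $\Ss_T$ only from the \emph{sup-norm} space $\bigl(L^\infty([0,T],\Ca)\bigr)^3$, so one must argue that Skorokhod convergence $(\ZGn{1},\ZGn{2},\ZGn{3}) \to (Z,\ZZ{2},\ZZ{3})$ with a \emph{continuous} limit upgrades to uniform convergence on $[0,T]$ — this is the classical fact that if $x_n \to x$ in $\Dd([0,T],\Ss)$ and $x \in \Cc([0,T],\Ss)$ then $\sup_{t}\|x_n(t)-x(t)\|_\Ss \to 0$. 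Once this upgrade is in place, $\Ss_T(\ZGn{1},\ZGn{2},\ZGn{3}) \to \Ss_T(Z,\ZZ{2},\ZZ{3})$ uniformly (using that the inputs are eventually in a common bounded set, again by the uniform convergence together with the a.s.\ boundedness of the limit on $[0,T]$), and then $\oX = P_\cdot\Xn + \ZGn{1} + \Ss_T(\cdots) \to P_\cdot\Xn + Z + \Ss_T(\cdots) = X$ in $\Dd([0,T],\Ca)$, because adding the uniformly convergent continuous perturbation $P_\cdot\Xn + \Ss_T(\cdots)$ does not destroy Skorokhod convergence of $\ZGn{1} \to Z$. This yields the continuity of $\Phi$ needed above. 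The rest is routine: the continuous mapping theorem for convergence in law (valid since the limit lands a.s.\ in the continuity set of $\Phi$), and boundedness of $F$ to pass from weak convergence to convergence of expectations.
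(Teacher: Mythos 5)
Your proof is correct and follows essentially the same route as the paper: write both $\oX$ and $X$ as the image of the triple of Wick powers under a single map $\Phi = P_\cdot \Xn + (\cdot) + \Ss_T(\cdot,\cdot,\cdot)$, invoke Theorem~\ref{t:converg-lin-Wickbis} for convergence in law of the triple, upgrade to $L^\infty([0,T],\Ca)^3$-convergence using that the limit is continuous, and apply the continuous mapping theorem via the (uniform on bounded sets) continuity of $\Ss_T$ from Theorem~\ref{thm:ContSolution}. The obstacle you flag — that $\Ss_T$ is only Lipschitz on bounded sets of $L^\infty$, so one needs the Skorokhod-to-uniform upgrade and the a.s.\ boundedness of the limit on $[0,T]$ — is exactly the point the paper handles the same way.
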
 
%%%%%%%%%%%%%%%%%
%
%
%%%%%%%%%%%%%%%%
\begin{proof}
%%%%%%%%%%%%%%%%
For $ t \in [0,T]$, we can write 
\begin{equation*}
X(t, \cdot) =  P_{t}\Xn(\cdot) +  Z(t, \cdot)   + \Ss_T(Z, Z^{\colon 2 \colon},Z^{\colon 3 \colon})(t, \cdot)\;.
\end{equation*}
In Theorem~\ref{t:converg-lin-Wickbis}, it was shown that the triple $(\Zgn, \ZGn{2}, \ZGn{3})$ convergences in law with respect to the topology of $\Dd([0,T],\Cc^{-\al})^{3}$ to the triple $(Z, Z^{\colon 2 \colon},Z^{\colon 3 \colon})$. As the limit is continuous, we can conclude that the convergence also holds with respect to the metric of $L^{\infty}([0,T], \Ca)^3$.

On the other hand, Theorem~\ref{thm:ContSolution} ensures that the operator  $\Ss_T$ is uniformly continuous on bounded sets from $  L^{\infty}\big([0,T], \Ca \big)^3$ to, say, $\Cc([0,T], \Cc^{1}(\T^2) )$.  
 Note that the evolution $Y(t) = P_t \Xn$ is in $\Cc([0,T] , \Ca)$ by properties of the heat semigroup $P_t$. In particular, the mapping that sends $(Z, Z^{\colon 2 \colon},Z^{\colon 3 \colon})$ to 
 \begin{equation*}
 P_{t}\Xn +  Z(t, \cdot)   + \Ss_T(Z, Z^{\colon 2 \colon},Z^{\colon 3 \colon})(t, \cdot)
 \end{equation*}
 is continuous from $L^\infty([0,T],\Cc^{-\al})^{3}$ to $\Dd([0,T],\Cc^{-\al})$. This implies the desired result.  
%
%%%%%%%%%%%%%%%
\end{proof}
%%%%%%%%%%%%%%%
%
%
It remains to bound the difference between $\oX$ and $\Xg$ (or $\tXgn$). Note that these two processes are naturally defined on the same probability space, so that we can derive almost sure error bounds. For most of the remainder of this section we collect bounds  to show that for every fixed $\nn \geq 1$, $T >0$ and bounded uniformly continuous function $F \colon \Dd([0,T], \Ca) \to \R$, we have
\begin{equation}\label{e:Aim1}
\limsup_{\ga \to 0}\E\big| F(\oX) - F(\tXgn) \big|  =0\;.
\end{equation}
Once \eqref{e:Aim1} is established, it can be combined with Lemma~\ref{le:FirstPartOfMainThm}, implying the convergence in law of $\tXgn$ to $X$ for every fixed $\nn \geq1$. The stopping time $\taun$ can then be removed by a 
soft argument that will be explained in the proof of our main result, Theorem~\ref{thm:Main}, at the end of the section.

We start by treating the initial datum. We  set $\Yg(t, \cdot) = \Pg{t} \Xng$ on $ \T^2$. Lemma~\ref{le:semi-group-regularity} and the uniform bound on $\| \Xng\|_{\Ca}$ imply that for $\ka >0$ and $\be >0$, we have
\begin{equation}\label{e:Ygbo1}
\| \Yg(t, \cdot) \|_{\Cc^{\be-\ka}}  \;t^{\al + \beta} \leq C(T,\be, \ka) \; ,
\end{equation}
uniformly in $\ga$ and in $t \in [0,T]$. 
This is slightly worse than the bound available for the continuous equation, where we have uniform control on $\| Y(t, \cdot) \|_{\Cc^\beta} t^{\frac{\al + \beta}{2}}$ for all values of $\nu,\be >0$. The bound is weaker due to the bad behaviour of $\Pg{t}$ on high Fourier modes observed in Lemma~\ref{le:semi-group-regularity}, but sufficient for our needs. 

We set
\begin{align}
\tvg(t,x)  &:= \tXgn(t,x)  -  \Zgn(t,x) - \Yg(t,x) \qquad t \geq 0, \; x \in \T^2 \;, \notag\\
\bvg(t, x) &:= \oX(t, x)  -  \Zgn(t, x) -  Y(t, x)   \qquad t \geq 0, \; x \in \T^2    \;. \label{e:Defvgm}
\end{align}

So the desired bound \eqref{e:Aim1} follows as soon as we have established bounds on $\sup_{0 \leq t \leq T} \|  Y(t,\cdot) -\Yg(t,\cdot)  \|_{\Ca}$ 
as well as $\sup_{0 \leq t \leq T} \| \bvg(t, \cdot) - \tvg (t, \cdot) \|_{\Cc^{\frac12}}$  for fixed $T$ and $\nn$. 

The first of these bounds is established in the following Lemma.
%
%%%%%%%%%%%%%%%%%%%%%
\begin{lemma}
%%%%%%%%%%%%%%%%%%%%%
\label{le:ic}
%%%%%%%%%%%%%%%%%%%%%
For every $T >0$ we have
\begin{equation}\label{e:Aim1A}
\sup_{0 \leq t \leq T} \|  Y(t,\cdot) -\Yg(t,\cdot)  \|_{\Ca} \to 0
\end{equation} 
\end{lemma}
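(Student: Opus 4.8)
The plan is to write, via the triangle inequality,
\begin{equation*}
Y(t,\cdot) - \Yg(t,\cdot) = P_t(\Xn - \Xng) + (P_t - \Pg{t})\Xng\;,
\end{equation*}
and to treat the two terms separately. For the first, I would use that $P_t = e^{\Delta t}$ is a contraction on $\Ca$ uniformly in $t \ge 0$ (it commutes with the Littlewood--Paley blocks $\delta_k$ and is convolution with a probability density on $\T^2$), so that $\sup_{0\le t\le T}\|P_t(\Xn - \Xng)\|_{\Ca} \le \|\Xn - \Xng\|_{\Ca}$, which tends to $0$ by the standing hypothesis $\Xng \to \Xn$ in $\Ca$. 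The same applies to $\Pg{t}$: its kernel has Fourier coefficients $\hPg{t}(\om) = e^{-t\ga^{-2}(1-\hKg(\om))}$, which form a positive-definite sequence — because $(\hKg(\om))_\om$ is, being the Fourier transform of a nonnegative measure, cf. Section~\ref{sec:APPB} — with $\hPg{t}(0) = 1$; hence $\Pg{t}$ too is convolution with a probability density on $\T^2$ and a contraction on $\Ca$, uniformly in $t\ge 0$ and $\ga$. So the whole content of the lemma is to show that $\sup_{0\le t\le T}\|(P_t - \Pg{t})\Xng\|_{\Ca}\to 0$ as $\ga\to 0$.

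For this I would fix $\theta > 0$ and split the dyadic frequencies at a threshold $2^K$ chosen below. On the high part $\{2^k \ge 2^K\}$ one makes no attempt to compare the two semigroups: using contractivity on each block, $\|\delta_k(P_t-\Pg{t})\Xng\|_{L^\infty} \le 2\|\delta_k\Xng\|_{L^\infty}$, so that
\begin{equation*}
\sup_{0\le t\le T}\sup_{2^k\ge 2^K} 2^{-k\al}\|\delta_k(P_t-\Pg{t})\Xng\|_{L^\infty} \le 2\sup_{2^k\ge 2^K} 2^{-k\ka}\,\|\Xng\|_{\Cc^{-\al+\ka}} \le 2\,M\,2^{-K\ka}\;,
\end{equation*}
where $M = \sup_\ga\|\Xng\|_{\Cc^{-\al+\ka}} < \infty$ by hypothesis, for some $\ka\in(0,\al)$. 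This is exactly where the extra regularity assumption of Theorem~\ref{thm:Main} is genuinely used: it forces the high-frequency tail of the $\Ca$-norm of $\Xng$ to be small uniformly in $\ga$, so the high part is $< \theta$ once $K$ is chosen large.

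On the low part $\{-1\le k < K\}$ only finitely many Fourier modes $\om$ contribute, and for $\ga$ small enough all of them satisfy $|\om| \le \ga^{-1}$ (hence $|\om| \le N$). There I would insert the near-origin expansion of $\hKg$ from Lemma~\ref{le:Kg} — which is precisely where the normalisation \eqref{e:norm-kk} produces the correct limiting multiplier — namely $\ga^{-2}(1-\hKg(\om)) = \pi^2|\om|^2 + O(\ga|\om|^3)$, giving, for $|\om| \le 2^{K+1}$ and $0\le t\le T$,
\begin{equation*}
\big|\hat P_t(\om) - \hPg{t}(\om)\big| = e^{-t\pi^2|\om|^2}\,\big|1 - e^{O(t\ga|\om|^3)}\big| \le C(T,K)\,\ga\;.
\end{equation*}
Since the finitely many low Fourier coefficients of $\Xng$ are bounded uniformly in $\ga$ (from $\sup_\ga\|\Xng\|_{\Ca}<\infty$ together with the finite volume of $\T^2$: each $|\widehat{\delta_j\Xng}(\om)| \le |\T^2|\,\|\delta_j\Xng\|_{L^\infty}$), summing $|\hat P_t(\om)-\hPg{t}(\om)|\,|\hat\Xng(\om)|$ over these modes yields $\sup_{0\le t\le T}\max_{-1\le k < K}2^{-k\al}\|\delta_k(P_t-\Pg{t})\Xng\|_{L^\infty}\le C(T,K)\,\ga \to 0$. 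Combining the two regimes gives $\limsup_{\ga\to0}\sup_{0\le t\le T}\|(P_t-\Pg{t})\Xng\|_{\Ca}\le\theta$, and since $\theta$ is arbitrary this proves \eqref{e:Aim1A}.

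The only delicate step — more a matter of bookkeeping than of genuine difficulty — is the high-frequency regime: because $\Pg{t}$ smooths poorly at high modes (cf. Lemma~\ref{le:semi-group-regularity}) one cannot compare it with $P_t$ mode-by-mode there, so the whole high-frequency contribution must be absorbed into the small $\Ca$-tail supplied by the slightly better regularity $\Cc^{-\al+\ka}$; the rest follows routinely from the kernel bounds of Section~\ref{sec:APPB} and the basic Besov-space facts of Appendix~\ref{sec:Besov}.
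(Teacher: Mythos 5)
Your decomposition is exactly the paper's: a triangle inequality splitting $Y-\Yg = P_t(\Xn-\Xng) + (P_t-\Pg{t})\Xng$, the first piece killed by boundedness of $P_t$ on $\Ca$ together with $\Xng\to\Xn$, and the second piece killed by the extra $\Cc^{-\al+\ka}$ regularity of $\Xng$. The paper then invokes the already-proved estimate $\|\Pg{t}-P_t\|_{\Cc^{-\al+\ka}\to\Ca}\le C\ga^{\ka/2}$ from Corollary~\ref{cor:regPg}, which already packages the low/high-frequency dichotomy and even gives a rate; you instead redo the dyadic split by hand with an explicit threshold $2^K$ and get only a qualitative limit. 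Both are fine for the lemma as stated, though the paper's version is shorter because it leans on machinery it has already built.

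There is, however, one incorrect step in your argument, in the sentence asserting that $\Pg{t}$ ``is convolution with a probability density on $\T^2$ and a contraction on $\Ca$, uniformly in $t$ and $\ga$.'' The paper explicitly warns (just before Lemma~\ref{le:Pgt}) that for $x\notin\Le$ the extended kernel $\Pg{t}(x)$ is in general not nonnegative. The reason your positive-definiteness argument fails is that $\hPg{t}$ is not simply $\exp(t\ga^{-2}(\hKg-1))$ on all of $\Z^2$: it is that expression on $\{-N,\dots,N\}^2$ and identically zero outside, cf.\ the definition after \eqref{e:Fourier-semi}. This hard truncation destroys positive definiteness of the sequence $(\hPg{t}(\om))_{\om\in\Z^2}$, so $\Pg{t}$ extended to $\T^2$ is not convolution with a nonnegative kernel, and the clean block-wise bound $\|\delta_k\Pg{t}\Xng\|_{L^\infty(\T^2)}\le\|\delta_k\Xng\|_{L^\infty(\T^2)}$ you use in the high-frequency regime is not justified this way. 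The step is repairable: the operator-norm bounds \eqref{e:Regular1} and \eqref{e:Regular5} of Lemma~\ref{le:semi-group-regularity} with $\be=0$ give $\|\delta_k\Pg{t}Z\|_{L^\infty}\le C\,2^{\ka'k}\|\delta_k Z\|_{L^\infty}$ for any small $\ka'>0$ (uniformly in $t\le T$ and $\ga$), and since you have $\ka>0$ of genuine extra regularity to spend, taking $\ka'<\ka$ still makes the high-frequency tail small. After that repair the proof goes through.
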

%%%%%%%%%%%%%%%%%%%%%%
%
%%%%%%%%%%%%%%%%%%%%%%
\begin{proof}
For any $t \geq 0$ we can write
\begin{align}
\| Y(t, \cdot) - \Yg(t, \cdot)  \|_{\Ca} \leq   \| P_t (\Xn - \Xng) \|_{\Ca} + \| (P_t -\Pg{t} ) \Xng \|_{\Ca} \;. %\label{e:Finale1}
\end{align}
The operator $P_t$ is bounded on $\Ca$ uniformly in $t$, so that  the first term is bounded by $C\|\Xn - \Xng \|_{\Ca} \to 0$. For the second term, we use the assumption that states that the $\Xng$ are uniformly bounded in $\Cc^{-\al + \ka}$ for some small $\ka>0$, as well as the fact that $ \|\Pg{t} - P_t\|_{\Cc^{-\al + \ka} \to \Ca } \leq C(T, \nu, \ka) \ga^{\frac{\ka}{2}}$ according to \eqref{e:reg_Cor}.  So \eqref{e:Aim1A} is established. 
\end{proof}
%%%%%%%%%%%%%%%%%%%%%%
%
%
%
%
 
We now turn to the terms $\tvg$ and $\bvg$. As in Section~\ref{sec:ContinuousAnalysis}, we have to include the initial conditions in the renormalised powers of $\Zgn$. We define  
\begin{align}
 \ttZ(t, \cdot) &=\Yg(t, \cdot) + \Zgn(t, \cdot) \; \notag,\\
 \ttZz(t, \cdot) \colon &=  \ZGn{2}(t, \cdot)  + 2\Yg(t, \cdot) \Zgn(t, \cdot)  +  \Yg(t,\cdot)^2\;, 
 \label{e:Zg_with_ic}\\
 \notag
 \ttZd(t, \cdot) \colon &=  \ZGn{3}(t, \cdot)  + 3\Yg(t, \cdot)  \ZGn{2}(t, \cdot)  +  3\Yg(t, \cdot)^2 \Zgn(t, \cdot)  +  \Yg(t,\cdot)^3\;. 
\end{align}
Furthermore, we define $\bZZg$, $\bZZG{2}$ and $\bZZG{3} $ in exactly the same way as we defined $\ttZ, \, \ttZz, \ttZd$ in \eqref{e:Zg_with_ic}, with the only difference that all occurrences of $\Yg(t, \cdot)$ are replaced by $Y(t,\cdot) = P_t \Xn$.

With these notations in place, we can observe that $\bvg =\Ss_T(\Zgn, \ZGn{2},\ZGn{3}) $ satisfies
\begin{align}
 \bvg(t, \cdot)  =
&- \int_0^t P_{t-s}   \bPsg( s) \, ds\; , \label{e:evolution3aa}
\end{align}
where we have set 
\begin{align}
\bPsg(s) :=& \frac13 \Big( \bvg^3(s) + 3 \bZZg(s) \, \bvg^2(s) + 3  \bZZG{2}(s) \, \bvg(s) +  \bZZG{3}(s) \Big) \notag \\
& - A(s)\,\big(\bvg(s) + \bZZG{1}(s) \big) \;. \label{e:Def_Psi_bar}
\end{align}
In the next lemma, we establish a similar expression for $\tvg$. To this end, recall the definition of $\EG{n}$ in \eqref{e:def:EGn}. We denote by $E^{\colon n \colon}_{\ga,t,\nn}$ the corresponding quantity with $\Rg$, $\RG{n}$ replaced by $\Rgn$, $\RGn{n}$. As discussed above, Proposition~\ref{p:discrete-Wick} holds as well (in fact, with the same constant) with $\EG{n}$ replaced by $E^{\colon n \colon}_{\ga,t,\nn}$.

%%%%%%%%%%%%%%%%%%
\begin{lemma}
%%%%%%%%%%%%%%%%%%
\label{le:Collection_of_Errors}
%%%%%%%%%%%%%%%%
For all $  \ga>0 $ and $t \geq 0$ we have on $\T^2$
\begin{align}
\tvg(t,\cdot) =&  - \int_0^t \Pg{t-s}  \Kg \star   \big( \Psg(s)+ \ERR{1}_{\nn} +\ERR{2}_{\nn}(s,\cdot) \big) \, ds \;,   \label{e:evolution3b} 
\end{align}
where
\begin{align}
\Psg(s) :=&\frac13 \Big( \tvg^3(s) + 3 \ttZ(s) \, \tvg^2(s) + 3  \ttZz(s) \, \tvg(s) +  \ttZd(s) \Big) \notag \\
& - A(s)\,\big(\tvg(s) + \ttZ(s) \big) \;. \label{e:def_Psi_n}
\end{align}
The error term $\ERR{1}_{\nn} $ was bounded in \eqref{e:error0control}, while for $\ERR{2}_{\nn}$, we have  for $0 \leq s \leq T$ and for $0 < \ga < \ga_0$
\begin{align}
\| &\ERR{2}_{\nn}(s, \cdot) \|_{L^\infty(\T^2)} \notag\\
& \qquad  \leq C(T, \al, \ka)  \big( \| E_{\ga,s,\nn}^{\colon 3 \colon }\|_{L^\infty(\T^2)}  + s^{-\al-\ka} \| E_{\ga,s,\nn}^{\colon 2 \colon }\|_{L^\infty(\T^2)}\big)\;. \label{e:e2Bound}
\end{align}
\end{lemma}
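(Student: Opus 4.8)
The plan is to carry out, at the discrete level, the algebra that turns the renormalised equation \eqref{e:SPDEeps} into \eqref{e:remainder} in the continuous setting of Section~\ref{sec:ContinuousAnalysis}, the only new feature being that the exact Wick powers of $\Zgn$ must be replaced by the approximations $\ZGn{2}$, $\ZGn{3}$ and the discrepancy tracked via Proposition~\ref{p:discrete-Wick} (in its $\nn$-version, valid as noted at the start of Section~\ref{sec:conv-lin}). I would start from the $\nn$-analogue of \eqref{e:evolution3CC} satisfied by $\tXgn$ --- i.e.\ \eqref{e:evolution3CC} with $\Zg$, $\ERR{1}$ and the bracket $[R_{\ga,s}(\cdot,\cdot)]_s$ replaced by $\Zgn$, $\ERR{1}_{\nn}$ (cf.\ \eqref{e:err11}) and $[R_{\ga,s,\nn}(\cdot,\cdot)]_s$ --- and substitute $\tXgn = \tvg + \Zgn + \Yg$. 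Since $\Yg = \Pg{t}\Xng$ and the stochastic convolution $\Zgn$ occurs identically on both sides, these two terms cancel, leaving a Duhamel identity for $\tvg$ whose integrand is $\Kg\star\big(-\tfrac13\tXgn^3 + (\,[R_{\ga,s,\nn}(\cdot,\cdot)]_s + A(s)\,)\tXgn - \ERR{1}_{\nn}(s,\cdot)\big)$.

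The core step is to rewrite this integrand. Writing $\ttZ = \Yg + \Zgn$ (so $\tXgn = \tvg + \ttZ$), I expand $\tXgn^3 = \tvg^3 + 3\ttZ\tvg^2 + 3\ttZ^2\tvg + \ttZ^3$. From the definition \eqref{e:def:EGn} of $\EG{n}$ applied to the $\nn$-processes, together with $H_2(X,T) = X^2 - T$ and $H_3(X,T) = X^3 - 3XT$ (see \eqref{e:def-Hermite1}), one obtains the pointwise identities
\begin{align*}
\Zgn(s,x)^2 &= \ZGn{2}(s,x) + [R_{\ga,s,\nn}(\cdot,x)]_s + E_{\ga,s,\nn}^{\colon 2\colon}(x), \\
\Zgn(s,x)^3 &= \ZGn{3}(s,x) + 3\,\Zgn(s,x)\,[R_{\ga,s,\nn}(\cdot,x)]_s + E_{\ga,s,\nn}^{\colon 3\colon}(x).
\end{align*}
Inserting these into $\ttZ^2 = (\Yg + \Zgn)^2$ and $\ttZ^3 = (\Yg + \Zgn)^3$ and using the definitions \eqref{e:Zg_with_ic} of $\ttZz$, $\ttZd$ gives $\ttZ^2 = \ttZz + [R_{\ga,s,\nn}]_s + E_{\ga,s,\nn}^{\colon 2\colon}$ and $\ttZ^3 = \ttZd + 3\ttZ\,[R_{\ga,s,\nn}]_s + 3\Yg\,E_{\ga,s,\nn}^{\colon 2\colon} + E_{\ga,s,\nn}^{\colon 3\colon}$. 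Substituting back, all bracket contributions assemble into $[R_{\ga,s,\nn}]_s\,(\tvg + \ttZ) = [R_{\ga,s,\nn}]_s\,\tXgn$, which cancels exactly the linear bracket term in the integrand; what survives of $-\tfrac13\tXgn^3 + A(s)\tXgn$ is precisely $-\Psg(s)$ by \eqref{e:def_Psi_n}, and the only leftover is $-\big(\tvg(s,\cdot) + \Yg(s,\cdot)\big)E_{\ga,s,\nn}^{\colon 2\colon} - \tfrac13 E_{\ga,s,\nn}^{\colon 3\colon}$. Setting $\ERR{2}_{\nn}(s,\cdot) := \big(\tvg(s,\cdot) + \Yg(s,\cdot)\big)E_{\ga,s,\nn}^{\colon 2\colon} + \tfrac13 E_{\ga,s,\nn}^{\colon 3\colon}$ then yields \eqref{e:evolution3b} (up to matching the sign convention of \eqref{e:evolution3CC}).

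The bound \eqref{e:e2Bound} now follows from the triangle inequality and submultiplicativity of the $L^\infty$ norm,
$$
\|\ERR{2}_{\nn}(s)\|_{L^\infty(\T^2)} \le \big(\|\tvg(s)\|_{L^\infty} + \|\Yg(s)\|_{L^\infty}\big)\,\|E_{\ga,s,\nn}^{\colon 2\colon}\|_{L^\infty(\T^2)} + \tfrac13\,\|E_{\ga,s,\nn}^{\colon 3\colon}\|_{L^\infty(\T^2)},
$$
together with \eqref{e:Ygbo1} (choosing $\be$ slightly larger than $\ka$ and using $\Cc^{\be-\ka}\hookrightarrow L^\infty$), which gives $\|\Yg(s)\|_{L^\infty} \le C(T,\ka)\,s^{-\al-\ka}$, and the fact that $\|\tvg(s)\|_{L^\infty}$ is bounded on $[0,T]$ (the process $\tvg$ being continuous with $\tvg(0)=0$ by the ODE well-posedness noted after \eqref{e:err11}), hence also $\le C\,s^{-\al-\ka}$ there; both factors are absorbed into $C(T,\al,\ka)$.

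I expect the only delicate point to be the bookkeeping in the second step: one must check that the discrete renormalisation closes, i.e.\ that the discrete bracket $[R_{\ga,s,\nn}(\cdot,x)]_s$ --- which here plays the role of $\tfrac{s}{2} + \sum_{\om \neq 0}\frac{1 - e^{-2s\pi^2|\om|^2}}{4\pi^2|\om|^2}$ from \eqref{e:coft} --- cancels exactly against the linear term carried over from the preceding lemma, the logarithmically divergent constant $\CGG$ built into $\be$ via \eqref{e:scalingbeta} having already been absorbed into $A(s)$ at that stage. Everything else is mechanical, relying only on Proposition~\ref{p:discrete-Wick} and the semigroup estimate \eqref{e:Ygbo1}.
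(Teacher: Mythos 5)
Your algebraic derivation is exactly the paper's: you substitute $\tXgn = \tvg + \ttZ$ into the $\nn$-analogue of \eqref{e:evolution3CC}, cancel $\Yg$ and $\Zgn$, expand the cube, and identify the mismatch with $\Psg$ via the identities $\ttZ^2 - \tCG - \ttZz = E^{\colon 2\colon}_{\ga,s,\nn}$ and $\ttZ^3 - 3\tCG\ttZ - \ttZd = E^{\colon 3\colon}_{\ga,s,\nn} + 3\Yg E^{\colon 2\colon}_{\ga,s,\nn}$, arriving at $\ERR{2}_{\nn} = (\tvg + \Yg)E^{\colon 2\colon}_{\ga,s,\nn} + \tfrac13 E^{\colon 3\colon}_{\ga,s,\nn}$, which is precisely the paper's $\err{1} + \err{2}$. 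The use of \eqref{e:Ygbo1} to get $\|\Yg(s)\|_{L^\infty} \le C s^{-\al-\ka}$ also matches.

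The one place to be careful is your justification for absorbing $\|\tvg(s)\|_{L^\infty}$ into $C(T,\al,\ka)$. Invoking ODE well-posedness gives you a bound that is random and $\ga$-dependent, not a deterministic constant depending only on $T,\al,\ka$; so as written this step does not deliver the stated uniformity. Interestingly the paper's own proof of \eqref{e:e2Bound} only discusses the $\Yg$ prefactor and is silent about the $\tvg$ prefactor in $\err{1}$, so this is a shared imprecision rather than a gap you introduced. The resolution in the paper's architecture is that the $\tvg$ dependence is ultimately tracked through the random constant $\overline{C}_1$ of the subsequent Gronwall lemma (which explicitly lists $\sup_{0\le s\le T}\|\tvg(s,\cdot)\|_{\Cc^{1/2}}$ among its dependencies), and is eventually controlled a posteriori on the event $\Aa^Z_\rr \cap \Aa^{\mathsf{E}}$ in the proof of Theorem~\ref{thm:Main}; strictly speaking the constant in \eqref{e:e2Bound} should be read as carrying that dependence. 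If you want to avoid the circularity entirely in your write-up, state the bound with an explicit $\big(1 + \|\tvg(s)\|_{L^\infty}\big)$ factor and let the Gronwall argument absorb it.
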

%%%%%%%%%%%%%%
%
%
%%%%%%%%%%%%%%%
\begin{proof}
Using \eqref{e:evolution3CC}, we see that $\tvg$ satisfies for any $t \geq 0$ on $ \T^2$
\begin{align*}
\tvg(t,\cdot) &= 
 \int_0^t \Pg{t-s}  \Kg \ae   \Big(  
- \frac{1}{3}  ( \tvg + \ttZ)^3(s,\cdot) \\
& \quad + \big(\tCG(s, \cdot) +  A(s)\big)
 (\tvg + \ttZ) (s,\cdot)\notag +\ERR{1}_{\nn}(s,\cdot) \Big)  \, ds \;  .
\end{align*}
The term $\ERR{1}_{\nn}(s,x) $ was already controlled in \eqref{e:error0control}. For the rest, we write
\begin{align*}
\frac13& ( \tvg + \ttZ)^3 - \tCG(\cdot,\cdot) \big( \tvg + \ttZ \big)  \\
&= \frac{1}{3} \tvg^3 + \tvg^2 \ttZ + \tvg \ttZz + \frac13 \ttZd  + \err{1} + \err{2}\;,
\end{align*}
where
\begin{align*}
\err{1}(s,\cdot) &=  \tvg(s,\cdot) \big( \ttZ^2(s,\cdot) - \tCG(s,\cdot)  -\ttZz(s,\cdot))\;, \\
\err{2} (s,\cdot)&=  \frac13 \big( \ttZ^3(s,\cdot) - 3 \tCG(s,\cdot) \ttZ(s,\cdot)  -\ttZd(s,\cdot)) \;.
\end{align*}
To control $\err{1}$, we write using \eqref{e:Zg_with_ic}
\begin{align*}
\ttZ^2& (s, \cdot)- \tCG(s, \cdot)  -\ttZz(s, \cdot)\\
 & = \big( \Zgn^2 + 2 \Yg \, \Zgn +\Yg^2    \big)(s, \cdot) - \tCG(s, \cdot) - \big(  \ZGn{2}  + 2\Yg \Zgn  +  \Yg^2 \big)(s, \cdot) \\
& = \Zgn^2(s, \cdot)  - \tCG(s, \cdot) - \ZGn{2}(s, \cdot) \\
& = E_{\ga,s,\nn}^{\colon 2 \colon }(s, \cdot) \;.
\end{align*}
For $\err{2}$, we write using \eqref{e:Zg_with_ic} once more (and dropping the arguments to improve readability)
\begin{align*}
 \big( & \ttZ^3 - 3 \tCG \ttZ  -\ttZd \big) \\
 & = \big(\Zgn^3  + 3\Yg \Zgn^2    + 3\Yg^2 \Zgn+  \Yg^3   \big) - 3 \tCG (\Zgn +\Yg)  \\
 & \qquad \qquad \qquad -  \big( \ZGn{3}  + 3\Yg  \ZGn{2}  +  3\Yg^2 \Zgn  +  \Yg^3 \big) \\
 & = \big( \Zgn^3  - 3 \tCG \Zgn  - \ZGn{3}  \big) + 3 \Yg \big( \Zgn^2   - \tCG - \ZGn{2} \big) \\
 & = E_{\ga,s,\nn}^{\colon 3 \colon } + 3 \Yg E_{\ga,s,\nn}^{\colon 2 \colon } \;. 
\end{align*}
By assumption, $\Xng$ is bounded in $\Ca$.The bound \eqref{e:Ygbo1}  thus ensures that for any $\ka>0$, we have a uniform bound on $s^{\al + \ka} \| \Yg\|_{L^\infty(\T^2)}$, from which the desired bound follows. 
%
%%%%%%%%%%%%%%
\end{proof}
%%%%%%%%%%%%%%

In the next lemma, we combine the expressions \eqref{e:evolution3aa} and \eqref{e:evolution3b} to derive a bound on $\sup_{0 \leq t \leq T} \| \bvg(t, \cdot) - \tvg (t, \cdot) \|_{\Cc^{\frac12}}$.
%
%
%
%%%%%%%%%%%%%%%
\begin{lemma}
%%%%%%%%%%%%%%%
Let $\ka>0$ be the constant appearing in the boundedness assumption in Theorem~\ref{thm:Main} and let $\al >0$ be small enough. For every $0 \leq t \leq T$ and $0 < \ga < \ga_0$, we have
\begin{align}
\|  \bvg(t, \cdot)-  \tvg(t, \cdot)  \big\|_{\Cc^{\frac12}} \leq  & \overline{C}_1  \int_0^t  (t-s)^{-\frac{1}{3}} s^{-\frac{1}{6} }\|  \bvg(s,\cdot) - \tvg(s,\cdot) \|_{\Cc^{\frac12}}   \, ds \notag\\
&+ \overline{C}_1 (\ga^{\frac{\ka}{2}}  + \| \Xng - \Xn \|_{\Ca}) + \ERR{3}(t)  \label{e:Gronwall1} \;,
\end{align}
where the constant $\overline{C}_1$ depends on $\nu,\ka,T$ $\| \Xn \|_{\Cc^{- \al + \ka}}$, $\| \Xng \|_{\Cc^{-\al + \ka}}$ as well  as the random quantities $\sup_{0 \leq s \leq T} \| \bvg(s, \cdot) \|_{\Cc^{\frac12}}$,   $\sup_{0 \leq s \leq T}\|\tvg(s, \cdot) \|_{\Cc^{\frac12}}$,  $\sup_{0 \leq s \leq T}\| \Zgn(s, \cdot) \|_{\Ca}$,  $\sup_{0 \leq s \leq T} \| \ZGn{2}(s, \cdot) \|_{\Ca}$, $\sup_{0 \leq s \leq T} \| \ZGn{3}(s, \cdot) \|_{\Ca}$.

The error term $\ERR{3}$ satisfies for every $T \geq 0$, $p \geq 2$ and $0 <\la< \frac{5}{6}$
\begin{align}\label{e:very_good_bound_A}
\E \sup_{0 \leq t \leq T}  \big| \ERR{3}(t) \big|^p \leq \overline{C}_2 \ga^{\la p} \;,
\end{align}
for a constant $\overline{C}_2= \overline{C}_2(p,T,\la)$.
\end{lemma}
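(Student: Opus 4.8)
The plan is to subtract the mild formulations \eqref{e:evolution3aa} for $\bvg$ and \eqref{e:evolution3b} for $\tvg$, and to estimate the difference in the $\Cc^{\frac12}$-norm by means of the smoothing properties of the semigroups $P_t$ and $\Pg{t}$ (Lemma~\ref{le:semi-group-regularity} and Corollary~\ref{cor:regPg}). Writing $w := \bvg - \tvg$, we get
\begin{align*}
w(t,\cdot) = & -\int_0^t P_{t-s} \, \bPsg(s) \, ds + \int_0^t \Pg{t-s} \, \Kg \star \Big( \Psg(s) + \ERR{1}_{\nn}(s) + \ERR{2}_{\nn}(s) \Big) \, ds \\
= & -\int_0^t \Pg{t-s} \, \Kg \star \big( \bPsg(s) - \Psg(s) \big) \, ds + \ERR{3}(t),
\end{align*}
where $\ERR{3}$ collects three kinds of error: (i) the difference $\int_0^t (P_{t-s} - \Pg{t-s}\Kg\star) \bPsg(s)\,ds$ between the two heat flows acting on the \emph{continuum} nonlinearity, which is controlled by \eqref{e:reg_Cor}-type bounds and the a priori regularity of $\bvg$; (ii) the terms in $\bPsg - \Psg$ that come \emph{only} from replacing $Y$ by $\Yg$ and $\tZG{n}$ by $\ttZ$ inside the polynomial expression, i.e. from the difference $\ttZ - \bZZg$ etc., which by \eqref{e:Ygbo1} and Lemma~\ref{le:ic} (or rather its quantitative version \eqref{e:reg_Cor}) is $O(\ga^{\ka/2})$ in appropriate norms; and (iii) the genuine error terms $\ERR{1}_{\nn}$ and $\ERR{2}_{\nn}$, already bounded in \eqref{e:error0control} and \eqref{e:e2Bound}.

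Next I would isolate the part of $\bPsg - \Psg$ that is \emph{linear in $w$}: differencing the cubic terms $\tvg^3$, $\ttZ\tvg^2$, $\ttZz\tvg$ and the term $A(s)\tvg$ against their barred counterparts produces, after a telescoping algebraic identity ($a^3 - b^3 = (a-b)(a^2+ab+b^2)$, etc.), an expression of the form $\big(\text{coefficient}(s)\big)\cdot w(s)$ plus lower-order pieces that are already absorbed in $\ERR{3}$. The coefficient involves $\bvg^2 + \bvg\tvg + \tvg^2$, $\ttZ$ (or $\bZZg$), $\ttZz$, and $A(s)$; using the a priori $\Cc^{\frac12}$-bounds on $\bvg,\tvg$, the $\Ca$-bounds on $\Zgn,\ZGn{2},\ZGn{3}$ (all contained in the random constant $\overline{C}_1$), the multiplicative Besov inequality Lemma~\ref{le:Besov-multiplicative}, the bound $\| \Yg(s,\cdot)\|_{\Cc^{\be-\ka}} \le C s^{-\al-\be}$ from \eqref{e:Ygbo1}, and the integrability of $A(s)$ near $0$ noted after \eqref{e:A_value}, one bounds the $\Cc^{\frac12}$-norm of $\Pg{t-s}\Kg\star\big(\text{coeff}(s) w(s)\big)$ by $\overline{C}_1 (t-s)^{-1/3} s^{-1/6} \| w(s)\|_{\Cc^{\frac12}}$. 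The exponent $-\frac13$ comes from the loss $\Cc^{\frac12+\kappa} \hookleftarrow \Pg{t-s}\Kg\star : \Cc^{-\al}$ through the singularity $(t-s)^{-(\frac12+\al+\kappa)/2}$-type estimate for $\Pg{t-s}$ (Corollary~\ref{cor:regPg}), with $\al,\kappa$ taken small; the exponent $-\frac16$ comes from the worst $s^{-\al-\be}$ weight in $\ttZz$ via \eqref{e:Ygbo1} (the power $\al+\beta$ rather than $(\al+\beta)/2$ is the price of working on the discrete torus, cf. the remark after \eqref{e:Ygbo1}). This yields precisely \eqref{e:Gronwall1}.

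Finally, for the moment bound \eqref{e:very_good_bound_A} on $\ERR{3}$ I would assemble the estimates for its three constituents. The semigroup-difference term (i) is bounded deterministically by $C\ga^{\ka/2}$ times a power of $\sup_s\|\bvg(s)\|_{\Cc^{\frac12}}$ and the $\Ca$-norms of the $Z$-processes, so it contributes $O(\ga^{\la p})$ in $L^p$ for any $\la < 5/6$ once $\al,\kappa$ are small; the initial-data term (ii) is $O(\ga^{\ka/2}) + O(\|\Xng - \Xn\|_{\Ca})$, already displayed separately in \eqref{e:Gronwall1}; for (iii) I would integrate the pointwise bounds \eqref{e:error0control} and \eqref{e:e2Bound} against the semigroup. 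The terms $\ga^{1/2}s^{-1/3}$, $\ga^{-1/6}\|\Xg^{\mathrm{high}}\|_{L^\infty}$, $\ga^{-1/6}\|Q_{\ga,s}\|_{L^\infty(\Le)}$ in \eqref{e:error0control} are handled respectively by direct integration (Beta-function), by Lemma~\ref{le:highfreq} (giving $\ga^{p(1-\ka)}$), and by Lemma~\ref{le:quartic_var} (also $\ga^{p(1-\ka)}$); the terms $\| E_{\ga,s,\nn}^{\colon n \colon}\|_{L^\infty}$ in \eqref{e:e2Bound} are handled by Proposition~\ref{p:discrete-Wick} (giving $\ga^{p(1-\ka)}$), with the time singularity $s^{-\al-\ka}$ harmless after integration. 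Collecting the worst exponent ($\ga^{1/2}$ from the first piece, degraded slightly by the $\al,\kappa$ losses but still beating any $\la < 5/6$ after raising to the $p$th power — note $1/2$ is not the binding constraint once the other pieces give nearly $\ga^p$, and $5/6$ is dictated by the $\ga^{2/3}s^{-1/3}$ scale in \eqref{e:Final_Error1} before the $\taun$-localisation improves it) gives \eqref{e:very_good_bound_A}.

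\textbf{Main obstacle.} The delicate point is the bookkeeping in step two: one must verify that \emph{every} term arising from $\bPsg - \Psg$ is either linear in $w$ with a coefficient whose $\Cc^{\frac12}\to\Cc^{\frac12}$ operator norm after $\Pg{t-s}\Kg\star$ is integrable against $(t-s)^{-1/3}s^{-1/6}$, or else falls into $\ERR{3}$ with a genuine power of $\ga$ — in particular the mismatched renormalisation constants ($\tCG$ versus the continuum $c(t)$, hidden in $\ERR{1}_{\nn}$, and $\ttZz$ versus $\ZGn{2}$, hidden in $E_{\ga,s,\nn}^{\colon 2\colon}$) must be tracked carefully so that no uncancelled logarithmically-divergent constant survives. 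The weaker discrete smoothing $\| \Yg(t)\|_{\Cc^{\be-\ka}} \lesssim t^{-(\al+\be)}$ (rather than $t^{-(\al+\be)/2}$) is what forces the $s^{-1/6}$ weight, and one must check this is still integrable, i.e. that $\al+\be < 1$ can be arranged while keeping $\be > \frac12$ — which is exactly why $\al$ must be taken small.
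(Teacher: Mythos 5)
Your decomposition and choice of tools (subtracting the mild formulations \eqref{e:evolution3aa} and \eqref{e:evolution3b}, bounding the semigroup difference with Corollary~\ref{cor:regPg}, extracting the Gronwall-type term from $\bPsg-\Psg$ via Lemma~\ref{le:Besov-multiplicative} and \eqref{e:Ygbo1}, and handing the genuine errors to Lemmas~\ref{le:highfreq}, \ref{le:quartic_var} and Proposition~\ref{p:discrete-Wick}) are exactly those of the paper's proof. Two points in your bookkeeping are off, though, and one of them matters.

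First, you cannot place the semigroup-difference term $\int_0^t\bigl(P_{t-s}-\Pg{t-s}\star\Kg\bigr)\bPsg(s)\,ds$ inside $\ERR{3}$. That term is bounded by a power of $\ga$ \emph{multiplied by a random quantity involving} $\sup_s\|\bvg(s)\|_{\Cc^{1/2}}$ and the $\Ca$-norms of $\Zgn,\ZGn{2},\ZGn{3}$, and the lemma does not establish (nor use) $L^p$ moment bounds on $\sup_s\|\bvg(s)\|_{\Cc^{1/2}}$. So the conclusion ``it contributes $O(\ga^{\la p})$ in $L^p$'' is not available. The paper instead absorbs this term into the $\overline{C}_1\,\ga^{\ka/2}$ bucket, which is the right place precisely because $\overline{C}_1$ is allowed to depend on those random quantities, while $\overline{C}_2$ in \eqref{e:very_good_bound_A} must be deterministic. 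The $\ga^{1/2}s^{-1/3}$ piece of \eqref{e:error0control} is absorbed in the same way. In the paper, $\ERR{3}$ is only the integrated contribution of $\ga^{-1/6}\|\Xg^{\mathrm{high}}\|_{L^\infty}$, $\ga^{-1/6}\|Q_{\ga,s}\|_{L^\infty(\Le)}$, $\|E^{:3:}_{\ga,s,\nn}\|_{L^\infty}$ and $s^{-\al-\bar\ka}\|E^{:2:}_{\ga,s,\nn}\|_{L^\infty}$.

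Second, your explanation of the exponent $\frac56$ is not right. It is \emph{not} the $\ga^{2/3}s^{-1/3}$ scale in \eqref{e:Final_Error1} that dictates it (that term only gives $O(\ga^{1/2})$ after the $\taun$-localisation and goes into $\overline{C}_1\,\ga^{\ka/2}$, not $\ERR{3}$). The binding constraint is the prefactor $\ga^{-1/6}$ in \eqref{e:error0control} sitting in front of $\|\Xg^{\mathrm{high}}\|_{L^\infty}$ and $\|Q_{\ga,s}\|_{L^\infty}$: Lemmas~\ref{le:highfreq} and \ref{le:quartic_var} give these terms at $O(\ga^{1-\ka})$ in every $L^p$, so after the $\ga^{-1/6}$ loss one obtains $\ga^{5/6-\ka}$, hence $\la<\frac56$. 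The rest of your reasoning — the origin of the $(t-s)^{-1/3}$ from \eqref{e:Regular6} and of the $s^{-1/6}$ from the weak smoothing \eqref{e:Ygbo1} applied to $\Yg$ (squared, inside $\ttZz$), both absorbable by taking $\al,\bar\ka$ small — is correct and matches the paper.
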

%%%%%%%%%%%%%%%
%
%
%
%%%%%%%%%%%%%%%
\begin{proof}
%%%%%%%%%%%%%%%

For any $ t \geq 0$ and $\gamma >0$, we get combining  \eqref{e:evolution3aa} and \eqref{e:evolution3b}
\begin{align}
 \bvg(t, \cdot)-  \tvg(t, \cdot)  =& -\int_0^t \big(  P_{t-s} - \Pg{t-s} \star \Kg   \big)  \,\bPsg\big(  s\big) \,ds \notag  \\
&- \int_0^t \Pg{t-s}  \star \Kg \star\, \big( \bPsg( s) -  \Psg(s)    \big) \, ds\; \notag\\
&+ \int_0^t \Pg{t-s}\star  \,\Kg \star \big(\ERR{1}_{\nn}(s, \cdot) +\ERR{2}_{\nn}(s,\cdot) \big)   \, ds\;, \label{e:bound_diff1}
\end{align}
where $\bPsg( s) $ was defined in \eqref{e:Def_Psi_bar} and   $\Psg(s)$ was defined in  \eqref{e:def_Psi_n}.  
Using the multiplicative inequality, Lemma~\ref{le:Besov-multiplicative}, we can bound for every $s \geq 0$ 
\begin{align}
\| \bPsg( s)& \|_{\Ca} \notag \\
&\leq  \|\bvg(s, \cdot) \|_{\Cc^\frac12}^3 +C(\nu)  \| \bZZg(s, \cdot)\|_{\Ca}  \|\bvg(s, \cdot) \|_{\Cc^{\frac12}}^2 
\notag \\
 &+ C(\nu) \| \bZZG{2}(s, \cdot)\|_{\Ca}  \|\bvg(s, \cdot) \|_{\Cc^{\frac12}} 
  + \| \bZZG{3}(s, \cdot)\|_{\Ca}  \notag\\ 
 & + A(s) \big( \|  \bvg(s, \cdot) \|_{\Cc^{\frac12}} + \| \bZZg(s,\cdot) \|_{\Ca}\big)\;,\label{e:FinaleA1}
\end{align}
where we have chosen $\al>0$ small enough to assure  $\al < \frac12$. Recalling the definition of $\bZZg$, $\bZZG{2}$ and $\bZZG{3}$ just below \eqref{e:Zg_with_ic} we get for $s \geq 0$ 
\begin{align}
 \|  \bZZg(s, \cdot) \|_{\Ca}& \leq \; \|  \Zgn(s, \cdot) \|_{\Ca} + \|Y(s, \cdot) \|_{\Ca} \leq \|  \Zgn(s, \cdot) \|_{\Ca} +C \|\Xn \|_{\Ca} \notag
 \end{align}
 and for any  $\bar{\ka}>0$
 \begin{align}
\|   \bZZG{2}  \|_{\Cc^{-\al}} &\leq  \| \ZGn{2} \|_{\Ca} +  C(\al,\bar{\ka}) \big( \|  \Zgn \|_{\Ca}  \|Y \|_{\Cc^{\al + \bar{\ka}}} + \|Y \|_{\Cc^{\al + \bar{\ka}}} \|Y \|_{\Ca}\big) \notag\\
\|   \bZZG{3}  \|_{\Cc^{-\al}} &\leq  \| \ZGn{3} \|_{\Ca} +  C(\al,\bar{\ka})\Big(  \|  \ZGn{2} \|_{\Ca}  \|Y \|_{\Cc^{\al + \bar{\ka}}} 
+  \|  \Zgn \|_{\Ca}  \|Y \|_{\Cc^{\al + \bar{\ka}}}^2\notag\\
 &  +  \|Y \|_{\Cc^{\al + \bar{\ka}}}^2 \|Y \|_{\Ca} \Big) \;, \notag
\end{align}
where we have omitted the arguments $(s, \cdot)$ for all functions in the inequality. The regularity bound \eqref{e:heat-semigroup-reg} for the heat semigroup $P_t$  implies that for any $\la \geq -\al$
\begin{equation}\label{e:FinaleAB}
\| Y(t) \|_{\Cc^{\la}} \leq C(\la) t^{-\frac{\la +\al}{2}} \| \Xn \|_{\Ca}\;.
\end{equation}
Furthermore, the definition \eqref{e:A_value} of $A(s)$ shows that $|A(s)| \leq C\log(s^{-1})$. Combining these bounds, we get the following (brutal) bound uniformly over $0 \leq s \leq T$
\begin{align}
 \big\| \bPsg( s)  \big\|_{\Ca} \leq& \, C(\al,T) \, \big( \|\bvg(s, \cdot) \|_{\Cc^\frac12}^3 +1    \big) \;  s^{-\frac{1}{4}}  \big(  \| \Xng \|_{\Ca}^3 +1 \big) \notag\\
& \qquad  \big( \| \Zgn(s) \|_{\Ca}  + \| \ZGn{2}(s) \|_{\Ca}  + \| \ZGn{3}(s) \|_{\Ca}\big)  \;,\label{e:Finale1C}
\end{align}
where we have assumed that $\al$ and $\bar{\ka}$ are small enough to ensure that the exponent on $s$ in the term $ \|Y(s) \|_{\Cc^{\al + \bar{\ka}}}^2 \leq  C(T, \al,\bar{\ka}) s^{-(2\al +\bar{\ka})} \| \Xn \|_{\Cc^{\al }}^2$ satisfies $2\al  +\bar{\ka} \leq \frac{1}{4}$.

Combining Lemma~\ref{le:semi-group-regularity} and Corollary~\ref{cor:regPg}, we observe that for any $\la < \frac{3}{4} - \frac{\al}{2}$ and  $\bar{\ka} >0$,  there exists a constant $C= C(T,\la,\bar{\ka})$ such that uniformly over $0 \leq t \leq T$,
\begin{align}\label{e:Finale34}
\big\| \big(P_{t} -\Pg{t} \star \Kg   \big) \big\|_{\Ca  \to \Cc^{\frac12}} \leq C \ga^\la t^{-\la - \frac{1}{4} -\frac{\al}{2} - \bar{\ka} }\;.
\end{align}
Indeed, a combination of \eqref{e:Regular1}, \eqref{e:Regular3} and \eqref{e:Regular6} in conjunction with the standard regularity estimate for the heat semigroup \eqref{e:heat-semigroup-reg} yields that  for any $0 \leq \be\leq2$ and $\bar{\ka} >0$, we have uniformly over $0 \leq t \leq T$
\begin{equation*}
\big\| \big(P_{t} -\Pg{t} \star \Kg   \big) \big\|_{\Ca  \to \Cc^{-\al + \be - \bar{\ka}}} \leq C(\be, \bar{\ka},T)\, t^{- \frac{\be}{2}}\;.
\end{equation*}
Interpolating between this bound and \eqref{e:reg_Cor2} yields \eqref{e:Finale34}.

For any $\bar{\ka} >0$ and $\la >0$ small enough, we can thus bound the first term on the right-hand side of  \eqref{e:bound_diff1} uniformly over $0 \leq s \leq T$  
\begin{align}
\Big\|&  \int_0^t \big(P_{t-s} -\Pg{t-s} \star \Kg   \big)  \,\bPsg\big(  s\big) \,ds \Big\|_{\Cc^{\frac12}} \notag\\
&\leq  C(\bar{\ka},\la, T) \int_0^t (t-s)^{- \frac{\al}{2} -\frac14 -\bar{\ka}} (t-s)^{-\frac{\la}{2} - \bar{\ka}}  \, \ga^{\la} \big\| \bPsg( s)  \big\|_{\Ca} \notag \\
&\leq C t^{ \frac12 -\frac{\al + \la}{2} -2\bar{\ka}} \ga^{\la} \;, \label{e:diff_bound2}
\end{align}
where the constant $C$  on the right-hand side depends on $\bar{\ka}$, $\la$,  $T$,   $ \| \Xg \|_{\Ca}$ but also on the random quantities $\sup_{0 \leq s \leq T} \|\bvg(s, \cdot) \|_{\Cc^\frac12}$, $\sup_{0 \leq s \leq T} \| \Zgn(s, \cdot)) \|_{\Ca}$, $\sup_{0 \leq s \leq T} \| \ZGn{2}(s,\cdot) \|_{\Ca}$  and $\sup_{0 \leq s \leq T}\| \ZGn{3}(s,\cdot) \|_{\Ca}  $. In the last inequality in \eqref{e:diff_bound2} we have chosen $\nu,\la,\bar{\ka}$ small enough to ensure that the quantity on the right-hand side of the integral in the second line is integrable at as $s \to t$. By choosing $\nu$ and $\bar{\ka}$ small enough we can still choose $\la = \frac12$. Then the right-hand side of \eqref{e:diff_bound2} is bounded by $C \ga^{\frac12}$ where $C$ depends on $T, \|\Xg\|_{\Ca}$ as well as all the random quantities listed above.

As we will now see, the second term on the right-hand side of \eqref{e:bound_diff1} can be treated in a similar way. Using again the multiplicative inequality, Lemma~\ref{le:Besov-multiplicative}, as well as the definition of $\ttZ$, $\ttZz$, $\ttZd$, $\bZZg$, $\bZZG{2}$ and $\bZZG{3}$ in \eqref{e:Zg_with_ic} and below, we get 
for all $s \geq 0$  and any $\bar{\ka}>0$ that
\begin{align*}
\big\| &  \bPsg( s)  - \Psg(s) \big\|_{\Ca}\\
\leq &C(\nu, \bar{\ka},T) \Big( \|  \bvg(s, \cdot) - \tvg(s,\cdot) \|_{\Cc^{\frac12}} + \| Y(s, \cdot) - \Yg(s,\cdot) \|_{\Ca}\Big) \\
&\times (1 + A(s))  \big( \|\bvg(s, \cdot) \|_{\Cc^{\frac12}}^2 + \|\tvg(s, \cdot) \|_{\Cc^{\frac12}}^2+\| \Zgn(s, \cdot) \|_{\Ca}^2  + \| \ZGn{2}(s, \cdot) \|_{\Ca} \\
& \qquad + \| Y(s, \cdot)\|_{\Cc^{\al + \bar{\ka}}}^2 +\| \Yg(s,\cdot) \|_{\Cc^{\al + \bar{\ka}}}^2  +1    \big).
\end{align*}
Arguing as in the proof of Lemma~\ref{le:ic}, we see that 
\begin{equation*}
\| Y(s, \cdot) - \Yg(s,\cdot) \|_{\Ca} \leq C \| \Xn - \Xng \|_{\Ca} + C(\al, \ka,T) \ga^{\frac{\ka}{2}}\,,
\end{equation*}
where $\ka>0$ is the constant appearing in the boundedness condition in the statement of our main result, Theorem~\ref{thm:Main}. Furthermore, using \eqref{e:Ygbo1} (or  Lemma~\ref{le:semi-group-regularity}),
we get for any $\bar{\ka} >0$
\begin{align*}
\| Y(s, \cdot)\|_{\Cc^{\al + \bar{\ka}}}^2 &\leq C(\nu, \bar{\ka}) s^{- 2 \nu - \bar{\ka}} \| \Xn \|_{\Ca} \;, \\
\| \Yg(s,\cdot) \|_{\Cc^{\al + \bar{\ka}}}^2 & \leq C(\nu, \bar{\ka},T) s^{-4 \nu - 3 \bar{\ka}} \| \Xng \|_{\Ca} \;.
\end{align*}
(As stated above, the bounds for the approximated heat semigroup are weaker than the bounds in the continuous equation. )

This estimate, together with \eqref{e:Regular6}, gives the following bound on the second term on the right-hand side of \eqref{e:bound_diff1}, for any $\bar{\ka}>0$:
\begin{align}
\Big\| \int_0^t & \Pg{t-s}   \Kg \star  \, \big(  \bPsg(s) -\Psg(s)  \, \big) \, ds \Big\|_{\Cc^{\frac12}} \notag \\
& \leq  C(\al,\bar{\ka},T) \int_0^t  (t-s)^{-\frac14 -\frac{\al}{2}-\bar{\ka}} \big\|  \bPsg(s) -\Psg(s)  \, \big\|_{\Ca}  \, ds \notag \\
& \leq  C  \int_0^t  (t-s)^{-\frac14 -\frac{\al}{2}-\bar{\ka}} s^{-4 \nu - 3 \bar{\ka} } \notag\\
& \qquad \qquad \qquad \big(\|  \bvg(s) - \tvg(s) \|_{\Cc^{\frac12}}  + \|  \Xn  -\Xng \|_{\Ca}  +  \ga^{\frac{\ka}{2}}\big)\, ds.\label{e:FinaleB17}
\end{align}
for a constant $C$ that depends on $\al,\bar{\ka},\ka,T$, $\| \Xn \|_{\Cc^{- \al + \ka}}$, $\| \Xng \|_{\Cc^{-\al + \ka}}$ as well  as the random quantities $\sup_{0 \leq s \leq T} \| \bvg(s, \cdot) \|_{\Cc^{\frac12}}$,   $\sup_{0 \leq s \leq T}\|\tvg(s, \cdot) \|_{\Cc^{\frac12}}$,  $\sup_{0 \leq s \leq T}\| \Zgn(s, \cdot) \|_{\Ca}$,  $\sup_{0 \leq s \leq T} \| \ZGn{2}(s, \cdot) \|_{\Ca}$. Here we have absorbed the logarithmic divergence of $A(s)$ as $s \to 0$ into the term $s^{-4 \nu -3 \bar{\ka}}$ by choosing a slightly larger $\bar{\ka}$. If $\al, \bar{\ka} >0$ are small enough, the integral in the last line of \eqref{e:FinaleB17} can be bounded by 
\begin{equation*}
C    \int_0^t  (t-s)^{-\frac13} s^{-\frac16 } \|  \bvg(s) - \tvg(s) \|_{\Cc^{\frac12}}\, ds  + C\|  \Xng  -\Xn \|_{\Ca}  + C \ga^{\frac{\ka}{2}} \;.
\end{equation*}

For the last term on the right-hand side of \eqref{e:bound_diff1}, using \eqref{e:Regular6} once more, we get for any $\bar{\ka}>0$
\begin{align*}
\big\| & \int_0^t \Pg{t-s}  \,\Kg \star \big(\ERR{1}_{\nn}(s, \cdot) +\ERR{2}_{\nn}(s,\cdot) \big)   \, \big) \, ds \big\|_{\Cc^{\frac12}} \\
 &\leq C(\bar{\ka}, T)  \int_0^t (t-s)^{-\frac{1}{4} - \bar{\ka}} \big\| \ERR{1}_{\nn}(s, \cdot) +\ERR{2}_{\nn}(s,\cdot) \big\|_{L^\infty} ds \;.
\end{align*}
Recall that according to \eqref{e:err11}, for $s > \taun$ we have $\ERR{1}_{\nn}(s)=0$. Else, according to \eqref{e:error0control}, we have
\begin{align}
\| \ERR{1}_{\nn}&(s,\cdot) \|_{L^\infty(\T^2)}\notag \\
& \leq C(T,\nn)  \Big(  \ga^{\frac12} s^{-\frac13}   +\ga^{-\frac16} \| \Xg^{\mathrm{high}}(s, \cdot)  \|_{L^{\infty}(\T^2)} + \ga^{-\frac16}\|  Q_{\ga,s}(s,\cdot) \|_{L^\infty(\Le)}    \Big) \;. \notag
\end{align}
For the second term on the right-hand side of this estimate we write
\begin{align*}
 \| &\Xg^{\mathrm{high}}(s, \cdot)  \|_{L^{\infty}(\T^2)} \\
   &\leq  \| \Zg^{\mathrm{high}}(s,\cdot)  \|_{L^{\infty}(\T^2)} + \| \tvg^{\mathrm{high}}(s, \cdot)  \|_{L^{\infty}(\T^2)}  + \| \Yg^{\mathrm{high}}(s, \cdot)  \|_{L^{\infty}(\T^2)} \;\\
   &\leq  \| \Zg^{\mathrm{high}}(s,\cdot)  \|_{L^{\infty}(\T^2)} + C \ga^{1}\| \tvg(s, \cdot)  \|_{\Cc^\frac12}  + C(\nu,\bar{\ka}, T)  \ga^1 t^{-\frac{1 +\al}{2}-\bar{\ka}} \| \Xng  \|_{\Ca} \;.
\end{align*}
Here, $\tvg^{\mathrm{high}} $ and  $\Yg^{\mathrm{high}}(s, \cdot)$ are defined analogously to \eqref{e:XhigH}. From this definition, it follows immediately that $\| \tvg^{\mathrm{high}}(s, \cdot)  \|_{L^{\infty}(\T^2)}$ is controlled by $C \ga^{1}\| \tvg(s, \cdot)  \|_{\Cc^\frac12} $, and similarly for $\Yg^{\mathrm{high}}$. We have also used once more the estimates provided in Lemma~\ref{le:semi-group-regularity} to control the $\Cc^{\frac12}$ norm of $\Yg(s)$.

According to \eqref{e:e2Bound}, we have for any $\bar{\ka}>0$
\begin{equation*}
\| \ERR{2}_{\nn} \|_{L^\infty(\T^2)} \leq C(t, \al, \bar{\ka})  \big( \| E_{\ga,s,\nn}^{\colon 3 \colon }\|_{L^\infty(\T^2)}  + s^{-\al-\bar{\ka}} \| E_{\ga,s,\nn}^{\colon 2 \colon }\|_{L^\infty(\T^2)}\big)\;.
\end{equation*}

Summarising all of these calculations, we get
\begin{align*}
\|  \tvg(t, \cdot)-  \bvg(t, \cdot)  \big\|_{\Cc^1} \leq  &\overline{C}_1 \int_0^t  (t-s)^{-\frac13} s^{-\frac{1}{6} }\|  \tvg(s) - \bvg(s) \|_{\Cc^{1}}   \, ds \notag \\
&+ \overline{C}_1 (\ga^{\frac{\ka}{2}}  + \| \Xng - \Xn \|_{\Ca}) + \ERR{3}(t) \;, %\label{e:Gronwall1}
\end{align*}
for a constant $\overline{C}_1$ that depends on all of the quantities listed below \eqref{e:diff_bound2} and \eqref{e:FinaleB17}. The error term
 takes the form
\begin{align*}
\ERR{3}(t) \leq &C(\bar{\ka},T)  \int_0^t (t-s )^{-\frac14 - \bar{\ka}}   \Big( \ga^{-\frac16}\| \Xg^{\mathrm{high}}(s, \cdot)  \|_{L^{\infty}(\T^2)} + \ga^{-\frac16}\|  Q_{\ga,s}(s,\cdot) \|_{L^\infty(\Le)}  \\
&+ \| E_{\ga,s,\nn}^{\colon 3 \colon }\|_{L^\infty(\T^2)}  + s^{-\al-\bar{\ka}} \| E_{\ga,s,\nn}^{\colon 2 \colon }\|_{L^\infty(\T^2)}\Big)  \,ds \;,
\end{align*}

In particular, we can conclude from Lemmas~\ref{le:highfreq}, \ref{le:quartic_var} and~\ref{p:discrete-Wick} that for any $p\geq 2$,
\begin{align*}
\E \sup_{0 \leq t \leq T}  \big| \ERR{3}(t) \big|^p \leq C(p,T,\la) \ga^{\la p} \;,
\end{align*}
for any $\la < \frac56$. So the desired result follows.
%Now, a suitable version o%
%%%%%%%%%%%%%%%%%%
\end{proof}
%%%%%%%%%%%%%%%%%%

We are now ready to conclude the proof of our main result.

%%%%%%%%%%%%%%%
\begin{proof}[Proof of Theorem~\ref{thm:Main}]
%%%%%%%%%%%%%%%
For $\rr$ and $\nn \ge 1$, we define the events $ \Aa_{\rr}^Z =  \Aa_{\rr}^Z(\ga,\nn)$,  and $ \Aa^{\mathsf{E}} =  \Aa^{\mathsf{E}}(\ga,\nn)$ by
\begin{align*}
 \Aa_{\rr}^Z :=& \, \big\{ \|\Zgn    \|_{\Ca}   \leq \rr  \, , \,   \|\ZGn{2}    \|_{\Ca}   \leq \rr \, , \,  \|\ZGn{3}    \|_{\Ca}   \leq \rr \text{ on } [0,T] \big\}\; , \\
 \Aa^{\mathsf{E}} :=&  \big\{ \sup_{0 \leq t \leq T}  \big| \ERR{3}(t) \big| \leq  \gamma^{\frac12}  \,   \big\} \,.
\end{align*}
Recall that
$$
\bvg =  \Ss_T(\Zgn, \Zgn^{\colon 2 \colon},\Zgn^{\colon 3 \colon}) .
$$
By Theorem~\ref{thm:ContSolution}, there exists a constant $C(T,\rr)$ such that on the event $\Aa_\rr^Z$, 
\begin{equation}
\label{e:estim-vbar}
\sup_{0 \leq t \leq T}\big\| \bvg(t,\cdot) \big\|_{\Cc^{\frac12}} \le C(T,\rr).
\end{equation}
Let $\rr_0 = C(T,\rr) + 2$ and let $\td{T} = \td{T}(\ga)$ be the stopping time defined by
$$
\td{T} = \inf \{ t \ge 0 : \|\bvg(t,\cdot)\|_{\Cc^{\frac12}} \ge \rr_0 \}.
$$
By a suitable version of Gronwall's inequality (see e.g. \cite[Lemma 5.7]{HW10}), we deduce that there exists a constant $C$ depending on $T,\rr$ and the choice of small constants $\ka, \al>0$ in Theorem \ref{thm:Main} such that on the event $\Aa_{\rr}^Z \cap \Aa^{\mathsf{E}}$, we have
\begin{align}
\sup_{0 \leq t \leq \td{T}\wedge T } \Ll\|  \tvg(t, \cdot)-  \bvg(t, \cdot)  \Rr\|_{\Cc^{\frac12}} \leq  & C \Ll(\ga^{\frac{\ka}{2}}  + \| \Xng - \Xn \|_{\Ca} \Rr)\;
\label{e:Gronwall452}
\end{align}
(recall that the quantity in the supremum above is a continuous function of $t$).
In particular, for $\ga$ sufficiently small,
\begin{align*}
\sup_{0 \leq t \leq \td{T} \wedge T } \Ll\|  \tvg(t, \cdot)-  \bvg(t, \cdot)  \Rr\|_{\Cc^{\frac{1}{2}}} \leq  1.
\end{align*}
Together with \eqref{e:estim-vbar}, this implies that
$$
\|\bvg(\td{T} \wedge T,\cdot)\|_{\Cc^{\frac12}} \le \rr_0 - 1.
$$
By continuity of $t \mapsto \|\bvg(t,\cdot)\|_{\Cc^{\frac12}}$, this implies that $\td{T} \ge T$, and thus that \eqref{e:Gronwall452} can be upgraded to
\begin{align}
\sup_{0 \leq t \leq T }\|  \tvg(t, \cdot)-  \bvg(t, \cdot)  \big\|_{\Cc^{\frac12}} \leq  & C \big(\ga^{\frac{\ka}{2}}  + \| \Xng - \Xn \|_{\Ca} \big)\;, 
\label{e:Gronwall454}
\end{align}
on the event $\Aa_{\rr}^Z \cap \Aa^{\mathsf{E}}$ and for $\gamma>0$ small enough (depending in $\rr$). Recalling that
\begin{align*}
\tXgn = \tvg + \Zgn + \Yg \quad \text{ and } \quad \oX = \bvg + \Zgn + Y,
\end{align*}
we deduce from \eqref{e:Gronwall454} and Lemma~\ref{le:ic} that for every $\nn, \rr\geq 1$ and every bounded uniformly continuous mapping $F : \Dd([0,T] , \Ca) \to \R$,
\begin{equation}
\label{e:almostthere}
\lim_{\gamma \to 0 }\E   \Ll[  \Ll| F\Ll(   \oX \Rr) -  F\Ll(\tXgn \Rr)  \Rr| \mathbf{1}_{ \Aa^Z_{\rr} \cap \Aa^{\mathsf{E}}}  \Rr]   =0 \;.
\end{equation}
Let us write $\ov{\Aa}^Z_\rr$ and $\ov{\Aa}^{\mathsf{E}}$ for the complementary events of $\Aa^Z_\rr$ and $\Aa^{\mathsf{E}}$ respectively. 
Note that in \eqref{e:almostthere}, the indicator function of the event $\Aa^\mathsf{E}$ does no harm, since it follows from \eqref{e:very_good_bound_A} and Chebyshev's inequality that
\begin{equation}
\label{e:contovaaE}
\lim_{\ga \to 0} \P\Ll[ \ov{\Aa}^\mathsf{E} \Rr] = 0.
\end{equation}
Let $\eps > 0$. We now argue that by choosing $\rr$ sufficiently large, we can make the probability of the event $\ov{\Aa}^Z_{\rr}$ smaller than $\eps$ as $\ga \to 0$. In order to do so, let us introduce the stopping times 
\begin{align*}
\tau^{Z,\rr}_{\ga,\nn} & = \inf \Ll\{  t \geq 0 : \max\Ll(\| \Zgn(t) \|_{\Ca}, \| \ZGn{2}(t) \|_{\Ca}, \| \ZGn{3}(t) \|_{\Ca}\Rr) \geq \rr \Rr\},\\
\tau^{Z,\rr} & = \inf \Ll\{  t \geq 0 : \max \Ll( \| Z(t) \|_{\Ca}, \| Z^{\colon 2 \colon}(t) \|_{\Ca}, \| Z^{\colon 3 \colon}(t) \|_{\Ca} \Rr) \geq \rr \Rr\}\;. 
\end{align*}
Recall from Theorem~\ref{t:converg-lin-Wickbis} that $(\Zgn,\ZGn{2}, \ZGn{3})$ converges in law to $(Z, Z^{\colon 2 \colon},  Z^{\colon 3 \colon})$ for the topology of $\Dd(\R_+,\Ca)^3$. Arguing as in the proof of Theorem~\ref{t:converg-lin}, we see that for every $\rr$ outside of a countable set $\mathsf{Loc}$ (the set of $\rr$ such that $t \mapsto \max\big(\| \Zgn(t) \|_{\Ca}$, $\| \ZGn{2}(t) \|_{\Ca}, \| \ZGn{3}(t) \|_{\Ca}\big)$ attains the value $\rr$ as a local maximum with positive probability), the random variable $\tau^{Z,\rr}_{\ga,\nn}$ converges in law to $\tau^{Z,\rr}$ as $\ga \to 0$ (keeping $\nn$ fixed). By Proposition~\ref{prop:daPratoDebussche}, the random variable 
$$
\sup_{0 \le t \le T} \max \Ll( \| Z(t) \|_{\Ca}, \| Z^{\colon 2 \colon}(t) \|_{\Ca}, \| Z^{\colon 3 \colon}(t) \|_{\Ca} \Rr)
$$
is finite a.s.,\ so it suffices to choose $\rr$ sufficiently large to ensure that 
$$
\P\Ll[\tau^{Z,\rr} \le T \Rr] \le \eps\;.
$$ 
Enlarging $\rr$ if necessary, we can also make sure that $\rr \notin \mathsf{Loc}$, and thus get
\begin{equation}
\label{e:contovaaZ}
\limsup_{\ga \to 0} \P\Ll[\ov{\Aa}^Z_{\rr}\Rr] \le \limsup_{\ga \to 0} \P[\tau^{Z,\rr}_{\ga,\nn} \le T] \le \P\Ll[\tau^{Z,\rr} \le  T\Rr] \le \eps.
\end{equation}
We decompose
\begin{multline*}
\Ll| \E\Ll[ F\Ll(   \tXgn \Rr)\Rr] -  \E\Ll[ F\Ll(X \Rr) \Rr]\Rr| 
 \leq \Ll| \E\Ll[ F\Ll( \oX \Rr) \Rr] - \E \Ll[ F\Ll(X \Rr) \Rr]  \Rr| \\  
 +\E   \Ll[  \Ll| F\Ll( \oX \Rr) -  F\Ll( \tXgn \Rr)  \Rr| \mathbf{1}_{ \Aa^Z_{\rr} \cap \Aa^{\mathsf{E}}}  \Rr]  
+ \| F\|_{L^\infty}\P \Ll[\ov{\Aa}^Z_{\rr} \cup \ov{\Aa}^\mathsf{E}\Rr].
\end{multline*}
By Lemma~\ref{le:FirstPartOfMainThm}, \eqref{e:almostthere}, \eqref{e:contovaaE} and \eqref{e:contovaaZ}, we obtain  that
$$
\limsup_{\ga \to 0} \Ll| \E\Ll[ F\Ll(   \tXgn \Rr)\Rr] -  \E\Ll[ F\Ll(X\Rr) \Rr]\Rr|  \le \eps \| F\|_{L^\infty}\; .
$$
Since $\eps > 0$ was arbitrary, this proves that $\tXgn$ converges in law to $X$ as $\ga$ tends to~$0$, for any fixed value of $\nn$.

We can now remove $\nn$ by a similar reasoning. Recall the definition of $\taun$ in \eqref{e:deftaug} as 
\begin{equation*}
\taun = \inf \Ll\{  t \geq 0 : \| \tXgn(t) \|_{\Ca} \geq  \nn \Rr\}\;,
\end{equation*}
and set
\begin{equation*}
\tau_\nn = \inf \Ll\{  t \geq 0 : \| X(t) \|_{\Ca} \geq  \nn \Rr\}. 
\end{equation*}
Arguing as above, we obtain that for every $\nn$ outside of a countable set $\mathsf{Loc}'$, the stopping time $\taun$ converges in law to $\tau_\nn$. Moreover, we know from Theorem~\ref{thm:ContSolution} that $\sup_{0 \leq t \leq T+1} \| X(t) \|_{\Ca}$ is almost surely finite. Hence, for any given $\eps > 0$, we can choose $\nn = \nn(T,\eps)$ sufficiently large and outside of $\mathsf{Loc}'$ so that
$$
\limsup_{\ga \to 0} \P[\taun \le T] \le \eps\;.
$$
Recalling that $\tXgn$ and $\Xg$ coincide up to $\taun$, this implies that
\begin{equation*}
\limsup_{\ga \to 0} \P \Ll[ \tXgn \neq \Xg \Rr] \leq \eps\;.
\end{equation*}
Since $\tXgn$ converges in law to $X$, this concludes the proof of Theorem~\ref{thm:Main}.
\end{proof}
%
%
%
%
%
%%%%%%%%%%%%%%%%%%%%%%%%%%%%
\section{Some bounds on the kernels $\Kg$ and $\Pg{t}$}
%%%%%%%%%%%%%%%%%%%%%%%%%%%%
\label{sec:APPB}
%%%%%%%%%%%%%%%%%%%%%%%%%%%%

In this section, we collect some facts about the kernels $\Kg$ and the approximate heat semigroups $\Pg{t}$. We start by summarising some properties of the Fourier transform $\hKg$ of $\Kg$. Recall that for $0< \ga < \frac13$ and $\om \in \{-N, \ldots, N\}^2$,  it is given by 
\begin{align}
\hKg (\om) = 
 \sum_{x\in \Le} \eg^2 \, \Kg(x) \,e^{- i \pi \om \cdot x}  =  \,
\ct\sum_{x\in \ga \Z^2_\star} \ga^{2} \, \KK( x) \,e^{- i \pi (\eps/ \ga) \om \cdot x}\;, \label{e:B1}
\end{align} 
where $\KK$ is the smooth function introduced at the beginning of Section~\ref{s:Setting}. In the second equality, we have used the fact that $\KK$ has compact support in $B(0,3)$ to replace the sum over $\frac{\ga}{\eps} \Le^{\star}$ by a sum over $\ga \Z^2_{\star} := \ga \Z^2 \setminus \{0\}$.
Recall our choice of scaling \eqref{e:scaling1}, \eqref{e:scaling2}, in particular $\eps = \ga^2 \co$.

For some of the following calculations, it is useful to view $\hKg$ as a function of a continuous parameter by evaluating \eqref{e:B1} for all $\om \in \R^2$. The function $\hKg$ defined in this way is smooth and $(2N+1)$ periodic in both coordinates. We will typically evaluate it only for $\om \in [-N- \frac12,  N+\frac12]^2$. Furthermore, we have for all $\om$ and for $j=1,2$
\begin{align}
\partial_j \hKg (\om) &= 
 \,
- i \frac{\eps}{\ga} \pi \ct  \sum_{x\in \Lg} \ga^{2} \, x_j  \,  \KK( x) \,   e^{- i \pi (\eps/ \ga) \om \cdot x   }  \;,
\label{e:B1A}
\end{align}
and
\begin{align}
\partial_j^2 \hKg (\om) &= 
 \,
-  \frac{\eps^2}{\ga^2} \pi^2 \ct   \sum_{x\in \Lg} \ga^{2} \, x_j^2 \,   \KK( x) \,  e^{- i \pi (\eps/ \ga) \om \cdot x   }  \;.
\label{e:B2A}
\end{align}
In \eqref{e:B1A} and \eqref{e:B2A}, we can sum over $\Lg$ instead of $\Lg_{\star}$ because the summand at $x=0$ vanishes. This will be slightly more convenient below.

For small $\ga$, the expression \eqref{e:B1} approximates $\hKK(\ga \om)$, where the continuous Fourier transform $\hKK(\om)$ is defined as
\begin{equation*}
\hKK(\om) := \int_{\R^2} \KK(x)  \,e^{ -i \pi \om \cdot x} \, dx \; \qquad (\om \in \R^2).
\end{equation*}
The following lemmas state that some properties of $\hKK(\ga \om)$ also hold for $\hKg$, uniformly in $\ga$. We begin with pointwise estimates.
%
%
%
%

%%%%%%%%%%%%%%%%%%%%%%%%
\begin{lemma}\label{le:Kg0}
%%%%%%%%%%%%%%%%%%%%%%%%
There exists  a constant $C>0$ such that for all $0 < \ga< \frac13$ and for $|\om| \leq \ga^{-1}$ we have for $j=1,2$
\begin{align}
\big| \ga^{-2} (1 -\hKg(\om) ) -  \pi^2 |\om|^2 \big| &\leq C \ga | \om|^3\;, \label{e:K2.4} \\
\big|  -\ga^{-2}  \partial_j \hKg(\om)  - 2  \pi^2 \om_j \big| &\leq C \ga | \om|^2\;,\label{e:K2.3}\\
\big|  -\ga^{-2}  \partial_j^2 \hKg(\om)  - 2  \pi^2 \big| &\leq C \ga | \om| \;.\label{e:K2.2}
\end{align}
\end{lemma}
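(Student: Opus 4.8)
The plan is to expand the exponentials appearing in \eqref{e:B1}, \eqref{e:B1A} and \eqref{e:B2A} to the appropriate order in $\ga\om$, to use the symmetries of $\KK$ to cancel the unwanted terms, and to identify the surviving lattice sums with Riemann sums whose error is $O(\ga^2)$.

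First I would collect the elementary facts used repeatedly. From \eqref{e:scaling2}, $\eps/\ga=\ga\co$ with $|\co-1|\le\ga^2$, and by Remark~\ref{rem:ApproxInt}, $|\ct-1|\le C\ga^2$. Since $\KK$ is supported in $B(0,3)$ and $|\om|\le\ga^{-1}$, the phases $\theta_x:=\pi(\eps/\ga)\,\om\cdot x$ satisfy $|\theta_x|\le C$ on the support, so the bound
\[
\Big|e^{-i\theta}-\sum_{k=0}^{m}\frac{(-i\theta)^k}{k!}\Big|\le\frac{|\theta|^{m+1}}{(m+1)!}\qquad(\theta\in\R)
\]
provides controlled remainders. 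Rotation invariance of $\KK$ makes $\KK$ even in each coordinate and invariant under $x_1\leftrightarrow x_2$; as $\Lg=\ga\Z^2$ shares these symmetries, the sums $\sum_{x\in\Lg}\ga^2 x_j\KK(x)$, $\sum_{x\in\Lg}\ga^2 x_jx_l\KK(x)$ for $j\ne l$, and $\sum_{x\in\Lg}\ga^2 x_jx_lx_m\KK(x)$ all vanish, while $\sum_{x\in\Lg}\ga^2 x_1^2\KK(x)=\sum_{x\in\Lg}\ga^2 x_2^2\KK(x)$. By the trapezoidal-rule estimate for $\Cc^2$ compactly supported functions (as in Remark~\ref{rem:ApproxInt}) together with the normalisation \eqref{e:norm-kk}, which by rotation invariance gives $\int\KK(x)x_1^2\,dx=2$, one has $\sum_{x\in\Lg}\ga^2 x_j^2\KK(x)=2+O(\ga^2)$ and $\sum_{x\in\Lg}\ga^2|x|^3\KK(x)\le C$, the $x=0$ terms being zero. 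Finally, $\om=0$ is trivial ($\hKg(0)=\ct\sum_{x\in\Lg\setminus\{0\}}\ga^2\KK(x)=1$, and the derivatives there are handled directly), so I may assume $\om\ne0$, i.e.\ $|\om|\ge1$; combined with $\ga|\om|\le1$ and $\ga<\tfrac13$ this allows an error of size $\ga^a|\om|^b$ with $a\ge1$ to be absorbed into $C\ga|\om|^c$ whenever $b+(a-1)\le c$.

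Then I carry out the three expansions. For \eqref{e:K2.4}, using $\ct^{-1}=\sum_{x\in\Lg\setminus\{0\}}\ga^2\KK(x)$ I write $1-\hKg(\om)=\ct\sum_x\ga^2\KK(x)\,(1-e^{-i\theta_x})$ and expand $1-e^{-i\theta_x}=i\theta_x+\tfrac12\theta_x^2+O(|\theta_x|^3)$: the linear term drops by the symmetry of $\KK$, the quadratic term equals $\tfrac12\ct\pi^2(\eps/\ga)^2|\om|^2\sum_x\ga^2 x_1^2\KK(x)=\pi^2\ga^2|\om|^2(1+O(\ga^2))$ (using $\ct=1+O(\ga^2)$ and $(\eps/\ga)^2=\ga^2(1+O(\ga^2))$), and the cubic remainder is at most $C(\eps/\ga)^3|\om|^3\sum_x\ga^2|x|^3\KK(x)\le C\ga^3|\om|^3$; dividing by $\ga^2$ gives $|\ga^{-2}(1-\hKg(\om))-\pi^2|\om|^2|\le C\ga^2|\om|^2+C\ga|\om|^3\le C\ga|\om|^3$. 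For \eqref{e:K2.3}, starting from \eqref{e:B1A} I write $-\ga^{-2}\partial_j\hKg(\om)=i\tfrac{\eps}{\ga^3}\pi\ct\sum_x\ga^2 x_j\KK(x)e^{-i\theta_x}$ and expand $e^{-i\theta_x}=1-i\theta_x+O(\theta_x^2)$: the constant term drops by the oddness of $x\mapsto x_j\KK(x)$, the linear term gives $\tfrac{\eps}{\ga^3}\pi^2(\eps/\ga)\ct\,\om_j\sum_x\ga^2 x_j^2\KK(x)=2\pi^2\om_j(1+O(\ga^2))$ (since $\tfrac{\eps}{\ga^3}\cdot\tfrac{\eps}{\ga}=\tfrac{\eps^2}{\ga^4}=\co^2$), and the remainder is $O(\tfrac{\eps}{\ga^3}(\eps/\ga)^2|\om|^2)=O(\ga|\om|^2)$, whence $|-\ga^{-2}\partial_j\hKg(\om)-2\pi^2\om_j|\le C\ga^2|\om|+C\ga|\om|^2\le C\ga|\om|^2$. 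For \eqref{e:K2.2}, starting from \eqref{e:B2A} I write $-\ga^{-2}\partial_j^2\hKg(\om)=\tfrac{\eps^2}{\ga^4}\pi^2\ct\sum_x\ga^2 x_j^2\KK(x)e^{-i\theta_x}$ and use $e^{-i\theta_x}=1+O(|\theta_x|)$: the constant term gives $\co^2\pi^2\ct(2+O(\ga^2))=2\pi^2+O(\ga^2)$, and the remainder is $O(\tfrac{\eps^2}{\ga^4}(\eps/\ga)|\om|)=O(\ga|\om|)$, so $|-\ga^{-2}\partial_j^2\hKg(\om)-2\pi^2|\le C\ga|\om|$ (using $\ga^2\le\ga|\om|$).

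The main obstacle is not conceptual — everything reduces to second-order Taylor expansion and symmetry cancellation — but rather the bookkeeping: one must keep the three independent sources of error apart (the $O(\ga^2)$ defects of $\ct$, of $\co$ and of the Riemann sums, and the $O(\ga^k|\om|^k)$ Taylor remainders) and verify in each of \eqref{e:K2.4}, \eqref{e:K2.3}, \eqref{e:K2.2} that their combination is dominated by the claimed right-hand side, using $1\le|\om|\le\ga^{-1}$.
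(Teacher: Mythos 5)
Your proof is correct and follows essentially the same route as the paper: Taylor-expand the phase in \eqref{e:B1}, \eqref{e:B1A}, \eqref{e:B2A} to the appropriate order, use rotation/reflection invariance of $\KK$ to kill the odd moments and the $x_1x_2$ moment, identify the surviving quadratic sum with the trapezoidal approximation to $\int\KK(x)x_j^2\,dx=2$ (error $O(\ga^2)$), and then absorb the $O(\ga^2)$-type defects from $\ct$, $\co$ and the Riemann sum into the Taylor remainder using $1\le|\om|\le\ga^{-1}$. Your bookkeeping of the three error sources is, if anything, slightly more explicit than the paper's.

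One small point worth flagging, which applies equally to the paper's own proof: the reduction to $|\om|\ge 1$ is essential and the case $\om=0$ for \eqref{e:K2.2} is not actually ``handled directly.'' At $\om=0$ one has $-\ga^{-2}\partial_j^2\hKg(0)=\pi^2\ct\co^2\sum_x\ga^2 x_j^2\KK(x)=2\pi^2+O(\ga^2)$, whereas the right-hand side of \eqref{e:K2.2} is zero, so the stated inequality fails at $\om=0$ unless the Riemann sum, $\ct$ and $\co$ defects cancel exactly (they don't). This is immaterial: $\om$ always ranges over nonzero integer lattice points wherever the lemma is invoked, and the weaker consequence \eqref{e:K1B} that one needs at $\om=0$ holds directly. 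Still, a cleaner version of your write-up would simply state the hypothesis $1\le|\om|\le\ga^{-1}$ rather than claim that $\om=0$ is trivial for the second-derivative bound.
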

\begin{proof}
For any $|\om| \leq \ga^{-1}$, a Taylor expansion yields
\begin{align}
1 -  \hKg (\om) 
=& \,  \ct \sum_{x\in \Lg} \ga^{2} \, \KK( x) \, \big(1 - e^{- i \pi (\eps/ \ga) \om \cdot x} \big) \notag \\
=&  \, \ct \sum_{x\in \Lg} \ga^{2} \, \KK( x) \, \big( i \pi \tfrac{\eps}{\ga} \, \om \cdot x    + \tfrac{1}{2}  \big( \pi  \tfrac{\eps}{ \ga}  \,\om \cdot x\big)^2\Big) + \msf{Err} \;, \notag
\end{align}
where 
\begin{align*}
| \msf{Err}| \leq  \frac{\eps^3}{\ga^3} |\om|^3\; \frac{\pi^3}{6} \,  \ct   \sum_{x\in \Lg} \ga^{2} \, |x|^3  \KK( x) \, \leq   C \, \ga^3 |\om|^3 \  .
\end{align*}
By the symmetry of the kernel $\KK(x)$, we have
\begin{equation*}
\sum_{x\in \Lg} \ga^{2} \, \KK( x) \, \big( \om \cdot x \big)   =0\,\quad \text{and} \quad   \sum_{x\in \Lg} \ga^{2} \,x_1 \, x_2 \, \KK( x) \,  =0 \;.
\end{equation*}
Furthermore, for $j=1,2$, the sums
\begin{equation*}
\ct \sum_{x\in \Lg_\star} \ga^{2} \, \KK( x) \,        x_j^2  
\end{equation*}
converge to $\int_{\R^2} \KK(x) \, x_j^2 \, dx =2$  as $\ga \to 0$ and the error is controlled by $C\ga^2$, so \eqref{e:K2.4} follows.

The remaining bounds \eqref{e:K2.3} and \eqref{e:K2.2} follow in a similar manner: For \eqref{e:K2.3}, we write
\begin{align*}
-\partial_j \hKg(\om)& =  i  \frac{\eps}{\ga} \pi \ct   \,\sum_{x\in \Lg} \ga^{2} \,x_j \, \KK( x)  \, \big( e^{- i \pi (\eps/ \ga) \om \cdot x   }  -1 \big) \\
& =  \ga^2 \pi^2 \om_j \ct \co^2 \sum_{x \in \Lg} \ga^2 x_j^2 \,\KK(x) + \msf{Err}' \; ,
\end{align*}
for an error term $\msf{Err}'$ that is bounded by $C|\om|^2 \ga^3$ uniformly for $|\om| \leq \ga^{-1}$. Here we have used the symmetry of the kernel $\KK$ 
to add the term $-1$ in the fist equality and to remove the sum over $x_1 x_2$ in the Taylor expansion in the second line. The bound then follows as above. 

In the same way, we write
\begin{align*}
- \partial_j^2 \hKg(\om)& =   \frac{\eps^2}{\ga^2} \pi^2   \ct   \,\sum_{x\in \Lg} \ga^{2} \,x_j^2 \, \KK( x)  \,  e^{- i \pi (\eps/ \ga) \om \cdot x   }  \\
& =  \ga^2 \pi^2  \ct \co^2 \sum_{x \in \Lg} \ga^2 x_j^2 \,\KK(x) + \msf{Err}^{''}\, ,
\end{align*}
where $\msf{Err}^{''} \leq C \ga |\om|$ and the bound \eqref{e:K2.2} follows.
\end{proof}

%%%%%%%%%%%%%%%%
\begin{lemma}
%%%%%%%%%%%%%%%%
\label{le:Kg}
%%%%%%%%%%%%%%%%
There exists a constant $C$ such that for all $0<\ga<\frac13$, $\om \in [-N-\frac12, N +\frac12]^2$ and $j=1,2$,
\begin{enumerate}
\item (Estimates most useful for $|\om| \leq \ga^{-1}$)
\begin{align}
|\hKg(\om) | &\leq 1 \;, \label{e:K1}\\
|\partial_j \hKg(\om) | &\leq C \ga \big(  |\ga\om| \wedge 1  \big)  \;, \label{e:K1A}\\
|\partial_j^2 \hKg(\om) | &\leq  C \ga^2 \;, \label{e:K1B}
\end{align}
\item (Estimates most useful for $|\om| \geq \ga^{-1}$)
\begin{align}
    | \ga \om|^2 \;  \big| \hKg (\om) \big| &\leq C \;, \label{e:K3} \\
 |  \ga \om|^2 \;  \big|  \partial_j \hKg (\om) \big| &\leq C \ga \;, \label{e:K3A}\\
  |  \ga \om|^2 \;  \big| \partial_j^2\hKg (\om) \big| &\leq C \ga^2 \;. \label{e:K3B}
\end{align}
\end{enumerate}
Furthermore, there exist constants $C_1>0$ and $\ga_0 >0$ such that for all $0 <  \gamma < \ga_0$ and  $\om \in [-N-\frac12, N +\frac12]^2$ ,
\begin{align}
(1 -\hKg(\om) ) &\geq \frac{1}{C_1}  \big( |\ga\om|^2  \wedge 1 \big)\;.\label{e:K2} 
\end{align} 
\end{lemma}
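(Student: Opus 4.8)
The plan is to handle the three groups of estimates separately, after first rewriting the defining formula. Since $x\in\ga\Z^2$ means $x=\ga m$ with $m\in\Z^2$, one has $e^{-i\pi(\eps/\ga)\om\cdot x}=e^{-i\pi\eps\,\om\cdot m}$, so that
\begin{equation*}
\hKg(\om)=\ct\,\ga^2\sum_{m\in\Z^2}\KK(\ga m)\,e^{-i\pi\eps\,\om\cdot m}\;-\;\ct\,\ga^2\KK(0).
\end{equation*}
Before anything else I would record the elementary facts used repeatedly below: $\ct=1+O(\ga^2)$ and $\co=1+O(\ga^2)$ (Remark~\ref{rem:ApproxInt} and \eqref{e:scaling2}); $\eps=\ga^2\co$; the fact that $\KK$ is even forces $\hKg(\om)=\sum_{x\in\Le}\eps^2\Kg(x)\cos(\pi\om\cdot x)$ to be real with $\hKg(\om)\le\sum_{x\in\Le}\eps^2\Kg(x)=1$ (so $1-\hKg(\om)\ge0$); and on the range of interest $|\om|\le\sqrt2(N+\tfrac12)\le C\ga^{-2}$, so that the isolated term $\ct\ga^2\KK(0)$ contributes $O(\ga^2)$ to $\hKg(\om)$, hence $O(\ga^4|\om|^2)=O(1)$ after multiplication by $|\ga\om|^2$. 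The bounds \eqref{e:K1}, \eqref{e:K1A}, \eqref{e:K1B} then follow from crude estimates on \eqref{e:B1}, \eqref{e:B1A}, \eqref{e:B2A}: \eqref{e:K1} because $\Kg\ge0$; \eqref{e:K1B} from $|\partial_j^2\hKg(\om)|\le(\eps/\ga)^2\pi^2\ct\sum_x\ga^2 x_j^2\KK(x)\le C(\eps/\ga)^2\le C\ga^2$; for \eqref{e:K1A} one gets $|\partial_j\hKg(\om)|\le C\ga$ the same way, and also $|\partial_j\hKg(\om)|\le C\ga|\ga\om|$ by first subtracting $1$ from the exponential in \eqref{e:B1A} (legitimate since $\sum_x\ga^2 x_j\KK(x)=0$ by evenness) and then using $|e^{-i\pi(\eps/\ga)\om\cdot x}-1|\le C\ga|\om|$ on $\mathrm{supp}\,\KK$.

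The decay estimates \eqref{e:K3}, \eqref{e:K3A}, \eqref{e:K3B} are the heart of the argument, and the tool is discrete summation by parts. One may assume $\om\neq0$ (for $\om=0$ the left-hand sides vanish since $|\ga\om|=0$). Pick the coordinate $j$ with $|\om_j|=\max(|\om_1|,|\om_2|)\ge|\om|/\sqrt2>0$; since $|\eps\om_j|\le\eps(N+\tfrac12)=1$ we have $q:=e^{-i\pi\eps\om_j}\neq1$, and $|e^{-i\theta}-1|=2|\sin(\theta/2)|\ge\tfrac2\pi|\theta|$ for $|\theta|\le\pi$ gives $|q-1|\ge2|\eps\om_j|\ge\sqrt2\,|\eps\om|$. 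Summing by parts twice in the variable $m_j$ reduces $\sum_m\KK(\ga m)e^{-i\pi\eps\om\cdot m}$ to $|q-1|^{-2}$ times a sum of the second difference $\Delta_j^2 g$ (with $g(m)=\KK(\ga m)$ and $\Delta_j$ the discrete difference in direction $e_j$) against the same exponential. Since $\KK\in\Cc^2$ with support in $B(0,3)$, the second difference obeys $|\Delta_j^2 g(m)|\le\ga^2\|\KK\|_{\Cc^2}$ and is supported on $O(\ga^{-2})$ lattice points, so that sum is $O(1)$; hence $|\sum_m\KK(\ga m)e^{-i\pi\eps\om\cdot m}|\le C|\eps\om|^{-2}$. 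Multiplying by $\ct\ga^2$, adding the $O(\ga^2)$ origin term, and then multiplying by $|\ga\om|^2=\ga^2|\om|^2$ yields \eqref{e:K3}, using $\ga^4/\eps^2=\co^{-2}\le C$ and $\ga^4|\om|^2\le C$. For \eqref{e:K3A} and \eqref{e:K3B} one runs the same argument with $\KK(x)$ replaced by $x_j\KK(x)$, respectively $x_j^2\KK(x)$ (still $\Cc^2$ with compact support), which are exactly the kernels appearing in \eqref{e:B1A}, \eqref{e:B2A}; the extra prefactors $\eps/\ga$ and $(\eps/\ga)^2$ (of order $\ga$ and $\ga^2$) account for the two missing powers, and there is no origin term since these weighted kernels vanish at $0$. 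I expect this step to be the main obstacle: the key bookkeeping is that the $O(\ga^{-2})$ number of support points cancels the $O(\ga^2)$ size of the second difference, and that dropping the self-interaction term $\ct\ga^2\KK(0)$ is harmless precisely because $|\om|\le C\ga^{-2}$; this is what replaces, in the discrete setting, the familiar ``$\Cc^2$ with compact support $\Rightarrow$ quadratic Fourier decay''.

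For the lower bound \eqref{e:K2} I would split according to the size of $|\ga\om|$, and fix $\ga_0$ along the way. If $|\ga\om|\le c_0$ for a small absolute constant $c_0$, Lemma~\ref{le:Kg0} gives $\ga^{-2}(1-\hKg(\om))=\pi^2|\om|^2+O(\ga|\om|^3)$, so $1-\hKg(\om)\ge\tfrac12\pi^2|\ga\om|^2$ once $c_0$ is small enough to absorb the error. If $c_0\le|\ga\om|\le M$, with $M$ a large constant fixed below, I would compare the sum in the displayed formula for $\hKg(\om)$ to the integral $\hKK((\eps/\ga)\om)$: the function $\KK(x)e^{-i\pi(\eps/\ga)\om\cdot x}$ is Lipschitz with constant $O(1+M)$ on its $O(1)$-volume support, so the Riemann-sum error is bounded by $C(M)\ga$, whence $\hKg(\om)=\ct\hKK((\eps/\ga)\om)+O(C(M)\ga)$; since $\KK\ge0$, $\int\KK=1$ and $\mathrm{supp}\,\KK$ has nonempty interior, the (real) continuous transform satisfies $\hKK(\xi)<1$ for all $\xi\neq0$, so $\sup_{c_0/2\le|\xi|\le2M}\hKK(\xi)=:1-2\delta_1<1$ by continuity and compactness, and therefore $\hKg(\om)\le1-\delta_1$ for $\ga$ small, i.e.\ $1-\hKg(\om)\ge\delta_1\ge\delta_1(|\ga\om|^2\wedge1)$. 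If $|\ga\om|>M$, then \eqref{e:K3} gives $|\hKg(\om)|\le C/M^2\le\tfrac12$ once $M$ is chosen large depending only on the constant in \eqref{e:K3}, so $1-\hKg(\om)\ge\tfrac12$. Combining the three regimes and taking $\ga_0$ small enough (depending only on $c_0$ and $M$, hence absolute) proves \eqref{e:K2}; this $\ga_0$ is the constant referred to throughout the paper.
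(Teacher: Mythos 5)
Your proof is correct and follows essentially the same route as the paper: crude bounds together with the evenness of $\KK$ for \eqref{e:K1}--\eqref{e:K1B}, discrete summation by parts against the exponential for the decay estimates \eqref{e:K3}--\eqref{e:K3B}, and a three-regime split (Taylor expansion via Lemma~\ref{le:Kg0} for small $|\ga\om|$, Riemann-sum comparison with $\hKK$ in the intermediate range, and \eqref{e:K3} for large $|\ga\om|$) for the lower bound \eqref{e:K2}. Your small streamlinings --- summing by parts in only the coordinate where $|\om_j|$ is largest rather than with the full discrete Laplacian, and subtracting the $\ct\ga^2\KK(0)$ origin term up front instead of replacing $\KK(0)$ by $0$ and then bounding five exceptional lattice sites --- are sound and arguably cleaner, but do not constitute a different argument.
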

%

%%%%%%%%%%%%%%%%%%%%%%%
%

%
%
\begin{proof}
To see \eqref{e:K1}, we write
\begin{align*}
|\hKg(\om) | \leq \ct \sum_{x\in \Lg_\star} \ga^{2} \, \KK( x) =1 \;.
\end{align*}
For $|\om| \leq \ga^{-1}$, equations \eqref{e:K1A} and \eqref{e:K1B} follow directly from Lemma~\ref{le:Kg0}. For $|\om| \geq \ga^{-1}$, the bounds \eqref{e:K3} -- \eqref{e:K3B} are stronger, so it suffices to establish those.

The argument for \eqref{e:K3} is very similar to the argument used in the proof of Lemma~\ref{l:Bernstein}. Indeed, for any function $f \colon \Lg \to \R$, we set
\begin{equation*}
\Delg f (x) = \ga^{-2} \sum_{\substack{\bar{x} \in \Lg \\ \bar{x} \sim x}} (f(\bar{x}) - f(x))\;,
\end{equation*}
where $\bar{x} \sim x$ means that $\bar{x}$ and $x$ are adjacent in $\Lg$.   Similar to \eqref{e:IBPd1}, for fixed $\om \in \R^2$   we set  $e_\om : x \mapsto e^{-i \pi (\eps/\ga) \om \cdot x}$, and we get that
\begin{equation*}
-\Delg e_\om = 2 \ga^{-2} \sum_{j = 1}^2(1- \cos( \eps \pi \om_j) ) e_\om\;.
\end{equation*}
After a summation by parts, we get
\begin{align}
\hKg (\om) = 
\frac{1}{ 2 \ga^{-2} \sum_{j = 1}^2(1- \cos(\eps \pi \om_j) ) }\ct \sum_{x\in \Lg} \ga^{2} \, (-\Delg  \tilde{\KK}( x) ) \, e_\om(x) \;,\label{e:kernel1}
\end{align} 
where $\tilde{\KK}( x) = \KK(x)$ for $x \neq 0$ and $\tilde{\KK}( 0) = 0$.

Recalling our scaling (\eqref{e:scaling1} and \eqref{e:scaling2}), it is easy to see that 
 \begin{equation*}
 \frac{1}{ \ga^{-2} \sum_{j = 1}^2(1- \cos(\eps \pi \om_j) ) } \leq C \frac{1}{\ga^2 |\om|^2} \;,
 \end{equation*}
uniformly over $\eps$ and $|\om_i| \leq N+\frac12$.  On the other hand the fact that $\KK$ is a $\Cc^2$ function with compact support shows that the sum on the right-hand side of \eqref{e:kernel1} is bounded when restricted to points $x$ that are not $0$ or adjacent to $0$. For those five points $x$ which are, we bound 
$\Delg \tilde{\KK}(x) \leq \frac{8}{ \gamma^2}$. Hence,  the sum over these five points weighted with $\gamma^2$ is bounded by $40$. This finishes the proof of \eqref{e:K3}.

The arguments for \eqref{e:K3A} and \eqref{e:K3B} are almost identical to the argument for \eqref{e:K3}. Indeed, performing the same summation by parts as in \eqref{e:kernel1}, we get
\begin{align*} 
\big| \partial_j \hKg (\om)\big| \leq C \frac{\ga}{| \ga \om |^2}  \sum_{x\in \Lg} \ga^{2} \, |-\Delg  (x_j \, \KK( x)) |\;,
\end{align*}
and
\begin{align*} 
\big| \partial_j^2 \hKg (\om)\big| \leq C \frac{\ga^2}{| \ga \om |^2}  \sum_{x\in \Lg} \ga^{2} \, |-\Delg  (x_j^2 \, \KK( x)) | \;.
\end{align*}
Note that the problem caused by the fact that $\KK(0)$ was defined replaced by $0$ has disappeared in these expressions. By assumption, both $x_j \, \KK(x)$ and $x^2_j \, \KK(x)$ are $\Cc^2$ functions with compact support, so that the sums appearing here are uniformly bounded. This shows \eqref{e:K3A} and \eqref{e:K3B}.

Let us proceed to the proof of \eqref{e:K2}. From \eqref{e:K3}, we get the existence of $\overline{c}>0$ such that for $\om \geq \overline{c} \ga^{-1}$, we have $\hKg(\om) \leq \frac12$. Hence, \eqref{e:K2} holds for such $\om$. Next, we treat $\om$ with $|\om| <
\underline{c} \ga^{-1}$ for a $\underline{c}>0$ to be fixed below. For such $\om$
\eqref{e:K2.4} implies the existence of $\underline{C}$ such that
\begin{align*}
1 -  \hKg (\om)  \geq \pi^2 |\om|^2 \ga^2  -  \,\underline{C}  |\om|^3 \ga^3  \geq \big( \pi^2 -\underline{C} \, \underline{c}) \big) |\om|^2 \ga^2 \;, 
\end{align*}
which can be bounded from below by $\frac{\pi^2}{2} |\om|^2 \ga^2 $ if we choose $\underline{c}$ small enough. 

Finally, in order to treat the case $\underline{c} \ga^{-1} \leq |\om | \leq \overline{c} \ga^{-1}$, we observe that the Riemann sums 
\begin{equation*}
\Kg\big( \ga^{-1}\om  \big)=  \ct \sum_{x\in \Lg_\star} \ga^{2} \, \KK( x) \,e^{- i \pi \co \,\om \cdot x}
\end{equation*}
approximate $\hKK(\om)$  uniformly for $ |\om| \in [\underline{c}, \overline{c}]$. On the other hand, $\hKK$ is the Fourier transform of a probability measure with a density on $\R^2$, and as such, it is continuous and has $|\hKK(\om)|<1$ if $\om \neq 0$. In particular, $|\hKK(\om)|$ is bounded away from $1$ uniformly for  $ |\om| \in [\underline{c}, \overline{c}]$. Combining these facts, we see that for $\ga$ small enough, $\Kg(\om)$ is bounded away from $1$ uniformly in $\underline{c} \ga^{-1} \leq |\om | \leq \overline{c} \ga^{-1}$. This shows \eqref{e:K2}.
\end{proof}
%%%%%%%%%%%%%%%%%%%%%%%

The next lemma provides some pointwise estimates on the kernels $\Pg{t} \star \Kg$ that are used in the proof of Lemma~\ref{le:ErrIntBound}.  %
On $\Le$, $\Pg{t}(\cdot)$ is a Markov kernel for every $t \geq0$:
\begin{equation}\label{e:Prepresentation}
x \in \Le \Rightarrow \Pg{t}(x) \geq 0   \quad \text{ and } \quad \sum_{x \in \Le} \eps^2 \Pg{t}(x) =1 \,.
\end{equation}
Recall our convention to define $\Kg$ and $\Pg{t}$ for all $x \in \T^2$ by extending them as trigonometric polynomials of degree $\leq N$. We note that for $x \notin \Le$, the properties $\Kg(x) = \ct  \frac{\ga^{2}}{\eps^2} \, \KK( x) $  and $\Pg{t}(x) \geq 0$ do \emph{not} hold in general.

%%%%%%%%%%%%%%%%%%
\begin{lemma}\label{le:Pgt}
%%%%%%%%%%%%%%%%%%
Let $\ga_0>0$ be the constant introduced in Lemma~\ref{le:Kg}. For every $T>0$, there exists a constant $C = C(T)$ such that for all $0 < \gamma<\ga_0$, $0\leq t \leq T$ and $x \in \T^2$, we have
\begin{equation}\label{e:P0}
\big|    \Pg{t} \star  \Kg(x) \big| \leq C \big( t^{-1} \big(\log(\ga^{-1})\big)^2    \wedge \gamma^{-2} \log(\ga^{-1}) \big) \;,
\end{equation}
and
\begin{equation}\label{e:P1}
  |x|^2  \; \big| \Pg{t} \star \Kg  (x) \big| \leq C \, \log(\ga^{-1})  \;.
\end{equation}
\end{lemma}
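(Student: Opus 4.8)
The plan is to work in Fourier space, writing $\Pg{t} \star \Kg(x) = \frac14 \sum_{\om \in \{-N,\ldots,N\}^2} \hPg{t}(\om)\hKg(\om) e^{i\pi\om\cdot x}$, and then exploit the decay and coercivity estimates on $\hKg$ from Lemma~\ref{le:Kg}, together with $\hPg{t}(\om) = \exp\big(-t\ga^{-2}(1-\hKg(\om))\big)$ from \eqref{e:Fourier-semi}. The key quantitative inputs are: (i) the lower bound \eqref{e:K2}, $1-\hKg(\om) \ge \frac{1}{C_1}(|\ga\om|^2 \wedge 1)$, which controls $\hPg{t}(\om)$; and (ii) the bounds \eqref{e:K1} and \eqref{e:K3}, $|\hKg(\om)| \le 1 \wedge \frac{C}{|\ga\om|^2}$, which provide the decay of the second factor. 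First I would prove \eqref{e:P0}. For the bound $|\Pg{t}\star\Kg(x)| \le C\ga^{-2}\log(\ga^{-1})$, estimate $|\Pg{t}\star\Kg(x)| \le \frac14\sum_\om |\hPg{t}(\om)||\hKg(\om)| \le \frac14\sum_\om |\hKg(\om)|$, split the sum at $|\om| = \ga^{-1}$: for $|\om| \le \ga^{-1}$ use $|\hKg| \le 1$, giving $O(\ga^{-2})$ terms; for $|\om| > \ga^{-1}$ use $|\hKg(\om)| \le C|\ga\om|^{-2}$ and sum $\sum_{|\om|>\ga^{-1}} |\ga\om|^{-2} \le C\ga^{-2}\log(\ga^{-1})$ (the log coming from the two-dimensional dyadic sum up to the cutoff $N \le \ga^{-2}$). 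For the bound $|\Pg{t}\star\Kg(x)| \le Ct^{-1}(\log\ga^{-1})^2$, insert the heat factor: $|\hPg{t}(\om)||\hKg(\om)| \le \exp(-\tfrac{t}{C_1\ga^2}(|\ga\om|^2\wedge 1))\cdot(1\wedge\tfrac{C}{|\ga\om|^2})$; again split at $|\om|=\ga^{-1}$. On the low-frequency part, $\sum_{|\om|\le\ga^{-1}} e^{-t\pi^2|\om|^2/C_1'} \approx \int_{\R^2} e^{-ct|\xi|^2}\,d\xi = O(t^{-1})$ by comparison with a Gaussian integral (using \eqref{e:K2.4} to identify $\ga^{-2}(1-\hKg) \approx \pi^2|\om|^2$, or more simply the crude lower bound from \eqref{e:K2}); the high-frequency part is dominated by $e^{-t/(C_1\ga^2)}\sum_{|\om|>\ga^{-1}}|\ga\om|^{-2} \le e^{-t/(C_1\ga^2)}\ga^{-2}\log\ga^{-1}$, and using $e^{-t/(C_1\ga^2)}\ga^{-2} \le Ct^{-1}$ this is $\le Ct^{-1}\log\ga^{-1}$. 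Taking the minimum of the two bounds (and absorbing powers of the logarithm generously) gives \eqref{e:P0}.

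For \eqref{e:P1}, the natural approach is to control $|x|^2$ by using that multiplication by $x_j$ corresponds (up to constants) to a discrete derivative in Fourier space, but since $\Pg{t}\star\Kg$ is extended as a trigonometric polynomial it is cleaner to use the continuous second-derivative identity: for $x \ne 0$ one has $|x|^2 f(x)$ expressible via $\sum_j \partial_{\om_j}^2$ of the Fourier symbol (after summation by parts on the torus, i.e.\ $(i\pi x_j)^2 e^{i\pi\om\cdot x}$ paired against a discrete Laplacian in $\om$, which only involves finite differences of the symbol at neighboring lattice points). Concretely I would write $\pi^2|x|^2 \Pg{t}\star\Kg(x) = -\frac14\sum_\om (\Delta_\om[\hPg{t}\hKg])(\om)\,e^{i\pi\om\cdot x}$ where $\Delta_\om$ is the discrete Laplacian on the frequency lattice, or—more robustly—bound $|x|^2|\Pg{t}\star\Kg(x)|$ directly by $\frac14\sum_\om |\partial_\om^2(\hPg{t}\hKg)(\om)|$ after a Leibniz expansion, treating $\hPg{t}\hKg$ as a smooth function of the continuous variable $\om$ (it is, since $\hKg$ is smooth and $(2N+1)$-periodic, as noted before Lemma~\ref{le:Kg0}). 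The derivatives distribute as $\partial_\om^2(\hPg{t}\hKg) = (\partial_\om^2\hPg{t})\hKg + 2(\partial_\om\hPg{t})(\partial_\om\hKg) + \hPg{t}(\partial_\om^2\hKg)$. Then $\partial_{\om_j}\hPg{t} = -t\ga^{-2}(-\partial_{\om_j}\hKg)\hPg{t}$ and $\partial_{\om_j}^2\hPg{t} = \big(-t\ga^{-2}(-\partial_{\om_j}^2\hKg) + t^2\ga^{-4}(\partial_{\om_j}\hKg)^2\big)\hPg{t}$, and one plugs in the bounds \eqref{e:K1A}, \eqref{e:K1B}, \eqref{e:K3A}, \eqref{e:K3B}: for $|\om| \le \ga^{-1}$, $|\partial_j\hKg| \le C\ga(|\ga\om|\wedge 1) \le C\ga^2|\om|$ and $|\partial_j^2\hKg| \le C\ga^2$, so all resulting terms carry a factor $\ga^2$ (possibly times $t\ga^{-2} = O(1)\cdot t$ or $t^2\ga^{-4}\ga^4|\om|^2 = t^2|\om|^2$) times $e^{-ct|\om|^2}$; for $|\om|>\ga^{-1}$ the $|\ga\om|^{-2}$ decay of $\hKg$ and its derivatives keeps the sums convergent with the $e^{-t/(C_1\ga^2)}$ factor to spare. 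Summing against $\ga^{-2}$ frequency modes up to $N \le \ga^{-2}$ then yields at worst a single power of $\log(\ga^{-1})$ (again from the 2D dyadic sum), uniformly in $t \in [0,T]$ and $x \in \T^2$.

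The main obstacle I anticipate is bookkeeping rather than conceptual: ensuring that the $\log(\ga^{-1})$ powers in \eqref{e:P0} and \eqref{e:P1} are actually correct (and not larger), which requires carefully tracking the 2D lattice sums $\sum_{\ga^{-1} < |\om| \le \ga^{-2}} |\ga\om|^{-2} \asymp \ga^{-2}\log\ga^{-1}$ and the time-integrability near $t=0$; and, for \eqref{e:P1}, handling the discrete-versus-continuous subtlety cleanly—specifically justifying the passage from ``multiplication by $|x|^2$'' to ``two derivatives of the Fourier symbol'' on the torus with a genuinely discrete frequency set, where the correct statement uses finite differences $\hKg(\om+e_j) - 2\hKg(\om) + \hKg(\om-e_j)$, whose size is controlled by $\sup|\partial_j^2\hKg|$ over the unit interval by Taylor's theorem, so the same estimates \eqref{e:K1B}, \eqref{e:K3B} apply. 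With the pointwise symbol estimates of Lemma~\ref{le:Kg} in hand, each individual bound is a short computation; the care needed is in organizing the case split $|\om| \lessgtr \ga^{-1}$ consistently across both $t^{-1}$ and $\ga^{-2}$ regimes.
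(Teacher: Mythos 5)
Your proposal is correct and follows essentially the same approach as the paper: for \eqref{e:P0}, split the Fourier sum at $|\om| = \ga^{-1}$ and apply the pointwise estimates \eqref{e:K1}, \eqref{e:K2}, \eqref{e:K3}, while for \eqref{e:P1}, perform a summation by parts against a discrete Laplacian on the frequency lattice and control the resulting finite-difference second derivatives of $\hPg{t}\hKg$ via Taylor's theorem and the derivative bounds of Lemmas~\ref{le:Kg0} and~\ref{le:Kg}. The one wrinkle you do not spell out is that $\hKg$ is set to zero (not periodically extended) outside $\{-N,\ldots,N\}^2$, so the summation by parts produces genuine boundary terms at $|\om_j| \in \{N, N+1\}$; the paper treats these $O(\ga^{-2})$ lattice points separately and shows, using \eqref{e:K3}, that their total contribution is $O(1)$, which your finite-difference/Taylor reduction should be adjusted to accommodate.
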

%%%%%%%%%%%%%%%%%%%
%
%

%%%%%%%%%%%%%%%%%%%
\begin{proof}
We write
\begin{align*}
\big|    \Pg{t} \star  \Kg(x) \big|  \leq  \frac{1}{4} \sum_{\om \in \{-N, \ldots,N \}^2 }  \exp\Big(t \gamma^{-2} (\hKg(\om) - 1) \Big)  \; \big| \hKg(\om) \big| \;.
\end{align*}
We first bound the sum over $\om$ satisfying $|\om| \leq \ga^{-1}$. According to \eqref{e:K1} and \eqref{e:K2}, there exists  $C_1>0$ such that for $0 < \ga < \ga_0$  
\begin{align*}
 \sum_{|\om| \leq \ga^{-1}}  \exp\big(t \gamma^{-2} (\hKg(\om) - 1) \big)  \; \big| \hKg(\om) \big| \leq  \sum_{|\om| \leq \ga^{-1}} \exp\Big(-\frac{t}{C_1} |\om|^2 \Big) \leq C\big( t^{-1}  \wedge \ga^{-2}  \big) \;, 
\end{align*}
for a universal constant $C$. On the other hand, using \eqref{e:K3} and \eqref{e:K2}, we get 
\begin{align*}
\sum_{\substack{\om \in \{-N, \ldots,N \}^2 \\ |\om| > \ga^{-1}}}  & \exp\big(t \gamma^{-2} (\hKg(\om) - 1) \big)  \; \big| \hKg(\om) \big| \\
&\leq C \sum_{\substack{\om \in \{-N, \ldots,N \}^2 \\ |\om| > \ga^{-1}}}   \exp\Big( - \frac{t}{C_1 \gamma^{2}}  \Big) \frac{1}{|\ga \om|^2}\;.
\end{align*}
If $t \geq 4 C_1 \ga^2 \log(\ga^{-1})$ (and recalling that according to \eqref{e:scaling1}, $N \leq \ga^{-2}$), then 
\begin{align*}
\sum_{\substack{\om \in \{-N, \ldots,N \}^2 \\ |\om| > \ga^{-1}}}   \exp\Big( - \frac{t}{C_1 \gamma^{2}}  \Big) \frac{1}{|\ga \om|^2}\leq 5 \ga^{-4}  \exp\Big( - \frac{t}{C_1 \gamma^{2}}  \Big)  \leq  5 \ga^{-4} \ga^4   = 5 \;. 
\end{align*}
Else, if  $t <4 C_1 \ga^2 \log(\ga^{-1}) $, we have
\begin{align}
\sum_{\substack{\om \in \{-N, \ldots,N \}^2 \\ |\om| > \ga^{-1}}}   \exp\Big( - \frac{t}{C_1 \gamma^{2}}  \Big) \frac{1}{|\ga \om|^2} \leq C \ga^{-2} \log(\ga^{-1}) \leq C t^{-1 } \big(\log(\ga^{-1})\big)^2  \;,  \notag
\end{align}
which shows \eqref{e:P0}.

The proof of \eqref{e:P1} is again similar to the proof of Lemma~\ref{l:Bernstein} and the proof of Lemma~\ref{le:Kg}. As in those arguments, we define a discrete Laplace operator, this time acting on functions $f \colon \Z^2 \to \R$ and defined as
\begin{equation}\label{e:PK1}
\Dell f (\om) =  \sum_{\substack{\bar{\om} \in \Z^2 \\ \bar{\om} \sim \om}} (f(\bar{\om}) - f(\om))\;.
\end{equation}
As above, for fixed $x \in \T^2$   we set  $e_x : \om \mapsto e^{i \pi \om \cdot x}$, and we get that
\begin{equation*}
-\Dell e_x = 2\sum_{j = 1}^2(1- \cos( \pi x_j) ) \,e_x\;.
\end{equation*}

In this way, after a summation by parts over $\Z^2$, we get for all $x \in \T^2$ and $t \geq 0$ that
\begin{align}
&\Ll|\Pg{t} \star \Kg (x) \Rr|
 \leq  
\frac{1}{ 2 \sum_{j = 1}^2(1- \cos( \pi x_j) ) }  \notag\\
& \times  \frac14 \sum_{\om \in \{-N-1, \ldots,N+1 \}^2}
  \Ll| \;- \Dell \Ll[  \exp\big(t \gamma^{-2} \big(\hKg(\om) - 1\big) \big)  \;  \hKg(\om)  \Rr] \; \Rr|  \;.\label{e:PK2}
\end{align} 
Recall that the $\hPg{t}$ and $\hKg$ are defined as zero outside of $\{-N. \ldots, N\}^2$. Hence by \eqref{e:PK1}, the discrete Laplacian of $\hPg{t} \hKg$ is zero outside of $\{ -N-1, \ldots,N+1\}^2$.

We treat the sum over the boundary points, i.e. over those $\om = (\om_1,\om_2)$ for which at least one $\om_j$ satisfies $|\om_j |=N$ or $|\om_j |=N+1$, first. For such an $\om$  we bound brutally using \eqref{e:K3}
\begin{align*}
\big|- \Dell \hPg{t} \hKg (\om)\big| \leq   4|\hPg{t} \hKg (\om)\big| +  \sum_{\substack{\bar{\om} \in \Z^2 \\ \bar{\om} \sim \om}} | \hPg{t} \hKg(\bar{\om}) | \leq \frac{C}{|\ga \om|^2} \leq C\ga^2 \;.
\end{align*}
There are $16N+8 \leq C \ga^{-2}$ such boundary points, so that the sum over these points in \eqref{e:PK2} is bounded uniformly in $0<\ga< \ga_0$ and $ t $.

In order to bound the discretised Laplacian appearing on the right-hand side of \eqref{e:PK2} at interior points $\om$, it is useful to pass to continuous coordinates. If $f\colon \R^2 \to \R$ is $\Cc^2$, then for $\om \in \Z^2$, equation \eqref{e:PK1} can be rewritten as 
\begin{align}
\big| \Dell f (\om)  \big| &=  \Ll|  \sum_{j =1,2}   \int_{-1}^1 \partial_j^2 f(\om + \tau e_j) \,  (1 - |\tau|) d \tau  \Rr| \notag\\
& \leq \sum_{j=1,2} \sup_{\tau \in [-1,1]}  \big| \partial_j^2 f(\om + \tau e_j)\big| \,,\label{e:dicsLaptocont}
\end{align}
where $e_1$ and $e_2$ are the standard unit basis vectors of $\R^2$. In order to apply this to bound \eqref{e:PK2}, we calculate for $j =1,2$
\begin{align}
\partial_{j}^2 & \big(  e^{\lambda (\hKg - 1)}  \;  \hKg  \big) \notag \\
&= e^{ \la (\hKg - 1)}   \Big( \la^2 \big( \partial_{j} \hKg \big)^2\hKg  +  \la  \big(  \partial_{j}^2 \hKg \big)  \hKg + \partial_j^2 \hKg + 2 \la (\partial_j \hKg)^2   \Big) \;, \label{e:PK3}
\end{align}
where to improve readability we have dropped the arguments $\om$ and  set $\lambda := t \gamma^{-2}$.

We now use  the bounds derived in Lemmas~\ref{le:Kg0} and \ref{le:Kg} to bound the terms on the right-hand side of this expression one by one. For the first term, we write for $0 < \ga< \ga_0$,   $|\om| \leq \ga^{-1}$ and $j=1,2$ 
\begin{align*}
\sup_{\tau \in [-1,1]} & e^{ \la (\hKg(\om + \tau e_j) - 1)}    \la^2  \Ll(\partial_{j} \hKg(\om + \tau e_j)\Rr)^2\, \hKg(\om + \tau e_j) \\
&\leq C  \sup_{\tau \in [-1,1]} \exp\Big( - \frac{t}{C_1}|\om + \tau e_j|^2  \Big)  \;  t^2  \big| \om + \tau e_j  |^2 \\
&\leq C \exp\Big( - \frac{t}{2 C_1}  \,|\om |^2  \Big)  \,e^{\frac{t}{C_1}}  \,  2t^2  (|\om|^2 +1) \;,
\end{align*}
where we have used \eqref{e:K1}, \eqref{e:K1A}, and \eqref{e:K2} in the first inequality.  The sum over $\{ \om \in \Z^2 \colon |\om| \leq \gamma^{-1} \}$ of the terms appearing in the last line is bounded uniformly in $0 \leq t \leq T$ and $0 < \gamma< 1$.

For the second term, we write in a similar way for $|\om| \leq \ga^{-1}$ and $j=1,2$ 
\begin{align*}
\sup_{\tau \in [-1,1]} & e^{ \la (\hKg(\om + \tau e_j) - 1)}    \la \, \partial_{j}^2 \hKg(\om + \tau e_j) \,  \hKg(\om + \tau e_j) \\
&\leq C \exp\Big( - \frac{t}{2 C_1}  \,|\om |^2  \Big)  \, e^{\frac{t}{C_1}}  \,  t  \;,
\end{align*}
where we have used  \eqref{e:K1}, \eqref{e:K1B}, and \eqref{e:K2}. The sum over $|\om| \leq \ga^{-1}$ of these terms is again uniformly bounded for $0 \leq t \leq T$ and $0 < \ga<1$.

For the third term on the right-hand side of \eqref{e:PK3}  and $|\om| \leq \ga^{-1}$, we use the $\om$-independent bound 
\begin{align*}
 e^{ \la (\hKg - 1)} \big( \partial_j^2 \hKg  \big) \leq C \ga^2 \;,
\end{align*}
which follows immediately from \eqref{e:K1B} once we observe that $ e^{ \la (\hKg - 1)}  \leq 1$.

Finally, for the fourth term we write
\begin{align*}
2  e^{ \la (\hKg(\om) - 1)}   \la (\partial_j \hKg(\om))^2 \leq C \exp\Big( -\frac{t}{C_1}|\om|^2 \Big) \frac{t}{\ga^2} \ga^4 |\om|^2 \leq C \ga^2 \;,
\end{align*}
which shows that the sum over $|\om| \leq \gamma^{-1}$ of these terms is uniformly bounded.

For the sums over $|\om| \geq \ga^{-1}$, we note that by \eqref{e:K2}, for $0 < \ga < \ga_0$ the terms $ e^{ \la (\hKg - 1)}    \la^2$ and $ e^{ \la (\hKg - 1)}  \la$ are uniformly bounded for $|\om| \geq \ga^{-1}$, so that we can bound the whole expression on the right-hand side of \eqref{e:PK3} by
\begin{align*}
C  \Big(  \big( \partial_j \hKg \big)^2\hKg  +    \big(  \partial_j^2 \hKg \big)  \hKg + \partial_j^2 \hKg + 2  (\partial_j \hKg)^2   \Big) \\
 \leq C \bigg(\frac{\ga^2}{|\ga \om|^6}   +  \frac{\ga^2}{|\ga \om|^4} +   \frac{\ga^2}{|\ga \om|^2} \bigg) \;.
\end{align*}
Taking local maxima over $\om + \tau e_j$ only changes the constant in this expression. The sum over $\{ \om \in \{ -N+1, \ldots, N-1\}^2 \colon \, |\om| >\ga^{-1} \}$ of  this expression can be bounded by a constant times $\log(\ga^{-1})$. Hence the proof is complete.
\end{proof}
%%%%%%%%%%%%%%%%%%%%%%%%%
%
%
%
%
%

Finally, we summarise the regularising properties of these approximate heat operators $\Pg{t}$ as well as $\Pg{t} \star \Kg$. We recall that the \emph{usual} heat 
operator $P_t$ satisfies 
\begin{equation}
\| P_t X \|_{\Cc^{\al + \be}} \leq C(\be) \, t^{-\frac{\be}{2}} \, \|X\|_{\Cc^{\al}} \; \label{e:heat-semigroup-reg}
\end{equation}
for all $\al \in \R$, $\beta \geq0$ and $t \geq 0$.

In the following lemma, we discuss analogous bounds for the approximate operators. 
Recall that the operators $\Pg{t}$ and $\Kg \star$ are naturally defined on trigonometric polynomials of degree $\leq N$, and are otherwise viewed as Fourier multiplication operators (that is, they evaluate to $0$ on trigonometric monomials of degree $>N$). Their behaviour is quite different on frequencies that are small compared to $\ga^{-1}$ and on frequencies that are large compared to $\ga^{-1}$, so we treat
those two cases separately.  Our argument essentially follows  \cite[Lemma A.5]{Gubi}.

%%%%%%%%%%%%
\begin{lemma}
%%%%%%%%%%%%
\label{le:semi-group-regularity}
%%%%%%%%%%%%
Let $\ga_0$ be the constant introduced in Lemma~\ref{le:Kg} and fix $c_1,c_2 >0$, $T>0$, and $\ka >0$. 
\begin{enumerate}
\item For every $\beta >0$ and $0 \leq \lambda \leq 1$, there exists a constant $C = C(c_1,T,\ka,\be,\la)$ such that for all functions $X\colon \T^2 \to \R$   with $\hat{X}(\om) = 0$ for $|\om| \geq c_1 \ga^{-1}$, we have for all $0 < \ga < \ga_0$, $0 \leq t \leq T$  and $\al \in \R$  
\begin{align}
\| \Pg{t} X \|_{\Cc^{\al + \be - \ka}} \leq & \; C\; t^{- \frac{\be}{2}} \| X\|_{\Cc^\al} \;,\label{e:Regular1}\\
\| (\Pg{t} - P_t) X \|_{\Cc^{\al - \ka }} \leq &  \; \Big( C  \;  \ga^{\la} t^{-\frac{\la}{2}}  \| X\|_{\Cc^\al} \Big) \wedge \Big(  C  \ga^{\la}   \| X\|_{\Cc^{\al+ \la}}\Big)  \;, \label{e:Regular2}\\
\|  \Kg \star X \|_{\Cc^{\al-\ka}} \leq & \;C \; \| X\|_{\Cc^\al} \;, \label{e:Regular3}\\
\|  \Kg \star X- X \|_{\Cc^{\al-\ka}} \leq & \;C  \; \ga^{2\la}   \| X\|_{\Cc^{\al+2\la}} \; \label{e:Regular4}.
\end{align}
\item For every $\beta >0$, there exists $C = C(c_2,T,\ka, \be)$ such that for any distribution $X$ on $\T^2$ with $\hat{X}(\om) = 0$ for $|\om| \leq c_2 \ga^{-1}$, we have for all $0 < \ga < \ga_0$, $0 \leq t \leq T$ and $\al \in \R$
\begin{align}
\| \Pg{t}  X \|_{\Cc^{\al + \be -\ka }} \leq &  \;C  \;   t^{-\be}  \| X\|_{\Cc^\al} \; \label{e:Regular5}.
\end{align}
If furthermore $0 \leq \be \leq 2$, then
\begin{align}
\| \Pg{t} (\Kg \star X) \|_{\Cc^{\al + \be - \ka}} \leq & \; C\; t^{- \frac{\be}{2}} \| X\|_{\Cc^\al} \;.
\label{e:Regular6}
\end{align}
\end{enumerate}

\end{lemma}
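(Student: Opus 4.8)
The plan is to reduce every bound to an estimate on a Fourier multiplication operator acting on Besov spaces, and then to read off the multiplier estimates from the kernel bounds of Lemmas~\ref{le:Kg0}, \ref{le:Kg} and~\ref{le:Pgt}. Recall that $\Pg{t}$ and $\Kg\star$ act as Fourier multipliers with symbols $\hPg{t}(\om)=\exp(t\ga^{-2}(\hKg(\om)-1))$ and $\hKg(\om)$ (and evaluate to zero on frequencies $|\om|>N$). The general principle, following \cite[Lemma A.5]{Gubi}, is that if $m$ is a symbol with $|\om|^{\sigma}|m(\om)|\le A$ for all relevant $\om$ (together with the mild derivative control needed to run the corresponding Littlewood--Paley block estimate), then the associated operator maps $\Cc^{\al}$ into $\Cc^{\al+\sigma-\ka}$ with norm $\le C(\ka)A$ for every $\ka>0$; the loss of $\ka$ absorbs the $\log(\ga^{-1})$-many dyadic scales that survive for a trigonometric polynomial of degree $\le N\le\ga^{-2}$ and the passage between the discrete and continuous Fourier pictures. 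In each case the required derivative bounds on $\hPg{t}$ are obtained from \eqref{e:K1A}, \eqref{e:K1B}, \eqref{e:K3A}, \eqref{e:K3B} by the chain and product rules exactly as in the computation \eqref{e:PK3} in the proof of Lemma~\ref{le:Pgt}, so I will not repeat them.

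For part (1) the key observation is that on $\{|\om|\le c_1\ga^{-1}\}$ the bound \eqref{e:K2} gives $\ga^{-2}(1-\hKg(\om))\ge c|\om|^2$ with $c=c(c_1)$, so $|\hPg{t}(\om)|\le e^{-ct|\om|^2}$, i.e. $\hPg{t}$ behaves like the true heat multiplier up to constants; hence $|\om|^{\be}|\hPg{t}(\om)|\le C t^{-\be/2}$ and \eqref{e:Regular1} follows. For \eqref{e:Regular2} I write $\hPg{t}(\om)-\hat P_t(\om)=\hat P_t(\om)\big(\exp(-t(\ga^{-2}(1-\hKg(\om))-\pi^2|\om|^2))-1\big)$; by \eqref{e:K2.4} the exponent is $O(t\ga|\om|^3)$, so the difference is bounded both by $2e^{-ct|\om|^2}$ and by $Ct\ga|\om|^3 e^{-ct|\om|^2}$, and interpolating these two bounds with weights $1-\la$ and $\la$ produces $|\hPg{t}(\om)-\hat P_t(\om)|\le C\ga^{\la}\big(t^{-\la/2}\wedge|\om|^{\la}\big)$, which yields the two terms of \eqref{e:Regular2}. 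The bounds \eqref{e:Regular3} and \eqref{e:Regular4} are even simpler: $|\hKg(\om)|\le 1$ by \eqref{e:K1} gives \eqref{e:Regular3}, while $|1-\hKg(\om)|\le C\ga^2|\om|^2$ (from the Taylor expansion underlying \eqref{e:K2.4}, or directly from $\hKg(0)=1$, $\nabla\hKg(0)=0$ and \eqref{e:K1B}), interpolated with the trivial bound $|1-\hKg(\om)|\le 2$, gives $|1-\hKg(\om)|\le C\ga^{2\la}|\om|^{2\la}$ and hence \eqref{e:Regular4}.

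For part (2), on $\{|\om|\ge c_2\ga^{-1}\}$ the lower bound $1-\hKg(\om)\ge c$ with $c=c(c_2)>0$ from \eqref{e:K2} gives $|\hPg{t}(\om)|\le\exp(-ct\ga^{-2})$, with no decay in $|\om|$. Since $X$ is a trigonometric polynomial of degree $\le N\le\ga^{-2}$, all relevant frequencies satisfy $|\om|\le C\ga^{-2}$, so $|\om|^{\be}|\hPg{t}(\om)|\le C\ga^{-2\be}\exp(-ct\ga^{-2})\le Ct^{-\be}$ (optimising $u\mapsto u^{\be}e^{-cu}$ in $u=t\ga^{-2}$), which is \eqref{e:Regular5}; the worse exponent $t^{-\be}$ reflects that at frequencies $\gg\ga^{-1}$ the operator $\Dg$ saturates at $-O(\ga^{-2})$ rather than scaling like the Laplacian. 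For \eqref{e:Regular6} the extra convolution contributes the factor $|\hKg(\om)|\le C|\ga\om|^{-2}$ from \eqref{e:K3}; then, using $\be\le 2$ so that $|\om|^{\be-2}\le(c_2\ga^{-1})^{\be-2}=c_2^{\be-2}\ga^{2-\be}$, one gets $|\om|^{\be}|\hKg(\om)\hPg{t}(\om)|\le C\ga^{-2}\ga^{2-\be}\exp(-ct\ga^{-2})\le Ct^{-\be/2}$, recovering the parabolic scaling.

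The main obstacle is not any single estimate but the careful execution of the reduction to Besov-space bounds for Fourier multipliers with only limited smoothness on the discrete torus --- in particular tracking the $\ka$-loss and verifying that the $\log(\ga^{-1})$ factors coming from the number of active dyadic scales are genuinely absorbed --- together with the derivative bookkeeping for $\hPg{t}$ and the two small interpolation arguments feeding \eqref{e:Regular2} and \eqref{e:Regular4}. None of these is conceptually hard, but they require the patient case analysis $|\om|\lessgtr\ga^{-1}$ already set up in Lemmas~\ref{le:Kg0}--\ref{le:Pgt}.
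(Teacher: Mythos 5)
Your proposal is correct and follows the same strategy as the paper's proof: reduce each estimate to a Fourier multiplier bound on Littlewood--Paley blocks (following \cite[Lemma~A.5]{Gubi}, which is also the paper's cited template), read off the pointwise multiplier bounds from Lemmas~\ref{le:Kg0} and~\ref{le:Kg} with the case split $|\om|\lessgtr\ga^{-1}$, and absorb the $\log(\ga^{-1})$ factor from the finitely many active dyadic scales, together with the discrete-to-continuous passage, into the $\ka$-loss. Your pointwise computations for all six estimates match the paper's, including the mean-value/interpolation argument for \eqref{e:Regular2}, the optimisation $u\mapsto u^{\be}e^{-cu}$ in $u=t\ga^{-2}$ for \eqref{e:Regular5}, and the use of $|\hKg(\om)|\le C|\ga\om|^{-2}$ together with $\be\le 2$ to recover parabolic scaling in \eqref{e:Regular6}. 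The only place where you are a bit terse is the derivative bookkeeping for the multipliers (the paper carries this out explicitly in \eqref{e:RegOp1}--\eqref{e:RegOp1B} via a discrete integration by parts in Fourier variables), but you correctly identify that these follow from \eqref{e:K1A}, \eqref{e:K1B}, \eqref{e:K3A}, \eqref{e:K3B} by the same chain/product-rule computation as \eqref{e:PK3}, which is indeed how the paper proceeds.
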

%%%%%%%%%%%%%
%
\begin{remark}
The (arbitrarily small) loss of regularity $\ka$ in these estimates has two reasons. It is caused on the one hand by the logarithmic divergence that appears in the last line of the estimate \eqref{e:RegOp1} below. This could probably be removed 
by performing additional integrations by parts. A second (arbitrarily small) loss of regularity is caused by the fact that in \eqref{e:RegOp1}, we derive a bound on a discrete $L^1$ norm over $\Le$ rather than on the $L^1$ norm over $\T^2$. This allows us to avoid boundary terms in the integration by parts. 
\end{remark}

\begin{remark}
The approximate heat operator $\Pg{t}$ is not as regularising on high frequencies as $P_t$ is, as can be seen in the estimate \eqref{e:Regular5}. The usual rescaling is recovered when one also convolves with 
$\Kg$ as in \eqref{e:Regular6}, but only for $\be \leq 2$.
\end{remark}
%
%%%%%%%%%%%%%%
\begin{proof}
%%%%%%%%%%
We start by discussing the regularisation properties of an abstract Fourier multiplication operator. Suppose that $T$ is a Fourier multiplication operator with symbol $\hat{T} : \Z^2 \to \C$ vanishing for $\om \notin \{-N, \ldots, N \}^2$.  We aim to derive a bound on $\| \dk (T X) \|_{L^\infty(\T^2)} = \| T \dk X \|_{L^\infty(\T^2)}$ for $k$ satisfying $2^k  \frac{8}{3}\leq N$. By definition,  $\dk (T X) =0$ for all larger $k$.

In order to avoid boundary terms in \eqref{e:RegOp1} below, we will actually derive bounds on $\| T \dk X \|_{L^\infty(\Le)}$ and then use   Lemma~\ref{le:LinftyDiscreteCont} to conclude that
\begin{align*}
\| T \dk X \|_{L^\infty(\T^2)} \leq C(\ka) \eps^{-\frac{\ka}{4}} \| T \dk X \|_{L^\infty(\Le)} \leq C(\ka) 2^{\frac{\ka}{2} k }  \| T \dk X \|_{L^\infty(\Le)} \;,
\end{align*}
i.e. we encounter an arbitrarily small loss of regularity.

We start by defining auxiliary cutoff functions $\bar{\chi}_k$ for $k \geq -1$  as follows. Suppose that  $\bar{\chi} \colon \R^2 \to \R_+$ is smooth,  that it coincides with the constant function $1$ on $B(0,3) \setminus B(0,\frac12)$ and that $\bar{\chi}$ vanishes outside of the annulus $B(0,4) \setminus B(0,\frac13)$. For $k \in \N_0$, set $\bar{\chi}_k(\om) = \bar{\chi}(2^{-k} \om)$. Let $\bar{\chi}_{-1} \colon \R^2 \to \R_+$ be smooth and suppose that it coincides with $1$ on $B(0,3/2)$ and that it vanishes outside of $B(0,2)$. Finally, set $\bar{\eta}_k(x) = \frac{1}{4}\sum_{\om \in \Z^2} \bar{\chi}_k(\om) \, e^{i \pi \om \cdot x}$.

For any $k \geq -1$, we have according to Young's inequality
\begin{align*}
 \| T \dk X \|_{L^\infty(\Le)} &= \| T \ae \bar{\eta}_k \ae \dk X \|_{L^\infty(\Le)} \leq  \| T \ae \bar{\eta}_k \|_{L^1(\Le)} \| \dk X \|_{L^{\infty}(\Le)}\\
 & \leq   \| T \ae \bar{\eta}_k \|_{L^1(\Le)}  2^{-k \al} \| \dk X \|_{\Cc^{\al}}  \;.
\end{align*}
In this calculation, we identified $T$ with its integral kernel. In order to bound the $L^1$ norm appearing on the right-hand side of this estimate, we write for $k \geq 4$
\begin{align}
 \| &T \star \bar{\eta}_k \|_{L^1(\Le)}  \notag \\
  =& \sum_{x \in \Le} \eps^2 \Big|     \frac{1}{4} \sum_{\om \in \{ -N, \ldots, N \}^2}  \hat{T}(\om) \, \bar{\chi}(2^{-k} \om ) \, e^{i \pi \om \cdot x}     \Big| \,  \; \notag\\
=&  \sum_{x \in  2^k \Le} 2^{2k} \eps^2  \,   \Bigg|  \frac14 \sum_{\substack{\om \in 2^{-k} \Z^2  \\ |\om_j | \leq 2^{-k}N}}  2^{-2k} \,  \hat{T} (2^k \om) \, \bar{\chi}(\om) \, e^{i \pi \om \cdot x}    \Bigg| \, \notag \\
=& \sum_{x \in  2^k \Le} 2^{2k} \eps^2  \frac{1}{1+  2^{2k} \sum_{j=1,2}  2(1 - \cos(\pi 2^{-k} x_j))}\notag \\
&\qquad \qquad \qquad \times  \Bigg|  \frac14 \sum_{\substack{\om \in 2^{-k} \Z^2  \\ |\om_j | \leq 2^{-k}N}}    2^{-2k} \,  \hat{T} (2^k \om) \, \bar{\chi}(\om) \,(1- \Delk) e^{i \pi \om \cdot x}    \Bigg| \,  \; \notag\\
\leq& C \sum_{x \in  2^k \Le} 2^{2k} \eps^2  \frac{1}{1 + |x|^2} \Bigg|  \frac14 \sum_{\substack{\om \in 2^{-k} \Z^2  \\ |\om_j | \leq 2^{-k}N}}   2^{-2k} \, (1-\Delk) \Big(    \hat{T} (2^k \om) \, \bar{\chi}(\om)\Big) \,   \Bigg| \,  \; \notag\\
\leq& C \log(2^k) \sup_{\frac15\leq |\om| \leq 5} \Big| (1- \Delk) \Big(    \hat{T} (2^k \om) \, \bar{\chi}(\om)\Big) \Big|\; .\label{e:RegOp1}
\end{align}
This time, the discrete Laplacian is defined as 
\begin{equation*}
\Delk f (\om) =  2^{2k} \sum_{\substack{\bar{\om} \in 2^{-k} \Z^2 \\ |\om_j |  \leq 2^{-k} N  \\ \bar{\om} \sim \om}} (f(\bar{\om}) - f(\om))\;,
\end{equation*}
and the nearest neighbour relation is to be understood with periodic boundary conditions on $\{ \om \in 2^{-k} \Z^2 \colon |\om_j| \leq  2^{-k} N\}$. The integration by parts is also understood with periodic boundary conditions on this discrete torus. (This is the reason why we had to restrict the sum to $x \in \Le$ --- only for such $x$ is $\om \mapsto e^{i \pi \om \cdot x}$ an eigenfunction for $\Delk$).
In the fourth inequality in \eqref{e:RegOp1}, we have used the fact that $\bar{\chi}$ has compact support contained in $B(0,4) \setminus B(0,\frac{1}{3})$, which ensures that  for $k \geq 4$ the discrete Laplacian is supported in $B(0,5) \setminus B(0, \frac{1}{5})$.

In all of our applications, $\hat{T}$ is defined and smooth on all of $\R^2$, and not only on the grid points in $\Z^2$.  As in \eqref{e:dicsLaptocont}, we can then replace the local supremum over the discrete Laplacian by a local supremum over the
derivatives in the continuum. We get  for $k \geq 4$
\begin{align}
\sup_{\frac15\leq |\om| \leq 5}  &\Big| (1- \Delk) \Big(    \hat{T} (2^k \om) \, \bar{\chi}(\om)\Big)\Big| \notag\\
\leq&  C \sup_{\frac15\leq |\om| \leq 5}  \big| \hat{T}(2^k \om) \big| +  C \sup_{\substack{ \frac15\leq |\om| \leq 5\\ j =1,2}} \big| \partial_j \hat{T}(2^k \om)\big| +   C \sup_{\substack{ \frac15\leq |\om| \leq 5\\ j =1,2}} \big| \partial_j^2 \hat{T}(2^k \om)\big| \;. \label{e:RegOp1A}
\end{align}
In the case $k =-1, \ldots,3$, we get easily
\begin{equation}\label{e:kleqthree}
 \| T \star \bar{\eta}_{k} \|_{L^1(\T^2)} \leq  \int_{\T^2} \Big|     \frac{1}{4} \sum_{\om \in \{ -N, \ldots, N \}^2}  \hat{T}(\om) \, \bar{\chi}_{k}( \om )    \Big| \, dx \leq C \sup_{|\om | \leq 32} |T(\om)| \;.
\end{equation}

We are now ready to apply these bounds to the operators of interest. The bounds on $\dk X$ for $k \leq 3$ follow from \eqref{e:kleqthree}, so from now on we assume that $k \geq4$.  For $\Pg{t}$ we have for  $0 < \ga < \ga_0$, for $|\om| \leq \ga^{-1}$, and $j =1,2$ 
\begin{align}
\hPg{t}(\om) &= \exp\Big(-  \frac{t}{\ga^2}\big( 1 - \hKg(\om)\big) \Big) \leq \exp \Big(- \frac{t|\om|^2}{C_1} \Big)\;, \notag \\
\partial_j \hPg{t}(\om) &= \hPg{t}(\om)  \;\frac{t}{\ga^2}\partial_j \hKg(\om)  \leq C \sqrt{t}  \exp \Big(- \frac{t|\om|^2}{C_1} \Big) (\sqrt{t}|\om|)  \;, \notag \\
\partial_j^2 \hPg{t}( \om) &= \hPg{t}(\om) \Big(  \frac{t^2}{\ga^4} \big(\partial_j \hKg(\om)\big)^2 + \frac{t}{\ga^2} \partial^2_j \hKg(\om)  \Big)\notag\\
& \qquad \qquad \qquad\leq Ct \exp \Big(- \frac{t|\om|^2}{C_1} \Big) \Big(t |\om|^2 +1 \Big)\;, \label{e:RegOp1B}
\end{align}
where we have used the bounds on $\hKg$ and its derivatives stated in Lemmas~\ref{le:Kg0} and \ref{le:Kg}. The first of these bounds immediately yields, for any $\beta \geq 0$,
\begin{equation*}
 \sup_{\frac15\leq |\om| \leq 5}    \big| \hPg{t}(2^k \om) \big| \leq  \exp \Big(- \frac{t}{ 25 C_1} 2^{2k} \Big) \leq \big( 2^{-k} t^{-\frac12}\big)^\beta  \sup_{x > 0} x^{\beta} e^{-\frac{x^2}{25 C_1}} \;.
\end{equation*}
Similar bounds on the first and second derivatives of $\hPg{t}$ follow in the same way from the remaining estimates in \eqref{e:RegOp1B}. Indeed, both of these bounds behave slightly better for small $t$ -- we gain a factor $t^{\frac12}$ in the bound for the first derivative, and a factor $t$ in the bound for the second derivative. The desired bound  \eqref{e:Regular1} then follows.

To get bounds on $\Pg{t} - P_t$, we write
\begin{align*}
\Big| \hPg{t}(\om) - \hat{P}_t(\om) \Big|  &= \Big| \exp\Big(-  \frac{t}{\ga^2}\big( 1 - \hKg(\om)\big) \Big) - \exp(- t \pi^2 |\om|^2 ) \Big|\\
&\leq \exp \Big(- \frac{t|\om|^2}{C_1} \Big)\;\ \Big| \frac{t}{\ga^2} \big( 1 - \hKg(\om)\big)  - t\pi^2 |\om|^2  \Big| \\
&\leq C  \exp \Big(- \frac{t|\om|^2}{C_1} \Big)\; t |\om|^2  \ga |\om| \leq C(\la) \ga^{\la} \big( t^{-\frac{\la}{2}} \wedge |\om|^\la \big)  \;,
\end{align*}
the last inequality being valid for any $0 \leq \la \leq 1$. In the third inequality we have made use of \eqref{e:K2.4}. For the derivatives, we write
\begin{align*}
\Big| &\partial_j \hPg{t}(\om) - \partial_j \hat{P}_t(\om) \Big| \\
&\leq 2 \pi^2 t |\om|  \;\Big| \hPg{t}(\om) - \hat{P}_t(\om) \Big| + \hat{P}_t(\om)\Big| -\frac{t}{\ga^2} \partial_j \hKg(\om) - 2 t \pi^2 \om_j \Big| \\
&\leq  C  \ga + Ce^{-\frac{t |\om|^2}{C_1}}  t  \ga|\om|^2 \leq C \ga\;.
\end{align*}
Here, in the second inequality we have used \eqref{e:K2.3}. Note that as above in the bound for $\partial_j \hPg{t}$ we gain a factor $\sqrt{t}$ with respect to the bound for $\hPg{t}$.

Finally, for the second derivatives we have using \eqref{e:K2.3} and \eqref{e:K2.2}
\begin{align*}
\Big| &\partial_j^2 \hPg{t}(\om) - \partial_j^2 \hat{P}_t(\om) \Big| \\
&\leq |4 \pi^4 t^2\, \om_j^2 - 2 \pi^2 t | \;\Big| \hPg{t}(\om) - \hat{P}_t(\om) \Big| \\
& \qquad \qquad + \hPg{t}(\om)\Big(\Big| \frac{t^2}{\ga^4}  \big(\partial_j \hKg(\om) \big)^2- 4 \pi^4 t^2 \om_j^2 \Big| + \Big|  \frac{t}{\ga^2}\partial_j^2 \hKg(\om) +  2 \pi^2 t\Big|\Big)  \\
& \leq C t^{\frac12} \ga \;.
\end{align*}
We can now deduce \eqref{e:Regular2} by repeating the argument below \eqref{e:RegOp1B}. 

The bounds \eqref{e:Regular3} and \eqref{e:Regular4} follow immediately from \eqref{e:K1}, \eqref{e:K1A}, and \eqref{e:K1B} as well as the bound
\begin{align*}
|\hKg(\om) -1 | \leq C \ga^2 |\om|^2 \;,
\end{align*}
uniformly in $|\om| \leq \ga^{-1}$, which is an immediate consequence of \eqref{e:K2.4}.

We now pass to the bounds for $|\om| \geq \ga^{-1}$. In order to get   \eqref{e:Regular5}, we write for $|\om| \geq \ga^{-1}$ with $\om \in \{ -N, \ldots, N\}^2$
\begin{align*}
\hPg{t}(\om) &= \exp\Big(-  \frac{t}{\ga^2}\big( 1 - \hKg(\om)\big) \Big) \leq \exp \Big(- \frac{t }{C_1 \ga^2} \Big)\leq \exp\Big(- \frac{t |\om|}{ 2 C_1} \Big) \\
\partial_j \hPg{t}(\om) &= \hPg{t}(\om)  \;\frac{t}{\ga^2}\partial_j \hKg(\om)  \leq C\ga \exp \Big(- \frac{t }{C_1\ga^2}\Big) \frac{t}{\ga^2} \;, \\
\partial_j^2 \hPg{t}( \om) &= \hPg{t}(\om) \Big(  \frac{t^2}{\ga^4} \big( \partial_j \hKg(\om) \big)^2+ \frac{t}{\ga^2} \partial_j^2 \hKg(\om)  \Big) \\
& \leq C\ga^2 \exp\Big(- \frac{t }{ C_1\ga^2}\Big) \Big(\frac{t^2}{\ga^4} + \frac{t}{\ga^2} \Big)\;.
\end{align*}
In this case, we write for $\beta \geq 0$
\begin{equation*}
 \sup_{\frac15\leq |\om| \leq 5}    \big| \hPg{t}(2^k \om) \big| \leq  \exp \Big(- \frac{t}{ 25 C_1} 2^{k} \Big) \leq \big( 2^{-k} t^{-1}\big)^\beta  \sup_{x > 0} x^{\beta} e^{-\frac{x}{25 C_1}} \;.
\end{equation*}
and as before similar bounds on the first and second derivatives of $\hPg{t}$ follow in the same way, so that \eqref{e:Regular5} follows. Observe the exponent $t^{-\be}$ which  yields a weaker bound than the 
$t^{-\frac{\be}{2}}$ one gets in the case of the heat operator $P_t$.

Finally, for $\Pg{t} \Kg$ we write for $ |\om | \geq \ga^{-1}$
\begin{align*}
\big| \hPg{t}(\om) \hKg(\om)\big| \leq \exp \Big(- \frac{t }{C_1 \ga^2} \Big) \frac{C}{\ga^2 |\om|^2} \leq C(\la) \frac{1}{t^\la |\om|^{2\la}} \;, 
\end{align*}
for any $0 \leq \la \leq 1$. For the derivatives, we get
\begin{align*}
\big| \partial_j& \hPg{t}(\om)  \;\hKg(\om)\big|  + \big| \hPg{t}(\om) \partial_j \hKg(\om)\big| \\
&\leq C\ga \exp \Big(- \frac{t }{C_1\ga^2}\Big) \frac{t}{\ga^2} \frac{1}{\ga^2 |\om|^2} + C \exp \Big(- \frac{t }{C_1\ga^2}\Big)  \frac{\ga}{\ga^2 |\om|^2}\leq C(\la) \frac{\ga}{t^\la |\om|^{2\la}} \;,
\end{align*}
and
\begin{align*}
\Big| \partial_j^2 & \big(  e^{\frac{t}{\ga^2} (\hKg - 1)}  \;  \hKg  \big) \Big| \notag \\
&\leq e^{ \frac{t}{C_1\ga^2}}   \Big( \frac{t^2}{\ga^4} \big( \partial_j \hKg \big)^2\hKg  +  \frac{t}{\ga^2}  \big(  \partial_j^2 \hKg \big)  \hKg + \partial_j^2 \hKg + 2 \frac{t}{\ga^2} (\partial_j \hKg)^2   \Big) \\
&\leq C   e^{ \frac{t}{C_1\ga^2}}   \Big(\frac{t^2}{\ga^4}  \frac{\ga^2}{\ga^6 |\om|^6} + \frac{t}{\ga^2} \frac{\ga^2}{\ga^6 |\om|^6} +\frac{\ga^2}{\ga^2|\om|^2} +  2 \frac{t}{\ga^2} \frac{\ga^2}{\ga^4|\om|^4} \Big)\\
&\leq C \frac{\ga^2}{t^\la |\om|^{2\la}}\;.
\end{align*}
Hence \eqref{e:Regular6} follows as well.
%%%%%%%%%%%
\end{proof}
%%%%%%%%%%%%

%%%%%%%%%%%
\begin{corollary}
%%%%%%%%%%%
\label{cor:regPg}
%%%%%%%%%%%
Let $\ga_0$ be the constant appearing in Lemma~\ref{le:Kg}. For any $T>0$, $\ka > 0$,  $\be>0$ and $0 \leq \la \leq 1$, there exists a constant $C = C(T,\ka,\be,\la)$ such that for every $0 < \ga < \ga_0$, $0 \leq t \leq T$, $\al \in \R$ and distribution $X$ on $\T^2$,
\begin{align}
\| (\Pg{t} - P_t) X  \|_{\Cc^{\al -\ka}} \leq   \Big( C  \;  \ga^{\la} t^{-\la}  \| X\|_{\Cc^\al} \Big) \wedge \Big(  C  \ga^{\la}   \| X\|_{\Cc^{\al+ \la}}\Big) \;.\label{e:reg_Cor}
\end{align}
\begin{align}
\| (\Pg{t}\Kg - P_t) X  \|_{\Cc^{\al -\ka}} \leq   \Big( C  \;  \ga^{\la} t^{-\frac{\la}{2}}  \| X\|_{\Cc^\al} \Big) \wedge \Big(  C  \ga^{\la}   \| X\|_{\Cc^{\al+\la}}\Big) \;.\label{e:reg_Cor2}
\end{align}

%%%%
\end{corollary}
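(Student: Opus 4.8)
The plan is to deduce both estimates from Lemma~\ref{le:semi-group-regularity} by means of a Littlewood--Paley frequency split. Fix a constant $c>0$ small enough and write $X = X^{\mathrm{low}} + X^{\mathrm{high}}$, where $X^{\mathrm{low}} = \sum_{2^k < c\ga^{-1}} \dk X$ and $X^{\mathrm{high}} = X - X^{\mathrm{low}}$, so that $X^{\mathrm{low}}$ satisfies the frequency restriction of part~(1) of Lemma~\ref{le:semi-group-regularity} (with $c_1$ determined by $c$) and $X^{\mathrm{high}}$ that of part~(2) (with $c_2$ determined by $c$). Since the operators $\Pg{t}-P_t$ and $\Pg{t}\Kg\star\,-\,P_t$ are linear, it suffices to bound their action on each of the two pieces, and then collect the contributions, shrinking $\ka$ at the end to absorb the various losses. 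Throughout one uses the elementary consequence of the frequency gap that $\|X^{\mathrm{high}}\|_{\Cc^{s}} \le C\ga^{s'-s} \|X\|_{\Cc^{s'}}$ whenever $s \le s'$.

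On the low-frequency part, \eqref{e:reg_Cor} is immediate from \eqref{e:Regular2} (which in fact gives the stronger $t$-power $t^{-\la/2}$, and whose second branch is already uniform in $t$). For \eqref{e:reg_Cor2} one writes $(\Pg{t}\Kg\star\,-\,P_t)X^{\mathrm{low}} = (\Pg{t}-P_t)X^{\mathrm{low}} + \Pg{t}(\Kg\star X^{\mathrm{low}} - X^{\mathrm{low}})$: the first summand is handled by \eqref{e:Regular2}; for the second, the factor $\ga^{\la}$ is gained from \eqref{e:Regular4} applied with the exponent $\la/2$ in place of $\la$, after which \eqref{e:Regular1} is used to apply $\Pg{t}$, producing the factor $t^{-\la/2}$ of the first branch. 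The $t$-uniform second branch is obtained by instead applying \eqref{e:Regular4} so as to leave a $\Cc^{\al+\la}$-norm on the right, and then using that $\Pg{t}$ is bounded on $\Cc^{\al-\ka}$ uniformly in $t\in[0,T]$ on frequencies $\lesssim\ga^{-1}$ --- which holds because there $\hPg{t}$ is a Mikhlin-type symbol uniformly in $t$ (the requisite derivative bounds are exactly \eqref{e:RegOp1B}), so the $L^1$-kernel estimate \eqref{e:RegOp1} applies as in the proof of Lemma~\ref{le:semi-group-regularity}.

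On the high-frequency part, each of the two terms of each difference is estimated separately. For $P_t X^{\mathrm{high}}$ one uses the standard heat estimate \eqref{e:heat-semigroup-reg} together with the frequency gap, which yields both branches of the $\wedge$. For $\Pg{t} X^{\mathrm{high}}$ one uses \eqref{e:Regular5} (again combined with the gap), giving the $\ga^{\la}t^{-\la}$ branch of \eqref{e:reg_Cor}; for $\Pg{t}\Kg X^{\mathrm{high}}$ one uses \eqref{e:Regular6}, giving the $\ga^{\la}t^{-\la/2}$ branch of \eqref{e:reg_Cor2}. The genuinely delicate point --- and the one I expect to be the main obstacle --- is the remaining \emph{$t$-uniform} bound on these last two terms, i.e. $\|\Pg{t}X^{\mathrm{high}}\|_{\Cc^{\al-\ka}} \le C\ga^{\la}\|X\|_{\Cc^{\al+\la}}$ and the analogue with $\Kg$, uniformly in $t\in[0,T]$: here no negative power of $t$ is allowed, so \eqref{e:Regular5}--\eqref{e:Regular6} (whose constants degenerate as $\be\to 0$) cannot be invoked with $\be=0$. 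Instead one argues directly as in the proof of Lemma~\ref{le:semi-group-regularity}, writing $\dk\Pg{t}X^{\mathrm{high}} = (\Pg{t}\ae\bar\eta_k)\ae\dk X^{\mathrm{high}}$ (and similarly $(\Pg{t}\ae\Kg\ae\bar\eta_k)\ae\dk X^{\mathrm{high}}$) and bounding by Young's inequality; one then controls $\|\Pg{t}\ae\bar\eta_k\|_{L^1(\Le)}$ and $\|\Pg{t}\ae\Kg\ae\bar\eta_k\|_{L^1(\Le)}$ through the multiplier estimate \eqref{e:RegOp1}, using $|\hPg{t}(\om)|\le 1$, the high-frequency decay \eqref{e:K3}--\eqref{e:K3B} of $\hKg$ and its derivatives, and the derivative bounds on $\hPg{t}$ for $|\om|\ge\ga^{-1}$; this produces at worst a polylogarithmic prefactor $(\log\ga^{-1})^{O(1)}$, which combined with the $\ga^{\la}$ from the frequency gap and summed over the dyadic scales $2^k\gtrsim\ga^{-1}$ gives the claim, after absorbing the logarithm into an arbitrarily small power $\ga^{-\ka}$ and adjusting $\la$ and $\ka$.
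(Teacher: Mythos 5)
Your proof is correct and takes essentially the same route as the paper's: split $X$ into its Littlewood--Paley blocks below and above scale $\ga^{-1}$, treat the low-frequency part with \eqref{e:Regular1}--\eqref{e:Regular4}, and the high-frequency part with \eqref{e:Regular5}--\eqref{e:Regular6} after extracting the $\ga^{\la}$ factor from the frequency gap. The one place where you go beyond what the paper says explicitly is the $t$-\emph{uniform} branch of the $\wedge$ on the high-frequency part: the paper simply says the bound ``follows from \eqref{e:Regular5}'', but \eqref{e:Regular5} on its own degenerates as $\be \to 0$, so it only hands you the $t^{-\la}$ branch. You correctly identify that what is really needed there is a uniform-in-$t$ operator bound for $\Pg{t}$ (respectively $\Pg{t}\Kg\star$) on each high-frequency block, and your argument via the kernel/$L^1$-multiplier estimate \eqref{e:RegOp1}, using $|\hPg{t}|\le 1$ and the derivative bounds from Lemmas~\ref{le:Kg0}--\ref{le:Kg}, is exactly how this is obtained (it is also what underlies the paper's proof of \eqref{e:Regular5} itself, with the logarithmic loss absorbed into the $\ka$). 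So your write-up is not only consistent with the paper's proof, it fills in the one step the paper leaves implicit.
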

%%%%%
\begin{proof}
To see \eqref{e:reg_Cor}, we split $X$ into its Littlewood-Paley blocks $\dk X$, separating those with $2^{k} \leq \ga^{-1}$ from those with $2^k > \ga^{-1}$. For the blocks with $2^{k } \leq \ga^{-1}$, the necessary bound is stated in \eqref{e:Regular2}. Indeed, this bound is 
even better, because it has the term $t^{-\la}$ replaced by $t^{-\frac{\la}{2}}$.

For the blocks with $2^{k} > \ga^{-1}$, we bound brutally
\begin{align}
\| \dk  (\Pg{t} - P_t) X\|_{L^\infty(\T^2) }& \leq \|   \Pg{t}  \dk X  \|_{L^\infty(\T^2) } + \|   P_t  \dk X\|_{L^\infty(\T^2) } \notag \\ 
& \leq 2^{-k (\al + \la)} \Big( \|   \Pg{t}  \dk X  \|_{\Cc^{\al+\la} } + \|   P_t  \dk X\|_{\Cc^{\al+\la} }  \Big) \notag\\
& \leq 2^{-\al k}  \ga^{\la} \Big( \|   \Pg{t}  \dk X  \|_{\Cc^{\al+\la} } + \|   P_t  \dk X\|_{\Cc^{\al+\la} }  \Big)\;. \label{e:P_Cor1}
\end{align}
Then the bound \eqref{e:reg_Cor} follows from \eqref{e:Regular5}, and this is where we pick up the term $t^{-\la}$.

To get \eqref{e:reg_Cor2}, we argue in a similar way. For $\dk X$  with $2^{k} \leq \ga^{-1}$, we bound
\begin{align*}
\| \dk  (\Pg{t} &\Kg - P_t) X\|_{L^\infty(\T^2) } \\
&\leq \| \dk  (\Pg{t} (\Kg - 1)X)\|_{L^\infty(\T^2) } + \| \dk  (\Pg{t} - P_t ) X\|_{L^\infty(\T^2) } \; .
\end{align*}
The bound then follows from a combination of \eqref{e:Regular1}, \eqref{e:Regular2}, and \eqref{e:Regular4}. For $2^k > \ga^{-1}$, we argue as in \eqref{e:P_Cor1} with \eqref{e:Regular5} replaced by \eqref{e:Regular6}.
\end{proof}

\appendix
%%%%%%%%%%%%%%%%%%%%%%%%
\section{Besov spaces}
%%%%%%%%%%%%%%%%%%%%%%%%
\label{sec:Besov}

The aim of this appendix is to recall some facts about Besov spaces on the torus $\T^n$, which we identify with $[-1,1]^n$. We begin with a technical lemma.

\begin{lemma}
\label{l:Bernstein}
Let $\chi$ be a smooth function with compact support. For every $\eps > 0$, let
$$
\eta_{\eps}(x) = \sum_{\om \in \Z^n} e^{i \pi \om \cdot x} \, \chi(\eps \om) \qquad (x \in \T^n).
$$
For every $p,p' \in [1,+\infty]$ with $1/p + 1/p' = 1$, we have
$$
\sup_{0 < \eps \le 1} \eps^{n/p'} \|\eta_{\eps}\|_{L^p(\T^n)} < + \infty.
$$
\end{lemma}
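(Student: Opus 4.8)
The plan is to realise $\eta_{\eps}$ as the periodisation of a rescaled Schwartz function, to control its $L^1(\T^n)$ and $L^\infty(\T^n)$ norms separately, and then to interpolate.

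First I would set $\Phi(y):=\int_{\R^n}\chi(\zeta)\,e^{i\pi\zeta\cdot y}\,d\zeta$; since $\chi$ is smooth with compact support, $\Phi$ is a Schwartz function. Applying the Poisson summation formula to $\zeta\mapsto\chi(\eps\zeta)\,e^{i\pi\zeta\cdot x}$ for fixed $x$ and rescaling the integration variable yields the pointwise identity
\[
\eta_{\eps}(x)=\sum_{\om\in\Z^n}\chi(\eps\om)\,e^{i\pi\om\cdot x}=\eps^{-n}\sum_{k\in\Z^n}\Phi\!\left(\frac{x-2k}{\eps}\right),
\]
where the factor $2$ appears because $\T^n$ is identified with $[-1,1]^n$. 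There is no convergence subtlety here: $\chi$ has compact support, so the left-hand side is in fact a finite sum, and the identity may be obtained by first proving it for general Schwartz $\chi$.

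Next I would estimate the two extreme norms. Since the translated cubes $2k+[-1,1]^n$, $k\in\Z^n$, tile $\R^n$, integrating the absolute value termwise gives, uniformly in $\eps\in(0,1]$,
\[
\|\eta_{\eps}\|_{L^1(\T^n)}\le\eps^{-n}\int_{\R^n}\left|\Phi\!\left(\frac{y}{\eps}\right)\right|dy=\|\Phi\|_{L^1(\R^n)}<\infty .
\]
For the sup norm I would use the Schwartz bound $|\Phi(y)|\le C_m(1+|y|)^{-m}$ together with the elementary fact that $|x-2k|\ge|k|$ whenever $x\in[-1,1]^n$ and $|k|\ge\sqrt n$, the finitely many remaining $k$ contributing only a bounded amount; since $\eps\le1$ one may bound $|k|/\eps$ from below by $|k|$ in the tail, so that for any $m>n$
\[
\|\eta_{\eps}\|_{L^\infty(\T^n)}\le\eps^{-n}\Big(C\,\|\Phi\|_{L^\infty(\R^n)}+C_m\sum_{k\in\Z^n}(1+|k|)^{-m}\Big)\le C\,\eps^{-n},
\]
with $C$ independent of $\eps\in(0,1]$. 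The endpoint cases $p=1$ and $p=\infty$ of the lemma are precisely these two bounds, and the intermediate range follows from the interpolation inequality $\|f\|_{L^p(\T^n)}\le\|f\|_{L^1(\T^n)}^{1/p}\,\|f\|_{L^\infty(\T^n)}^{1/p'}$ on the finite-measure space $\T^n$, which yields $\eps^{n/p'}\|\eta_{\eps}\|_{L^p(\T^n)}\le C$.

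This argument is essentially routine, and I do not expect a genuine obstacle. The only points that need care are the bookkeeping of normalisation constants (the factor $\pi$ in the exponentials and the period $2$) in the Poisson summation identity, and verifying that the $L^\infty$ bound is uniform over $\eps\in(0,1]$ rather than merely $\eps$-dependent, which is exactly where the hypothesis $\eps\le1$ enters. As an alternative one could avoid Poisson summation and estimate the trigonometric sum directly by summing by parts against a discrete Laplacian in the frequency variable, in the spirit of the proofs of Lemmas~\ref{le:Kg} and~\ref{le:Pgt}; I would nonetheless keep the periodisation argument as the primary route, since it is shorter and the delicate step (the uniform $L^\infty$ bound) is more transparent in that form.
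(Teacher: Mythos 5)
Your proof is correct, and it takes a genuinely different route from the paper's. You invoke the Poisson summation formula to rewrite $\eta_\eps$ as the periodisation $\eps^{-n}\sum_{k\in\Z^n}\Phi\bigl((x-2k)/\eps\bigr)$ of the continuous Fourier transform $\Phi$ of $\chi$ (this is legitimate because the summand $\omega\mapsto\chi(\eps\omega)\,e^{i\pi\omega\cdot x}$ is smooth with compact support, hence Schwartz); from this the $L^1$ bound is immediate by tiling $\R^n$ with translates of $[-1,1]^n$, and the $L^\infty$ bound follows from the rapid decay of $\Phi$. The paper, by contrast, never leaves the discrete setting: for $p=\infty$ it simply notes that the sum has $O(\eps^{-n})$ nonzero terms, each bounded by $\|\chi\|_{L^\infty}$, and for $p=1$ it rescales to $\check{\chi}_\eps(x)=\eps^n\sum_{\omega\in\eps\Z^n}e^{i\pi\omega\cdot x}\chi(\omega)$ and performs a discrete summation by parts against a lattice Laplacian in the frequency variable, yielding $|\check{\chi}_\eps(x)|\le C_k'/|x|^{2k}$ uniformly in $\eps$ --- in effect redoing by hand, on the grid, the classical argument that Fourier transforms of compactly supported smooth functions decay rapidly. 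Both routes are valid. Yours is shorter and delegates the analytic content to Poisson summation; the paper's is more self-contained and uses exactly the discrete integration-by-parts device that reappears in Lemmas~\ref{le:Kg} and~\ref{le:Pgt} (which is the alternative you yourself mention at the end of your proposal). One small asymmetry worth noting: the paper's $p=\infty$ case is simpler than yours (no decay estimate is needed, just a term count), whereas your $p=1$ case is simpler than the paper's.
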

\begin{remark}
This lemma would follow from very simple scaling arguments if we were considering the continuous Fourier transform instead of the discrete Fourier series. 
\end{remark}
\begin{proof}
Since $\|\eta_{\eps}\|_{L^p(\T^n)}^p \le \|\eta_{\eps}\|_{L^\infty(\T^n)}^{p-1} \|\eta_{\eps}\|_{L^1(\T^n)}$, it suffices to prove the lemma for $p = 1$ and $p = \infty$. Since $\chi$ is bounded and compactly supported, the result is clear for $p = \infty$. In order to cover the case $p = 1$, we define
$$
\check{\chi}_\eps(x) = \eps^n \sum_{\om \in \eps \Z^n} e^{i \pi \om \cdot x} \, \chi(\om),
$$
so that $\eps^{n} \eta_\eps(\eps x) =\check{\chi}_\eps(x)$. It is then clear that $\check{\chi}_\eps$ is bounded uniformly over $\eps$ and that $\|\eta_\eps\|_{L^1(\T^n)} = \|\check{\chi}_\eps\|_{L^1(\T^n/\eps)}$. In order to conclude the proof, it thus suffices to check that $\check{\chi}_\eps$ decays uniformly fast enough at infinity.
In order to do so, we introduce
$$
\Del f (\om) = \eps^{-2} \sum_{\substack{\om' \in \eps \Z^n \\ \om' \sim \om}} (f(\om') - f(\om)),
$$
where $\om' \sim \om$ is the usual nearest-neighbour relationship in $\eps \Z^n$. Writing $e_x : \om \mapsto e^{i x\cdot \om}$, we observe that
\begin{equation}
\label{e:IBPd1}
\Del e_x = 2 \eps^{-2} \sum_{i = 1}^n(\cos(\eps \pi x_i) - 1) e_x.
\end{equation}
For every positive integer $k$, repeated integration by parts yields
\begin{equation}
\label{e:IBPd2}
\sum_{\om \in \eps \Z^n} \Del^k e_x(\om) \chi(\om) = \sum_{\om \in \eps \Z^n}  e_x(\om) \Del^k \chi(\om).
\end{equation}
Since $\chi$ is smooth and compactly supported, $|\Del^k \chi|$ is bounded uniformly over $\eps$, and thus
$$
\Ll|\eps^{n} \sum_{\om \in \eps \Z^n}  e_x(\om) \Del^k \chi(\om)\Rr| \le C_k
$$
for some constant $C_k$. Combining this with \eqref{e:IBPd1} and \eqref{e:IBPd2}, we obtain that
$$
\Ll|\check{\chi}_\eps(x)\Rr| \le \frac{C_k}{\Ll[\eps^{-2} \sum_{i = 1}^n(1 - \cos(\eps \pi x_i))\Rr]^k}.
$$
Hence, there exists $C_k'$ such that uniformly over $\eps \in (0,1]$ and $x \in \T^n/\eps$,
\begin{equation}
\label{e:IBPdconcl}
\Ll|\check{\chi}_\eps(x)\Rr| \le \frac{C_k'}{|x|^{2k}},
\end{equation}
and this is sufficient to conclude the proof.
\end{proof}

We learn from \cite[Proposition~2.10]{BCD} that one can find smooth rotationally invariant functions $\td{\chi}$, $\chi$ taking values in $[0,1]$ and such that
\begin{equation}
\label{chi-prop1}
\textrm{supp} \, \td{\chi} \subset B(0,4/3),
\end{equation}
\begin{equation}
\label{chi-prop2}
\textrm{supp} \, {\chi} \subset B(0,8/3) \setminus B(0,3/4),
\end{equation}
\begin{equation}
\label{chi-prop3}
\forall \om \in \R^d, \ \td{\chi}(\om) + \sum_{k = 0}^{+\infty} \chi(\om/2^k) = 1,
\end{equation}
where we write $\textrm{supp}\, f$ to denote the support of $f$, and $B(0,r)$ to denote the Euclidean ball of radius $r$. Recall that any distribution $Z$ on $\T^n$ can be written (see e.g.\ \cite[Proposition~2.12]{BCD}) as the Fourier series
$$
Z(x) = \frac{1}{2^n} \sum_{\om \in \Z^d} \hat{Z}(\om) e^{i\pi \om \cdot x},
$$
where 
$$
\hat{Z}(\om) = \int_{\T^d} Z(x) \; e^{-i\pi \om \cdot x} \, dx;,
$$
(this expression has to be interpreted as testing the distribution $Z$ against the test function $x \mapsto e^{- i\pi  \om \cdot x}$). We use the partition of unity provided by \eqref{chi-prop3} to decompose any distribution $u$ over $\T^n$ as a sum of (smooth) functions with localized spectrum. More precisely, we let
\begin{equation}
\label{e:def:chik}
\chi_{-1} = \td{\chi}, \qquad \chi_k = \chi(\cdot/2^k) \quad (k \ge 0),
\end{equation}
$$
\delta_k Z(x) = \frac{1}{2^n} \sum_{\om \in \Z^d} \chi_k(\om)\hat{Z}(\om) e^{i\pi \om \cdot x} \qquad (k \ge -1),
$$
so that
$$
Z = \sum_{k = -1}^{+\infty} \delta_k Z.
$$
For any $\al \in \R$, and $1 \leq p,q \le \infty$ we define 
\begin{equation*}
\| Z \|_{\Bb^\al_{p,q}}:= \Ll( \sum_{k = -1}^{\infty} \Ll( 2^{k \al } \| \dk Z \|_{L^p(\T^n)} \Rr)^q \Rr)^{\frac{1}{q}}\; = \Ll\| \Ll( 2^{k \al } \| \dk Z \|_{L^p(\T^n)} \Rr)_{k \ge -1} \Rr\|_{\ell^q}
\end{equation*}
with the usual interpretation (as a supremum) for $q = \infty$. We define the Besov space $\Bb^\al_{p,q}$ as the closure of $\Cc^\infty(\T^n)$  with respect to this norm. Note that if $p,q = \infty$, this deviates slightly from the 
standard definition to take all distributions for which this norm is finite. Our choice to take a slightly smaller space has the advantage to yield separable spaces.

The operator $\dk$ is best thought of as a convolution operator. Letting 
\begin{equation}\label{e:DEFek}
\eta_k(x) = \frac{1}{2^n} \sum_{\om \in \Z^d} \chi_k(\om) \, e^{i\pi \om \cdot x} \qquad (k \ge -1),
\end{equation}
we observe that
\begin{equation}
\dk Z = \eta_k \star Z\;, \label{e:dkConvProp}
\end{equation}
where $\star$ denotes the convolution in $\T^n$.

%%%%%%%%%%%%%%%%%%%%%%%
\begin{lemma}
%%%%%%%%%%%%%%%%%%%%%%%
\label{l:unif-Fourier-cut}
%%%%%%%%%%%%%%%%%%%%%%%
For every $\al > 0$, there exists $C = C(\al)$ such that for any $R \ge 1$, if $\hat{Z}(\om) = 0$ for all $\om \in \Z^n$ such that $|\om| > R$, then 
$$
\|Z\|_{L^\infty(\T^n)} \le CR^\al \|Z\|_{\Ca}.
$$
\end{lemma}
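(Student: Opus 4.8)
The plan is to run the standard Littlewood--Paley / Bernstein argument, exploiting that the spectral support condition leaves only finitely many non-zero blocks $\dk Z$. First I would observe that by the support property \eqref{chi-prop2}, for $k \ge 0$ the function $\chi_k = \chi(\cdot/2^k)$ is supported in the annulus $B(0, 8\cdot 2^k/3) \setminus B(0, 3 \cdot 2^k/4)$, so $\dk Z$ can only be non-zero if there exists $\om \in \Z^n$ with $3 \cdot 2^k / 4 \le |\om| \le R$; this forces $2^k \le 4R/3$. Consequently there is a largest index $K = K(R)$ with $\dk Z \not\equiv 0$, and $2^K \le 4R/3 \le 2R$ (if $Z \equiv 0$ the statement is trivial).

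Next, since $Z = \sum_{k=-1}^{K} \dk Z$ is now a \emph{finite} sum, the triangle inequality in $L^\infty(\T^n)$ gives $\|Z\|_{L^\infty(\T^n)} \le \sum_{k=-1}^{K} \|\dk Z\|_{L^\infty(\T^n)}$. By the very definition of the Besov norm of $\Ca = B^{-\al}_{\infty,\infty}$ one has $\|\dk Z\|_{L^\infty(\T^n)} \le 2^{k\al} \|Z\|_{\Ca}$ for every $k \ge -1$. Summing the geometric series $\sum_{k=-1}^{K} 2^{k\al}$, whose value is at most $C(\al)\, 2^{K\al}$ because $2^\al > 1$, and inserting $2^K \le 2R$, I obtain
\[
\|Z\|_{L^\infty(\T^n)} \;\le\; \Big(\sum_{k=-1}^{K} 2^{k\al}\Big) \|Z\|_{\Ca} \;\le\; C(\al)\, 2^{K\al}\, \|Z\|_{\Ca} \;\le\; C(\al)\, R^\al\, \|Z\|_{\Ca},
\]
which is exactly the claimed bound.

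I do not expect any real obstacle here: this is a classical Bernstein-type embedding and every step is elementary. The only points that need a little care are identifying the cutoff index $K$ from the precise support of the $\chi_k$, and noting that the sum over $k$ runs \emph{upward} and is therefore dominated by its top term $2^{K\al}$ (so the constant depends on $\al$ and the dependence on $R$ is genuinely $R^\al$), rather than being bounded uniformly in $R$ as would happen for $\al < 0$.
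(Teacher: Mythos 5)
Your proof is correct and follows essentially the same route as the paper: identify the maximal index $K$ with $\delta_K Z \not\equiv 0$ using the annular support of $\chi_k$ and the spectral cutoff at $R$, apply the Besov definition $\|\delta_k Z\|_{L^\infty} \le 2^{k\al}\|Z\|_{\Ca}$ blockwise, and dominate the finite geometric sum by $C(\al) 2^{K\al} \le C(\al) R^\al$. (You also correctly start the sum at $k=-1$, whereas the paper's displayed sum begins at $k=0$, which appears to be a harmless typo.)
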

%%%%%%%%%%%%%%%%%%%%%%%%
%
%%%%%%%%%%%%%%%%%%%%%%%%
\begin{proof}
By definition of the Besov norm
\begin{equation*}
\|\delta_k Z\|_{L^\infty(\T^n)} \le 2^{k \al} \|Z\|_{\Ca}.
\end{equation*}
Let $l$ be the smallest integer such that $2^l > R$. The assumption on $Z$ ensures that
$$
Z = \sum_{k = 0}^l \delta_k Z,
$$
hence
$$
\|Z\|_{L^\infty(\T^n)}  \le  \sum_{k = 0}^l \|\delta_k Z\|_{L^\infty(\T^n)}
 \le  \|Z\|_{\Ca} \sum_{k = 0}^l 2^{k \al} \le C 2^{l\al} \|Z\|_{\Ca}.
$$
We get the result by observing that $2^{l} \le 2R$.
\end{proof}
%%%%%%%%%%%%%%%%%%%%%%%%

The following \emph{Kolmogorov-like} lemma is used for deriving a priori bounds in Section~\ref{sec:linear}. 

%%%%%%%%%%%%%%%%%%%%%%%
\begin{proposition}\label{prop:Kolmogorov} 
%%%%%%%%%%%%%%%%%%%%%%%
Let $r \mapsto Z(r,\cdot)$ be a cadlag process taking values in $\Cc^{\infty}(\T^n)$.   For any $\al \in \R$, $1 < p < \infty$ and $\al_{\star} < \al - \frac{n}{p}$, there exists a constant $C = C(p,n, \al,\al_{\star})$ such that  
\begin{equation*}
\E  \sup_{0 \leq r < \infty} \| Z(r,\cdot)\|_{\Cc^{\al_{\star}}}^p \leq C \,  \sup_{k \geq 0} \sup_{x \in \T^n} 2^{k \al p } \,  \E \sup_{0 \leq r < \infty}\big|\dk Z(r,x) \big|^p \; .
\end{equation*}
\end{proposition}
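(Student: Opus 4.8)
The plan is to establish the Besov norm bound by controlling each Littlewood--Paley block $\delta_k Z(r,\cdot)$ in $L^\infty(\T^n)$ via a Kolmogorov-type continuity argument in the spatial variable, then summing over $k$. The key observation is that for a fixed $k$, the function $x \mapsto \delta_k Z(r,x)$ has spectrum localised in a ball of radius $\lesssim 2^k$, so it is a trigonometric polynomial of degree $\lesssim 2^k$, and control of its values at a sufficiently fine grid (of mesh $\sim 2^{-k}$) together with a Bernstein-type gradient bound gives control of its supremum. More precisely, for any $x, y \in \T^n$ one has $|\delta_k Z(r,x) - \delta_k Z(r,y)| \le \|\nabla \delta_k Z(r,\cdot)\|_{L^\infty} |x-y| \lesssim 2^k \|\delta_k Z(r,\cdot)\|_{L^\infty}|x-y|$; iterating this or using a dyadic chaining shows $\|\delta_k Z(r,\cdot)\|_{L^\infty(\T^n)} \lesssim \sup_{x \in G_k} |\delta_k Z(r,x)|$ where $G_k$ is a grid of mesh $c\,2^{-k}$ (with $\#G_k \lesssim 2^{kn}$), provided the implied constant absorbs the Bernstein factor, which works because we only need a comparison up to a fixed multiplicative constant independent of $k$.

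First I would make the grid reduction rigorous: fix $k \ge -1$ and let $G_k$ be a maximal $2^{-k}$-separated set in $\T^n$, so $\#G_k \le C\, 2^{kn}$. Using the Bernstein inequality (which follows from Lemma~\ref{l:Bernstein} applied to the spectral cutoff $\chi_k$, giving $\|\nabla \delta_k Z(r,\cdot)\|_{L^\infty} \le C\, 2^k \|\delta_k Z(r,\cdot)\|_{L^\infty}$), I would deduce
$$
\|\delta_k Z(r,\cdot)\|_{L^\infty(\T^n)} \le C \max_{x \in G_k} |\delta_k Z(r,x)|
$$
for a constant $C$ independent of $k$ and $r$; this is the standard consequence that a band-limited function is comparable to its samples on a fine enough grid. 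Taking the supremum over $r$ inside and then the $p$-th moment, and using $\#G_k \le C\,2^{kn}$ together with $\E \sup_r \max_{x \in G_k} |\cdot|^p \le \sum_{x \in G_k} \E \sup_r |\cdot|^p$, yields
$$
\E \sup_{0 \le r < \infty} \|\delta_k Z(r,\cdot)\|_{L^\infty(\T^n)}^p \le C\, 2^{kn} \sup_{x \in \T^n} \E \sup_{0 \le r < \infty} |\delta_k Z(r,x)|^p.
$$

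Next I would assemble the Besov norm. By definition $\|Z(r,\cdot)\|_{\Cc^{\al_\star}} = \sup_{k \ge -1} 2^{k\al_\star} \|\delta_k Z(r,\cdot)\|_{L^\infty(\T^n)}$, and since this is a supremum over countably many nonnegative quantities, $\sup_r \|Z(r,\cdot)\|_{\Cc^{\al_\star}}^p = \sup_r \sup_k (2^{k\al_\star}\|\delta_k Z(r,\cdot)\|_{L^\infty})^p$. To convert the supremum over $k$ into a summable quantity I would fix $\theta > 0$ small enough that $\al_\star + \theta < \al - n/p$, i.e. $(\al - \al_\star - \theta)p > n$, and bound
$$
\sup_{k \ge -1} \bigl(2^{k\al_\star}\|\delta_k Z(r,\cdot)\|_{L^\infty}\bigr)^p \le \sum_{k \ge -1} \bigl(2^{k\al_\star}\|\delta_k Z(r,\cdot)\|_{L^\infty}\bigr)^p.
$$
Taking $\sup_r$ and expectation, and plugging in the grid estimate from the previous step, gives
$$
\E \sup_{r} \|Z(r,\cdot)\|_{\Cc^{\al_\star}}^p \le C \sum_{k \ge -1} 2^{k\al_\star p}\, 2^{kn}\, \sup_{x} \E \sup_r |\delta_k Z(r,x)|^p \le C \Bigl(\sup_{k \ge 0}\sup_x 2^{k\al p}\, \E \sup_r |\delta_k Z(r,x)|^p\Bigr) \sum_{k \ge -1} 2^{k(\al_\star + n/p - \al)p},
$$
and the geometric series converges precisely because $(\al - \al_\star)p > n$. (The $k=-1$ term is handled separately and absorbed into the constant, or one notes $2^{-\al p}$ comparable to a constant.) This gives the claimed bound.

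The main obstacle is the grid reduction step: one must verify carefully that a trigonometric polynomial of degree $\lesssim 2^k$ on $\T^n$ is controlled by its supremum over a grid of mesh $c\, 2^{-k}$ with a constant uniform in $k$. This is where Lemma~\ref{l:Bernstein} (Bernstein's inequality) does the work --- the gradient bound lets one transfer from grid points to a neighbourhood, and one then needs the grid mesh $c$ chosen small enough (independently of $k$) that $C\, 2^k \cdot c\, 2^{-k} = Cc < 1$, so that the maximum over the grid dominates the global supremum up to a factor $(1-Cc)^{-1}$. An alternative, slightly cleaner route that avoids choosing $c$ is to iterate the oscillation bound over dyadic subgrids, but the effect is the same. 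A secondary technical point is measurability/applicability of the cadlag assumption: since $Z$ takes values in $\Cc^\infty$, each $\delta_k Z(\cdot,x)$ is a real-valued cadlag process, so $\sup_{0\le r<\infty}|\delta_k Z(r,x)|$ is a genuine (measurable) random variable, and the union bound $\E\sup_r \max_{x\in G_k}(\cdot) \le \sum_{x\in G_k}\E\sup_r(\cdot)$ is legitimate; this should be remarked but requires no real work.
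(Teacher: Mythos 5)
Your argument is correct, but it implements the key estimate
$\E \sup_r \|\dk Z(r,\cdot)\|_{L^\infty}^p \le C\, 2^{kn} \sup_x \E \sup_r |\dk Z(r,x)|^p$
by a route different from the paper's. The paper observes that $\dk Z = \bar\eta_k \star \dk Z$ for a slightly fatter smooth cutoff $\bar\chi$ (equal to $1$ on the support of $\chi_k$), so that by H\"older
$\|\dk Z(r,\cdot)\|_{L^\infty} \le \|\bar\eta_k\|_{L^{p'}} \|\dk Z(r,\cdot)\|_{L^p}$,
and then invokes Lemma~\ref{l:Bernstein} directly to get $\|\bar\eta_k\|_{L^{p'}} \le C\, 2^{kn/p}$ (a Nikolskii / reverse-H\"older inequality for band-limited functions); the $L^p$ norm is then converted into the pointwise form by integrating $\E\sup_r |\dk Z(r,x)|^p$ over the torus. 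You instead discretise the spatial supremum onto a grid $G_k$ of mesh $\sim 2^{-k}$, using the Bernstein gradient bound $\|\nabla\dk Z\|_{L^\infty}\lesssim 2^k\|\dk Z\|_{L^\infty}$ (which, as you say, is obtainable from Lemma~\ref{l:Bernstein} applied to the kernel of $\nabla\bar\eta_k$), and then use a union bound over $\#G_k\sim 2^{kn}$ points. The two routes pick up the same $2^{kn}$ factor; yours requires one extra lemma (the gradient bound) and a choice of mesh constant $c$ with $Cc<1$, whereas the paper's avoids both by using Lemma~\ref{l:Bernstein} once as a kernel estimate and integrating rather than sampling. The geometric-series assembly, the need for $\al_\star + n/p < \al$, and the measurability remarks are the same in both. (One cosmetic slip: you introduce a parameter $\theta$ that is never used.)
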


%%%%%%%%%%%%%%%%%%%%%%%%%
%
%%%%%%%%%%%%%%%%%%%%%%%%%
\begin{proof} 
Let $\bar{\chi}\colon \R^n \to \R$ be a smooth non-negative function which is identical to $1$ on the unit ball $B(0,3)$ and with compact support contained in $B(0,4)$. For $k \geq -1$, set
\begin{equation*}
\bar{\eta}_k(x) =  \frac{1}{2^n} \sum_{\om \in \Z^n} e^{i \pi \om \cdot x} \, \bar{\chi}( 2^{-k}\om) \qquad (x \in \T^n).
\end{equation*}
For any $k \geq -1$, we have  $\dk Z \star \bar{\eta}_k = \dk Z$, and hence by H\"older's inequality we get that for any $0 \leq r < \infty$,
\begin{equation*}
\sup_{x \in \T^n} |  \dk Z(r,x) | = \sup_{x \in \T^n } \Big| \int_{\T^n}\dk Z (r,x-y) \,\bar{\eta}_k (y) dy \Big| \leq \|\dk Z(r, \cdot) \|_{L^p(\T^n)} \, \| \bar{\eta}_k \|_{L^{p'}(\T^n)}\;,
\end{equation*} 
where $\frac{1}{p} + \frac{1}{p'} =1$. Lemma~\ref{l:Bernstein} implies that
\begin{equation*}
\| \bar{\eta}_k \|_{L^{p'}(\T^n)} \leq C(p,n) 2^{ \frac{kn}{p}}\;,
\end{equation*}
so that
\begin{equation*}
\| \dk Z(r, \cdot) \|_{L^{\infty}(\T^n)}^p    \leq C(p,n)  \,2^{kn} \, \| \dk Z(r, \cdot)\|_{L^{p}(\T^n)}^p\;.
\end{equation*}
Using this estimate, we get
\begin{align*}
\sup_{0 \leq r < \infty}& \sup_{k \geq -1} 2^{ k p \al_{\star}  } \| \dk Z(r, \cdot) \|_{L^{\infty}(\T^n)}^p \\
 &\leq \sup_{0 \leq r < \infty} \sum_{k \geq -1} 2^{k p \al_{\star}  } \| \dk Z(r, \cdot) \|_{L^{\infty}(\T^n)}^p \\
&\leq C(p,n) \sum_{k \geq -1} 2^{k p (\al_{\star}   +\frac{ n}{p})}\sup_{0 \leq r < \infty} \| \dk Z(r, \cdot)\|_{L^{p}(\T^n)}^p\;. 
\end{align*}
Therefore,
\begin{align*}
\E \sup_{0 \leq r < \infty} \| Z(r,\cdot) \|_{\Cc^{\al_\star}}^p &\leq C(p,n) \sum_{k \geq 0} 2^{k p (\al_{\star}   +\frac{ n}{p})} \int_{\T^n } \E\sup_{0 \leq r < \infty} | \dk Z(r,x) |^p  \, dx \\
& \leq C(p,n) \sum_{k \geq 0} 2^{ kp  (\al_{\star} +  \frac{n}{p} -\al)}  \sup_{x \in \T^n}  \sup_{k \geq 0} 2^{ k\al p }\,  \E \sup_{0 \leq r < \infty}\big|\dk Z(r,x)\big|^p  \\
& \leq C(p,n,\al,\al_{\star})  \sup_{x \in \T^n} \sup_{k \geq 0} 2^{ k\al p } \, \E\sup_{0 \leq r < \infty}\big|\dk Z(r,x)\big|^p  \; .
\end{align*}
\end{proof}

In Section~\ref{sec:Nonlinear}, we make use of the following multiplicative inequality.
\begin{lemma}\label{le:Besov-multiplicative}
Let $\beta < 0 < \al$ with $\al + \beta >0$. Then the mapping $(Z_1, Z_2) \mapsto Z_1 Z_2$ defined for $Z_1, Z_2 \in \Cc^\infty$ extends uniquely to a continuous bilinear mapping from $\Cc^\al \times \Cc^\beta \to \Cc^\beta$.
That is, there exists a constant $C$ depending only on $\al$ and $\beta$ such that 
\begin{equation*}
\| Z_1 \, Z_2 \|_{\Cc^\beta} \leq C \|Z_1 \|_{\Cc^\al} \, \| Z_2\|_{\Cc^\beta}\;. 
\end{equation*}
\end{lemma}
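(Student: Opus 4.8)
The statement is the classical paraproduct estimate (Bony's decomposition) for Besov spaces. The plan is to decompose the product $Z_1 Z_2$ into the three pieces $Z_1 \prec Z_2$, $Z_1 \succ Z_2$ and $Z_1 \circ Z_2$ (paraproduct below, above, and resonant term), and to estimate each of them separately using the Littlewood-Paley characterisation of $\Cc^\al = \Bb^\al_{\infty,\infty}$ and the Bernstein-type inequality in Lemma~\ref{l:Bernstein}. Recall that for $Z_1, Z_2 \in \Cc^\infty$ one writes $Z_1 Z_2 = \sum_{i,j \ge -1} \delta_i Z_1 \, \delta_j Z_2$ and splits the sum according to whether $i < j - 1$, $|i-j|\le 1$, or $i > j+1$.

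First I would treat the low-high paraproduct $Z_1 \prec Z_2 := \sum_{j} S_{j-1} Z_1 \, \delta_j Z_2$, where $S_{j-1} Z_1 = \sum_{i \le j-2} \delta_i Z_1$. The key point is that $\delta_i(S_{j-1}Z_1 \, \delta_j Z_2)$ vanishes unless $i \sim j$, so the block $\delta_i$ of the sum involves only finitely many $j$'s with $j \sim i$; then $\|S_{j-1} Z_1\|_{L^\infty} \le C \|Z_1\|_{\Cc^\al}$ (this uses $\al > 0$, summing the geometric series $\sum_{i \le j-2} 2^{-i\al}$ in the wrong direction gives a uniform bound since the worst term is $i$ near $-1$; more precisely $\|S_{j-1}Z_1\|_{L^\infty} \le \sum_{i\le j-2} 2^{-i\al}\|Z_1\|_{\Cc^\al}$, which is bounded because the smallest frequencies dominate — actually one gets $\|S_{j-1} Z_1\|_{L^\infty} \le C\|Z_1\|_{\Cc^\al}$ uniformly), and $\|\delta_j Z_2\|_{L^\infty} \le 2^{-j\beta}\|Z_2\|_{\Cc^\beta}$, so $\|\delta_i(Z_1 \prec Z_2)\|_{L^\infty} \le C 2^{-i\beta} \|Z_1\|_{\Cc^\al}\|Z_2\|_{\Cc^\beta}$, giving the bound in $\Cc^\beta$. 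The high-low term $Z_1 \succ Z_2$ is symmetric but now the low-frequency factor is $S_{j-1}Z_2$, and here we crucially use $\beta < 0$ is fine for the $\succ$ part when testing regularity — actually for $Z_1 \succ Z_2 = \sum_j \delta_j Z_1 \, S_{j-1} Z_2$ one has $\|S_{j-1}Z_2\|_{L^\infty} \le \sum_{i \le j-2} 2^{-i\beta}\|Z_2\|_{\Cc^\beta} \le C 2^{-(j-2)\beta}\|Z_2\|_{\Cc^\beta}$ since $\beta < 0$, and $\|\delta_j Z_1\|_{L^\infty}\le 2^{-j\al}\|Z_1\|_{\Cc^\al}$, so the block-$i$ contribution is bounded by $C 2^{-i(\al+\beta)}\|Z_1\|_{\Cc^\al}\|Z_2\|_{\Cc^\beta}$, and since $\al + \beta > 0 > \beta$ this is even better than the required $2^{-i\beta}$.

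The resonant term $Z_1 \circ Z_2 := \sum_{|i-j|\le 1} \delta_i Z_1 \, \delta_j Z_2$ is the one where I expect the only genuine subtlety: here $\delta_k(\delta_i Z_1 \delta_j Z_2)$ is nonzero only for $i,j \gtrsim k$ (the product of two functions with frequencies $\sim 2^i \sim 2^j$ has frequency $\lesssim 2^{i+1}$, so $k \le i + C$), and one must sum $\sum_{i \ge k - C} \|\delta_i Z_1\|_{L^\infty}\|\delta_i Z_2\|_{L^\infty} \le \|Z_1\|_{\Cc^\al}\|Z_2\|_{\Cc^\beta} \sum_{i \ge k-C} 2^{-i(\al+\beta)}$, and this geometric series converges precisely because $\al + \beta > 0$, yielding a bound of order $2^{-k(\al+\beta)}\|Z_1\|_{\Cc^\al}\|Z_2\|_{\Cc^\beta}$, which dominates $2^{-k\beta}$ as needed since $\al > 0$. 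Thus the resonant term lands in $\Cc^{\al+\beta} \subset \Cc^\beta$. Finally I would collect the three bounds: each piece satisfies $\|\delta_k(\cdot)\|_{L^\infty} \le C 2^{-k\beta}\|Z_1\|_{\Cc^\al}\|Z_2\|_{\Cc^\beta}$ uniformly in $k$, hence $\|Z_1 Z_2\|_{\Cc^\beta} \le C\|Z_1\|_{\Cc^\al}\|Z_2\|_{\Cc^\beta}$, and by density of $\Cc^\infty$ in $\Cc^\al$ and $\Cc^\beta$ (which is how these spaces were defined above) the bilinear map extends uniquely and continuously. The main obstacle — really the only place where the hypotheses are used in an essential, non-cosmetic way — is controlling the resonant term, where the condition $\al + \beta > 0$ is exactly what makes the relevant sum over high frequencies summable; everything else is bookkeeping with geometric series. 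Alternatively, one can simply cite \cite[Theorem~2.85]{BCD} or \cite[Proposition~2.3]{Gubi}, which contain exactly this statement, and only sketch the paraproduct argument for completeness.
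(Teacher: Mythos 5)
Your proof is correct, and it fills in what the paper deliberately omits: the paper does not prove this lemma at all, it simply cites \cite[Theorem 2.85]{BCD} (and remarks that the torus case is identical to the full-space case). The Bony paraproduct decomposition you carry out is precisely the argument behind that citation. Your treatment of all three pieces is sound: for $Z_1 \prec Z_2$ and $Z_1 \succ Z_2$ the spectral localisation forces $i\sim j$ so only $O(1)$ blocks contribute at each output scale, the bound $\|S_{j-1}Z_1\|_{L^\infty}\le C\|Z_1\|_{\Cc^\al}$ holds uniformly in $j$ because the index $i$ is bounded below by $-1$ (so the diverging geometric factor $2^{-i\al}$, $\al>0$, never blows up), while $\|S_{j-1}Z_2\|_{L^\infty}\lesssim 2^{-j\beta}\|Z_2\|_{\Cc^\beta}$ uses $\beta<0$; and for the resonant term $Z_1\circ Z_2$, the output block $\delta_k$ picks up a sum over $i\gtrsim k$, which converges exactly because $\al+\beta>0$. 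You also correctly note at the end that one can simply cite \cite[Theorem~2.85]{BCD}, which is what the paper does — so your proof and the paper's reference are the same mathematics, with the only difference being that you spell it out.
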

The proof of this statement in the case of the full space can be found in Sections 2.6 and 2.5 of \cite{BCD} (see in particular Theorem 2.85). The argument on the torus is exactly the same, so we do not replicate it here.   

Finally, we provide a bound on the $L^{\infty}$ norm of a function defined by extension from a grid by trigonometric polynomials.

%%%%%%%%%%%%%%%%%%%
\begin{lemma}
%%%%%%%%%%%%%%%%%%%
\label{le:LinftyDiscreteCont}
%%%%%%%%%%%%%%%%%%%
For $N \in \N$, set $\eps = \frac{2}{2N+1}$ and let $\Le = \{ x \in \eps \Z^n \colon -1 < x_i < 1 \, \text{ for } i = 1, \ldots, n   \}$. For any $Z \colon \Le \to \R$, we define the extension 
\begin{align*}
\Ex \,Z(x) = 
  \sum_{z \in \Le}  \frac{\eps^n}{2^n} \; Z(z) \prod_{j=1}^n \frac{\sin\Big(\frac{\pi}{2} (2N+ 1)   (x_j-z_j)  \Big)}{ \sin\big(    \frac{\pi}{2}  (x_j-z_j) \big)}  \;,
\end{align*}
defined for all $x \in \T^n$. Then for any $\ka >0$, there exists a constant $C=C(\ka,n)$ such that 
\begin{equation*}
\big\|  \Ex \, Z \big\|_{L^\infty(\T^n)} \leq C \, \eps^{-\ka} \big\|  Z \big\|_{L^\infty(\Le)}\;.
\end{equation*}
\end{lemma}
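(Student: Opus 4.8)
The statement is about bounding the sup-norm of the trigonometric-polynomial extension $\Ex\,Z$ of a grid function $Z\colon\Le\to\R$ by $C\eps^{-\ka}\|Z\|_{L^\infty(\Le)}$. The extension kernel is a product of one-dimensional Dirichlet-type kernels $D_N(u) := \dfrac{\sin\big(\frac{\pi}{2}(2N+1)u\big)}{\sin\big(\frac{\pi}{2}u\big)}$, so $\Ex\,Z(x) = \sum_{z\in\Le}\frac{\eps^n}{2^n}Z(z)\prod_{j=1}^n D_N(x_j-z_j)$. Hence
\[
\|\Ex\,Z\|_{L^\infty(\T^n)} \le \|Z\|_{L^\infty(\Le)}\,\sup_{x\in\T^n}\sum_{z\in\Le}\frac{\eps^n}{2^n}\prod_{j=1}^n\big|D_N(x_j-z_j)\big|
\le \|Z\|_{L^\infty(\Le)}\Big(\sup_{t\in\T}\sum_{z_1\in\Le^{(1)}}\frac{\eps}{2}\big|D_N(t-z_1)\big|\Big)^n,
\]
where $\Le^{(1)}=\{z\in\eps\Z\colon -1<z<1\}$ is the one-dimensional grid. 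So the whole problem reduces to the one-dimensional Lebesgue-constant-type estimate: $\sup_{t}\sum_{z\in\Le^{(1)}}\frac{\eps}{2}|D_N(t-z)|\le C\eps^{-\ka}$ for every $\ka>0$. Actually the sharp bound here is $O(\log N)=O(\log\eps^{-1})$, which is certainly $\le C\eps^{-\ka}$, so the statement has plenty of room; I will just prove the logarithmic bound.

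\textbf{The one-dimensional estimate.} Fix $t\in\T$ and consider $S(t) := \sum_{z\in\Le^{(1)}}\frac{\eps}{2}|D_N(t-z)|$. The numerator $|\sin(\frac{\pi}{2}(2N+1)(t-z))|$ is bounded by $1$, so $|D_N(t-z)|\le \min\big(2N+1,\ \frac{1}{|\sin(\frac{\pi}{2}(t-z))|}\big)$, using that $|D_N(u)|\le 2N+1$ at the peak (L'H\^opital, or the bound $|\sin(k\theta)|\le k|\sin\theta|$). Since the points $z$ lie on an $\eps$-spaced grid with $\eps=\frac{2}{2N+1}$, and since $|\sin(\frac{\pi}{2}u)|\gtrsim |u|$ for $u\in[-1,1]$ (distances on $\T=[-1,1]$), we get
\[
S(t)\ \lesssim\ \eps(2N+1) + \eps\sum_{\substack{z\in\Le^{(1)}\\ \eps\le |t-z|\le 1}}\frac{1}{|t-z|}\ \lesssim\ 1 + \eps\sum_{m=1}^{O(\eps^{-1})}\frac{1}{m\eps}\ \lesssim\ 1 + \log(\eps^{-1}),
\]
where in the middle step I grouped the grid points $z$ by their distance $|t-z|\approx m\eps$ to $t$ (at most two such points for each $m\ge 1$, by the grid structure and periodicity), and the nearest grid point contributes the $\eps(2N+1)=O(1)$ term. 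This gives $S(t)\le C\log(\eps^{-1})$ uniformly in $t$, and raising to the $n$-th power and absorbing $(\log\eps^{-1})^n\le C(\ka,n)\eps^{-\ka}$ finishes the proof.

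\textbf{Main obstacle.} There is no serious obstacle; the only mild care needed is the book-keeping of the one-dimensional sum — correctly handling the peak point $z$ closest to $t$ (where the bound $2N+1$ is used and $\eps(2N+1)=O(1)$), correctly counting how many grid points fall in each dyadic (or arithmetic) distance band from $t$, and using periodicity so that the sum over $z\in\Le^{(1)}$ is genuinely a sum over an $\eps$-net of the circle rather than of an interval. I would also note explicitly the elementary inequality $|\sin(k\theta)|\le k|\sin\theta|$ (for integer $k$) to get the uniform bound $|D_N|\le 2N+1$, and $\frac{2}{\pi}|\theta|\le|\sin\theta|$ on $|\theta|\le\pi/2$ to lower-bound the denominator. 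One could alternatively reduce to Lemma~\ref{l:Bernstein}, by recognizing $\prod_j D_N(x_j-z_j)$ (up to normalization) as the Dirichlet kernel $\eta$ associated with the indicator-type cutoff $\chi=\mathbf 1_{[-N,N]^n}$, whose $L^1(\T^n)$ norm is $O((\log N)^n)$ — but since the $\mathbf 1$ is not smooth, Lemma~\ref{l:Bernstein} does not directly apply, so the hands-on argument above is cleaner and self-contained.
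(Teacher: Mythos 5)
Your proof is correct, and it takes a genuinely different route from the paper's. The paper observes that $\frac{\eps^n}{2^n}$ is a probability measure on $\Le$ and applies Jensen's/H\"older's inequality (the paper calls it ``Young's'') to replace the $L^1$ sum by an $L^p$ sum: writing each factor $D_N = (2N+1)\cdot\frac{D_N}{2N+1}$ and raising to the $p$-th power pulls out the factor $(2N+1)^{n(p-1)/p} \approx \eps^{-n(p-1)/p}$, while the residual $p$-th-power sum factorizes into one-dimensional sums that converge uniformly in $N$ precisely because $p>1$. Choosing $p=n/(n-\ka)$ then yields the stated $\eps^{-\ka}$. This is a soft argument that deliberately sacrifices sharpness for brevity: no dyadic bookkeeping or explicit Lebesgue-constant estimate is needed. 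You instead prove the sharp one-dimensional Lebesgue-constant bound $\sup_t \sum_{z\in\Le^{(1)}}\frac{\eps}{2}|D_N(t-z)|\lesssim \log(\eps^{-1})$ directly (nearest point treated via $|D_N|\le 2N+1$, remaining points grouped by distance $m\eps$ with a harmonic-sum tail), obtaining the stronger $(\log\eps^{-1})^n$ bound and then noting this is $\le C(\ka,n)\eps^{-\ka}$. Both arguments rely on the same factorization of the $n$-dimensional sum into one-dimensional sums and the elementary inequalities $|D_N(u)|\le 2N+1$, $|\sin(\frac\pi2 u)|\gtrsim|u|$; the paper's version is shorter to typeset, while yours exposes the sharp $(\log N)^n$ growth rate and is arguably more informative. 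Your remark about reducing to Lemma~\ref{l:Bernstein} is also astute: that lemma requires a smooth cutoff $\chi$, so the sharp indicator cutoff genuinely falls outside its scope, and a self-contained estimate is the right move.
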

%%%%%%%%%%%%%%%%%%%
%
%
%%%%%%%%%%%%%%%%%%
\begin{proof}
%%%%%%%%%%%%%%%%%%
For $p >1$ we get from Young's inequality that
\begin{align*}
\sum_{z \in \Le} & \frac{\eps^n}{2^n} \prod_{j=1}^n  \Bigg| \frac{\sin\Big(\frac{\pi}{2} (2N+ 1)   (x_j-z_j)  \Big)}{ \sin\big(    \frac{\pi}{2}  (x_j-z_j) \big)}  \Bigg| \\
&\leq    \,\; \Bigg( \sum_{z \in \Le}  \frac{\eps^n}{2^n} \prod_{j=1}^n  \Bigg| \frac{\sin\Big(\frac{\pi}{2} (2N+ 1)   (x_j-z_j)  \Big)}{\sin\big(    \frac{\pi}{2}  (x_j-z_j) \big)}  \Bigg|^p \Bigg)^{\frac{1}{p}} \\
&=    \,\; (2N+1)^{\frac{n(p-1)}{p}}  \Bigg( \sum_{z \in \Le} \prod_{j=1}^n   \Bigg| \frac{\sin\Big(\frac{\pi}{2} (2N+ 1)   (x_j-z_j)  \Big)}{(2N+ 1) \sin\big(    \frac{\pi}{2}  (x_j-z_j) \big)}  \Bigg|^p \Bigg)^{\frac{1}{p}}\\
&\leq C(n)  \;\eps^{- \frac{n(p-1)}{p}} \sum_{\tilde{z} \in \{ -N, \ldots ,N\}^n } \big(1 \wedge |\tilde{z}|^{-n p} \big) \;.
\end{align*}
Here in the last line we have made use of the index change $\tilde{z_i} =  \lfloor 2(N+1)(x_i-z_i) \rfloor  $ (interpreted with periodic boundary condition on $\{-N, \ldots, N\}$). Now by choosing $p= \frac{n}{n-\ka}>1$ we obtain the desired result.
%
%
%%%%%%%%%%%%%%%%%
\end{proof}
%%%%%%%%%%%%%%%%

%

%%%%%%%%%%%%%%%%%%%%%%%%
\section{Martingales and an It\^o formula with jumps}
%%%%%%%%%%%%%%%%%%%%%%%%
\label{AppA}
Let $M = (M(t))_{t \ge 0}$ be a cadlag square-integrable martingale (with $M(0) = 0$). Its \emph{predictable quadratic variation} $\langle M \rangle_t$ is the unique increasing (in the wide sense) process such that $M^2(t) - \langle M \rangle_t$ is a martingale. Its \emph{bracket process} is
$$
[M]_t = M^2(t) - 2 \int M({s^{-}}) \, d M(s),
$$
where $M(s^{-})$ is the left limit of $M$ at time $s$. A convenient way to think about these processes consists in  noting that the bracket process is the limit in probability of
$$
\sum_{i = 0}^{n-1} \Ll(M({t_{i+1}}) - M({t_i})\Rr)^2
$$
as the subdivision $0 = t_0 \le \cdots \le t_n = t$ gets finer and finer (\cite[Theorem~4.47]{jacshi}), while the predictable quadratic variation is the limit of
$$
\sum_{i = 0}^{n-1} \E\Ll[\Ll(M({t_{i+1}}) - M({t_i})\Rr)^2 \ \vert \ \mathscr{F}_{t_i} \Rr],
$$ 
where $(\mathscr{F}_{t})_{t \ge 0}$ is the underlying filtration (\cite[Proposition~4.50]{jacshi}).
In particular, if $M$ is of finite total variation, then the bracket process is simply the sum of the jumps squared:
$$
[M]_t = \sum_{0 < s \le t} \Ll(\Delta_s M\Rr)^2,
$$
where $\Delta_s M = M(s) - M({s^-})$.
If $M$ has continuous sample paths, then the predictable quadratic variation and the bracket process coincide (\cite[Theorem~4.47]{jacshi}). In every case, $[M]_t - \langle M \rangle_t$ is a martingale (\cite[Proposition~4.50]{jacshi}). 

Let $(\sigma(t))_{t \ge 0}$ be a Feller process. Denote by $\L$ its infinitesimal generator, with domain $\mathcal{D}(\L)$. For $f \in \mathcal{D}(\L)$,  the process 
$$
M_f(t) := f(\sigma(t)) - f(\sigma(0)) - \int_0^t \L f (\sigma(s)) \, d s
$$
is a martingale. Moreover, if $f^2 \in \mathcal{D}(\L)$, then its predictable quadratic variation is given by
$$
\langle M_f \rangle_t = \int_0^t \Gamma_f(\sigma(s)) \, ds,
$$
where $\Gamma_f = \L(f^2) - 2 f \L f$ is the so-called ``carr\'e du champ'' (see for instance \cite[Appendix B]{berry}). 
When $\L$ is the operator defined in \eqref{e:Generator}, we have
$$
\Gamma_f (\sigma)= \sum_{j \in \LN} \cg(\si,j) \big(f(\si^j) -f(\si) \big)^2. 
$$

The Burkholder-Davis-Gundy inequality below plays a key role in our analysis.

\begin{lemma}[Burkholder-Davis-Gundy inequality]
Let $M$ be a cadlag square integrable martingale. For any $p>0$, there exists a constant $C=C(p)$ such that for all $T >0$,
\begin{equation*}
\E \bigg[  \sup_{0 \leq t \leq T} |M(t)|^p \bigg] \leq C \bigg(\E \Big[  \langle M \rangle_t^{\frac{p}{2}} \Big]  + \E \bigg[  \sup_{0 \leq t \leq T}  |\Delta_t M  |^p  \bigg]   \bigg)\;, 
\end{equation*}
where we recall that $\Delta_t M := M(t) - M({t^-})$ denotes the jump of $M$ at $t$.  
\label{le:BDG} 
\end{lemma}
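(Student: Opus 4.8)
The plan is to derive Lemma~\ref{le:BDG} from the \emph{classical} form of the Burkholder--Davis--Gundy inequality, which compares $\E\sup_{t\le T}|M(t)|^p$ with $\E[M]_T^{p/2}$, where $[M]$ is the bracket process recalled above (this holds for every $p>0$; see e.g.\ \cite{jacshi}), together with a comparison between the bracket process and the predictable quadratic variation. Since the classical inequality already gives $\E\sup_{t\le T}|M(t)|^p\le C(p)\,\E[M]_T^{p/2}$, it suffices to establish
\[
\E\big[[M]_T^{p/2}\big] \le C(p)\Big( \E\big[\langle M\rangle_T^{p/2}\big] + \E\big[\sup_{0<s\le T}|\Delta_s M|^{p}\big]\Big).
\]

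First I would reduce to the case where all the quantities below are finite, by a standard truncation: replace $M$ by $M$ stopped at $\tau_n=\inf\{t\ge 0 : [M]_t> n\}$, prove the inequality for the stopped martingale (for which $[M^{\tau_n}]_T\le n+\sup_{s\le T}|\Delta_s M|^2$ is bounded in $L^{p/2}$, assuming as we may that $\E\sup_{s\le T}|\Delta_s M|^p<\infty$), and let $n\to\infty$ using monotone convergence together with $\langle M^{\tau_n}\rangle_T\le\langle M\rangle_T$ and $\sup_{s}|\Delta_s M^{\tau_n}|\le\sup_s|\Delta_s M|$. Next, set $N_t := [M]_t - \langle M\rangle_t$, a martingale with $N_0=0$; since $[M]_t\ge 0$ and $N_T\le|N_T|$ we have $[M]_T \le \langle M\rangle_T + |N_T|$. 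In every application in this paper the predictable quadratic variation is the absolutely continuous (hence continuous) process $\int_0^\cdot\Gamma_f(\sigma_s)\,ds$, so that $\Delta_s N=(\Delta_s M)^2$ and $N$ has finite variation; consequently
\[
[N]_T = \sum_{0<s\le T}(\Delta_s M)^4 \le \Big(\sup_{0<s\le T}|\Delta_s M|\Big)^2 [M]_T, \qquad \sup_{0<s\le T}|\Delta_s N| = \Big(\sup_{0<s\le T}|\Delta_s M|\Big)^2 .
\]
(If $\langle M\rangle$ had jumps — a case which does not arise here — one would replace $(\Delta_s M)^2$ by $(\Delta_s M)^2-\Delta_s\langle M\rangle$ throughout, with only notational changes.)

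The remaining step is to bound $\E|N_T|^{p/2}$. Applying the classical Burkholder--Davis--Gundy inequality to $N$ with exponent $p/2$ and inserting the two displays above gives
\[
\E|N_T|^{p/2}\le C(p)\Big( \E\big[\big(\sup_{s\le T}|\Delta_s M|\big)^{p/2}[M]_T^{p/4}\big] + \E\big[\sup_{s\le T}|\Delta_s M|^{p}\big] \Big).
\]
By Young's inequality, for any $\eta>0$ the first expectation is at most $\eta\,\E[M]_T^{p/2}+C(p,\eta)\,\E\sup_{s\le T}|\Delta_s M|^{p}$. Combining this with $[M]_T^{p/2}\le C(p)\big(\langle M\rangle_T^{p/2}+|N_T|^{p/2}\big)$ (subadditivity of $x\mapsto x^{p/2}$ when $p\le 2$, convexity when $p\ge 2$) and choosing $\eta$ small enough that the resulting $\E[M]_T^{p/2}$ term can be moved to the left-hand side — which is legitimate by the finiteness obtained from the truncation reduction — yields the claimed bound on $\E[M]_T^{p/2}$, and hence the lemma.

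I expect the main technical point to be precisely this absorption of the $\E[M]_T^{p/2}$ term, which is what forces the preliminary truncation/finiteness argument; the passage between $[M]$ and $\langle M\rangle$ via the martingale $N$ is otherwise routine once one notes that $\langle M\rangle$ is continuous in all the cases of interest, so that $\Delta N=(\Delta M)^2$.
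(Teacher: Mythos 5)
The paper does not give a proof of this lemma at all: it simply refers to Hall--Heyde (discrete time), ``approximation'' for continuous time, and a second reference for a direct continuous-time argument. Your proof therefore constitutes a genuine derivation and takes a genuinely different route. Structurally it is sound: the reduction of the lemma to the estimate $\E[M]_T^{p/2}\leq C_p(\E\langle M\rangle_T^{p/2}+\E\sup|\Delta M|^p)$, the introduction of the martingale $N=[M]-\langle M\rangle$, the identification $\Delta N=(\Delta M)^2$ when $\langle M\rangle$ is continuous, the bound $[N]_T\le(\sup|\Delta M|)^2[M]_T$, Young's inequality, the absorption, and the truncation to justify the absorption are all the right moves.

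Two points should be tightened. First, you say that ``applying the classical Burkholder--Davis--Gundy inequality to $N$ with exponent $p/2$'' yields a \emph{two}-term bound involving both $[N]_T^{p/4}$ and $\sup|\Delta N|^{p/2}$. The classical BDG inequality with the bracket process, $\E\sup_{s\le T}|X_s|^q\le C_q\,\E[X]_T^{q/2}$, has only \emph{one} term; the two-term form with a jump correction is essentially the inequality the lemma asserts (with $[\cdot]$ in place of $\langle\cdot\rangle$), so invoking it here reads as circular. This is harmless in the end, because the one-term bound $\E|N_T|^{p/2}\le C_p\E[N]_T^{p/4}$ already gives $\E|N_T|^{p/2}\le C_p\,\E\bigl[(\sup|\Delta M|)^{p/2}[M]_T^{p/4}\bigr]$, after which Young and absorption complete the argument exactly as you write; but you should drop the spurious second term and make clear that only the one-term bracket-form BDG for $N$ is being used.

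Second, you invoke this one-term bracket-form BDG with exponent $p/2$, which may lie in $(0,1)$. For c\`adl\`ag local martingales the upper estimate $\E\sup_s|X_s|^q\le C_q\E[X]_T^{q/2}$ does hold for \emph{all} $q>0$ (this goes back to Burkholder--Davis--Gundy, with the sub-unit exponents handled via the Davis decomposition or Lenglart domination; see Meyer's S\'eminaire de Probabilit\'es~X notes or Dellacherie--Meyer), but it is not the textbook version restricted to $q\ge1$, and since you also need it for the initial step $\E\sup|M|^p\le C_p\E[M]_T^{p/2}$ when $p<1$, a pointer to the all-$p>0$ statement is needed to make the proof self-contained. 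With these two clarifications the argument is correct, and it is a cleaner, more self-contained route than the paper's citation-based treatment.
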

\begin{proof}
The case of discrete-time martingales is covered by \cite[Theorem 2.11]{HaHe}. The continuous-time case can be recovered by approximation. Alternatively, one can consult for instance \cite{lelepr}.
\end{proof}

The following result, which can be found for instance in \cite[Theorem~II.7.31]{protter}, is akin to the fundamental theorem of calculus, but with integrands that may have jumps. 
\begin{lemma}
\label{l:ito-with-jumps}
Let $X = (X^{(1)},\ldots,X^{(n)}) : \R_+ \to \R^n$ be a cadlag process such that for every $i$, $X^{(i)}$ is of bounded total variation. Let $f : \R^n \to \R$ be a continuously differentiable function. Then $f(X) : \R \to \R$ is of bounded total variation and
\begin{multline}
\label{e:ito-with-jumps}
f(X(t)) - f(X(0)) \\
= \sum_{i = 1}^n \int_{s = 0}^t \frac{\partial f}{\partial x_i} (X({s^-})) \, dX^{(i)}(s) + \sum_{0 < s \le t} \Ll\{\Delta_s f(X) - \sum_{i = 1}^n \frac{\partial f}{\partial x_i} (X({s^-})) \, \Delta_s X^{(i)}\Rr\},
\end{multline}
where we recall our notation $\Delta_s F = F(s) - F(s^-)$ and $\int_{s = 0}^t = \int_{s \in (0,t]}$.
\end{lemma}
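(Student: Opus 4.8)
The plan is to reduce the general cadlag bounded-variation case to the classical Lebesgue--Stieltjes chain rule for continuous paths, and then to reinstate the jumps by an approximation argument. First I would fix $t>0$ and record the facts to be used: since $X$ is cadlag, its range over $[0,t]$ is contained in a compact convex set $K$ on which $\nabla f$ is uniformly continuous, with modulus of continuity $\omega$ and $L:=\sup_K|\nabla f|<\infty$; and since each $X^{(i)}$ has finite total variation $V_i$ on $[0,t]$, its jumps are absolutely summable, $\sum_{0<s\le t}|\Delta_sX^{(i)}|\le V_i$. The bounded variation of $f(X)$ follows immediately, since $|f(X(v))-f(X(u))|\le L\sum_i|X^{(i)}(v)-X^{(i)}(u)|$. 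The only quantitative input is the first-order Taylor estimate along segments of $K$ (from the integral form of the remainder and uniform continuity of $\nabla f$):
\begin{equation}\label{e:taylor-plan}
|f(y)-f(x)-\nabla f(x)\cdot(y-x)|\le \omega(|y-x|)\,|y-x|\qquad(x,y\in K).
\end{equation}

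I would then prove \eqref{e:ito-with-jumps} in two preliminary cases. (i) \emph{Continuous paths.} If $X$ is continuous and BV, take a partition $0=t_0<\dots<t_m=t$ of mesh $\rho$; telescoping $f(X(t))-f(X(0))$ and applying \eqref{e:taylor-plan} on each subinterval shows that $f(X(t))-f(X(0))$ differs from the Riemann--Stieltjes sum $\sum_{i,k}\partial_if(X(t_k))\big(X^{(i)}(t_{k+1})-X^{(i)}(t_k)\big)$ by at most $\sum_k\omega(|X(t_{k+1})-X(t_k)|)|X(t_{k+1})-X(t_k)|\le \omega\big(\sup_{|u-v|\le\rho}|X(u)-X(v)|\big)\sum_iV_i$, which tends to $0$ as $\rho\to0$ by uniform continuity of $X$. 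Since that Riemann--Stieltjes sum converges to $\sum_i\int_0^t\partial_if(X(s))\,dX^{(i)}(s)$ (again by uniform continuity of $X$ and $\nabla f$), we get $f(X(t))-f(X(0))=\sum_i\int_0^t\partial_if(X(s))\,dX^{(i)}(s)$, which is \eqref{e:ito-with-jumps} with empty jump sum. (ii) \emph{Finitely many jumps.} If $X$ has only finitely many jump times $0<s_1<\dots<s_p\le t$ in $(0,t]$ (set $s_0=0$, $s_{p+1}=t$), then $X$ restricted to $[s_j,s_{j+1})$ extends continuously to $[s_j,s_{j+1}]$ by assigning it the value $X(s_{j+1}^-)$ at $s_{j+1}$; applying case (i) to that extension gives $f(X(s_{j+1}^-))-f(X(s_j))=\sum_i\int_{(s_j,s_{j+1})}\partial_if(X(s))\,dX^{(i)}(s)$. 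Summing over $j$, adding the jump increments $f(X(s_j))-f(X(s_j^-))$, and observing that the atom of $dX^{(i)}$ at $s_j$ contributes $\partial_if(X(s_j^-))\,\Delta_{s_j}X^{(i)}$ to $\int_{(0,t]}\partial_if(X(s^-))\,dX^{(i)}(s)$, one recovers \eqref{e:ito-with-jumps} with a finite jump sum.

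For the general case I would approximate. Write $X^{(i)}=X^{(i),c}+J^{(i)}$, where $J^{(i)}(s)=\sum_{0<u\le s}\Delta_uX^{(i)}$ is the (absolutely convergent) pure-jump part and $X^{(i),c}$ is continuous and BV, and set $X_m^{(i)}(s)=X^{(i),c}(s)+\sum_{0<u\le s,\;|\Delta_uX|>1/m}\Delta_uX^{(i)}$, i.e.\ $X_m$ keeps only the jumps of size larger than $1/m$. Each $X_m$ has finitely many jumps on $[0,t]$, so \eqref{e:ito-with-jumps} holds for $X_m$ by case (ii). Moreover $X_m\to X$ uniformly on $[0,t]$, $\mathrm{Var}_{[0,t]}(X_m^{(i)}-X^{(i)})=\sum_{|\Delta_uX|\le1/m}|\Delta_uX^{(i)}|\to0$, and $X_m(s^-)\to X(s^-)$ for every $s$. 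Passing to the limit $m\to\infty$: the left side of \eqref{e:ito-with-jumps} converges trivially; the Stieltjes term converges because $\big|\int_{(0,t]}\partial_if(X_m(s^-))\,dX_m^{(i)}(s)-\int_{(0,t]}\partial_if(X(s^-))\,dX^{(i)}(s)\big|\le L\,\mathrm{Var}_{[0,t]}(X_m^{(i)}-X^{(i)})+\int_{(0,t]}|\partial_if(X_m(s^-))-\partial_if(X(s^-))|\,|dX^{(i)}|(ds)\to0$, using boundedness of $\nabla f$ and dominated convergence against the finite measure $|dX^{(i)}|$; and the jump series converges by dominated convergence for series---each summand converges (for $|\Delta_sX|>1/m$, $\Delta_sX_m=\Delta_sX$ and $X_m(s^-)\to X(s^-)$; otherwise the $X_m$-summand is $0$), and by \eqref{e:taylor-plan} every summand, for every $m$, is bounded by the summable quantity $\omega(|\Delta_sX|)\,|\Delta_sX|$. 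This yields \eqref{e:ito-with-jumps} in general.

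The main obstacle is the final limiting step rather than any individual identity: one must pass to the limit in the Stieltjes integral while \emph{both} the integrand $\partial_if(X_m(s^-))$ and the driving total-variation measure $dX_m^{(i)}$ vary with $m$, and one must dominate the second-order jump series uniformly in $m$. The estimate \eqref{e:taylor-plan}, combined with absolute summability $\sum_s|\Delta_sX|<\infty$, is exactly what provides a common summable majorant and legitimizes both passages; a little extra care is needed with endpoint conventions when $t$ is itself a jump time (include $t$ among the $s_j$), but this is routine.
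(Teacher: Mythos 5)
The paper offers no proof of this lemma at all: it simply cites \cite[Theorem~II.7.31]{protter}. What you have written is therefore a genuine, self-contained proof where the paper has only a reference, and your argument is essentially correct. The two-stage reduction (continuous BV paths via Taylor plus Riemann--Stieltjes sums, then finitely many jumps by splitting the interval, then the general case by truncating small jumps and passing to the limit) is the standard textbook route to this pathwise change-of-variables formula for BV functions; the key technical ingredient, a common summable majorant $\omega(|\Delta_s X|)\,|\Delta_s X|$ for the second-order jump terms that is uniform in $m$, is correctly identified and correctly deployed.

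One small point worth tightening: you fix the compact convex set $K$ to contain the range of $X$ over $[0,t]$, and you define $\omega$ and $L$ relative to $K$, but the approximants $X_m$ need not take values in $K$ for small $m$ (they differ from $X$ by the accumulated small-jump remainder). Since $X_m \to X$ uniformly, for $m$ large the range of $X_m$ lies in, say, the closed $1$-neighbourhood of $K$, and replacing $K$ from the outset by the (compact, convex) closed $1$-neighbourhood of the original set repairs this without any other change. Alongside the endpoint convention you already flag for the case when $t$ itself is a jump time, this is the only gap, and both are routine.
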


\begin{remark}
\label{r:ito-with-jumps}
In the setting of Lemma~\ref{l:ito-with-jumps}, let us assume for simplicity that $n = 1$. Recall that the bracket process of $X$ is then given by
$$
[X]_t = \sum_{0 < s \le t} \Ll( \Delta_s X \Rr)^2.
$$
If $f$ is twice continuously differentiable, we can rewrite \eqref{e:ito-with-jumps} as
\begin{multline*}
f(X(t)) - f(X(0)) 
= \int_{s = 0}^t f' (X({s^-})) \, dX(s) + \frac{1}{2} \int_{s = 0}^t f''(X(s^-)) \, d[X]_s \\
+ \sum_{0 < s \le t} \Ll\{\Delta_s f(X) - f'(X({s^-})) \, \Delta_s X - \frac{1}{2} f''(X(s^-))  \Ll(\Delta_s X\Rr)^2\Rr\},
\end{multline*}
which is nothing but It\^{o}'s formula, see \cite[Theorem~II.7.32]{protter}.
\end{remark}

\section{Martingale characterization of the stochastic heat equation}\label{sec:mart}

Recall that we write $(P_t)_{t \ge 0} = (e^{t \Delta})_{t \ge 0}$ for the semigroup associated with the Laplacian on $\T^2$. Let $W$ be a cylindrical Wiener process over $L^2(\T^2)$. The solution ${Z}$ of the stochastic heat equation
\begin{equation}
\label{e:heateq2}
\left\{
\begin{array}{l}
d{Z} = \Delta {Z} \, dt+ \sqrt{2} \, dW \\
{Z}(0,\cdot) = z_0
\end{array}
\right.
\end{equation}
is defined by
$$
{Z}(t) = P_t z_0  + \sqrt{2} \int_0^tP_{t-s}  \, dW(s).
$$
If $\phi \in \Cc^\infty(\T^2)$ is a smooth test function, and if we write $(z,\phi)$ for the evaluation of $z \in \Ss^{\prime}(\T^2)$ against $\phi$, we have 
$$
({Z}(t),\phi) = (z_0, P_t \phi) +\sqrt{2}  \int_0^t  P_{t-s} \phi \, dW(s)
$$
for every $t \ge 0$.

The goal of this section is to give a martingale characterization of the law of ${Z}$. 
\begin{theorem}[Uniqueness for the martingale problem]
\label{t:martchar}
Let $\ov{Z}$ be a random process in $\Cc(\R_+, \Ss'(\T^2))$. For every $\phi \in \Cc^\infty(\T^2)$, define
$$
\Mm_\phi(t) = (\ov{Z}(t), \phi) - (\ov{Z}(0), \phi) - \int_0^t (\ov{Z}(s),\Delta \phi) \, ds
$$
and
$$
\Gamma_\phi(t) = \Mm_\phi^2(t) - 2 t \|\phi\|_{L^2}^2.
$$
If for every $\phi \in \Cc^\infty(\T^2)$, the processes $(\Mm_\phi(t))_{t \ge 0}$ and $(\Gamma_\phi(t))_{t \ge 0}$ are local martingales, then $\ov{Z}$ has the same law as ${Z}$ the solution of \eqref{e:heateq2} with initial condition $z_0 = \ov{Z}(0)$.
\end{theorem}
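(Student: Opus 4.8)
The plan is to reconstruct from $\ov Z$ the cylindrical Wiener process that drives it, and then to appeal to uniqueness for the linear equation. First I would record the trivial structural facts: since $\ov Z \in \Cc(\R_+,\Ss'(\T^2))$, for every $\phi \in \Cc^\infty(\T^2)$ the process $\Mm_\phi$ is continuous, and $\phi \mapsto \Mm_\phi$ is linear. Conditioning on the $\sigma$-algebra generated by $\ov Z(0)$ (equivalently, and as in all applications of this theorem in the present paper, treating $z_0 = \ov Z(0)$ as a fixed initial datum) we may assume $\ov Z(0)$ is deterministic. The hypothesis that $\Gamma_\phi$ is a local martingale together with continuity of $\Mm_\phi$ forces, by uniqueness of the predictable quadratic variation, $\langle \Mm_\phi \rangle_t = 2t\|\phi\|_{L^2}^2$. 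Applying this to $\phi$, $\psi$ and $\phi+\psi$ and polarising yields
\[
\langle \Mm_\phi, \Mm_\psi \rangle_t = 2t\,(\phi,\psi)_{L^2}, \qquad \phi,\psi \in \Cc^\infty(\T^2).
\]
Thus each $\Mm_\phi$ is a continuous local martingale with a deterministic, linear bracket, and for any finite family $\phi_1,\dots,\phi_n$ the $\R^n$-valued process $(\Mm_{\phi_1},\dots,\Mm_{\phi_n})$ is a continuous local martingale whose matrix of brackets is the deterministic $t\,(2(\phi_i,\phi_j)_{L^2})_{i,j}$. By Lévy's characterisation theorem (applied to such vector-valued continuous local martingales; see e.g.\ \cite{protter}), this vector is a Brownian motion with covariance matrix $(2(\phi_i,\phi_j)_{L^2})_{i,j}$; in particular it is centred Gaussian, and its law is entirely determined by the $L^2$ inner products of the $\phi_i$.

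Next I would build the noise. Fix an orthonormal basis $(\phi_k)_{k\ge1}$ of $L^2(\T^2)$ consisting of real trigonometric polynomials and set $B_k := \Mm_{\phi_k}/\sqrt2$. By the previous step applied to finite subfamilies, $(B_k)_{k\ge1}$ is a sequence of independent standard Brownian motions with respect to the filtration generated by $\ov Z$, so $\widetilde W(t) := \sum_{k\ge1} B_k(t)\,\phi_k$ defines a cylindrical Wiener process over $L^2(\T^2)$. Moreover, since $\E[\Mm_\phi(t)^2] = 2t\|\phi\|_{L^2}^2$, the map $\phi \mapsto \Mm_\phi(t)$ extends to an isometry (up to the factor $\sqrt{2t}$) from $L^2(\T^2)$ into $L^2(\Omega)$; combining this with linearity and Parseval's identity gives, for every $\phi \in \Cc^\infty(\T^2)$ and $t \ge 0$,
\[
\Mm_\phi(t) = \sum_{k\ge1} (\phi,\phi_k)_{L^2}\,\Mm_{\phi_k}(t) = \sqrt2\,(\widetilde W(t),\phi),
\]
with the series converging in $L^2(\Omega)$ (the integrability being automatic here, since the brackets are bounded on compact time intervals, so the $\Mm_\phi$ are genuine $L^2$-martingales). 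Unwinding the definition of $\Mm_\phi$, this says precisely that $\ov Z$ is an analytically weak solution of $d\ov Z = \Delta \ov Z\,dt + \sqrt2\,d\widetilde W$ with initial datum $z_0$: for all $\phi \in \Cc^\infty(\T^2)$ and $t \ge 0$, $(\ov Z(t),\phi) = (z_0,\phi) + \int_0^t (\ov Z(s),\Delta\phi)\,ds + \sqrt2\,(\widetilde W(t),\phi)$.

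Finally I would invoke uniqueness for the linear equation. It is classical (see e.g.\ \cite{dPZ}) that for a given cylindrical Wiener process and initial datum $z_0 \in \Ss'(\T^2)$ the analytically weak solution of the stochastic heat equation is unique and agrees with the mild solution ${Z}(t) = P_t z_0 + \sqrt2\int_0^t P_{t-s}\,d\widetilde W(s)$; the quickest self-contained way to see this is to test against the Fourier modes $e^{i\pi\om\cdot x}$ (extending $\Mm$ by complex linearity), which turns the equation into the scalar Ornstein--Uhlenbeck SDE $d\hat{\ov Z}(\om,t) = -\pi^2|\om|^2\,\hat{\ov Z}(\om,t)\,dt + d\Mm_{e_\om}(t)$ with $\hat{\ov Z}(\om,0) = \hat z_0(\om)$, solved pathwise and uniquely by variation of constants. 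Hence $\ov Z = {Z}$ almost surely, where ${Z}$ solves \eqref{e:heateq2} with the cylindrical Wiener process $\widetilde W$ and initial condition $z_0$; since the law of the mild solution depends only on $z_0$, $\ov Z$ has the asserted law. The hard part of this argument is the passage from the one-dimensional data ``$\Mm_\phi$ and $\Gamma_\phi$ are local martingales for each individual $\phi$'' to the full joint law of the family $(\Mm_\phi)_\phi$: this is exactly what the multidimensional Lévy characterisation and the density/extension argument to a countable generating basis deliver, and it is the step that identifies $(\Mm_\phi)_\phi$ with $\sqrt2$ times a genuine cylindrical Wiener process. Everything downstream is the standard uniqueness theory for the linear stochastic heat equation.
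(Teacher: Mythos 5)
Your proof is correct, but it takes a genuinely different route from the paper. You reconstruct the driving cylindrical Wiener process $\widetilde W$ by applying the multidimensional L\'evy characterisation to the vector of continuous local martingales $(\Mm_{\phi_k})_k$ attached to an orthonormal basis, then show that $\ov Z$ is an analytically weak solution of $d\ov Z = \Delta \ov Z\,dt + \sqrt2\,d\widetilde W$, and conclude by pathwise uniqueness for the linear equation (diagonalised over Fourier modes). The paper instead never reconstructs the noise: it first upgrades the martingale hypothesis to time-dependent test functions $\psi(s,x)$ (Lemma~\ref{l:time-dep}), then plugs in the backward heat flow $\ov\phi(s) = P_{t-s}\phi$ so that the drift term vanishes, turning $(\ov Z(t),\phi) - (\ov Z(0), P_t\phi)$ into a continuous martingale whose quadratic covariations (after polarisation) are the deterministic quantities $2\int_0^t (P_{t-s}\phi_1, P_{t-s}\phi_2)_{L^2}\,ds$; since continuous local martingales with deterministic covariation have law uniquely determined, the joint law of $(\ov Z(t))_t$ is pinned down and matched against the mild solution directly. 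Your approach buys a cleaner conceptual picture (you exhibit the Wiener process explicitly, and the final step is just scalar Ornstein--Uhlenbeck uniqueness), at the cost of needing the multidimensional L\'evy theorem, the construction of $\widetilde W$ as a bona fide cylindrical Wiener process adapted to the filtration of $\ov Z$, and the observation that $\widetilde W$ is independent of $\ov Z(0)$ (which follows from the independent-increments property of the L\'evy-type characterisation, but should be said). The paper's approach buys a more self-contained argument that identifies the finite-dimensional distributions directly via the backward-heat-equation trick, without ever producing the noise. Both rely at bottom on the same principle (continuous martingale with deterministic bracket is Gaussian), packaged differently. One small point you should make explicit: when you assert $\Mm_\phi(t) = \sum_k (\phi,\phi_k)_{L^2}\,\Mm_{\phi_k}(t)$, the linearity of $\phi \mapsto \Mm_\phi$ and the $L^2(\Omega)$-isometry (available because the brackets are bounded on compacts, so the $\Mm_\phi$ are true $L^2$ martingales, cf.\ Remark~\ref{r:local-true}) together justify the $L^2$-convergent expansion; this is what you sketched, and it is fine, but it is the crux of passing from a countable basis to all $\phi\in\Cc^\infty$ and deserves to be stated as a lemma rather than in passing.
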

\begin{remark}
\label{r:continuity}
Since $\ov{Z}$ is assumed to take values in $\Cc(\R_+,\Ss^{\prime}(\T^2))$, the martingales $(\Mm_\phi(t))_{t \ge 0}$ and $(\Gamma_\phi(t))_{t \ge 0}$ are automatically continuous.
\end{remark}
\begin{remark}
\label{r:local-true}
The fact that $(\Gamma_\phi(t))_{t \ge 0}$ is a local martingale identifies $(2 t \|\phi\|_{L^2}^2)_{t \ge 0}$ as the quadratic variation of $(\Mm_\phi(t))_{t \ge 0}$. In particular, for every $t \ge 0$, $\Mm_\phi(t)$ is square-integrable, 
\begin{equation}
\label{e:local-true}
\E\Ll[\Mm_\phi^2(t)\Rr] = 2 t \|\phi\|_{L^2}^2,
\end{equation}
and $(\Mm_\phi(t))_{t \ge 0}$ and $(\Gamma_\phi(t))_{t \ge 0}$ are (true) martingales. 
\end{remark}
\begin{remark}
\label{r:density}
Theorem~\ref{t:martchar} remains true if we only know that $\Mm_\phi$ and $\Gamma_\phi$ are continuous local martingales for every trigonometric polynomial $\phi$. Indeed, it suffices to argue by density using~\eqref{e:local-true}.
\end{remark}
Our proof of Theorem~\ref{t:martchar} begins by showing that the martingale property can be extended to smooth test functions that also depend on the time variable.
\begin{lemma}[Time-dependent martingale characterization]
Let $\ov{Z}$ be a random process in $\Cc(\R_+, \Ss^{\prime}(\T^2))$. For every $\psi \in \Cc^\infty(\R_+ \times \T^2)$, we let
$$
\Mm_\psi(t) = (\ov{Z}(t), \psi(t)) - (\ov{Z}(0), \psi(0)) - \int_0^t (\ov{Z}(s),(\partial_s + \Delta) \psi(s)) \, ds
$$
and
$$
\Gamma_\psi(t) = \Mm_\psi^2(t) - 2 \int_0^t \|\psi(s)\|_{L^2}^2 \, ds.
$$
Under the assumptions of Theorem~\ref{t:martchar}, the processes $(\Mm_\psi(t))_{t \ge 0}$ and $(\Gamma_\psi(t))_{t \ge 0}$ are continuous martingales.
\label{l:time-dep}
\end{lemma}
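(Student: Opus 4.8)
The plan is to obtain the time-dependent statement from the time-independent one by a freeze-and-telescope argument. Fix $T>0$. Under the standing assumptions, Remark~\ref{r:local-true} tells us that for every time-independent $\phi\in\Cc^\infty(\T^2)$ the process $\Mm_\phi$ is a genuine square-integrable martingale on $[0,T]$ with $\E[\Mm_\phi^2(t)]=2t\|\phi\|_{L^2}^2$, so that $\langle\Mm_\phi\rangle_t=2t\|\phi\|_{L^2}^2$; applying this to $\Gamma_{\phi_1+\phi_2}-\Gamma_{\phi_1}-\Gamma_{\phi_2}$ together with the (obvious) linearity of $\phi\mapsto\Mm_\phi$ gives $\langle\Mm_{\phi_1},\Mm_{\phi_2}\rangle_t=2t(\phi_1,\phi_2)_{L^2}$. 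I would also record at the outset that, since $\ov Z\in\Cc(\R_+,\Ss'(\T^2))$ and $\psi$ is smooth, the process $\Mm_\psi$ of the statement is automatically continuous in $t$, as is $\Gamma_\psi$ (Remark~\ref{r:continuity}).

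Next I would fix a partition $0=s_0<\dots<s_n=T$ and build the ``frozen'' approximation $\Mm^{(n)}_\psi$ by pasting the pieces of $\Mm_{\psi(s_i)}$: on $[s_i,s_{i+1}]$ set $\Mm^{(n)}_\psi(t)=\Mm^{(n)}_\psi(s_i)+\Mm_{\psi(s_i)}(t)-\Mm_{\psi(s_i)}(s_i)$. An induction on $i$ shows $\Mm^{(n)}_\psi$ is a square-integrable martingale whose bracket is the deterministic Riemann sum $\sum 2(s_{i+1}\wedge t-s_i\wedge t)\|\psi(s_i)\|_{L^2}^2$ approximating $2\int_0^t\|\psi(s)\|_{L^2}^2\,ds$. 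A telescoping computation, inserting $\psi(s_{i+1})-\psi(s_i)=\int_{s_i}^{s_{i+1}}\partial_s\psi(s)\,ds$, then expresses $\Mm_\psi(t)-\Mm^{(n)}_\psi(t)$ purely as a sum of differences between Riemann sums and time integrals of the continuous integrands $s\mapsto(\ov Z(s),\partial_s\psi(s))$ and $s\mapsto(\ov Z(s),\Delta\psi(s))$. For each realisation $s\mapsto\ov Z(s)$ is a continuous $\Ss'$-valued path on the compact interval $[0,T]$, so this remainder tends to $0$ pathwise and boundedly as the mesh shrinks.

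To turn this into a convergence statement strong enough to treat $\Gamma_\psi$, I would use that the martingale increments making up $\Mm^{(n)}_\psi$ are pairwise orthogonal, so $\E[(\Mm^{(n)}_\psi(t))^2]=\sum 2(s_{i+1}\wedge t-s_i\wedge t)\|\psi(s_i)\|_{L^2}^2$; combining this with the linearity of $\phi\mapsto\Mm_\phi$, a common-refinement estimate bounds $\E[(\Mm^{(n)}_\psi(t)-\Mm^{(m)}_\psi(t))^2]$ by a constant times $T$ times the squared modulus of continuity of $s\mapsto\psi(s)$ in $L^2(\T^2)$. Hence $(\Mm^{(n)}_\psi(t))_n$ is Cauchy in $L^2$, uniformly in $t\le T$ by Doob's inequality, so it converges to some continuous square-integrable martingale; comparing with the pathwise limit from the previous step identifies this limit as $\Mm_\psi$, which is therefore a continuous martingale. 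Finally, $(\Mm^{(n)}_\psi)^2-\langle\Mm^{(n)}_\psi\rangle$ is a martingale, $\langle\Mm^{(n)}_\psi\rangle_t\to2\int_0^t\|\psi(s)\|_{L^2}^2\,ds$ deterministically, and $(\Mm^{(n)}_\psi(t))^2\to\Mm_\psi^2(t)$ in $L^1$ from the $L^2$ convergence; passing to the limit shows $\Gamma_\psi$ is a martingale.

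The main obstacle I anticipate is not the telescoping bookkeeping (routine) but securing $L^2$- rather than merely $L^1$-convergence of the approximants, since this is exactly what is needed for the $\Gamma_\psi$ part: it forces one to exploit the orthogonality of increments and the exact second-moment identity $\langle\Mm_\phi\rangle_t=2t\|\phi\|_{L^2}^2$ quantitatively, and to handle with some care the partial last interval when $t$ is not a partition point, together with the boundary term it generates.
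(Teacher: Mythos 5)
Your proof is correct, but it proceeds by a genuinely different route from the paper's. The paper performs an exact algebraic decomposition: it splits $(\ov{Z}(t),\psi(t)) - (\ov{Z}(0),\psi(0))$ into a ``space-increment'' and a ``time-increment'' of $\psi$, uses the time-independent martingale assumption twice (once with fixed test function $\psi(t)$, once inside an integral with test function $\partial_s\psi(s)$), and after a Fubini-type cancellation of a double integral arrives at the closed-form identity $\Mm_\psi(t) = \Mm_{\psi(t)}(t) - \int_0^t \Mm_{\partial_s\psi(s)}(s)\,ds$. From this exact representation the martingale property of $\Mm_\psi$ is read off directly via linearity and conditional expectations, and the quadratic variation is computed by noting that the two ``extra'' pieces are of bounded variation, so $d\langle\Mm_\psi\rangle_t = d\langle\Mm_{\psi(t)}\rangle_t = 2\|\psi(t)\|_{L^2}^2\,dt$. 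No approximation or limit-passing occurs. Your proof instead builds the martingale by discretization in time (freeze $\psi$ on each mesh interval, paste the frozen martingale increments, then refine), and uses orthogonality of increments plus the exact second-moment identity to get an $L^2$-Cauchy estimate controlled by the modulus of continuity of $s\mapsto\psi(s)$ in $L^2(\T^2)$, whereupon Doob and $L^2\Rightarrow L^1$ convergence of squares finish both the $\Mm_\psi$ and $\Gamma_\psi$ claims. The paper's argument is shorter and entirely algebraic once one spots the right decomposition; yours is more mechanical but also more elementary, and it makes the quantitative dependence on the regularity of $\psi$ explicit (which the paper's bounded-variation step leaves implicit). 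Both are valid proofs of the same statement.

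One point worth being a touch more explicit about in your write-up: when you say the pasted process $\Mm^{(n)}_\psi$ is a square-integrable martingale with the stated bracket, you are tacitly promoting the local martingales $\Mm_\phi$ and $\Gamma_\phi$ of Theorem~\ref{t:martchar} to true martingales; this is exactly the content of Remark~\ref{r:local-true}, and citing it at the outset (as you do) suffices, but the pairwise orthogonality of increments coming from \emph{different} mesh intervals should be stated as a consequence of the martingale property conditionally on $\mathscr{F}_{s_i}$, not just asserted. With that small clarification the argument is complete.
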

\begin{proof}
We decompose $(\ov{Z}(t), \psi(t)) - (\ov{Z}(0), \psi(0))$ into
\begin{equation}
\label{e:decomp}
(\ov{Z}(t), \psi(t)) - (\ov{Z}(0), \psi(t)) + (\ov{Z}(0), \psi(t)) - (\ov{Z}(0), \psi(0)).
\end{equation}
By the assumptions of Theorem~\ref{t:martchar}, we can rewrite the first difference as
$$
\int_0^t (\ov{Z}(s), \Delta \psi(t)) \, ds + \Mm_{\psi(t)}(t),
$$
and we decompose the integral further into 
\begin{multline}
\label{e:dec1}
\int_0^t (\ov{Z}(s), \Delta \psi(s)) \, ds + \int_0^t (\ov{Z}(s), \Delta (\psi(t) - \psi(s))) \, ds \\
= \int_0^t (\ov{Z}(s), \Delta \psi(s)) \, ds + \int_{s = 0}^t \int_{u=s}^t (\ov{Z}(s), \Delta \partial_u\psi(u)) \, du \, ds .
\end{multline}
The second difference appearing in \eqref{e:decomp} can be rewritten as
$$
\int_0^t (\ov{Z}(0), \partial_s \psi(s)) \, ds = \int_0^t (\ov{Z}(s), \partial_s \psi(s)) \, ds - \int_0^t (\ov{Z}(s)-\ov{Z}(0), \partial_s \psi(s)) \, ds.
$$
By the assumptions of Theorem~\ref{t:martchar}, we can rewrite the second integral in the right-hand side above as
$$
\int_{s = 0}^t \Ll\{ \int_{u = 0}^s (\ov{Z}(u), \Delta \partial_s(\psi(s)) \, du + \Mm_{\partial_s \psi(s)}(s)  \Rr\} \, ds.
$$
The double integral cancels with that appearing in the right-hand side of \eqref{e:dec1}. We thus obtain
\begin{multline*}
(\ov{Z}(t), \psi(t)) - (\ov{Z}(0), \psi(0)) 
\\ = \int_0^t (\ov{Z}(s) , (\partial_s + \Delta) \psi(s)) \, ds + \Mm_{\psi(t)}(t) - \int_{0}^t \Mm_{\partial_s \psi(s)}(s)  \, ds,
\end{multline*}
so that 
\begin{equation}
\label{mtphi}
\Mm_\psi(t) = \Mm_{\psi(t)}(t) - \int_{0}^t \Mm_{\partial_s \psi(s)}(s)  \, ds.
\end{equation}
By Remark~\ref{r:local-true}, $\Mm_\psi(t)$ is square-integrable for any $t\ge 0$. If we denote by $(\mathcal{F}_t)_{t \ge 0}$ the underlying filtration, we observe that 
$$
\E[\Mm_\psi(t+s) - \Mm_\psi(t) \ | \ \mathcal{F}_t] = \Mm_{\psi(t+s)}(t) - \Mm_{\psi(t)}(t) - \int_t^{t+s} \Mm_{\partial_r \psi(r)}(t) \, dr.
$$
By linearity, this is equal to $0$, and thus $(\Mm_\psi(t))_{t \ge 0}$ is a martingale. 
We now compute its quadratic variation. Since 
$$
t \mapsto \int_{0}^t \Mm_{\partial_s \psi(s)}(s)  \, ds
$$
and $t \mapsto \Mm_{\psi(t)}(t_0)$ are of bounded variation, we can write
$$
d \langle \Mm_\psi \rangle_t = d \langle \Mm_{\psi(t)} \rangle_t = 2 \|\psi(t)\|_{L^2}^2 \, dt,
$$
that is,
$$
\langle \Mm_\psi \rangle_t = 2 \int_0^t \|\psi(s)\|_{L^2}^2 \, ds,
$$
and the proof is complete.
\end{proof}
\begin{proof}[Proof of Theorem~\ref{t:martchar}]
Let $\phi \in \Cc^\infty(\T^2)$, and $\ov{\phi} \in \Cc^\infty([0,t] \times \T^2)$ be the bounded solution of the backward heat equation
$$
\Ll\{
\begin{array}{ll}
(\partial_s + \Delta) \ov{\phi}(s) = 0 & \text{ for } s \in (0,t), \\
\ov{\phi}(t) = \phi. & 
\end{array}
\Rr.
$$
We learn from Lemma~\ref{l:time-dep} that 
$$
\Mm_{\ov{\phi}}(t) = (\ov{Z}(t),\ov{\phi}(t)) - (\ov{Z}(0),\ov{\phi}(0))
$$
is a martingale with quadratic variation 
$$
2 \int_0^t \|\ov{\phi}(s)\|_{L^2}^2 \, ds.
$$
Clearly, $\ov{\phi}(s) = P_{t-s} \phi$, so we can rewrite $\Mm_{\ov{\phi}}(t)$ as
$$
\Mm_{\ov{\phi}}(t) = (\ov{Z}(t),\phi) - (\ov{Z}(0),P_t \phi)
$$
and the quadratic variation as
$$
2 \int_0^t \|P_{t-s} \phi\|_{L^2}^2 \, ds.
$$
By polarization, it follows that the quadratic covariation at time $t$ between $\Mm_{\ov{\phi_1}}$ and $\Mm_{\ov{\phi_2}}$ is
$$
2 \int_0^t (P_{t-s} \phi_1, P_{t-s} \phi_2) \, ds.
$$
for every $\phi_1, \phi_2 \in \Cc^\infty(\T^2)$.
Since $\Mm_{\ov{\phi}}$ is a continuous martingale for every~$\phi$, the fact that the quadratic covariations are deterministic implies that the law of $(\Mm_{\ov{\phi}}(t))_{t \ge 0, \phi \in \Cc^\infty(\T^2)}$ is determined uniquely. Letting ${Z}$ be the solution of \eqref{e:heateq2} with $z_0 = \ov{Z}(0)$, we observe that the quadratic covariations of 
$$
{\Mm}'_{\phi}(t) := ({Z}(t),\phi) - ({Z}(0),P_t \phi)
$$
are the same as those of $(\Mm_{\ov{\phi}}(t))_{t \ge 0, \phi \in \Cc^\infty(\T^2)}$, so the proof is complete.
\end{proof}

\bibliographystyle{abbrv}
\bibliography{Ising}
\end{document}